\newcommand*{\rn}[1]{%
  \expandafter\@rn\csname c@#1\endcsname%
}
\newcommand*{\@rn}[1]{%
  $\ifcase#1\or(i)\or(ii)\or(iii)\or(iv)\or(v)\or(vi)\or(vii)\or(viii)\or(ix)\or(x)%
    \else\@ctrerr\fi$%
}
\AddEnumerateCounter{\rn}{\@rn}{53.13}
  \newbox\gnBoxA
\newdimen\gnCornerHgt
\newdimen\gnArgHgt
\def\Godelnum #1{%
\setbox\gnBoxA=\hbox{$#1$}%
\gnArgHgt=\ht\gnBoxA%
\ifnum     \gnArgHgt<\gnCornerHgt \gnArgHgt=0pt%
\else \advance \gnArgHgt by -\gnCornerHgt%
\fi \raise\gnArgHgt\hbox{$\ulcorner$} \box\gnBoxA %
\raise\gnArgHgt\hbox{$\urcorner$}}
\newcommand{\pushright}[1]{\ifmeasuring@#1\else\omit\hfill$\displaystyle#1$\fi\ignorespaces}
\newcommand{\pushleft}[1]{\ifmeasuring@#1\else\omit$\displaystyle#1$\hfill\fi\ignorespaces}
\newcommand{\PP}{\mathbb{P}}
\newcommand{\Q}{\dot{\mathbb{Q}}}
\newcommand{\forces}{\Vdash}
\newcommand{\res}{\upharpoonright}
\newcommand{\dotieconcat}[2]{\text{\raisebox{.8ex}{$\smallfrown$}}}
\newcommand{\QQ}{\mathbb{Q}}
\newcommand{\dom}{\mathrm{dom}}
\newcommand{\ran}{\mathrm{ran}}
\newcommand{\ZFC}{\mathrm{ZFC}}
\newtheoremstyle{nopoint}
  {}{}{\itshape}{}{\bfseries}{}{5pt}{}
\theoremstyle{plain}
\newtheorem{thm}{Theorem}[section]
\newtheorem{prop}[thm]{Proposition}
\newtheorem{lemm}[thm]{Lemma}
\newtheorem{cor}[thm]{Corollary}
\newtheorem{claim}[thm]{Claim}
\theoremstyle{definition}
\newtheorem{defn}[thm]{Definition}
\newtheorem{rem}[thm]{Remark}
\newtheorem{que}[thm]{Question}
\newtheorem*{que*}{Question}
\theoremstyle{nopoint}
\newtheorem*{lemm*}{Lemma}
\newtheorem*{thm*}{Theorem}
\newtheorem*{rem*}{Remark}
\newtheorem{assump}[thm]{Assumption}
\newcommand{\Col}{\mathrm{Col}}
\newcommand{\Add}{\mathrm{Add}}
\newcommand{\Addinf}{\Add(\infty)}
\newcommand{\Ord}{\mathrm{Ord}}
\newcommand{\CCA}{\mathsf{CCA}}
\newcommand{\LCCA}{\mathsf{LCCA}}
\newcommand{\MM}{\mathsf{MM}}
\newcommand{\crit}{\mathrm{crit}}
\newcommand{\Con}{\mathrm{Con}}
\newcommand{\Hull}{\mathrm{Hull}}
\newcommand{\MP}[3]{#1\mathchar`-\mathsf{MP}_{#2}(#3)}
\newcommand{\allMP}[2]{\mathsf{MP}_{#1}(#2)}
\newcommand{\MPall}[2]{#1\mathchar`-\mathsf{MP}(#2)}
\newcommand{\MPempty}[2]{#1\mathchar`-\mathsf{MP}_{#2}}
\newcommand{\allMPall}[1]{\mathsf{MP}(#1)}
\newcommand{\allMPempty}[1]{\mathsf{MP}_{#1}}
\newcommand{\MPallempty}[1]{#1\mathchar`-\mathsf{MP}}
\newcommand{\SnMP}[2]{\MP{\Sigma_n}{#1}{#2}}
\newcommand{\PnMP}[2]{\MP{\Pi_n}{#1}{#2}}
\newcommand{\SnMPall}[1]{\MPall{\Sigma_n}{#1}}
\newcommand{\PnMPall}[1]{\MPall{\Pi_n}{#1}}
\newcommand{\SnMPallR}{\SnMPall{\mathbb{R}}}
\newcommand{\PnMPallR}{\PnMPall{\mathbb{R}}}
\newcommand{\PnMPallempty}{\MPallempty{\Pi_n}}
\newcommand{\MPprime}[3]{#1\mathchar`-\mathsf{MP}'_{#2}(#3)}
\newcommand{\MPast}[3]{#1\mathchar`-\mathsf{MP}^{\ast}_{#2}(#3)}
\newcommand{\allMPast}[2]{\mathsf{MP}^{\ast}_{#1}(#2)}
\newcommand{\MPastall}[2]{#1\mathchar`-\mathsf{MP}^{\ast}(#2)}
\newcommand{\MPastempty}[2]{#1\mathchar`-\mathsf{MP}^{\ast}_{#2}}
\newcommand{\MPastallempty}[1]{#1\mathchar`-\mathsf{MP}^{\ast}}
\newcommand{\MPminus}[3]{#1\mathchar`-\mathsf{MP}^{-}_{#2}(#3)}
\newcommand{\MPplus}[3]{#1\mathchar`-\mathsf{MP}^{+}_{#2}(#3)}
\newcommand{\allMPplus}[2]{\mathsf{MP}^{+}_{#1}(#2)}
\newcommand{\MPplusallempty}[1]{#1\mathchar`-\mathsf{MP}^{+}}
\newcommand{\bRA}[2]{\undertilde{\mathsf{RA}}_{#1}(#2)}
\renewcommand{\subset}{\subseteq}
\title{Separating Maximality Principles}
\author{Takehiko Gappo}
\address{Takehiko Gappo, Institut f\"{u}r Diskrete Mathematik und Geometrie, TU Wien, Wiedner Hauptstra{\ss}e 8-10/104, 1040 Wien, Austria.}
\email{takehiko.gappo@tuwien.ac.at}
\author{Andreas Lietz}
\address{Andreas Lietz, Institut f\"ur Diskrete Mathematik und Geometrie, TU Wien, Wiedner Hauptstra{\ss}e 8-10/104, 1040 Wien, Austria.}
\email{andreas.lietz@tuwien.ac.at}
\subjclass[2020]{03E57, 03E35, 03E55, 03E45}
\keywords{Maximality principles, forcing axioms, large cardinals.}
\date{\today}
\begin{document}
    
    \maketitle

    \begin{abstract}
        We investigate fragments of generic absoluteness principles known as Maximality Principles. We determine the consistency strength of $\SnMP{}{\mathbb R}$ and $\PnMP{}{\mathbb R}$, the boldface Maximality Principle restricted respectively to $\Sigma_n$- and $\Pi_n$-formulas. Further, we show that no implication between $\SnMP{}{\mathbb R}$ and $\PnMP{}{\mathbb R}$ is provable in $\ZFC$. We also establish the consistency, relative to a Woodin cardinal, of the Maximality Principle for $\omega_1$-preserving posets with countable ordinal parameters and prove its consistency strength is bounded below by a Ramsey cardinal. Finally, we resolve questions of Ikegami--Trang and Goodman by separating the Maximality Principle for stationary set preserving posets restricted to $\Sigma_2$-formulas from $\MM^{++}$ in the presence of large cardinals.
    \end{abstract}

    \setcounter{tocdepth}{1}
    \tableofcontents
    
    \section{Introduction}


    Since Paul Cohen's resolution of Hilbert's first problem, the Continuum Hypothesis, it has been understood that G\"odel's independence phenomenon affects not only specific arithmetic sentences but also natural mathematical questions. Set theorists therefore search for natural extensions of the standard base theory $\mathsf{ZFC}$ which decide as many such questions as possible. One promising direction comes from the very method typically used to establish independence results: forcing. Formalizing the idea that ``a maximal amount of certain forcing has been done" leads to forcing axioms such as Martin's Axiom, the Proper Forcing Axiom and Martin's Maximum, each of which have found far-reaching applications within set theory and beyond.

    A different way to maximizing truth relative to forcing is provided by \emph{Maximality Principles} ($\mathsf{MP}$). These assert that if a statement can be made true by forcing but not subsequently destroyed, then it should hold already. Maximality Principles and forcing axioms are orthogonal in the sense that there is typically no implication between related instances of each of them, but nonetheless they work in tandem. In fact, the Maximality Principle for a given class of forcings usually hold in the standard model of a related forcing axiom, provided the large cardinal used in the construction exhibits some additional correctness properties. For example, in Baumgartner's standard model of the Proper Forcing Axiom, the Maximality Principle for proper forcing holds true, provided that the supercompact cardinal $\kappa$ in the ground model $W$ satisfies $V_\kappa^W\prec W$.

    Weaker fragments of forcing axioms such as the bounded forcing axioms $\mathsf{BPFA}$ and $\mathsf{BMM}$ are by now well understood and have an extensive literature. In this paper, we investigate natural fragments of Maximality Principles obtained by restricting the class of allowed formulas. Below are the main highlights:
    \begin{enumerate}
        \item We calibrate the exact consistency strength of $\Sigma_n\mathchar`-\mathsf{MP}$ and $\Pi_n\mathchar`-\mathsf{MP}$ for all posets with real parameters, showing that they differ. As a consequence, we stablish the consistency of $\Pi_n\mathchar`-\mathsf{MP}\,+\,\neg\Sigma_n\mathchar`-\mathsf{MP}$ for all posets with real parameters, relative to some large cardinals.
        \item We also show that $\Sigma_n\mathchar`-\mathsf{MP}\,+\,\neg\Pi_n\mathchar`-\mathsf{MP}$ for all posets with real parameters is consistent relative to some large cardinals.
        \item We show that $\mathsf{MP}$ for $\omega_1$-preserving posets with countable ordinal parameters is consistent relative to large cardinals. We show that the upper bound of the consistency strength of this statement is a Woodin cardinal and a measurable cardinal above it, while the lower bound is a Ramsey cardinal.
        \item Large cardinal axioms does not imply $\Sigma_2\mathchar`-\mathsf{MP}$ for all posets. Also, $\mathsf{MM}^{++}$ and the existence of a proper class of Woodin cardinals do not imply $\Sigma_2\mathchar`-\mathsf{MP}$ for stationary set preserving posets. This answers questions raised by Ikegami--Trang \cite{IkegamiTrangOnAClassOfMaximilityPrinciples} and Goodman \cite{GoodmanSigmaNCorrectForcingAxioms}.
    \end{enumerate}

    \subsection*{Acknowledgments}

    The first author thanks Fuchino Saka\'e and Francesco Parente for helpful discussion on Maximality Principles. Both authors thank Juliette Kennedy and Jouko V\"a\"an\"anen for letting us know the early history of the Maximality Principle in the Arctic Set Theory Workshop in 2025.
    
    This research was funded in part by the Austrian Science Fund (FWF) [10.55776 /I6087]. The second author was supported by the German Research Foundation (DFG) project 542745507.  For the purpose of open access, the authors have applied a CC BY public copyright license to any Author Accepted Manuscript version arising from this submission.

    \section{Maximality Principles}

    The Maximality Principle was first introduced by Stavi and V\"a\"an\"anen in \cite{StaviVaananenReflectionPrinciplesForTheContinuum}. Although their paper was written in the late 1970s, it was published more than two decades later.\footnote{We refer the readers interested in the early history of the Maximality Principle to \cite{VaananenPursuingLogicWithoutBorders}.} One of the results in \cite{StaviVaananenReflectionPrinciplesForTheContinuum} was a characterization of Martin's Axiom in terms of generic absoluteness. Joan Bagaria independently established the same characterization in \cite{BagariaACharacterizationOfMartinsAxiomInTermsOfAbsoluteness} and had arrived at the idea of the Maximality Principle. Hamkins also rediscovered and further studied the Maximality Principle in \cite{HamkinsASimpleMaximalityPrinciple}, building on an idea of Christophe Chalons, as reformulated by Paul B. Larson. So far, the study of the Maximality Principle has expanded in multiple directions through the contributions of many people, including Asperó \cite{AsperoAMaximalBoundedForcingAxiom}, Leibman \cite{LeibmanPhDThesis}, Hamkins--Woodin \cite{HamkinsWoodinTheNecessaryMaximalityPrinciple}, Fuchs \cite{FuchsClosedMaximalityPrinciples, FuchsCombinedMaximalityPrinciplesUpToLargeCardinals, FuchsSubcompleteForcingPrinciplesAndDefinableWellorders}, Ikegami--Trang \cite{IkegamiTrangOnAClassOfMaximilityPrinciples}, and Minden \cite{MindenCombiningResurrectionAndMaximality}, among others.

    In the papers mentioned above, the terminologies and notations for the variants of the Maximality Principles are not always consistent. Therefore, let us begin with basic definitions, along with several basic observations.
    
    \begin{defn}
        Let $\mathcal{P}$ be a class of forcing posets that is definable in the language of set theory without parameters.
        We say that $\mathcal{P}$ is \emph{iterable} if $\mathcal{P}$ contains the trivial poset and is closed under forcing equivalence, restriction, and two-step iteration.
        We informally say that $\mathcal{P}$ is \emph{transfinitely iterable} if $\mathcal{P}$ is iterable and is closed under iteration of  using some appropriate support.
    \end{defn}

    \begin{defn}
        Let $\mathcal{P}$ be a definable class of posets. Let $\phi(\vec{x})$ be a formula and let $\vec{a}$ be a parameter.
        \begin{enumerate}
            \item $\phi[\vec{a}]$ is \emph{$\mathcal{P}$-forceable} if there are a poset $\mathbb{P}\in\mathcal{P}$ and $p\in\mathbb{P}$ such that $p\forces_{\mathbb{P}}\phi[\vec{a}]$.
            \item $\phi[\vec{a}]$ is \emph{$\mathcal{P}$-persistent} (or \emph{$\mathcal{P}$-necessary}) if for all posets $\mathbb{P}\in\mathcal{P}$ and all $p\in\mathbb{P}$, $p\forces_{\mathbb{P}}\phi[\vec{a}]$.
            \item $\phi[\vec{a}]$ is \emph{$\mathcal{P}$-forceably $\mathcal{P}$-persistent} if the sentence ``$\phi[\vec{a}]$ is $\mathcal{P}$-persistent'' is $\mathcal{P}$-forceable.
        \end{enumerate}
    \end{defn}

    \begin{defn}
        Let $\Gamma$ be a set of formulas, $\mathcal{P}$ be a (definable) class of posets, and $A$ be a set. Then $\MP{\Gamma}{\mathcal{P}}{A}$, the \emph{$\Gamma$-Maximality Principle for $\mathcal{P}$ with parameters in $A$}, is the scheme of formulas asserting that for any formula $\phi(\vec{x})$ in $\Gamma$ and any $\vec{a}\in A^{<\omega}$, if $\phi[\vec{a}]$ is $\mathcal{P}$-forceably $\mathcal{P}$-persistent, then $\phi[\vec{a}]$ holds.

        We omit $\Gamma, \mathcal{P}$ or $A$ from theses notations in the case that
        \begin{itemize}
            \item $\Gamma$ is the set of all formulas,
            \item $\mathcal{P}$ is the class of all posets,
            \item $A = \emptyset$.
        \end{itemize}
        Thus, $\mathsf{MP}$ denotes the original Maximality Principle.
        
        We also use standard abbreviations to refer to classes of posets as follows.
        \begin{enumerate}
            \item ``c.c.c.'' stands for the class of all c.c.c.\ posets.
            \item ``proper'' stands for the class of all proper posets.
            \item ``semiproper'' stands for the class of all semiproper posets.
            \item ``$\sigma$-closed'' stands for the closure of $\sigma$-closed posets under forcing equivalence.
            \item ``subcomp'' stands for the class of all subcomplete posets.
            \item ``SPP'' stands for the class of all stationary preserving posets.
            \item ``$\omega_1$-pres'' stands for the class of all $\omega_1$-preserving posets.
        \end{enumerate}
        For example, the Maximality Principle for c.c.c.\ posets without parameters is denoted by $\allMPempty{\mathrm{c.c.c.}}$.
    \end{defn}

    A parameter set $A$ in the definition of $\MP{\Gamma}{\mathcal{P}}{A}$ cannot be taken arbitrary large. The easy proofs of the next observation can be found in \cite{HamkinsASimpleMaximalityPrinciple} and \cite[Lemma 2.2 and Lemma 2.3]{MindenCombiningResurrectionAndMaximality}.
    
    \begin{prop}\label{prop:MaximalParameterSets}
        Let $\mathcal{P}$ be a class of posets.
        \begin{enumerate}
            \item $\allMPall{A}$ implies $A \subset H_{\omega_1}$.
            \item If $\mathcal{P}$ includes posets that collapse arbitrarily large cardinals to $\omega_1$, then $\allMP{\mathcal{P}}{A}$ implies $A \subset H_{\omega_2}$.
            \item If $\mathcal{P}$ includes posets that increase the continuum arbitrarily large without collapsing cardinals, then $\allMP{\mathcal{P}}{A}$ implies $A \subset H_{2^{\omega}}$.
        \end{enumerate}
    \end{prop}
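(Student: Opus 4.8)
The plan is to treat all three items with a single strategy: given a purported parameter $a \in A$ lying outside the target class $H_\theta$, I would pick a formula $\phi(x)$ expressing that $\op{trcl}(x)$ is \emph{small} in a sense appropriate to $\theta$, force $\phi[a]$ to hold by a poset available in $\mathcal{P}$, argue that $\phi[a]$ is then $\mathcal{P}$-persistent, and finally invoke $\allMP{\mathcal{P}}{A}$ (resp.\ $\allMPall{A}$) to conclude that $\phi[a]$ already holds in $V$ --- contradicting $a \notin H_\theta$. Thus each item reduces to choosing $\phi$ and checking two things: that the required poset lies in $\mathcal{P}$ (immediate from the hypotheses), and that $\phi[a]$ is persistent once forced.

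For the three cases I would use, respectively, $\phi_1(x) \equiv$ \emph{``$\op{trcl}(x)$ is countable''}, $\phi_2(x) \equiv$ \emph{``$|\op{trcl}(x)| \le \aleph_1$''}, and $\phi_3(x) \equiv$ \emph{``$|\op{trcl}(x)| < 2^{\aleph_0}$''}. If $a \notin H_{\omega_1}$ (resp.\ $H_{\omega_2}$, resp.\ $H_{2^\omega}$) then the corresponding $\phi_i[a]$ fails in $V$. To force it true: in case (1) collapse $\op{trcl}(a)$ to be countable via $\Col(\omega, \op{trcl}(a))$, which is legitimate since $\mathcal{P}$ is the class of all posets; in case (2) use a poset of $\mathcal{P}$ collapsing $|\op{trcl}(a)|$ to $\omega_1$, which exists by hypothesis; in case (3) use a poset of $\mathcal{P}$ driving the continuum above $|\op{trcl}(a)|$ without collapsing cardinals, again available by hypothesis. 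In each case $\phi_i[a]$ holds in the extension.

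The heart of the argument is persistence, and I expect this to be the only nonroutine point. In case (1) a surjection $\omega \to \op{trcl}(a)$ present in $V[G]$ remains a surjection in every further extension, so countability of $\op{trcl}(a)$ is persistent under all forcing. Case (2) is the subtle one: here I would use that $\omega_1$ is non-decreasing under forcing, so that a surjection $f\colon \omega_1^{V[G]}\to\op{trcl}(a)$ still witnesses $|\op{trcl}(a)|\le\omega_1^{V[G]}\le\omega_1^{V[G][H]}$ in any further extension $V[G][H]$; hence $\phi_2[a]$ is persistent under all forcing, in particular $\mathcal{P}$-persistent. For case (3) I would observe that $\mathbb{R}$ only grows under forcing, so ``$\op{trcl}(a)$ injects into $\mathbb{R}$'' is persistent under all forcing; the strict inequality $|\op{trcl}(a)|<2^{\aleph_0}$ then persists under the cardinal-preserving posets supplied by the hypothesis, since those leave $|\op{trcl}(a)|$ unchanged while the continuum can only increase --- and this is exactly where the ``without collapsing cardinals'' clause is used.

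Putting these together, $\phi_i[a]$ is $\mathcal{P}$-forceably $\mathcal{P}$-persistent in each case, so the relevant Maximality Principle yields $\phi_i[a]$ in $V$, the desired contradiction. The main obstacle, as indicated, is the persistence check in (2), which hinges on the monotonicity of $\omega_1$ under forcing; the forceability clauses are handed to us directly by the hypotheses on $\mathcal{P}$. I would also remark that for classes $\mathcal{P}$ that both blow up the continuum and collapse cardinals, items (1) or (2) already deliver a bound at least as strong as (3), so the genuinely new content of (3) concerns cardinal-preserving classes such as the c.c.c.\ posets, for which (1) and (2) are vacuous.
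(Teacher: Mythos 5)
The paper does not actually prove this proposition; it points to \cite{HamkinsASimpleMaximalityPrinciple} and \cite[Lemmas 2.2, 2.3]{MindenCombiningResurrectionAndMaximality}, and your strategy (force a smallness statement about $\op{trcl}(a)$, check it is persistent, apply the principle) is exactly the standard argument from those sources. Your treatments of (1) and (2) are complete and correct; in particular the observation that $\omega_1$ cannot decrease under forcing is precisely what makes ``$|\op{trcl}(a)|\le\aleph_1$'' persistent under \emph{all} further forcing, so (2) goes through for any $\mathcal{P}$ satisfying its hypothesis.

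Your argument for (3), however, has a gap as a proof of the literal statement. $\mathcal{P}$-persistence must be verified against \emph{every} poset in $\mathcal{P}$, whereas you only check ``$|\op{trcl}(a)|<2^{\aleph_0}$'' against the cardinal-preserving posets that the hypothesis supplies. The hypothesis of (3) does not say that $\mathcal{P}$ consists of cardinal-preserving posets, only that it \emph{includes} continuum-inflating cardinal-preserving ones; if $\mathcal{P}$ also contains, say, $\Col(\omega_1,\lambda)$ for large $\lambda$, then after forcing $2^{\aleph_0}>|\op{trcl}(a)|$ a further collapse (adding no reals) can bring the continuum back down to $|\op{trcl}(a)|$, so the forced statement is not $\mathcal{P}$-persistent and your contradiction does not materialize. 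Your closing remark that (1) or (2) already cover such mixed classes does not repair this: under $\CH$ the bound $H_{\omega_2}$ from (2) is strictly \emph{weaker} than the bound $H_{2^{\omega}}$ claimed in (3). What your argument does establish in full generality is $A\subseteq H_{(2^{\omega})^{+}}$, via the genuinely persistent statement ``$\op{trcl}(a)$ injects into $\mathbb{R}$''; the strict bound $H_{2^{\omega}}$ requires reading (3), as the cited references do and as the paper's Assumption 3.6(2) intends, for classes of cardinal-preserving posets such as c.c.c.\ --- in which case your persistence check is exactly right. You should either add that standing assumption to (3) or state explicitly that the persistence of the strict inequality uses that all posets in $\mathcal{P}$ preserve cardinals.
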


    For many natural classes of posets $\mathcal{P}$, the largest parameter set $A$ for which $\allMP{\mathcal{P}}{A}$ is consistent (relative to some large cardinal axiom) is known. One of the few exceptions is the class of $\omega_1$-preserving posets. See \S \ref{subsection:omega_1-pres} and Question \ref{que:omega_1-pres}.

    Note that $\MP{\Pi_1}{\mathcal{P}}{A}$ holds by downward absoluteness of $\Pi_1$-sentences. The Maximality Principles for $\Sigma_1$ formulas do not follow from $\mathsf{ZFC}$, because it is equivalent to the Bounded Forcing Axioms.
    
    \begin{thm}[Bagaria \cite{BagariaBoundedForcingAxiomsAsPrinciplesOfGenericAbsoluteness}\footnotemark]\label{BagariaBFAvsMP}
        Let $\mathbb{P}$ be a class of posets that is closed under forcing equivalence and restriction. For an uncountable cardinal $\kappa$, the following are equivalent:
        \begin{enumerate}
            \item $\MP{\Sigma_1}{\mathcal{P}}{H_{\kappa}}$.
            \item $\mathsf{BFA}_{<\kappa}(\mathcal{P})$ holds, i.e.\ for any $\mathbb{P}\in\mathcal{P}$ and any family $\mathcal{A}$ of maximal antichains in the Boolean completion $\mathbb{B}$ of $\mathbb{P}$ such that $\lvert\mathcal{A}\rvert < \kappa$ and $\lvert A\rvert < \kappa$ for all $A\in\mathcal{A}$, there is a filter $G\subset\mathbb{B}$ such that for all $A\in\mathcal{A}$, $A\cap G\neq\emptyset$.
        \end{enumerate}
    \end{thm}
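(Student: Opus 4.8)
The plan is to route the equivalence through the standard reformulation of the $\Sigma_1$-Maximality Principle as a \emph{generic $\Sigma_1$-absoluteness} statement, and then to match each direction to the combinatorics of $\mathsf{BFA}$. The preliminary observation I would isolate first is that, for a $\Sigma_1$ formula $\phi$, being $\mathcal{P}$-forceably $\mathcal{P}$-persistent is the same as being $\mathcal{P}$-forceable: since $\Sigma_1$ formulas are upward absolute under arbitrary forcing, once $p\forces_{\PP}\phi[\vec a]$ holds, $\phi[\vec a]$ is true in $V[G]$ and in every further extension, so $p$ already forces ``$\phi[\vec a]$ is $\mathcal{P}$-persistent''. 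Hence $\MP{\Sigma_1}{\mathcal{P}}{H_\kappa}$ is equivalent to the scheme asserting that for every $\Sigma_1$ formula $\phi$ and every $\vec a\in H_\kappa$, if $\phi[\vec a]$ holds in some $\mathcal{P}$-generic extension then $\phi[\vec a]$ already holds in $V$. This is the form I would use throughout.

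For the direction $(2)\Rightarrow(1)$, which is the easier one, let $\mathcal A=\langle A_i:i<\lambda\rangle$ be a family of maximal antichains in the completion $\BB$ of some $\PP\in\mathcal{P}$ with $\lambda<\kappa$ and each $\lvert A_i\rvert<\kappa$. Let $\BB_0\le\BB$ be the subalgebra generated by $\bigcup\mathcal A$; it is generated by $<\kappa$ many elements, hence of size $<\kappa$, so after replacing it by an isomorphic copy carried by an ordinal $<\kappa$ the pair $(\BB_0,\mathcal A)$ is coded by an element of $H_\kappa$. The statement ``there is a filter on $\BB_0$ meeting every $A_i$'' is $\Sigma_1$ in this parameter, and forcing with $\PP$ adds a generic filter meeting every maximal antichain of $\BB$, whose trace on $\BB_0$ witnesses it; thus the statement is $\mathcal{P}$-forceable, so by the reformulation it holds in $V$. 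A filter on $\BB_0$ meeting the $A_i$ generates upward a filter on $\BB$ meeting them, giving $\mathsf{BFA}_{<\kappa}(\mathcal P)$.

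For the harder direction $(1)\Rightarrow(2)$, suppose $\phi[\vec a]=\exists y\,\psi(y,\vec a)$ with $\psi\in\Delta_0$ and $\vec a\in H_\kappa$ is $\mathcal{P}$-forceable; replacing $\PP$ by a restriction (using closure under restriction) I may assume $\1\forces_{\PP}\phi[\vec a]$. By L\'evy reflection inside the extension the witness may be taken in $H_\kappa^{V[G]}$, hence coded by a well-founded extensional relation on some ordinal $\gamma<\kappa$; restricting further I fix $\gamma$ and arrange $\1\forces$ ``$\dot R$ codes a transitive set containing $\vec a$ in which $\psi(y,\vec a)$ holds''. The crux is to capture $\dot R$ by few small antichains: for each pair $(\xi,\eta)\in\gamma\times\gamma$, letting $b_{\xi\eta}$ be the Boolean value of $(\check\xi,\check\eta)\in\dot R$, the set $\{b_{\xi\eta},\neg b_{\xi\eta}\}$ is a two-element maximal antichain that pins the membership diagram, and for extensionality witnesses and for the bounded quantifiers of $\psi$ I add witness-selecting maximal antichains, each of size $\le\lvert\gamma\rvert<\kappa$ and fewer than $\kappa$ in total. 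Applying $\mathsf{BFA}_{<\kappa}(\mathcal P)$ yields a filter from which I reconstruct a relation $R^{\ast}$ on $\gamma$ in $V$; its Mostowski collapse then produces a genuine $x^{\ast}\in H_\kappa$ with $\psi(x^{\ast},\vec a)$, so $\phi[\vec a]$ holds in $V$.

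The main obstacle lies in this last direction: meeting the membership antichains only guarantees that $R^{\ast}$ is \emph{some} relation in $V$, not that it is well-founded, because $R^{\ast}$ is reconstructed from a mere filter rather than being the generic object, and an $R^{\ast}$-descending sequence in $V$ would not contradict the forced well-foundedness of $\dot R$. I would resolve this by coding, alongside $\dot R$, a ranking function $\dot\rho\colon\gamma\to\Ord$ with $\xi\mathrel{\dot R}\eta\Rightarrow\dot\rho(\xi)<\dot\rho(\eta)$ and pinning its values by further antichains of size $<\kappa$ (an ordinal below $\kappa$ has $<\kappa$ possible values), so that $R^{\ast}$ is forced to embed into the ordinals and is automatically well-founded in $V$. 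Extensionality and the truth of $\psi$ in the collapse are secured analogously by the witness-selecting antichains, and a careful verification that every antichain used has size $<\kappa$ and that there are $<\kappa$ of them (using $\lvert\gamma\rvert<\kappa$) completes the argument.
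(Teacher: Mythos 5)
The paper does not actually prove this theorem: it is quoted from Bagaria, with the remark that Bagaria's original argument assumes $\kappa$ is the successor of a cardinal of uncountable cofinality and that removing this assumption requires a modification given in Fuchino--Gappo--Parente. So your proposal has to be judged on its own, and your overall architecture is the standard and correct one: for $\Sigma_1$ formulas, $\mathcal{P}$-forceable and $\mathcal{P}$-forceably $\mathcal{P}$-persistent coincide by upward absoluteness, so $\MP{\Sigma_1}{\mathcal{P}}{H_\kappa}$ is $\Sigma_1(H_\kappa)$-generic absoluteness; one direction is the subalgebra/trace-of-the-generic argument, the other reconstructs a coded witness from a filter meeting few small antichains. (Minor slip: your two direction labels are swapped --- the argument you present under ``$(2)\Rightarrow(1)$'' derives $\mathsf{BFA}$ from the Maximality Principle, and vice versa.)

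There is, however, a genuine gap exactly at the point you correctly identify as the crux. Your fix for well-foundedness is to pin the values of a ranking function $\dot\rho\colon\gamma\to\Ord$ by maximal antichains, justified by the parenthetical ``an ordinal below $\kappa$ has $<\kappa$ possible values.'' That justification is false: there are $\kappa$ ordinals below $\kappa$. Concretely, the rank of a well-founded relation on $\gamma$ is only bounded by $(\lvert\gamma\rvert^+)^{V[G]}\leq(\lvert\gamma\rvert^+)^V$, so the antichain $\{\,[[\dot\rho(\check\xi)=\check\alpha]]\mid\alpha\,\}$ may have $(\lvert\gamma\rvert^+)^V$ many nonzero elements. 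When $\kappa=\mu^+$ and the parameter $\vec a$ has transitive closure of size $\mu$ --- a case you cannot avoid, since the coding structure must contain $\mathrm{tc}(\{\vec a\})$ and hence $\lvert\gamma\rvert=\mu$ --- this antichain can have size $\kappa$, and $\mathsf{BFA}_{<\kappa}(\mathcal{P})$ does not apply to it. (This is precisely where Bagaria's cofinality hypothesis entered.) The standard repair avoids ranking antichains altogether: enumerate the transitive witness structure in a rank-respecting order, so that the name $\dot R$ satisfies $\1\forces\forall\xi,\eta\,(\xi\mathrel{\dot R}\eta\rightarrow\xi<\eta)$; then $b_{\xi\eta}=0$ whenever $\xi\geq\eta$, so any relation reconstructed from a filter is a subrelation of $<$ on $\gamma$ and is well-founded for free. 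You should also say explicitly how the parameter $\vec a$ is located in the collapse (for each $x\in\mathrm{tc}(\{\vec a\})$ an antichain of size $\leq\lvert\gamma\rvert$ selecting its representative, with filterhood forcing consistency with the membership antichains); without this, the conclusion is $\psi(x^\ast,a^\ast)$ for some $a^\ast$ rather than $\psi(x^\ast,\vec a)$. With these two repairs your argument goes through.
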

    \footnotetext{Theorem \ref{BagariaBFAvsMP} for the case where $\mathcal{P}$ is the class of c.c.c.\ posets and $\kappa = 2^{\aleph_0}$ is due to Stavi \cite{StaviVaananenReflectionPrinciplesForTheContinuum} and Bagaria \cite{BagariaACharacterizationOfMartinsAxiomInTermsOfAbsoluteness} independently.}
    
    In \cite{BagariaBoundedForcingAxiomsAsPrinciplesOfGenericAbsoluteness}, Theorem \ref{BagariaBFAvsMP} was proved under the assumption that $\kappa$ is a successor of a cardinal of uncountable cofinality, but this assumption can be removed by slightly modifying Bagaria's proof. See \cite{FuchinoGappoParenteGenericAbsolutenessRevisited} for the detail.

    For two classes of posets $\mathcal{P} \subsetneq \mathcal{Q}$, $\allMP{\mathcal{Q}}{A}$ does not imply $\allMP{\mathcal{P}}{A}$ in general. Indeed, they are incompatible in many cases. The following observation does not seem present in the previous literature. Recall that $\undertilde{\delta}^1_2$ denotes the supremum of the order type of a $\undertilde{\Delta}^1_2$ prewellordering of $\mathbb{R}$.

    \begin{thm}\label{thm:PropervsSemiproper}
        Assume that there are unboundedly many Woodin cardinals.
        Then $\MP{\Sigma_2}{\mathrm{semiproper}}{H_{\omega_2}}$ implies $\undertilde{\delta}^1_2 = \omega_2$.
        On the other hand, $\MPempty{\Sigma_2}{\mathrm{proper}}$ implies $\undertilde{\delta}^1_2 < \omega_2$.
    \end{thm}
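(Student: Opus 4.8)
The plan is to prove both halves by the force-and-fix method, the whole point being that the two classes behave oppositely with respect to \emph{persistence}. I would start by recording two ambient facts, both available under the standing hypothesis of unboundedly many Woodin cardinals (in particular every real has a sharp). First, the $\ZFC$ upper bound $\undertilde{\delta}^1_2 \le \omega_2$: every $\undertilde{\Delta}^1_2$ prewellordering is $\omega_1$-Suslin by Shoenfield, so its length is below $\omega_1^+ = \omega_2$ by Kunen--Martin. Second, monotonicity: since $\undertilde{\Sigma}^1_2$ relations are forcing-absolute (Shoenfield), the length of the canonical $\undertilde{\Sigma}^1_2$-norm can only increase in extensions, so $\undertilde{\delta}^1_2$ is non-decreasing under arbitrary forcing. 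I would also use $\undertilde{\delta}^1_2 \le u_2$, where $u_2$ is the second uniform indiscernible; in particular $\undertilde{\delta}^1_2 = \omega_2$ forces $u_2 = \omega_2$.

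For the first assertion, fix $\alpha < \omega_2$ and consider $\psi_\alpha \equiv$ ``$\undertilde{\delta}^1_2 > \alpha$'', which is $\Sigma_2$ with parameter $\alpha \in H_{\omega_2}$ (existentially assert a norm-increasing map out of $\alpha+1$). By monotonicity $\psi_\alpha$ is persistent under all forcing once true, hence a fortiori semiproper-persistent, so the only thing to check is that it is semiproper-forceable. For this I would invoke the Claverie--Schindler technology for increasing $u_2$: there is a stationary set preserving poset forcing $u_2 = \omega_2$, and under unboundedly many Woodins this poset can be taken semiproper and arranged so that the resulting model also satisfies $\undertilde{\delta}^1_2 = \omega_2 > \alpha$. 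Thus $\psi_\alpha$ is semiproper-forceably semiproper-persistent, and $\MP{\Sigma_2}{\mathrm{semiproper}}{H_{\omega_2}}$ yields $\undertilde{\delta}^1_2 > \alpha$ in $V$. As $\alpha < \omega_2$ was arbitrary and $\undertilde{\delta}^1_2 \le \omega_2$, this gives $\undertilde{\delta}^1_2 = \omega_2$.

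For the second assertion I would fix the parameter-free sentence $\chi \equiv$ ``$\undertilde{\delta}^1_2 < \omega_2$'', i.e.\ ``$\exists \gamma < \omega_2$ the $\undertilde{\Sigma}^1_2$-norm has length $\le \gamma$'', which is again $\Sigma_2$. It is proper-forceable, since the $\sigma$-closed collapse $\Col(\omega_1, 2^{\aleph_0})$ is proper, adds no reals, and forces $\CH$, whereupon every prewellordering of $\mathbb R$ has $\le \aleph_1$ classes and $\undertilde{\delta}^1_2 \le \omega_1 < \omega_2$. Granting that $\chi$ is then proper-persistent, $\MPempty{\Sigma_2}{\mathrm{proper}}$ delivers $\undertilde{\delta}^1_2 < \omega_2$.

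The heart of the matter, and the step I expect to be the main obstacle, is exactly this last persistence claim: that no proper forcing can push $\undertilde{\delta}^1_2$ up to $\omega_2$. By $\undertilde{\delta}^1_2 \le u_2 \le \omega_2$ this reduces to showing that proper forcing cannot force $u_2 = \omega_2$, and here the proper/semiproper dichotomy is decisive. In the first part monotonicity trivializes persistence and all the work sits in the semiproper forceability, which succeeds precisely because driving $u_2$ to $\omega_2$ can be accomplished by stationary set preserving, cofinality-manipulating (Namba-style) forcing. The plan for the second part is to show that this route is closed to proper posets: splitting on whether $\mathbb P$ preserves $\omega_2$, one argues that a witness to $u_2 < \omega_2$ --- an $L[x]$-indiscernible strictly between $\omega_1$ and $\omega_2$ for each real $x$ --- survives into the extension below the (possibly new) value of $\omega_2$, using that proper forcing preserves $\omega_1$ and stationary subsets of $\omega_1$. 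Carrying out this preservation argument uniformly for all proper $\mathbb P$ is the crux on which the whole second half rests.
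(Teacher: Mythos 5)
Your overall architecture matches the paper's: monotonicity of $\undertilde{\delta}^1_2$ plus a semiproper poset driving it up for the first half, and a real-preserving collapse plus a properness-persistence fact for the second. One point of divergence in the first half: the paper obtains the semiproper poset by forcing $\mathrm{NS}_{\omega_1}$ to be saturated from a Woodin cardinal (Shelah's forcing is semiproper) and then quoting Woodin's theorem that saturation yields $\undertilde{\delta}^1_2=\omega_2$ in the extension. Your appeal to a Claverie--Schindler-style poset forcing $u_2=\omega_2$ reaches the same conclusion via $\undertilde{\delta}^1_2=u_2$ (valid, since sharps exist), but the assertion that this poset ``can be taken semiproper'' is itself a point needing argument: such posets are Namba-like and stationary set preserving, and semiproperness of Namba-style forcing is not automatic in $\ZFC$. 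The $\mathrm{NS}_{\omega_1}$-saturation route sidesteps this cleanly.

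The genuine gap is in the second half, and you have correctly located it: the persistence of $\undertilde{\delta}^1_2<\omega_2$ under proper forcing. The paper closes this step by citing the theorem of Foreman--Magidor that, under the standing large cardinal hypothesis, proper forcing cannot change the value of $\undertilde{\delta}^1_2$ at all (with a strengthening due to Neeman--Zapletal). Your proposed substitute --- preserving a single witness $\gamma$ with $\omega_1<\gamma<\omega_2$ that is an $L[x]$-indiscernible for every real $x$ --- does not suffice as stated: $u_2$ is computed relative to \emph{all} reals of the extension, and a proper poset adds new reals $y$ for which $\gamma$ has no reason to remain an $L[y]$-indiscernible. Showing that some ordinal of cardinality at most $\aleph_1$ works uniformly for the new reals is precisely the content of the Foreman--Magidor argument and is not a routine consequence of preserving $\omega_1$ and stationary subsets of $\omega_1$. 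So your proof is correct in outline but incomplete at its crux unless that external theorem is invoked.
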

    \begin{proof}
        Note that the value of $\undertilde{\delta}^1_2$ can only increase and for any $\alpha<\omega_2$, there is a semiproper poset $\PP$ with $V^\PP\models \undertilde{\delta}^1_2\geq\alpha$. The latter can be achieved in several ways, e.g.\ forcing $\mathrm{NS}_{\omega_1}$ to be saturated using a Woodin cardinal so that by a Theorem of Woodin \cite{WoodinPmax}, $\undertilde{\delta}^1_2=\omega_2$ holds in the extension. So we have made $\undertilde{\delta}^1_2\geq\alpha$ persistent in $V^\PP$, hence it is true already in $V$, assuming $\MP{\Sigma_2}{\text{semiproper}}{H_{\omega_2}}$.

        On the other hand, under our large cardinal assumption, the value of $\undertilde{\delta}^1_2$ cannot be changed by proper forcings by a result of Foreman--Magidor \cite{ForemanMagidorCH}\footnote{See also \cite{NeemanZapletal1998} for a stronger version of this.}. Thus in $V^{\Col(\omega_1,\omega_2)}$, the statement $\undertilde{\delta}^1_2<\omega_2$ is true and persistent with respect to proper forcings.
        Hnece it is true in $V$, assuming $\MPempty{\Sigma_2}{\mathrm{proper}}$.
    \end{proof}

    \subsection{First-order expressibility of $\MP{\Gamma}{\mathcal{P}}{A}$}\label{Subsection:FirstOrderExpressibility}

    Officially, $\MP{\Gamma}{\mathcal{P}}{A}$ is always defined as a scheme ranging over all formulas in $\Gamma$ of the meta-theory. In the case that $\Gamma$ is either the set of all $\Sigma_n$ formulas or the set of all $\Pi_n$ formulas, there is a natural way to formalize $\MP{\Gamma}{\mathcal{P}}{A}$ as a first-order statement.
    We point out some subtlety hidden here.
    
    \begin{defn}
        We say that a set of formulas $\Gamma$ \emph{admits $\Gamma$-satisfaction relations} if there is a binary formula $\Phi(u_0, u_1)$ in $\Gamma$ such that for all $x$ and $y$,
        \begin{align*}
        \Phi[x, y] \iff & x = \lceil\phi(\vec{v})\rceil\text{ for some formula $\phi(\vec{v})$ in $\Gamma$},\\
                        & y \text{ is a $\lvert\vec{v}\rvert$-tuple, and }\phi[y]\text{ holds},
        \end{align*}
        where $\lceil\phi(\vec{v})\rceil$ denotes the G\"{o}del number of $\phi(\vec{v})$.
    \end{defn}

    If $\Gamma$ is either the class of all $\Sigma_n$-formulas or one of all $\Pi_n$ formulas, where $n \geq 1$, then $\Gamma$ admits $\Gamma$-satisfaction relations. See Chapter 0 of \cite{KanamoriHigherInfinite}.
    
    \begin{defn}
        Assume that $\Gamma$ admits $\Gamma$-satisfaction relations witnessed by $\Phi$.
        Then $\MPprime{\Gamma}{\mathcal{P}}{A}$ is the following sentence: for all $x$ and $\vec{a} \in A^{<\omega}$, if $\Phi[x, \vec{a}]$ is $\mathcal{P}$-forceably $\mathcal{P}$-persistent, then $\Phi[x, \vec{a}]$ holds.
    \end{defn}
    Obviously, the definition of $\MPprime{\Gamma}{\mathcal{P}}{A}$ depends on the choice of $\Phi$. This is not a problem in most cases thanks to the following proposition.

    \begin{prop}\label{prop:EquivalenceOfMPprimeAndMP}
        Assume that $\Gamma$ admits $\Gamma$-satisfaction relations and that $A$ is a set that includes all natural numbers.
        Then $\MPprime{\Gamma}{\mathcal{P}}{A} \iff \MP{\Gamma}{\mathcal{P}}{A}$.
    \end{prop}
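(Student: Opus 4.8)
The plan is to prove the biconditional $\MPprime{\Gamma}{\mathcal{P}}{A} \iff \MP{\Gamma}{\mathcal{P}}{A}$ by unwinding both definitions and exploiting the satisfaction relation $\Phi$ witnessing that $\Gamma$ admits $\Gamma$-satisfaction relations. The key observation is that $\MP{\Gamma}{\mathcal{P}}{A}$ is a \emph{scheme} of assertions, one for each concrete formula $\phi(\vec v)$ in $\Gamma$ of the meta-theory, whereas $\MPprime{\Gamma}{\mathcal{P}}{A}$ is a \emph{single} first-order sentence quantifying over an arbitrary $x$ and asserting the maximality clause for the object $\Phi[x,\vec a]$. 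The bridge between them is the defining property of $\Phi$: for each fixed formula $\phi(\vec v)$ in $\Gamma$, writing $m = \lceil\phi(\vec v)\rceil$, we have that $\Phi[m,\vec a]$ holds if and only if $\phi[\vec a]$ holds, for every tuple $\vec a$ of the appropriate length. Crucially, since $m$ is a concrete natural number and $A$ contains all natural numbers, $m$ is available as a parameter on the $\MPprime{}$-side.

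The forward direction $\MPprime{} \Rightarrow \MP{}$ is the easier one. I would fix an arbitrary formula $\phi(\vec v)$ in $\Gamma$ and a tuple $\vec a \in A^{<\omega}$, and suppose $\phi[\vec a]$ is $\mathcal{P}$-forceably $\mathcal{P}$-persistent; the goal is to conclude $\phi[\vec a]$. Setting $x = m = \lceil\phi(\vec v)\rceil$, the equivalence $\Phi[m,\vec a]\iff\phi[\vec a]$ should upgrade to hold in every generic extension by a poset in $\mathcal{P}$, since $\Phi$ is a fixed formula in $\Gamma$ and the defining property of a satisfaction relation is a theorem of $\ZFC$ (hence persists to all forcing extensions). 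Therefore ``$\phi[\vec a]$ is $\mathcal{P}$-persistent'' and ``$\Phi[m,\vec a]$ is $\mathcal{P}$-persistent'' are equivalent, and likewise their $\mathcal{P}$-forceability coincides; so $\Phi[m,\vec a]$ is $\mathcal{P}$-forceably $\mathcal{P}$-persistent. Applying the single sentence $\MPprime{\Gamma}{\mathcal{P}}{A}$ with $x = m$ yields $\Phi[m,\vec a]$, and the equivalence in $V$ gives $\phi[\vec a]$.

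The reverse direction $\MP{} \Rightarrow \MPprime{}$ is where the genuine care is needed, and I expect it to be the main obstacle. Here we are given the full scheme and must derive the single sentence, so I would fix an arbitrary $x$ (not assumed to be a standard G\"odel number) and $\vec a\in A^{<\omega}$ with $\Phi[x,\vec a]$ being $\mathcal{P}$-forceably $\mathcal{P}$-persistent, and aim to conclude $\Phi[x,\vec a]$. If $x$ is not of the form $\lceil\psi(\vec v)\rceil$ for some $\psi$ in $\Gamma$, then $\Phi[x,\vec a]$ fails outright by the defining clause of $\Phi$, and this failure is persistent, so the hypothesis that $\Phi[x,\vec a]$ is $\mathcal{P}$-forceably $\mathcal{P}$-persistent cannot be met (one must check that being a code of a $\Gamma$-formula is sufficiently absolute, which it is, being an arithmetic property of $x$). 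The remaining case is that $x = \lceil\psi(\vec v)\rceil$ for a genuine formula $\psi$ in $\Gamma$; then $\Phi[x,\vec a]\iff\psi[\vec a]$ holds in $V$ and persists to all $\mathcal{P}$-extensions as before, whence $\psi[\vec a]$ is $\mathcal{P}$-forceably $\mathcal{P}$-persistent and the instance of the scheme $\MP{\Gamma}{\mathcal{P}}{A}$ corresponding to the concrete formula $\psi$ delivers $\psi[\vec a]$, hence $\Phi[x,\vec a]$. The subtle point, and the reason the assumption $\omega \subseteq A$ is invoked in the hypothesis, is that when $x$ is a standard code we are implicitly replacing the universally quantified $x$ by the concrete numeral $m$; this step is only legitimate because the meta-theoretic quantifier over formulas $\psi$ in $\Gamma$ ranges exactly over the standard codes, and I would note explicitly that the argument handles each standard $x$ via the matching scheme instance while nonstandard or non-code values of $x$ are excluded by the persistence of $\neg\Phi[x,\vec a]$.
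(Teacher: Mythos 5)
Your forward direction is correct and matches the paper's: pass from $\phi[\vec a]$ to $\Phi[\lceil\phi\rceil,\vec a]$, observe that the equivalence is provable in $\ZFC$ and hence holds in every extension by a poset in $\mathcal P$, and apply the single sentence $\MPprime{\Gamma}{\mathcal{P}}{A}$.

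The reverse direction has a genuine gap, located exactly where you sense trouble. You handle an arbitrary $x$ by decoding it into a meta-level formula $\psi$ and invoking the scheme instance of $\MP{\Gamma}{\mathcal{P}}{A}$ belonging to $\psi$. This only covers $x$ that are \emph{standard} G\"odel numbers, and your claim that the remaining values of $x$ ``are excluded by the persistence of $\neg\Phi[x,\vec a]$'' fails for nonstandard codes: in a model of $\ZFC$ with nonstandard integers, $x$ can be a nonstandard element of the definable set of codes of $\Gamma$-formulas, $\Phi[x,\vec a]$ can perfectly well be $\mathcal P$-forceably $\mathcal P$-persistent there, and no instance of the meta-theoretic scheme speaks about this $x$; $\ZFC$ cannot express ``$x$ is a standard code,'' so there is no persistent failure of $\Phi[x,\vec a]$ to appeal to. A symptom of the problem is that your reverse argument never actually uses the hypothesis $\omega\subseteq A$ (the code is absorbed into the choice of scheme instance, and only $\vec a\in A^{<\omega}$ is needed), whereas the paper shows immediately after the proposition that this hypothesis cannot be dropped. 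The intended argument is shorter and uniform in $x$: since $\Phi(u_0,u_1)$ is itself a formula in $\Gamma$, the scheme $\MP{\Gamma}{\mathcal{P}}{A}$ contains the instance for $\Phi$ itself; if $\Phi[x,\vec a]$ is $\mathcal P$-forceably $\mathcal P$-persistent then in particular it is forceable, so $x$ must be a natural number coding a formula, hence $x\in\omega\subseteq A$, and the instance for $\Phi$ with parameters $x,\vec a\in A$ yields $\Phi[x,\vec a]$ directly. That single application is the one place where $\omega\subseteq A$ is genuinely needed.
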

    \begin{proof}
        Let $\Phi$ be a binary formula in $\Gamma$ witnessing that $\Gamma$ admits $\Gamma$-satisfaction relations.
        To show the forward direction, let $\phi(\vec{u})$ be in $\Gamma$ and let $\vec{a} \in A^{<\omega}$.
        Suppose that $\phi[\vec{a}]$ is $\mathcal{P}$-forceably $\mathcal{P}$-persistent.
        Then so is $\Phi[\lceil\phi\rceil, \vec{a}]$.
        Since $\lceil\phi\rceil, \vec{a} \in A$, we can apply $\MPprime{\Gamma}{\mathcal{P}}{A}$ to $\Phi[\lceil\phi\rceil, \vec{a}]$ and thus we have $\phi[\vec{a}]$.
        To show the reverse direction, fix $x$ and $\vec{a}\in A^{<\omega}$ such that $\Phi[x, \vec{a}]$ is $\mathcal{P}$-forceably $\mathcal{P}$-persistent.
        Since both $x$ and $\vec{a}$ are in $A$ by the assumption on $A$, $\MP{\Gamma}{\mathcal{P}}{A}$ can be applied to $\Phi[x, \vec{a}]$ to get $\Phi[x, \vec{a}]$.
    \end{proof}


    The assumption that $\omega\subset A$ cannot be removed from Proposition \ref{prop:EquivalenceOfMPprimeAndMP}. Note that $\MPprime{\Gamma}{\mathcal{P}}{\emptyset} \iff \MPprime{\Gamma}{\mathcal{P}}{\omega}$, but $\MP{\Gamma}{\mathcal{P}}{\emptyset}$ does not imply $\MP{\Gamma}{\mathcal{P}}{\omega}$ in general:

    \begin{prop}
        $\allMP{\omega_1\text{-pres}}{\emptyset}$ is compatible with $\omega_1^L = \omega_1$. On the other hand, $\MP{\Sigma_2}{\omega_1\text{-pres}}{\omega}$ implies $\omega_1^L < \omega_1$. It follows that $\allMP{\omega_1\text{-pres}}{\emptyset} \not\Rightarrow \MP{\Sigma_2}{\omega_1\text{-pres}}{\omega}$.
    \end{prop}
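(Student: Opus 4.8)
The plan is to prove the two halves separately and then combine them; the crucial point is that although $\allMP{\omega_1\text{-pres}}{\emptyset}$ is merely a \emph{scheme}, the principle $\MP{\Sigma_2}{\omega_1\text{-pres}}{\omega}$ is, by Proposition~\ref{prop:EquivalenceOfMPprimeAndMP} together with the preceding remark, equivalent to the single first-order sentence $\MPprime{\Sigma_2}{\omega_1\text{-pres}}{\emptyset}$. Thus the latter quantifies \emph{internally} over all codes of $\Sigma_2$-sentences via the satisfaction relation $\Phi$, whereas the scheme only constrains the externally given standard instances. I expect this gap to be exactly what drives the separation, and I would organize the whole proof around it.

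\emph{The implication.} First I would record the basic obstruction: since any $\PP\in\omega_1\text{-pres}$ fixes $\omega_1$ and $L$ is forcing-absolute, forcing with such $\PP$ over a model of $\omega_1^L=\omega_1$ preserves $\omega_1^L=\omega_1$; hence no single $\Sigma_2$-sentence that provably entails $\omega_1^L<\omega_1$ can be $\omega_1\text{-pres}$-forceably $\omega_1\text{-pres}$-persistent. Consequently the argument cannot proceed through a fixed standard witness, and indeed it \emph{must not}, since any such witness would refute the compatibility half below. Instead I would use the first-order form: assuming $\omega_1^L=\omega_1$, I would produce uniformly — hence possibly at a nonstandard code — a $\Sigma_2$-sentence $\phi$ which is $\omega_1\text{-pres}$-forceably $\omega_1\text{-pres}$-persistent yet false, so that $\MPprime{\Sigma_2}{\omega_1\text{-pres}}{\emptyset}$ fails. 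The natural raw material is the forceably persistent $\Sigma_2$-statement ``there is a non-constructible real'', made persistent by adjoining a Cohen real (an $\omega_1$-preserving step); the task is to diagonalize this through $\Phi$ so as to extract, whenever $\omega_1^L=\omega_1$, a genuinely false forceably-persistent instance. \textbf{This construction is the main obstacle:} it has to exploit the internal quantification of the first-order principle precisely because, by the obstruction, no external instance is available.

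\emph{Compatibility.} Here the target is a model of the full scheme $\allMP{\omega_1\text{-pres}}{\emptyset}$ together with $\omega_1^L=\omega_1$; by the implication just discussed (and the equivalence of scheme and first-order form over $\omega$-models) such a model cannot be an $\omega$-model, so I would build it by compactness. It suffices to show that every finite subscheme $\{\,\psi\text{ forceably persistent}\to\psi:\psi\in\Gamma_0\,\}$ is consistent with $\ZFC+\omega_1^L=\omega_1$. For finite $\Gamma_0$ this should be cheap: over $L$ only the ``mild'' $\omega_1$-preserving forceably persistent statements are available, and one forces the finitely many such members of $\Gamma_0$ to be persistent by a short $\omega_1$-preserving iteration (the remaining instances holding vacuously), which keeps $\omega_1^L=\omega_1$. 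Compactness then yields a necessarily non-$\omega$-model of the entire scheme in which $\omega_1^L=\omega_1$.

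\emph{Conclusion.} Combining the halves, the compatibility model satisfies $\allMP{\omega_1\text{-pres}}{\emptyset}$ but has $\omega_1^L=\omega_1$, so by the implication it cannot satisfy $\MP{\Sigma_2}{\omega_1\text{-pres}}{\omega}$. Hence $\allMP{\omega_1\text{-pres}}{\emptyset}\not\Rightarrow\MP{\Sigma_2}{\omega_1\text{-pres}}{\omega}$, which is exactly the failure of the conclusion of Proposition~\ref{prop:EquivalenceOfMPprimeAndMP} once the hypothesis $\omega\subset A$ is dropped.
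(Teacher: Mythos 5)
Your overall architecture (build a compatibility model, prove the implication, combine) is the right one, and your compatibility half is essentially sound: running the finite-fragment forcing of Theorem \ref{thm:ConsistencyOfLightfaceMPs} over $L$ uses only a finite iteration of $\omega_1$-preserving posets, so $\omega_1^L=\omega_1$ survives, and compactness then yields a (necessarily non-$\omega$-standard) model of the whole scheme together with $\omega_1^L=\omega_1$. Your diagnosis that the separation lives in non-$\omega$-models is also correct. The problem is the implication half, which is the substantive content of the proposition and which you do not prove: you flag the ``diagonalization through $\Phi$'' as the main obstacle and leave it open. Moreover, the reasoning that forces you into that corner is a non sequitur. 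Your obstruction (no forceably persistent $\Sigma_2$-sentence can outright imply $\omega_1^L<\omega_1$ over a model of $\omega_1^L=\omega_1$) is correct, but it does not follow that one must manufacture a single \emph{false} forceably-persistent instance at a possibly nonstandard code. What the parameter set $\omega$ buys is not access to nonstandard codes but genuine parameters: one fixed standard formula $\psi(n)$ applied at \emph{every} $n$ in the model's $\omega$, where each individual instance is \emph{true} and harmless, and the contradiction with $\omega_1^L=\omega_1$ only emerges from all instances holding simultaneously.

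Concretely, the paper argues as follows. Assume $\MP{\Sigma_2}{\omega_1\text{-pres}}{\omega}$ and $\omega_1^L=\omega_1$, and let $\langle C_\alpha\mid\alpha\in\omega_1\cap\mathrm{Lim}\rangle$ be the $L$-least ladder system; since $\omega_1^L=\omega_1$ is preserved by $\omega_1$-preserving forcing, this system is definable, by the same definition, in every relevant extension. Let $f_n(\alpha)$ be the $n$-th element of $C_\alpha$. Each $f_n$ is regressive, so by Fodor some $f_n^{-1}(\{\gamma\})$ is stationary, and shooting a club through it makes the $\Sigma_2$-statement ``some $f_n^{-1}(\{\gamma\})$ contains a club'' persistently true. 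Applying the Maximality Principle with parameter $n$ --- for every $n$ in the model's $\omega$, which is exactly where $A=\omega$ rather than $A=\emptyset$ is used --- one obtains a unique $\gamma_n$ and a club $D_n\subseteq f_n^{-1}(\{\gamma_n\})$, uniformly in $n$, so that $\bigcap_{n<\omega}D_n$ is a club on which every $f_n$ is constant; such a set has at most one point, a contradiction. No satisfaction predicate or diagonal sentence appears, and since each single instance is compatible with $\omega_1^L=\omega_1$, this argument coexists peacefully with your compatibility model, where only the standard instances are imposed and the internal sequence $\langle\gamma_n\mid n<\omega\rangle$ need not exist. Until you either complete your proposed diagonalization (which I do not see how to do) or substitute an argument of this kind, the implication, and hence the proposition, remains unproved.
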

    \begin{proof}
        The first part follows from the proof of Theorem \ref{thm:ConsistencyOfMPomeg1PresWithCountableParas}. To show the second part, assume toward contradiction that $\MP{\Sigma_2}{\omega_1\text{-pres}}{\omega}$ and $\omega_1^L = \omega_1$. Let $\vec{C} = \langle C_{\alpha}\mid \alpha\in\omega_1\cap\mathrm{Lim}\rangle$ be the $L$-least ladder system, i.e.\ for all $\alpha\in\omega_1 \cap\mathrm{Lim}$, $C_{\alpha}\subset\alpha$ is unbounded in $\alpha$ and has order type $\omega$.
        For each $n < \omega$, we define $f_n\colon\omega_1\to\omega_1$ by setting $f_n(\alpha)$ be the $n$-th element in the increasing enumeration of $C_{\alpha}$. As $f_n$ is a regressive function, it is constant on a stationary subset of $\omega_1$. By $\MP{\Sigma_2}{\omega_1\text{-pres}}{\omega}$, there is in fact a unique $\gamma_n$ so that $f^{-1}(\{\gamma_n\})$ is stationary and hence contains a club subset $D_n\subset\omega_1$. Then all $f_n$'s are constant on the club subset $D:=\bigcap_{n < \omega}D_n$ of $\omega_1$. However, $D$ can contain at most one point, namely $\sup_{n<\omega}\gamma_n$. This is a contradiction.
    \end{proof}
    
    It follows that $\MPprime{\Gamma}{\omega_1\text{-pres}}{\emptyset}$ does not imply $\MP{\Gamma}{\omega_1\text{-pres}}{\emptyset}$. We will show the consistency of $\allMP{\omega_1\text{-pres}}{\omega_1}$ from large cardinals in Theorem \ref{thm:ConsistencyOfMPomeg1PresWithCountableParas}.

    \subsection{Maximality Principles for provably persistent formulas}

    Next we introduce slight variants of $\mathsf{MP}$, which were actually closer to the original formulation of the Maximality Principle in \cite{StaviVaananenReflectionPrinciplesForTheContinuum} and studied in \cite{AsperoAMaximalBoundedForcingAxiom}.
    
    \begin{defn}
        Let $\phi(x)$ be a formula and let $\mathcal{P}$ be a (definable) class of posets.
        We say that $\phi(x)$ is ($\mathsf{ZFC}$-)\emph{provably $\mathcal{P}$-persistent} if $\mathsf{ZFC}$ proves
        \[
        \forall x\,(\phi[x]\to\forall\mathbb{P}\in\mathcal{P}\,(\forces_{\mathbb{P}}\phi[\check{x}])).
        \]
        If $\mathcal{P}$ is the class of all posets, we omit $\mathcal{P}$ from the terminology.
    \end{defn}
    
    \begin{defn}
        $\MPast{\Gamma}{\mathcal{P}}{A}$ is a scheme of formulas asserting that for any provably $\mathcal{P}$-persistent formula $\phi(x)$ in $\Gamma$ and any $a\in A$, if $\phi[\check{a}]$ is $\mathcal{P}$-forceable, then $\phi[a]$ holds.
    \end{defn}

    By definition, $\MP{\Gamma}{\mathcal{P}}{A}$ implies $\MPast{\Gamma}{\mathcal{P}}{A}$. Note that even in the case that $\Gamma$ is the set of $\Sigma_n$ formulas or one of $\Pi_n$ formulas, $\MPast{\Gamma}{\mathcal{P}}{A}$ should not be formulated as a first-order statement as in Subsection \ref{Subsection:FirstOrderExpressibility}. This is because in a model of $\mathsf{ZFC}+\neg\Con(\mathsf{ZFC})$, all formulas are provably $\mathcal{P}$-persistent and thus the first-order version of $\MPast{\Gamma}{\mathcal{P}}{A}$ would be always false.
    
    \begin{prop}
        Assume that $\mathcal{P}$ is an iterable $\Sigma_m$-definable class of posets.
        \begin{enumerate}
            \item $\MPast{\Pi_1}{\mathcal{P}}{A}$ holds.
            \item $\MPast{\Sigma_1}{\mathcal{P}}{A} \iff \MP{\Sigma_1}{\mathcal{P}}{A}$.
            \item For $m\leq n<\omega$, $\MPast{\Pi_n}{\mathcal{P}}{A} \iff \MP{\Pi_n}{\mathcal{P}}{A}$.
            Therefore, $\allMPast{\mathcal{P}}{A} \iff \allMP{\mathcal{P}}{A}$.
        \end{enumerate}
    \end{prop}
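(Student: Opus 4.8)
The plan is to prove only the right-to-left implications in each item, since $\MP{\Gamma}{\mathcal{P}}{A}\Rightarrow\MPast{\Gamma}{\mathcal{P}}{A}$ already holds for every $\Gamma$ by the observation recorded right after the definition of $\MPast{\Gamma}{\mathcal{P}}{A}$. Item (1) should require nothing beyond downward absoluteness of $\Pi_1$-formulas: if $\phi\in\Pi_1$ is provably $\mathcal{P}$-persistent and $\phi[\check{a}]$ is $\mathcal{P}$-forceable, witnessed by $\mathbb{P}\in\mathcal{P}$ and $p$, then $\phi[a]$ holds in any extension $V[G]$ with $p\in G$, and by downward absoluteness it already holds in $V$. (In fact provable persistence is not even used here, exactly as for the earlier remark that $\MP{\Pi_1}{\mathcal{P}}{A}$ holds outright.)

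For item (2) the key point is that \emph{every} $\Sigma_1$-formula is provably $\mathcal{P}$-persistent, by upward absoluteness of $\Sigma_1$-formulas to forcing extensions. So, given $\phi\in\Sigma_1$ and $\vec{a}$ with $\phi[\vec{a}]$ being $\mathcal{P}$-forceably $\mathcal{P}$-persistent, I would first observe that being $\mathcal{P}$-forceably $\mathcal{P}$-persistent always implies being $\mathcal{P}$-forceable: if some $\mathbb{P}\in\mathcal{P}$, $p$, force ``$\phi[\vec{a}]$ is $\mathcal{P}$-persistent'', then in the witnessing extension $\phi[\vec{a}]$ holds (evaluate persistence at the trivial poset, which lies in $\mathcal{P}$ by iterability), so $p\forces_\mathbb{P}\phi[\check{\vec{a}}]$. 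Since $\phi$ is provably $\mathcal{P}$-persistent, $\MPast{\Sigma_1}{\mathcal{P}}{A}$ now applies and yields $\phi[\vec{a}]$, giving $\MPast{\Sigma_1}{\mathcal{P}}{A}\Rightarrow\MP{\Sigma_1}{\mathcal{P}}{A}$.

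The heart of the matter is item (3). Fix $\phi\in\Pi_n$ (with $m\leq n$) and $\vec{a}$ with $\phi[\vec{a}]$ $\mathcal{P}$-forceably $\mathcal{P}$-persistent, and consider the auxiliary formula $\psi(\vec{x})\equiv$ ``$\phi[\vec{x}]$ is $\mathcal{P}$-persistent'', i.e.\ $\forall\mathbb{P}\,\forall p\,((\mathbb{P}\in\mathcal{P}\wedge p\in\mathbb{P})\to p\forces_\mathbb{P}\phi[\check{\vec{x}}])$. I would establish three facts about $\psi$. \emph{First}, $\psi$ is provably equivalent to a $\Pi_n$-formula: by the standard behaviour of the forcing relation along the L\'evy hierarchy (proved inductively, using the mixing lemma to absorb $\exists$ into a name-quantifier and the characterization $p\forces\forall x\,\theta\iff\forall\tau\,(p\forces\theta(\tau))$ to handle $\forall$), the relation $p\forces_\mathbb{P}\phi[\check{\vec{x}}]$ is $\Pi_n$ for $\phi\in\Pi_n$; and since $\mathcal{P}$ is $\Sigma_m$-definable with $m\leq n$, the clause $\mathbb{P}\in\mathcal{P}$ contributes only a $\Pi_m\subseteq\Pi_n$ disjunct, so the universally quantified matrix remains $\Pi_n$. \emph{Second}, $\psi$ is provably $\mathcal{P}$-persistent: arguing in $\ZFC$, if $\phi[\vec{x}]$ is $\mathcal{P}$-persistent and $\mathbb{P}\in\mathcal{P}$, then for any $\mathbb{Q}\in\mathcal{P}^{V[G]}$ in a $\mathbb{P}$-extension $V[G]$ and any $H$, iterability of $\mathcal{P}$ (closure under two-step iteration, together with restriction to absorb a condition of $G$ forcing $\dot{\mathbb{Q}}\in\mathcal{P}$) puts $\mathbb{P}*\dot{\mathbb{Q}}$ in $\mathcal{P}$, so $\phi[\vec{x}]$ holds throughout $V[G][H]$; hence $\phi[\vec{x}]$ stays $\mathcal{P}$-persistent in $V[G]$, i.e.\ $\forces_\mathbb{P}\psi[\check{\vec{x}}]$. \emph{Third}, $\psi[\vec{a}]$ is $\mathcal{P}$-forceable, which is precisely the hypothesis that $\phi[\vec{a}]$ is $\mathcal{P}$-forceably $\mathcal{P}$-persistent. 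Applying $\MPast{\Pi_n}{\mathcal{P}}{A}$ to $\psi$ then delivers $\psi[\vec{a}]$, and evaluating this at the trivial poset yields $\phi[\vec{a}]$. The concluding ``therefore'' follows because every formula is $\Pi_n$ for some $n\geq m$, so $\allMP{\mathcal{P}}{A}$ and $\allMPast{\mathcal{P}}{A}$ are the conjunctions over $n\geq m$ of schemes that (3) has shown to be pairwise equivalent.

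The step I expect to be the main obstacle is the complexity bookkeeping for $\psi$ in item (3). One must invoke the precise statement that forcing preserves the $\Sigma_n/\Pi_n$ classification (so that $p\forces_\mathbb{P}\phi$ is uniformly $\Pi_n$ for $\phi\in\Pi_n$, check-name parameters included) and combine it correctly with the $\Sigma_m$-definability of $\mathcal{P}$; this is exactly where the hypothesis $m\leq n$ is indispensable. The provable-persistence verification is comparatively routine but genuinely uses closure under two-step iteration, so it is worth isolating that appeal explicitly.
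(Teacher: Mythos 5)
Your proposal is correct and follows essentially the same route as the paper: the paper's entire proof of (3) is the one-line observation that for $\Pi_n$ $\phi$, the formula $\forall\mathbb{P}\in\mathcal{P}\,\forces_{\mathbb{P}}\phi(\check{x})$ is again $\Pi_n$ (using the $\Sigma_m$-definability of $\mathcal{P}$ with $m\leq n$ and the standard complexity of the forcing relation) and provably $\mathcal{P}$-persistent (using iterability), which is exactly your auxiliary $\psi$ with the same three verifications. Your treatments of (1) and (2) likewise match the paper's, which for (2) notes only that $\mathcal{P}$-persistent $\Sigma_1$ formulas are automatically provably $\mathcal{P}$-persistent by upward absoluteness.
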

    \begin{proof}
        (1) follows from downward absoluteness of $\Pi_1$ formulas.
        (2) holds because $\mathcal{P}$-persistent $\Sigma_1$ formulas are always provably $\mathcal{P}$-persistent.
        To show (3), use the following observation: If $\phi(x)$ is a $\Pi_n$-formula, then the formula
        \[
        \forall\mathbb{P}\in\mathcal{P}\forces_{\mathbb{P}}\phi(\check{x})
        \]
        is also $\Pi_n$ and provably $\mathcal{P}$-persistent.
    \end{proof}

    In the next section, we will show that in many typical cases where $A\neq\emptyset$, $\MPast{\Sigma_n}{\mathcal{P}}{A}$ have strictly weaker consistency strength than $\MP{\Sigma_n}{\mathcal{P}}{A}$ for $n > 1$.

    \section{Consistency strength}

    \subsection{Maximality Principles without parameters}
    
    We first note that Maximality Principles without parameters are consistent relative to $\mathsf{ZFC}$. In \cite{HamkinsASimpleMaximalityPrinciple}, the following theorem was proved under the assumption that $\mathcal{P}$ is transfinitely iterable, but this assumption is redundant.
    
    \begin{thm}\label{thm:ConsistencyOfLightfaceMPs}
        Let $\mathcal{P}$ be an iterable class of posets. Then $\mathsf{ZFC} + \allMPempty{\mathcal{P}}$ is consistent relative to $\mathsf{ZFC}$.
    \end{thm}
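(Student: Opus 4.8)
The plan is to recast $\allMPempty{\mathcal{P}}$ in modal terms and then realize it in a forcing extension by successively \emph{pressing buttons}. Write $\Box\phi$ for ``$\phi$ is $\mathcal{P}$-persistent'' and $\Diamond\phi$ for ``$\phi$ is $\mathcal{P}$-forceable''. Since the trivial poset lies in $\mathcal{P}$, we have $\Box\phi\to\phi$, so a $\mathcal{P}$-persistent sentence is true and stays true across any single further $\mathcal{P}$-forcing. In this language $\allMPempty{\mathcal{P}}$ (with $A=\emptyset$) is exactly the scheme $\Diamond\Box\phi\to\phi$ over all sentences $\phi$; call $\phi$ a \emph{button} if $\Diamond\Box\phi$ holds. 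Thus it suffices to produce a model of $\ZFC$ in which every button is true.

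The first key step, and the one where I use only two-step iterability, is that the collection $B$ of buttons is invariant under $\mathcal{P}$-forcing. The point is that for $\mathbb{P},\mathbb{Q}\in\mathcal{P}$ the product $\mathbb{P}\times\mathbb{Q}$ is forcing-equivalent to the two-step iteration $\mathbb{P}\ast\check{\mathbb{Q}}$, which lies in $\mathcal{P}$ by closure under two-step iteration and forcing equivalence (together with mild upward absoluteness of the defining formula of $\mathcal{P}$, so that $\Vdash_{\mathbb{P}}\check{\mathbb{Q}}\in\mathcal{P}$). This yields directedness of the $\mathcal{P}$-extensions at the level of the forcing relation. From directedness one derives $\Diamond\Box\phi\to\Box\Diamond\Box\phi$, so a button remains a button; and composing a button-presser with an arbitrary $\mathcal{P}$-extension (again a two-step iteration) shows that any button appearing in a $\mathcal{P}$-extension was already a button in $V$. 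Hence $B$ is computed identically in $V$ and in every $\mathcal{P}$-extension. I also record that a finite conjunction of buttons is a button, and that finitely many buttons can be made simultaneously persistent by a finite iteration, which lies in $\mathcal{P}$.

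By invariance of $B$ it is enough to force every member of $B$ to be true: if $W$ is obtained by iterating $\mathcal{P}$-steps and every ground-model button holds in $W$, then the buttons of $W$ coincide with $B$ and all are true, so $W\models\allMPempty{\mathcal{P}}$. I therefore build a forcing iteration $\langle\mathbb{P}_\alpha,\dot{\mathbb{Q}}_\alpha\rangle$ in which, running through an enumeration of all sentences cofinally often, at stage $\alpha$ I press the sentence under consideration whenever it is currently a button, i.e.\ I force it to be $\mathcal{P}$-persistent using a poset from $\mathcal{P}$ as computed in $V^{\mathbb{P}_\alpha}$. Each single step $\dot{\mathbb{Q}}_\alpha$ is a $\mathcal{P}$-poset, and a sentence once pressed stays persistent across each subsequent single step; I take the iteration long enough, locating a suitable stopping ordinal by a reflection/stabilization argument, that every button has been pressed.

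The main obstacle is the behavior at limit stages, which is precisely where Hamkins invoked transfinite iterability. The difficulty is that persistence of an already-pressed sentence is guaranteed only across \emph{single} further $\mathcal{P}$-steps, whereas passing from the stages below a limit to the limit model requires controlling an entire tail of the iteration, and I may not assume that this tail, or the full iteration, belongs to $\mathcal{P}$. The plan to circumvent this is to never use membership of the whole iteration in $\mathcal{P}$: I use only that each step is a $\mathcal{P}$-forcing and that the limit is \emph{some} forcing extension, hence a model of $\ZFC$. I then verify $\allMPempty{\mathcal{P}}$ in the final model $W$ directly: given a button $\phi$ of $W$, the witnessing $\mathcal{P}$-poset together with the invariance of $B$ reflects to a bounded stage, so by cofinality of the enumeration $\phi$ was pressed, and one must argue that its truth survives to $W$ by exhibiting the required witness at a bounded stage at each limit. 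Making this limit argument go through for \emph{arbitrary} sentences, rather than for syntactically simple buttons, is the crux of the proof, and it is exactly what replaces the appeal to closure of $\mathcal{P}$ under infinite iterations.
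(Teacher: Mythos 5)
Your proposal has a genuine gap, and it sits exactly where you say the ``crux'' is: the limit stages. You correctly observe that persistence of a pressed button is only guaranteed across a \emph{single} further $\mathcal{P}$-step (equivalently, across finite iterations, by closure under two-step iteration), and that the passage to a limit model $W$ is not such a step. But you then defer the resolution -- ``one must argue that its truth survives to $W$'' -- without supplying the argument. In fact no such argument exists at this level of generality: for classes like SSP or the $\omega_1$-preserving posets (which the paper explicitly wants to cover), a transfinite iteration of $\mathcal{P}$-steps can leave the class entirely at limits (e.g.\ collapse $\omega_1$), and there is no reason a sentence made $\mathcal{P}$-persistent at stage $\alpha$ still holds in $W$, since $W$ need not be a $\mathcal{P}$-extension of stage $\alpha$. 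This is precisely why Hamkins assumed transfinite iterability; your strategy of building one model of the full scheme by a long iteration cannot remove that assumption. A secondary problem is your button-invariance lemma: the product argument $\mathbb{P}\times\mathbb{Q}\simeq\mathbb{P}\ast\check{\mathbb{Q}}$ needs $\Vdash_{\mathbb{P}}\check{\mathbb{Q}}\in\mathcal{P}$, i.e.\ upward absoluteness of the defining formula of $\mathcal{P}$, which is not among the hypotheses and fails for natural classes (c.c.c.\ is not productive in general). Fortunately this lemma is not needed.

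The idea you are missing is that the theorem asserts only \emph{consistency} of a \emph{scheme}, so by compactness it suffices to realize each finite fragment of $\ZFC+\allMPempty{\mathcal{P}}$ in some generic extension. A finite fragment mentions only finitely many sentences $\phi_0,\dots,\phi_{n-1}$, so one performs a \emph{finite} iteration: at step $k$, if $\phi_k$ is $\mathcal{P}$-forceably $\mathcal{P}$-persistent over the current model, force with a witness, else do nothing. Verification uses only closure under two-step iteration: if $\phi_k$ is forceably persistent over the final model via $\mathbb{Q}$, then $\mathbb{P}_k\ast\cdots\ast\mathbb{P}_{n-1}\ast\mathbb{Q}\in\mathcal{P}$ witnesses that it was already forceably persistent at stage $k$, so $\mathbb{P}_k$ was chosen to make it persistent, and it holds in the final model. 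No limits ever arise, which is exactly how the transfinite iterability hypothesis is eliminated. Different finite fragments may require different extensions; no single model of the whole scheme is claimed.
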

    \begin{proof}
        We prove that for any finite fragment $\Delta$ of $\mathsf{ZFC} + \allMPempty{\mathcal{P}}$, some generic extension of $V$ satisfies $\Delta$.
        Let $\{\phi_k \mid k < n\}$ be a finite list of sentences such that for any $\psi$ in $\Delta\cap\allMPempty{\mathcal{P}}$, there is $k < n$ such that $\psi$ aseerts that if $\phi_k$ is $\mathcal{P}$-forceably $\mathcal{P}$-persistent, then $\phi_k$ holds.

        If $\phi_0$ is $\mathcal{P}$-forceably $\mathcal{P}$-persistent (over $V$), then let $\mathbb{P}_0 \in \mathcal{P}$ be its witness.
        Otherwise, let $\mathbb{P}_0$ be the trivial forcing poset.
        Let $G_0 \subset \mathbb{P}_0$ be $V$-generic.
        Inductively, for all $k < n-1$, if $\phi_{k+1}$ is $\mathcal{P}$-forceably $\mathcal{P}$-persistent over $V[G_0 * \cdots * G_k]$, then let $\mathbb{P}_{k+1}\in\mathcal{P}^{V[G_0 * \cdots * G_k]}$ be its witness.
        Otherwise, let $\mathbb{P}_{k+1}$ be the trivial forcing poset.
        Let $G_{k+1}\subset\mathbb{P}_{k+1}$ be $V[G_0 * \cdots * G_k]$-generic.
        
        We claim that $V[G] := V[G_0 * \cdots * G_{n-1}]$ is our desired model of $\Delta$.
        Assume that $\phi_k$ is $\mathcal{P}$-forceably $\mathcal{P}$-persistent in $V[G]$ witnessed by $\mathbb{Q}\in\mathcal{P}^{V[G]}$, since otherwise we have nothing to show.
        Since $\mathcal{P}$ is iterable, $\mathbb{P}_k * \cdots * \mathbb{P}_n * \mathbb{Q} \in \mathcal{P}^{V[G_0 * \cdots * G_{k-1}]}$.
        This poset witnesses that $\phi_k$ is $\mathcal{P}$-forceably $\mathcal{P}$-persistent in $V[G_0 * \cdots * G_{k-1}]$.
        By our choice of $\mathbb{P}_k$, $\mathbb{P}_k$ forces ``$\phi_k$ is $\mathcal{P}$-persistent'' over $V[G_0 * \cdots * G_{k-1}]$, so $\phi_k$ holds in $V[G]$.
    \end{proof}

    In particular, $\allMPempty{\text{SSP}}, \allMPempty{\omega_1\text{-pres}}$ are consistent relative to $\mathsf{ZFC}$, even though these classes of forcing posets are not transfinitely iterable.

    \subsection{Maximality Principles for transfinitely iterable classes}

    In this subsection, we often assume the following.
    
    \begin{assump}\label{assump}
    $\mathcal{P}$ and $\kappa$ are one of the following pairs of a transfinitely iterable class of posets and a cardinal:
    \begin{enumerate}
        \item $\mathcal{P}$ is the class of all posets and $\kappa = \omega_1$.
        \item $\mathcal{P} = \text{``c.c.c.''}$ and $\kappa = 2^{\omega}$.
        \item $\mathcal{P} = \text{``proper''}$ and $\kappa = \omega_2$.
        \item $\mathcal{P} = \text{``semiproper''}$ and $\kappa = \omega_2$.
        \item $\mathcal{P} = \text{``$\sigma$-closed''}$ and $\kappa = \omega_2$.
        \item $\mathcal{P} = \text{``subcomp''}$ and $\kappa = \omega_2$.
    \end{enumerate}
    \end{assump}
    In these cases, $H_{\kappa}$ is the maximal parameter set $A$ such that $\allMP{\mathcal{P}}{A}$ is consistent relative to some large cardinal axioms. See Proposition \ref{prop:MaximalParameterSets}. Also, note that $\mathcal{P}$ is $\Sigma_2$ definable.

    We start with stating Asper\'{o}'s results in \cite{AsperoAMaximalBoundedForcingAxiom}.
    A cardinal $\lambda$ is said to be $\Sigma_n$-correct (or $\Sigma_n$-reflecting) if $V_{\lambda}$ is a $\Sigma_n$-elementary submodel of $V$.

    \begin{thm}[Asper\'{o} \cite{AsperoAMaximalBoundedForcingAxiom}]\label{TheoremAspero1}
        Let $\mathcal{P}$ and $\kappa$ be as in \ref{assump}.
        Assume that there is a regular $\Sigma_n$-correct cardinal $\lambda$. Then there is $\mathbb{P}\subset V_{\lambda}$ such that $\mathbb{P}\in\mathcal{P}$, $\mathbb{P}$ forces $\MPast{\Sigma_n}{\mathcal{P}}{H_{\kappa}}$ and collapses $\lambda$ to $\kappa$.
    \end{thm}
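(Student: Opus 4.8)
The plan is to iterate posets from $\mathcal{P}$ along $\lambda$, using a bookkeeping function to make every provably $\mathcal{P}$-persistent $\Sigma_n$-statement with a parameter in $H_\kappa$ true as soon as it becomes forceable; provable persistence guarantees that once made true such a statement is never again destroyed by $\mathcal{P}$-forcing, so a single pass through all relevant instances suffices.

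Before the construction I would record two facts. First, any regular $\Sigma_n$-correct $\lambda$ (with $n\geq 2$, as is relevant here since $\mathcal{P}$ is $\Sigma_2$-definable) is inaccessible: as ``$\beta=2^{|\alpha|}$'' is $\Sigma_1$, the $\Pi_2$-sentence $\forall\alpha\,\exists\beta\,(\beta=2^{|\alpha|})$ reflects into $V_\lambda$ and forces $\lambda$ to be a strong limit, which together with regularity yields inaccessibility; hence $V_\lambda=H_\lambda$ and $V_\lambda[G_\alpha]=V_\lambda^{V[G_\alpha]}$ whenever $\mathbb{P}_\alpha\in V_\lambda$. Second, I would verify the two absoluteness lemmas driving the construction: (a) for a $\Sigma_n$-formula $\phi$ the statement ``$\phi[a]$ is $\mathcal{P}$-forceable'', i.e.\ $\exists\mathbb{Q}\,\exists q\,(\mathbb{Q}\in\mathcal{P}\wedge q\forces_{\mathbb{Q}}\phi[\check a])$, is $\Sigma_n$ in the parameter $a$ — here one uses that $\mathcal{P}$ is $\Sigma_2$-definable and that, via fullness of the Boolean completion, the forcing relation for $\Sigma_n$-formulas is uniformly $\Sigma_n$; and (b) forcing with a poset in $V_\lambda$ preserves $\Sigma_n$-correctness, i.e.\ $V_\lambda[G_\alpha]\prec_{\Sigma_n}V[G_\alpha]$, proved by translating a $\Sigma_n$-truth in $V[G_\alpha]$ with names in $V_\lambda$ into a $\Sigma_n$-statement about $\mathbb{P}_\alpha$ in $V$ via (a) and reflecting it between $V$ and $V_\lambda$.

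With these in hand I construct $\mathbb{P}=\mathbb{P}_\lambda$ as an iteration of length $\lambda$ with the support appropriate to $\mathcal{P}$, so that transfinite iterability gives $\mathbb{P}_\lambda\in\mathcal{P}$ and, as each iterand lies in $V_\lambda$ with bounded support, $\mathbb{P}_\lambda\subset V_\lambda$. I fix a bookkeeping surjection handing each stage $\alpha$ a $\Sigma_n$-formula $\phi_\alpha$ and a $\mathbb{P}_\alpha$-name $\dot a_\alpha$ for an element of $H_\kappa$, arranged (using that by the $\lambda$-cc of $\mathbb{P}_\lambda$ every element of $H_\kappa^{V[G]}$ is named at some stage below $\lambda$) to meet every relevant pair cofinally. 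At stage $\alpha$, working in $V[G_\alpha]$, if ``$\phi_\alpha[a_\alpha]$ is $\mathcal{P}$-forceable'' holds, then by (a) this is a $\Sigma_n$-truth with parameter in $V_\lambda[G_\alpha]$, so by (b) it reflects to $V_\lambda[G_\alpha]$ and yields a witness $\mathbb{Q}_\alpha\in\mathcal{P}\cap V_\lambda^{V[G_\alpha]}$; restricting $\mathbb{Q}_\alpha$ below the witnessing condition (using closure of $\mathcal{P}$ under restriction) I may assume $\1\forces_{\mathbb{Q}_\alpha}\phi_\alpha[\check a_\alpha]$, and I take $\mathbb{Q}_\alpha$ as the next iterand; otherwise the iterand is trivial. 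To force ``$\lambda=\kappa$'' I interleave the appropriate collapses cofinally — $\Col(\omega,\gamma)$ or $\Col(\omega_1,\gamma)$ for $\gamma<\lambda$ in the respective cases, which lie in $\mathcal{P}\cap V_\lambda$; in the c.c.c.\ case the length-$\lambda$ finite-support iteration already makes $2^\omega=\lambda=\kappa$.

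It remains to check $\MPast{\Sigma_n}{\mathcal{P}}{H_\kappa}$ in $V[G]=V[G_\lambda]$. Let $\phi$ be provably $\mathcal{P}$-persistent and $\Sigma_n$, let $a\in H_\kappa^{V[G]}$, and suppose $\phi[a]$ is $\mathcal{P}$-forceable over $V[G]$, say via $\mathbb{R}\in\mathcal{P}^{V[G]}$. Pick a stage $\alpha$ at which the bookkeeping treats $(\phi,a)$ with $a$ already named. As the tail $\mathbb{P}\res[\alpha,\lambda)$ lies in $\mathcal{P}^{V[G_\alpha]}$, the two-step iteration $(\mathbb{P}\res[\alpha,\lambda))\ast\dot{\mathbb{R}}$ witnesses that $\phi[a]$ is $\mathcal{P}$-forceable already over $V[G_\alpha]$; hence $\mathbb{Q}_\alpha$ was chosen to force $\phi[\check a]$, so $\phi[a]$ holds in $V[G_{\alpha+1}]$, and since $\phi$ is provably $\mathcal{P}$-persistent and $V[G]$ is a $\mathcal{P}$-extension of $V[G_{\alpha+1}]$ via the tail, $\phi[a]$ persists to $V[G]$. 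The main obstacle is securing lemma (b): the entire argument rests on being able to find the witnessing poset $\mathbb{Q}_\alpha$ inside $V_\lambda$, which is precisely what keeps the iteration a subset of $V_\lambda$ of length $\lambda$, and this is where the regularity and $\Sigma_n$-correctness of $\lambda$ — and its preservation under the small initial segments $\mathbb{P}_\alpha$ — are indispensable. A secondary technical point is arranging the bookkeeping and interleaved collapses uniformly across the cases of Assumption \ref{assump} so that names for all of $H_\kappa^{V[G]}$ appear boundedly while every iterand stays in $\mathcal{P}\cap V_\lambda$.
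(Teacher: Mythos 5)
The paper states this theorem purely as a citation to Asper\'o and contains no proof of its own, so there is nothing internal to compare against; your argument is the standard bookkeeping-iteration proof (reflect ``$\phi[a]$ is $\mathcal{P}$-forceable'', a $\Sigma_n$ statement, into $V_\lambda[G_\alpha]$ to find a small witness, force with it, and use provable persistence plus transfinite iterability of the tail to conclude), and it is correct in all essentials. Two cosmetic points: the formula ``$\beta=2^{|\alpha|}$'' is $\Sigma_2$ rather than $\Sigma_1$ (asserting that a set is the full power set is $\Pi_1$), which still suffices since $n\geq 2$ is the relevant case; and in the c.c.c.\ case one should explicitly interleave Cohen forcing cofinally to guarantee $2^\omega=\lambda$, since the bookkeeping stages alone might be trivial too often.
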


    \begin{thm}[Asper\'{o} \cite{AsperoAMaximalBoundedForcingAxiom}]\label{TheoremAspero2}
        Let $\mathcal{P}$ and $\kappa$ be as in Assumption \ref{assump}.
        For $n\geq 1$, $\MPast{\Sigma_n}{\mathcal{P}}{A}$ implies that $\kappa$ is a regular $\Sigma_n$ correct cardinal in $L$.
    \end{thm}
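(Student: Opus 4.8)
The plan is to prove the two conclusions separately: that $\kappa$ is regular in $L$, and that $\kappa$ is $\Sigma_n$-correct in $L$, by which I mean $L_\kappa \prec_{\Sigma_n} L$ (the standard reading inside $L$, where one works with the constructible hierarchy). Regularity is the easy half. In each case of Assumption~\ref{assump} the cardinal $\kappa$ is regular in $V$: this is immediate for $\omega_1$ and $\omega_2$, while for $\kappa = 2^\omega$ it follows from $\mathsf{MA}$, which is $\MP{\Sigma_1}{\text{c.c.c.}}{H_{2^\omega}}$ by Bagaria's Theorem~\ref{BagariaBFAvsMP} and hence a consequence of $\MPast{\Sigma_n}{\mathcal{P}}{H_\kappa}$ via the equivalence $\MPast{\Sigma_1}{\mathcal{P}}{A}\iff\MP{\Sigma_1}{\mathcal{P}}{A}$. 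Since regularity is downward absolute to $L$ and $\kappa$ is a cardinal of $V\supseteq L$, $\kappa$ is a regular cardinal of $L$.

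For correctness I argue by induction on $n$, using that a provably $\mathcal{P}$-persistent $\Sigma_m$-formula is also a provably $\mathcal{P}$-persistent $\Sigma_n$-formula for $m\le n$, so $\MPast{\Sigma_n}{\mathcal{P}}{H_\kappa}$ delivers $\MPast{\Sigma_m}{\mathcal{P}}{H_\kappa}$ for all $m\le n$. The case $n=0$ is $\Delta_0$-absoluteness. Assume $L_\kappa \prec_{\Sigma_{n-1}} L$ (equivalently $L_\kappa\prec_{\Pi_{n-1}}L$), and fix a $\Sigma_n$-formula $\phi = \exists y\,\phi_0(y,a)$ with $\phi_0\in\Pi_{n-1}$ and $a\in L_\kappa\subseteq H_\kappa$. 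The direction $L_\kappa\models\phi[a]\Rightarrow L\models\phi[a]$ is immediate from the induction hypothesis applied to the $\Pi_{n-1}$-witness. For the converse, suppose $L\models\phi[a]$ and consider the formula $\Theta_{\phi_0}(x)$ asserting ``there is $y$ with $|\mathrm{rank}_L(y)|<\kappa$ such that $L\models\phi_0(y,x)$'', where $\kappa$ is referred to by its definition ($\omega_1$, $\omega_2$, or $2^\omega$) and $|\mathrm{rank}_L(y)|<\kappa$ is spelled out as the existence of a surjection onto $\mathrm{rank}_L(y)$ from an ordinal below $\kappa$. Since the relativization to $L$ of a $\Pi_{n-1}$-formula is again $\Pi_{n-1}$ over $V$, the formula $\Theta_{\phi_0}$ is $\Sigma_n$.

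It remains to verify that $\Theta_{\phi_0}[a]$ is $\mathcal{P}$-forceable and provably $\mathcal{P}$-persistent. For forceability, fix in $L$ a witness $y_0$ with $L\models\phi_0(y_0,a)$, let $\gamma_0=\mathrm{rank}_L(y_0)$, and force with a poset in $\mathcal{P}$ collapsing $|\gamma_0|$ below $\kappa$: this is $\mathrm{Coll}(\omega,\gamma_0)$ for the class of all posets ($\kappa=\omega_1$), a large finite-support product of Cohen forcings for c.c.c.\ ($\kappa=2^\omega$), and $\mathrm{Coll}(\omega_1,\gamma_0)$ for the proper/semiproper/$\sigma$-closed/subcomplete classes ($\kappa=\omega_2$), the last being $\sigma$-closed and hence in all four classes. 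Persistence is the crux. Since $L$ is unchanged by forcing, $L\models\phi_0(y_0,x)$ is absolute, so the point is that once $\mathrm{rank}_L(y_0)$ has been surjected onto by an ordinal below $\kappa$ this survives all further $\mathcal{P}$-forcing. This is clear when $\kappa$ only grows (as $\omega_1$ and $2^\omega$ do under the relevant classes), but for $\kappa=\omega_2$ one must use that proper (semiproper, etc.) forcing can collapse $\omega_2$ yet always preserves $\omega_1$: the witnessing surjection from some $\lambda\le\omega_1$ persists with unchanged range, so $|\gamma_0|\le\omega_1<\omega_2$ remains true — this is exactly why ``rank below $\kappa$'' is phrased through a preserved-cardinal surjection rather than a bound mentioning $\kappa$ directly. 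Granting this, $\MPast{\Sigma_n}{\mathcal{P}}{H_\kappa}$ applied to the provably $\mathcal{P}$-persistent, $\mathcal{P}$-forceable $\Sigma_n$-formula $\Theta_{\phi_0}$ with parameter $a\in H_\kappa$ yields $\Theta_{\phi_0}[a]$ in $V$; this produces a genuine $y\in L_\kappa$ with $L\models\phi_0(y,a)$, and the induction hypothesis $L_\kappa\prec_{\Pi_{n-1}}L$ gives $L_\kappa\models\phi_0(y,a)$, hence $L_\kappa\models\phi[a]$.

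\textbf{The main obstacle} I anticipate is the combined complexity-and-persistence bookkeeping concentrated in $\Theta_{\phi_0}$: arranging that it be genuinely $\Sigma_n$ (keeping the ``small rank'' clause low-complexity and correctly placed relative to the $\Pi_{n-1}$ truth predicate of $L$) while simultaneously being provably persistent across all six classes at once, the non-monotonicity of $\omega_2$ under proper forcing being the delicate case and the $\omega_1$-preservation reformulation its resolution. A secondary point to settle is the reduction, for parameter sets $A$ smaller than $H_\kappa$, to ordinal parameters below $\kappa$, using that every element of $L_\kappa$ is $L$-definable from such an ordinal.
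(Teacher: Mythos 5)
The paper does not actually prove this theorem: it is imported from Asper\'o's paper, and the text only proves the new strengthening stated immediately afterwards. So there is no in-paper proof to compare against line by line. That said, your mechanism is exactly the one the paper deploys for that adjacent theorem, where the provably persistent surrogate is $\phi^{**}[a]\equiv\exists x\in L_{\kappa}\,(L\models\forall y\,\phi[x,y,a])$; the clause ``$x\in L_{\kappa}$'', with $\kappa$ read as $\omega_1$, $\omega_2$ or $2^{\aleph_0}$, is persistent for the same reason as your ``$\lvert\mathrm{rank}_L(y)\rvert<\kappa$'' clause, namely that $L$ is forcing-absolute and in each case of Assumption \ref{assump} the relevant cardinal cannot decrease under forcings from $\mathcal{P}$ (for $\kappa=\omega_2$ this is precisely the $\omega_1$-preservation point you isolate). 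Your argument is sound. The regularity half is as easy as you say: $\mathsf{MA}$ via Theorem \ref{BagariaBFAvsMP} handles $\kappa=2^{\aleph_0}$, and regularity passes down to the inner model $L$. For the correctness half, the one spot that genuinely needs checking is the complexity of the side clause: ``$\alpha=\omega_1$'' and ``$\lvert\lambda\rvert<2^{\aleph_0}$'' are only $\Delta_2$, not $\Sigma_1$, so a priori $\Theta_{\phi_0}$ threatens to exceed $\Sigma_n$. This is harmless: for $n\geq 2$ a $\Delta_2$ conjunct absorbed under the outer existential quantifiers leaves the formula $\Sigma_n$ (use the $\Sigma_2$ form of the clause when $n=2$), while for $n=1$ the conclusion $L_\kappa\prec_{\Sigma_1}L$ is provable outright in $\ZFC$ by L\'evy absoluteness since $L_\kappa=H_\kappa^L$, so no instance of the maximality principle is needed there. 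With that, your induction closes; the remaining verifications (that $\Col(\omega,\gamma_0)$, a finite-support product of Cohen forcings, and $\Col(\omega_1,\gamma_0)$ lie in the respective classes, the last being $\sigma$-closed and hence proper, semiproper and subcomplete) are standard. One cosmetic remark: the theorem as printed leaves the parameter set $A$ unbound; your reading $A=H_\kappa$ is the intended one, and only parameters from $L_\kappa\subseteq H_\kappa$ are ever used.
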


    The following result seems new.

    \begin{thm}
        Let $\mathcal{P}$ and $\kappa$ be as in Assumption \ref{assump}. Let $n\geq 2$.
        \begin{enumerate}
            \item $\MP{\Pi_n}{\mathcal{P}}{H_{\kappa}}$ implies that $\kappa$ is a regular $\Sigma_{n+1}$ correct cardinal in $L$.
            \item $\MP{\Sigma_n}{\mathcal{P}}{H_{\kappa}}$ implies that $\kappa$ is a regular $\Sigma_{n+2}$ correct cardinal in $L$.
        \end{enumerate}
    \end{thm}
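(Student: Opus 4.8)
The plan is to prove both parts by showing $L_\kappa \prec_{\Sigma_m} L$, where $m = n+1$ in (1) and $m = n+2$ in (2), by induction on $m$. Regularity of $\kappa$ in $L$ and a base level of correctness come for free from Theorem \ref{TheoremAspero2} applied to the provably-persistent fragment already subsumed by the hypothesis: $\MP{\Sigma_n}{\mathcal{P}}{H_\kappa}$ implies $\MPast{\Sigma_n}{\mathcal{P}}{H_\kappa}$ directly, and $\MP{\Pi_n}{\mathcal{P}}{H_\kappa}$ implies $\MP{\Sigma_{n-1}}{\mathcal{P}}{H_\kappa}$ (as $\Sigma_{n-1}\subset\Pi_n$) and hence $\MPast{\Sigma_{n-1}}{\mathcal{P}}{H_\kappa}$. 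Thus it remains to carry out the inductive step: assuming $L_\kappa \prec_{\Sigma_{m-1}} L$, establish downward reflection of $\Sigma_m$-formulas, i.e.\ for $\psi = \exists x\,\eta(x,\vec a)\in\Sigma_m$ with $\eta\in\Pi_{m-1}$ and $\vec a\in L_\kappa$, that $L\models\psi[\vec a]$ implies $L_\kappa\models\psi[\vec a]$. The upward direction is then immediate from $L_\kappa\prec_{\Sigma_{m-1}}L$.

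The two facts driving the reflection are: (i) forcing does not change $L$, so ``$L\models\theta$'' is provably $\mathcal{P}$-persistent for every formula $\theta$ and every set parameter; and (ii) for each class $\mathcal{P}$ of Assumption \ref{assump} other than c.c.c., there is a poset in $\mathcal{P}$ (the appropriate L\'evy collapse) that codes an arbitrary witness together with a $\Sigma_{m-1}$-correct level $L_\delta$ containing it into $H_\kappa$. Combining these, given $L\models\psi[\vec a]$ I fix $b^\ast$ with $L\models\eta[b^\ast,\vec a]$ and force with $\mathbb{P}\in\mathcal{P}$ collapsing a correct level $L_\delta\ni b^\ast$ below $\kappa$. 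The statement $\phi(\vec a):=\exists b\in L_\kappa\,(L\models\eta[b,\vec a])$ then holds in $V^{\mathbb{P}}$ and is $\mathcal{P}$-persistent there, since both conjuncts persist and the existential witness survives as $L_\kappa$ grows; hence $\phi$ is $\mathcal{P}$-forceably $\mathcal{P}$-persistent, and its truth in $V$ furnishes a witness $b\in L_\kappa$, giving $L_\kappa\models\psi[\vec a]$ via $L_\kappa\prec_{\Sigma_{m-1}}L$.

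The naive $\phi$ just described is itself $\Sigma_m$, so applied verbatim it only recovers Asper\'o's bound. The extra level(s) are extracted by routing the outer quantifiers of $\psi$ through the \emph{forcing} quantifiers implicit in ``forceably persistent,'' so that the formula fed to the Maximality Principle is only $\Pi_n$ resp.\ $\Sigma_n$. The point is a complexity computation: since $\mathcal{P}$ is $\Sigma_2$-definable and the forcing relation for a $\Gamma$-formula is again $\Gamma$, persistence adds a universal quantifier over posets and forceability an existential one, so that ``$\phi$ is $\mathcal{P}$-forceably $\mathcal{P}$-persistent'' is $\Sigma_{n+1}$ when $\phi\in\Pi_n$ and $\Sigma_{n+2}$ when $\phi\in\Sigma_n$. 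Concretely the $\Sigma_m$-existential $\exists x$ is installed by the outer $\exists\mathbb{P}$ and then held fixed, while in the $\Sigma_n$ case the innermost universal of $\eta\in\Pi_{n+1}=\forall c\,\theta$ is simulated by the persistence-$\forall\mathbb{Q}$, each $\mathbb{Q}$ coding one more $c$ into $H_\kappa$; forcing-invariance of $L$ keeps the $\Sigma_n$-core $\theta$ stable throughout. This asymmetry is exactly why $\Pi_n$ buys one correctness level and $\Sigma_n$ buys two.

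I expect the \textbf{main obstacle} to be establishing persistence cleanly, since the definable cardinal $\kappa$ moves under forcing and the reflecting statement must survive \emph{all} further $\mathcal{P}$-extensions. The resolution is to make the forcing quantifiers carry only data about $L$ (which is absolute) rather than about $L_\kappa$ (which is not), and then to verify that the simulation of $\forall c$ by $\forall\mathbb{Q}$ is faithful, i.e.\ that every relevant $c$ is indeed captured into $H_\kappa$ by some $\mathbb{Q}\in\mathcal{P}$; getting this bookkeeping to match the target complexity exactly is the delicate part. A second genuine obstacle is the c.c.c.\ case, where $\kappa=2^\omega$ and no cardinal may be collapsed: there one replaces collapsing by adding reals that code the witness and the correct level below $2^\omega$, and must check that these codes lie in $H_{2^\omega}$ and persist under further c.c.c.\ forcing.
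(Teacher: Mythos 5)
Your proposal follows essentially the same route as the paper's proof: both feed the Maximality Principle the statement that $L_\kappa$ contains a witness for the relevant $L$-statement (the paper's $\phi^*[a]\equiv\exists x\in L_{\kappa}\,\forall y\in L_{\kappa}\,(L\models\phi[x,y,a])$, which is $\Sigma_n$ because both outer quantifiers are bounded to the $\Delta_2$-definable set $L_\kappa$), establish forceable persistence by capturing a true witness into $L_\kappa$ and invoking the absoluteness of $L$ (your ``carry only data about $L$'' is exactly the paper's provably persistent strengthening $\phi^{**}[a]\equiv\exists x\in L_\kappa\,(L\models\forall y\,\phi[x,y,a])$), and then pull the conclusion into $L_\kappa$ via the $\Sigma_n$-correctness already supplied by Asper\'{o}'s theorem. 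The only differences are organizational: the paper gains both extra levels in a single application of $\MP{\Sigma_n}{\mathcal{P}}{H_{\kappa}}$ rather than by induction on $m$, and it writes the key formula explicitly where you describe the absorption of the universal quantifier into persistence only impressionistically.
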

    \begin{proof}
    We only prove (2), because (1) is similar and easier.
    Let $a\in H_{\kappa}$ and let $\phi$ be a $\Sigma_n$ formula.
    Assume that $L\models\forall x\,\exists y\,\phi[x, y, a]$.
    The point is that the sentence
    \[
    \phi^*[a]\equiv\exists x\in L_{\kappa}\,\forall y\in L_{\kappa}\, L\models\phi[x, y, a]
    \]
    is a $\Sigma_n$ formula.
    Note that this is not provably persistent.

    We claim that $\phi^*[a]$ is $\mathcal{P}$-forceably persistent.
    One can easily find a $\mathcal{P}$-generic extension $V[g]$ where
    \[
    \phi^{**}[a]\equiv\exists x\in L_{\kappa}\,L\models \forall y\,\phi[x, y, a]
    \]
    holds by sufficiently increasing the value of $\kappa$.
    Note that $\phi^{**}$ is provably persistent and $\phi^{**}[a]$ implies $\phi^*[a]$.
    Therefore, $V[g]$ and its generic extensions satisfy $\phi^*[a]$.

    Now we can apply $\MP{\Sigma_2}{\mathcal{P}}{H_{\kappa}}$ to $\phi^*[a]$.
    Fix $x\in L_{\kappa}$ such that for all $y\in L_{\kappa}$, $L\models\phi[x, y, a]$.
    By Asper\'{o}'s result, we know that $\kappa$ is $\Sigma_n$-correct in $L$, so $L_{\kappa}\models\phi[x, y, a]$.
    Therefore, $L_{\kappa}\models\forall x\, \exists y\,\phi[x, y, a]$.
    \end{proof}

    \begin{cor}\label{cor:ExactConsistencyStrength}
        Let $\mathcal{P}$ and $\kappa$ be as in Assumption \ref{assump}. Let $n\geq 2$.
        \begin{enumerate}
            \item (Asper\'{o} \cite{AsperoAMaximalBoundedForcingAxiom}) $\MPast{\Sigma_n}{\mathcal{P}}{H_{\kappa}}$ is equiconsistent with the existence of a regular $\Sigma_n$-correct cardinal.
            \item $\MP{\Pi_n}{\mathcal{P}}{H_{\kappa}}$ is equiconsistent with the existence of a regular $\Sigma_{n+1}$-correct cardinal.
            \item $\MP{\Sigma_n}{\mathcal{P}}{H_{\kappa}}$ is equiconsistent with the existence of a regular $\Sigma_{n+2}$-correct cardinal.
        \end{enumerate}
    \end{cor}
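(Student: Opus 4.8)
The plan is to read each equiconsistency as the combination of a lower bound, which is already contained in the preceding results, and an upper bound, which I obtain by forcing. For the lower bounds there is nothing to do beyond citing what is above: item (1) is exactly Theorem \ref{TheoremAspero2}, and items (2) and (3) follow from the preceding theorem, which yields that $\MP{\Pi_n}{\mathcal{P}}{H_\kappa}$ (resp. $\MP{\Sigma_n}{\mathcal{P}}{H_\kappa}$) implies that $\kappa$ is a regular $\Sigma_{n+1}$- (resp. $\Sigma_{n+2}$-) correct cardinal in $L$. Since $L$ is a definable inner model of $\ZFC$, working inside any model of the relevant Maximality Principle produces a model of $\ZFC$ containing a regular $\Sigma_{n+1}$- (resp. $\Sigma_{n+2}$-) correct cardinal, which gives the consistency implication $\Con(\ZFC+\MP{}{}{})\to\Con(\ZFC+\exists\text{ regular }\Sigma_{m}\text{-correct cardinal})$ at the appropriate level $m$.

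For the upper bounds the strategy is to reduce the boldface principles to Asper\'o's starred principles one level higher. Concretely, I would establish the two implications $\MPast{\Sigma_{n+1}}{\mathcal{P}}{H_\kappa}\Rightarrow\MP{\Pi_n}{\mathcal{P}}{H_\kappa}$ and $\MPast{\Sigma_{n+2}}{\mathcal{P}}{H_\kappa}\Rightarrow\MP{\Sigma_n}{\mathcal{P}}{H_\kappa}$. Granting these, item (2) follows by starting from a model with a regular $\Sigma_{n+1}$-correct cardinal, applying Theorem \ref{TheoremAspero1} to force $\MPast{\Sigma_{n+1}}{\mathcal{P}}{H_\kappa}$, and then invoking the first implication; item (3) is the same argument with $n+1$ replaced by $n+2$, and the upper bound for item (1) is again just Theorem \ref{TheoremAspero1}.

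The heart of the matter is the reduction lemma behind these two implications. Given a $\Pi_n$- (resp. $\Sigma_n$-) formula $\phi(\vec x)$, consider the formula $\psi(\vec x)$ expressing that $\phi[\vec x]$ is $\mathcal{P}$-persistent, namely $\forall\mathbb{P}\in\mathcal{P}\,(\forces_{\mathbb{P}}\phi[\check{\vec x}])$. This $\psi$ is provably $\mathcal{P}$-persistent: once $\phi$ is persistent it remains persistent after any $\mathbb{P}\in\mathcal{P}$, because a further $\mathcal{P}$-extension of $V^{\mathbb{P}}$ is, via two-step iteration and the iterability of $\mathcal{P}$, a $\mathcal{P}$-extension of $V$. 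Moreover $\psi[\vec a]$ is $\mathcal{P}$-forceable precisely when $\phi[\vec a]$ is $\mathcal{P}$-forceably $\mathcal{P}$-persistent, since the latter means exactly that the sentence $\psi[\vec a]$ is forceable, and $\psi[\vec a]$ implies $\phi[\vec a]$ by instantiating the trivial poset. Hence if $\phi[\vec a]$ is $\mathcal{P}$-forceably $\mathcal{P}$-persistent with $\vec a\in H_\kappa$ (coded as a single parameter, using that $H_\kappa$ is closed under finite tuples), then applying the starred principle at the right level to the provably persistent formula $\psi$ gives $\psi[\vec a]$, and therefore $\phi[\vec a]$.

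What makes the levels come out as $\Sigma_{n+1}$ and $\Sigma_{n+2}$ is the complexity computation for $\psi$, and this is the step I expect to need the most care. Using that $\mathcal{P}$ is $\Sigma_2$-definable, the matrix $\mathbb{P}\in\mathcal{P}\to\;\forces_{\mathbb{P}}\phi$ is $\Pi_2\vee\Pi_n$ when $\phi$ is $\Pi_n$ and $\Pi_2\vee\Sigma_n$ when $\phi$ is $\Sigma_n$; since $n\geq 2$ and the L\'evy classes are closed under finite disjunctions, these collapse to $\Pi_n$ and $\Pi_{n+1}$ respectively, and the leading universal quantifier over $\mathbb{P}$ preserves the class. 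Thus $\psi$ is $\Pi_n\subseteq\Sigma_{n+1}$ in the first case and $\Pi_{n+1}\subseteq\Sigma_{n+2}$ in the second, exactly matching the starred principles at levels $n+1$ and $n+2$. The main obstacle is getting this bookkeeping exactly right — in particular verifying that the forcing relation $\forces_{\mathbb{P}}\phi$ is uniformly $\Pi_n$ (resp. $\Sigma_n$) in the parameter $\mathbb{P}$ for $n\geq 2$, so that no stray unbounded quantifier inflates the complexity, and checking that writing the matrix as $\Pi_{n+1}$ rather than $\Sigma_{n+1}$ is what keeps the universal quantifier over $\mathbb{P}$ harmless.
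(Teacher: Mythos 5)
Your proposal is correct and follows essentially the route the paper intends: lower bounds from Theorem \ref{TheoremAspero2} and the theorem immediately preceding the corollary, and upper bounds from Theorem \ref{TheoremAspero1} together with the reductions $\MPast{\Sigma_{n+1}}{\mathcal{P}}{H_\kappa}\Rightarrow\MP{\Pi_n}{\mathcal{P}}{H_\kappa}$ and $\MPast{\Sigma_{n+2}}{\mathcal{P}}{H_\kappa}\Rightarrow\MP{\Sigma_n}{\mathcal{P}}{H_\kappa}$. Your key reduction lemma (that $\forall\mathbb{P}\in\mathcal{P}\,\forces_{\mathbb{P}}\phi(\check{x})$ is provably $\mathcal{P}$-persistent and of complexity $\Pi_n$ resp.\ $\Pi_{n+1}$) is exactly the observation the paper records in its proposition relating $\MPast{\Pi_n}{\mathcal{P}}{A}$ to $\MP{\Pi_n}{\mathcal{P}}{A}$ and alludes to at the start of its Section 4.
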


    By this result, it immediately follows that for any $\mathcal{P}$ and $\kappa$ in Assumption \ref{assump}, $\MP{\Pi_n}{\mathcal{P}}{H_\kappa}\land\neg\MP{\Sigma_n}{\mathcal{P}}{H_\kappa}$ is consistent relative to some large cardinal axioms. However, the following question still remains open.

    \begin{que}\label{que:Pi_nMPvsSigma_nMPast}
        Let $\mathcal{P}$ and $\kappa$ be as in Assumption \ref{assump}. Is $\MP{\Pi_n}{\mathcal{P}}{H_\kappa}\land\neg\MPast{\Sigma_n}{\mathcal{P}}{H_\kappa}$ consistent for $n\geq 3$?
    \end{que}

    Let us mention another question that is possibly related to the case $n=3$ of this question. Recall that the Ground Axiom ($\mathsf{GA}$), introduced in \cite{RietzTheGroundAxiom}, is the assertion that there are no grounds of $V$ via non-trivial (set-sized) forcings. Since $\neg\mathsf{GA}$ is provably persistent $\Sigma_3$ sentence, $\mathsf{GA}$ implies $\neg\MPastempty{\Sigma_3}{\mathcal{P}}$ for any class of posets $\mathcal{P}$ containing non-trivial posets. Although it was proved in \cite{FuchinoGappoParenteGenericAbsolutenessRevisited} that $\MPastallempty{\Sigma_2}$ and $\MPallempty{\Pi_2}$ are compatible with $\mathsf{GA}$, the following question still remains open.
    
    \begin{que}\label{que:MPvsGA}
        Is $\Sigma_2$-$\mathsf{MP}$, or even $\Pi_3$-$\mathsf{MP}$, consistent with $\mathsf{GA}$?
    \end{que}

    In the rest of this subsection, we give further conclusions (of the proof) of Corollary \ref{cor:ExactConsistencyStrength}.

    \subsubsection{Maximality Principles combined with Resurrection Axioms}

    The Resurrection Axiom ($\mathsf{RA}$) was introduced by Hamkins and Johnstone in \cite{HamkinsJohnstoneresurrectionAxiomsAndUpliftingCardinals}.
   We recall the definition of its boldface version, introduced in \cite{HamkinsJohnstoneStronglyUpliftingCardinals} and reformulated by Fuchs \cite{FuchsHierarchiesOfVirtualResurrectionAxioms} in the following form.

    \begin{defn}[Hamkins--Johnstone \cite{HamkinsJohnstoneStronglyUpliftingCardinals}; Fuchs \cite{FuchsHierarchiesOfVirtualResurrectionAxioms}]
        Let $\mathcal{P}$ be a class of posets and let $\kappa\geq\omega_2$ be a cardinal.
        The \emph{boldface Resurrection Axiom for $\mathcal{P}$ at $H_{\kappa}$}, denoted $\bRA{\mathcal{P}}{H_{\kappa}}$, is the following statement: for any $\mathbb{P}\in\mathcal{P}$ and any $A\subset H_{\kappa}$, there are a $\mathbb{P}$-name $\dot{\mathbb{Q}}$ with $\forces_{\mathbb{P}}\dot{\mathbb{Q}}\in\mathcal{P}$ and a cardinal $\lambda$ such that $\forces_{\mathbb{P}*\dot{\mathbb{Q}}}$``$\lambda$ is a cardinal,'' and if $\kappa$ is regular then $\forces_{\mathbb{P}*\dot{\mathbb{Q}}}$``$\lambda$ is regular,'' such that whenever a filter $G\subset \mathbb{P}*\dot{\mathbb{Q}}$ is $V$-generic, there are $A^*\subset H_{\lambda}^{V[G]}$ and an elementary embedding
        \[
        j\colon\langle H_{\kappa}^V, \in, A\rangle \to \langle H_{\lambda}^{V[G]}, \in, A^*\rangle.
        \]
    \end{defn}

    \begin{defn}[Hamkins--Johnstone \cite{HamkinsJohnstoneStronglyUpliftingCardinals}]
        Let $\theta$ be an ordinal. A cardinal $\kappa$ is \emph{$\theta$-strongly uplifting} if for every $A\subset V_{\kappa}$ there is an inaccessible cardinal $\gamma\geq\theta$ and a set $A^* \subset V_{\gamma}$ such that
        \[
        \langle V_{\kappa}, \in, A\rangle \prec \langle V_{\gamma}, \in, A^*\rangle.
        \]
        We say that $\kappa$ is \emph{strongly uplifting} if $\kappa$ is $\theta$-uplifting for all ordinals $\theta$.
    \end{defn}

    By combining Minden's work in \cite{MindenCombiningResurrectionAndMaximality} with the proof of Corollary \ref{cor:ExactConsistencyStrength} in a straightforward way, we get the following equiconsistency results.

    \begin{thm}
        Let $\mathcal{P}$ and $\kappa$ be as in Assumption \ref{assump}.
        Let $n\geq 2$.
        \begin{enumerate}
            \item $\MPast{\Sigma_n}{\mathcal{P}}{H_{\kappa}} + \bRA{\mathcal{P}}{H_{\kappa}}$ is equiconsistent with the existence of a strongly uplifting $\Sigma_n$-correct cardinal.
            \item $\MP{\Pi_n}{\mathcal{P}}{H_{\kappa}} + \bRA{\mathcal{P}}{H_{\kappa}}$ is equiconsistent with the existence of a strongly uplifting $\Sigma_{n+1}$-correct cardinal.
            \item $\MP{\Sigma_n}{\mathcal{P}}{H_{\kappa}} + \bRA{\mathcal{P}}{H_{\kappa}}$ is equiconsistent with the existence of a strongly uplifting $\Sigma_{n+2}$-correct cardinal.
        \end{enumerate}
    \end{thm}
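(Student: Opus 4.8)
The plan is to obtain each of (1)--(3) by observing that the hypothesis splits into two essentially independent demands on the single cardinal $\kappa$: the resurrection axiom $\bRA{\mathcal{P}}{H_\kappa}$ is responsible for the ``strongly uplifting'' clause, while the maximality principle is responsible for the correctness clause, whose level is $\Sigma_n$, $\Sigma_{n+1}$, or $\Sigma_{n+2}$ in exactly the pattern of Corollary \ref{cor:ExactConsistencyStrength}. Item (1) is precisely Minden's equiconsistency from \cite{MindenCombiningResurrectionAndMaximality}. The new content of (2) and (3) is that passing from $\MPast{\Sigma_n}{\mathcal{P}}{H_\kappa}$ to $\MP{\Pi_n}{\mathcal{P}}{H_\kappa}$ or to $\MP{\Sigma_n}{\mathcal{P}}{H_\kappa}$ raises the correctness demand by one or two levels while leaving the uplifting demand, and the entire resurrection half of both directions, untouched; this is the same shift already isolated for the pure maximality case.

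For the lower bounds I would assume the conjunction, say $\MP{\Sigma_n}{\mathcal{P}}{H_\kappa} + \bRA{\mathcal{P}}{H_\kappa}$ for (3), and read off both clauses inside $L$. The maximality conjunct gives, by the theorem preceding Corollary \ref{cor:ExactConsistencyStrength}, that $\kappa$ is a regular $\Sigma_{n+2}$-correct cardinal in $L$; the $\Pi_n$-maximality conjunct of (2) gives $\Sigma_{n+1}$-correctness by the same theorem. The resurrection conjunct gives, by Minden's adaptation of the Hamkins--Johnstone lower bound for $\bRA{\mathcal{P}}{H_\kappa}$ carried out in $L$, that $\kappa$ is strongly uplifting there. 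Since ``strongly uplifting'' and ``$\Sigma_k$-correct'' are each first-order properties of $\kappa$ computed inside $L$, and the two extractions exploit disjoint features of the hypothesis, they combine to witness that $\kappa$ is a strongly uplifting $\Sigma_k$-correct cardinal in $L$.

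For the upper bounds I would begin with a strongly uplifting $\Sigma_k$-correct cardinal and run Minden's lottery iteration, which simultaneously forces a maximality principle and $\bRA{\mathcal{P}}{H_\kappa}$, with $\kappa$ the image of the target (i.e.\ $\omega_2$, $2^\omega$, etc., as dictated by Assumption \ref{assump}). The strongly uplifting embeddings lift through the iteration to certify $\bRA{\mathcal{P}}{H_\kappa}$ in the extension exactly as in Minden's proof, a step entirely insensitive to the correctness level. The one modification is that feeding a $\Sigma_{n+2}$-correct target (resp.\ $\Sigma_{n+1}$-correct) into the same iteration forces the full $\MP{\Sigma_n}{\mathcal{P}}{H_\kappa}$ (resp.\ $\MP{\Pi_n}{\mathcal{P}}{H_\kappa}$) rather than merely its provably persistent fragment, by the analysis in the proof of Corollary \ref{cor:ExactConsistencyStrength}: a $\mathcal{P}$-forceably $\mathcal{P}$-persistent $\Sigma_n$ statement is reflected by a cardinal whose correctness is two levels higher, and it is exactly this reflection that the $\Sigma_{n+2}$-correctness of the target supplies, on top of which Asper\'{o}'s Theorem \ref{TheoremAspero1} handles the provably persistent instances.

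The main obstacle I anticipate is confirming that the two components do not interfere. Concretely, one must check that the lottery bookkeeping guaranteeing resurrection can be interleaved with the bookkeeping anticipating each forceably persistent instance, so that neither the lifting of the uplifting embedding nor the reflection upgrading provable to forceable persistence is disturbed. The delicate quantitative point is the complexity estimate: at a $\Sigma_{n+2}$-correct target one must verify that the assertion ``$\phi$ is $\mathcal{P}$-forceably $\mathcal{P}$-persistent'' (with $\mathcal{P}$ being $\Sigma_2$-definable) still falls within the reflection afforded by the uplifting embedding, so that a single embedding can simultaneously witness resurrection and certify the persistence required by the maximality instance. Once this compatibility is secured, both directions of all three equiconsistencies follow by assembling the two established arguments.
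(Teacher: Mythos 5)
Your proposal matches the paper's intended argument: the paper offers no proof beyond the remark that the theorem follows by combining Minden's lottery-iteration equiconsistency for $\bRA{\mathcal{P}}{H_{\kappa}}$ with the correctness-level analysis in the proof of Corollary \ref{cor:ExactConsistencyStrength}, which is exactly the decomposition you describe (resurrection accounts for ``strongly uplifting,'' the maximality conjunct for the correctness level, both read off the same $\kappa$ in $L$ for the lower bound). One small streamlining for the upper bounds: there is no need to treat provably persistent and merely forceably persistent instances separately, since forcing $\MPast{\Sigma_{n+2}}{\mathcal{P}}{H_{\kappa}}$ (resp.\ $\MPast{\Sigma_{n+1}}{\mathcal{P}}{H_{\kappa}}$) already yields $\MP{\Sigma_n}{\mathcal{P}}{H_{\kappa}}$ (resp.\ $\MP{\Pi_n}{\mathcal{P}}{H_{\kappa}}$) outright, because ``$\phi$ is $\mathcal{P}$-persistent'' is itself a provably $\mathcal{P}$-persistent $\Pi_{n+1}$ statement.
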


    \subsubsection{Recurrence Axioms}

    The Recurrence Axioms ($\mathsf{RA}$) were first introduced in Fuchino--Usuba \cite{FuchinoUsubaOnRecurrenceAxioms} and their relation with Maximality Principles for provably persistent formulas was discussed in \cite{FuchinoGappoParenteGenericAbsolutenessRevisited}.
    Based on their results, we also get the exact consistency strength of the Recurrence Axioms in typical cases.

    \begin{defn}
        A (definable) class $W$ is a \emph{ground} of $V$ if $W$ is a transitive class model of $\mathsf{ZFC}$ and $V$ is a generic extension of $W$ via a set-sized forcing, i.e., there is some forcing poset $\mathbb{P} \in W$ and a $W$-generic filter $G \subset \mathbb{P}$ such that $V = W[G]$.

        Let $\mathcal{P}$ be a definable class of forcing posets. We say that $W$ is a \emph{$\mathcal{P}$-ground} of $V$ if $W$ is a ground of $V$ witnessed by some $\mathbb{P}\in\mathcal{P}^W$.
    \end{defn}

    \begin{defn}[Fuchino--Usuba \cite{FuchinoUsubaOnRecurrenceAxioms}]
        Let $\Gamma$ be a set of formulas, let $\mathcal{P}$ be a (definable) class of posets, and $A$ be a set.
        \begin{enumerate}
            \item $\Gamma$-$\mathsf{RcA}(\mathcal{P}, A)$ is the scheme of formulas asserting that for any formula $\phi(x)$ in $\Gamma$ and any $a\in A$, if $\phi[a]$ is $\mathcal{P}$-forceable, then $\phi[a]$ holds in some ground of $V$.
            \item $\Gamma$-$\mathsf{RcA}^+(\mathcal{P}, A)$ is the scheme of formulas asserting that for any formula $\phi(x)$ in $\Gamma$ and any $a\in A$, if $\phi[a]$ is $\mathcal{P}$-forceable, then $\phi[a]$ holds in some $\mathcal{P}$-ground of $V$.
        \end{enumerate}
    \end{defn}
    
    Let us introduce few more variants of the Maximality Principles, which will not used in other subsections.

    \begin{defn}
        Let $\Gamma$ be a set of formulas, $\mathcal{P}$ be a (definable) class of posets, and $A$ be a set.
        \begin{enumerate}
            \item $\MPminus{\Gamma}{\mathcal{P}}{A}$ is the scheme of formulas asserting that for any formula $\phi(\vec{x})$ in $\Gamma$ and any $\vec{a}\in A^{<\omega}$, if $\phi[\vec{a}]$ is $\mathcal{P}$-forceably persistent, then $\phi[\vec{a}]$ holds.
            \item $\Gamma$-$\mathsf{MP}^{-, *}_{\mathcal{P}}(A)$ is the scheme of formulas asserting that for any provably persistent formula $\phi(\vec{x})$ in $\Gamma$ and any $\vec{a} \in A^{<\omega}$, if $\phi[\vec{a}]$ is $\mathcal{P}$-forceable, then $\phi[\vec{a}]$ holds.
        \end{enumerate}
    \end{defn}
    Note that $\MPminus{\Gamma}{\mathcal{P}}{A}$ has been defined in a slightly different way from \cite{FuchinoGappoParenteGenericAbsolutenessRevisited}. So, even though Lemma \ref{lemm:RcAvsMP} is essentially proved in \cite{FuchinoGappoParenteGenericAbsolutenessRevisited}, we include its proof below for reader's convenience.
    
    The easy implications between the Recurrence Axioms and the Maximality Principles can be summarized in the following diagram:
    \[
    \begin{tikzcd}
        \Gamma\text{-}\mathsf{RcA}^+(\mathcal{P}, A) \arrow[r, Rightarrow] \arrow[d, Rightarrow] & \MPast{\Gamma}{\mathcal{P}}{A} \arrow[d, Rightarrow] \\
        \Gamma\text{-}\mathsf{RcA}(\mathcal{P}, A) \arrow[r, Rightarrow]                     & \Gamma\mathchar`-\mathsf{MP}^{-, *}_{\mathcal{P}}(A)
    \end{tikzcd}
    \]
    Also, it is easy to see the following equivalence:
    \[
    \Sigma_1\mathchar`-\mathsf{RcA}^+(\mathcal{P}, A) \iff \Sigma_1\mathchar`-\mathsf{RcA}(\mathcal{P}, A) \iff \MP{\Sigma_1}{\mathcal{P}}{A}.
    \]
    
    To show the relation between the Maximality Principles and the Recurrence Axioms, we need to use the uniform definability of grounds, which was proved by Laver and Woodin. We borrow the following terminology from \cite{BagariaHamkinsTsaprounisUsubaLaverIndestructible}.

    \begin{defn}
        Let $\mathbb{P}$ be a poset and let $G\subset\mathbb{P}$ be a filter on $\mathbb{P}$. We say that \emph{$r$ succeeds in defining a ground using $\mathbb{P}$ and $G$} if letting $\delta = \lvert\mathbb{P}\rvert^+$, for every beth-fixed point $\gamma$ with $\mathrm{cf}(\gamma)>\delta$, there is a transitive $M\subset V_{\gamma}$ such that
        \begin{enumerate}
            \item $M$ satisfies $\delta$-approximation and $\delta$-cover properties to $V_{\delta}$,
            \item $M\models\mathsf{ZFC}_{\delta}$,
            \item $(\delta^+)^M = \delta^+$,
            \item $r = ({}^{{<}\delta}2)^M$,
            \item $\mathbb{P}\in M$, and
            \item $G$ is $M$-generic with $M[G] = V_{\gamma}$.
        \end{enumerate}
    \end{defn}
    Suppose that $r$ succeeds in defining a ground using $\mathbb{P}$ and $G$. By Laver's theorem \cite[Theorem 1]{LaverCertainVeryLargeCardinalsAreNotCreated}, for each beth-fixed point $\gamma$ with $\mathrm{cf}(\gamma)>\delta$, there exists a unique $M$ with the above properties (1)--(6). A class $W_r$ is defined as the unions of all such $M$. Then we have $V = W_r[G]$. In \cite{BagariaHamkinsTsaprounisUsubaLaverIndestructible}, it was observed that ``$r$ succeeds in defining a ground using $\mathbb{P}$ and $G$'' is a $\Pi_2$ statement in $r, \mathbb{P}, G$. See the paragraphs after Lemma 5 in \cite{BagariaHamkinsTsaprounisUsubaLaverIndestructible}.

    The following lemma can be found in \cite[Lemma 1.6.7]{GoodmanSigmaNCorrectForcingAxioms}.

    \begin{lemm}
        Suppose that $r$ succeeds in defining a ground. Let $a\in W_r$ and let $\phi(v)$ be a $\Sigma_n$ (resp.\ $\Pi_n$) formula, where $n\geq 2$. Then $W_r\models\phi[a]$ is expressible as a $\Sigma_n$ (resp.\ $\Pi_n$) formula with $a$ and $r$ as parameters.
    \end{lemm}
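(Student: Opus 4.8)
The plan is to realize $W_r$ as the increasing continuous union of its rank-initial segments and to compute $W_r$-satisfaction level by level, pushing the complexity down by induction on $n$. Write $\delta$ for the cardinal $\lvert\mathbb{P}\rvert^+$ read off from $r$ (as $r=({}^{<\delta}2)^{W_r}$ already determines $\delta$), and for a beth-fixed point $\gamma$ with $\mathrm{cf}(\gamma)>\delta$ let $M_\gamma$ be the unique transitive $M\subseteq V_\gamma$ witnessing clauses (1)--(6). Since $W_r$ is transitive and rank is absolute, $M_\gamma=V_\gamma\cap W_r=V_\gamma^{W_r}$, so the $M_\gamma$ form an increasing continuous chain with $W_r=\bigcup_\gamma M_\gamma$. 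The first step is to verify that, granting that $r$ succeeds, the relation ``$M=M_\gamma$'' (equivalently ``$x\in W_r$'') is uniformly $\Delta_2$ in its arguments and $r$: a $\Sigma_2$-definition comes from existentially quantifying over the witnessing $\mathbb{P},G$ and the set $V_\gamma$ (noting ``$w=V_\gamma$'' is $\Pi_1$ and that clauses (1)--(6) are then bounded in $w$), and the matching $\Pi_2$-definition from the uniqueness supplied by Laver's theorem.

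With the chain in hand I would prove, by simultaneous induction on $n\geq 2$, that $W_r\models\phi[a]$ is $\Sigma_n$ (resp.\ $\Pi_n$) in $a,r$ for $\phi$ in $\Sigma_n$ (resp.\ $\Pi_n$). The inductive step is clean for $n\geq 3$. Given $\phi=\exists x\,\psi$ with $\psi\in\Pi_{n-1}$, write $W_r\models\phi[a]\iff\exists x\,(x\in W_r\wedge(W_r\models\psi[x,a]))$; by the induction hypothesis the inner clause is $\Pi_{n-1}$, and since $n-1\geq 2$ the $\Delta_2$ predicate ``$x\in W_r$'' may be taken in its $\Pi_2$-form, which is absorbed into the $\Pi_{n-1}$ matrix. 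Thus the matrix is $\Pi_{n-1}$ and the whole statement is $\Sigma_n$; the $\Pi_n$ case is dual, taking ``$x\in W_r$'' in its $\Sigma_2$-form. This reduces the lemma to the base case $n=2$.

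The base case is the crux, and it is exactly here that one must use more than the $\Delta_2$-definability of $W_r$, namely that $V=W_r[G]$ is a \emph{small} forcing extension. Naively, $\Sigma_1$- and $\Pi_1$-truth in $W_r$ only come out $\Sigma_2$ and $\Pi_2$ (because ``$x\in W_r$'' is genuinely $\Delta_2$ and not $\Delta_1$), and feeding these into one more quantifier would yield $\Sigma_3$ rather than $\Sigma_2$. The intended route is to characterize, for $\phi=\exists x\,\forall y\,\phi_0\in\Sigma_2$, that $W_r\models\phi[a]\iff\exists\lambda\,(M_\lambda\models\phi[a]\wedge M_\lambda\prec_{\Sigma_1}W_r)$, using that the levels $M_\lambda$ with $M_\lambda\prec_{\Sigma_1}W_r$ are cofinal and that for such $\lambda$ the $\Pi_1$-part $\forall y\,\phi_0$ reflects between $M_\lambda$ and $W_r$. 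The whole point is then to detect the correctness clause $M_\lambda\prec_{\Sigma_1}W_r$ by a $\Pi_1$ (rather than $\Pi_2$) condition, so that $\exists\lambda(\dots)$ remains $\Sigma_2$.

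The hard part is precisely this cheap detection of $\Sigma_1$-correct levels of $W_r$ from the vantage point of $V$. For $\lambda>\delta$ a limit one has $V_\lambda^V=M_\lambda[G]$ and $V=W_r[G]$, so by the forcing theorem $\Sigma_1$-statements over $M_\lambda$ and over $W_r$ translate into forcing assertions that can be compared inside the extension; combined with the $\delta$-approximation and $\delta$-cover properties and the smallness of $\mathbb{P}$, this should tie ``$M_\lambda\prec_{\Sigma_1}W_r$'' to the inexpensive condition ``$V_\lambda^V\prec_{\Sigma_1}V$'', which is $\Pi_1$ in $\lambda$ since $\Sigma_1$-truth in $V$ is literally $\Sigma_1$. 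Establishing this transfer — in particular checking that generic witnesses produced on the $V$-side can be pulled back to genuine $W_r$-witnesses of bounded rank, which is the subtle direction and where the approximation property must do the real work — is the main obstacle. Once it is in place the base case gives $\Sigma_2$ and $\Pi_2$, the induction of the previous paragraph settles all $n\geq 2$, and the $\Pi_n$ clauses throughout are obtained by dualizing.
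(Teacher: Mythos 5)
Your skeleton --- writing $W_r=\bigcup_\gamma M_\gamma$ with $M_\gamma=V_\gamma^{W_r}$, noting that ``$x\in W_r$'' is $\Delta_2$, and running the induction for $n\geq 3$ by absorbing the $\Delta_2$ membership predicate in its $\Pi_2$ (resp.\ $\Sigma_2$) form into the $\Pi_{n-1}$ (resp.\ $\Sigma_{n-1}$) matrix --- is correct and is the standard decomposition; the paper does not reprove the lemma but cites Goodman, and the intended argument has exactly this shape. The problem is the base case $n=2$, which you rightly call the crux and then do not actually prove: you reduce it to expressing ``$M_\lambda\prec_{\Sigma_1}W_r$'' by a $\Pi_1$ condition via a transfer to ``$V_\lambda\prec_{\Sigma_1}V$'' through the forcing theorem and the approximation property, and you explicitly leave that transfer as ``the main obstacle.'' As written this is a genuine gap, and moreover the detour is unnecessary.

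The missing observation is that the correctness clause is automatic for \emph{every} relevant level, so it never needs to be expressed at all. Each $\gamma$ occurring in the definition of ``$r$ succeeds'' is a beth-fixed point of $V$ with $\mathrm{cf}(\gamma)>\delta=\lvert\mathbb{P}\rvert^+$; since $V=W_r[G]$ is an extension by a forcing of size ${<}\delta$, such $\gamma$ are also beth-fixed points, in particular uncountable cardinals, of $W_r$, so $M_\gamma=V_\gamma^{W_r}=H_\gamma^{W_r}$. L\'{e}vy absoluteness, applied \emph{inside} the $\mathsf{ZFC}$-model $W_r$, then gives $M_\gamma\prec_{\Sigma_1}W_r$ for all such $\gamma$ --- there is nothing to detect, and no comparison with $\Sigma_1$-correct levels of $V$ is needed. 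Hence for $\phi\in\Sigma_2$ one has: $W_r\models\phi[a]$ iff there exist $\gamma$, $M$, $\mathbb{P}$, $G$ and $w$ such that $w=V_\gamma$, $M$ satisfies clauses (1)--(6) at $\gamma$, $a\in M$, and $M\models\phi[a]$. Since ``$w=V_\gamma$'' is $\Pi_1$ and the remaining clauses (including satisfaction in the set structure $M$, and the approximation and cover properties, whose universal quantifiers range over subsets of $M$ and hence over $V_{\gamma+\omega}$) are bounded in $w$, this is $\Sigma_2$; the $\Pi_2$ case dualizes, using the uniqueness of $M$ supplied by Laver's theorem to turn the existential over $M$ into a universal. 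With this base case in place, your induction for $n\geq 3$ goes through verbatim.
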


    Now it is easy to see the following.

    \begin{lemm}[Fuchino--Gappo--Parente \cite{FuchinoGappoParenteGenericAbsolutenessRevisited}]\label{lemm:RcAvsMP}
        Assume that $\mathcal{P}$ is $\Sigma_m$-definable and that $\mathcal{P}$ is an iterable class of posets. For $\max\{3, m\}\leq n<\omega$,
        \begin{enumerate}
            \item $\Sigma_n$-$\mathsf{RcA}(\mathcal{P}, A)\iff\Sigma_n$-$\mathsf{MP}^{-, *}_{\mathcal{P}}(A)$.
            \item $\Sigma_n$-$\mathsf{RcA}^+(\mathcal{P}, A)\iff\Sigma_n$-$\mathsf{MP}^*_{\mathcal{P}}(A)$.
        \end{enumerate}
    \end{lemm}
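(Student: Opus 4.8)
The plan is to prove both biconditionals by a common template: the forward implications are immediate from (provable) persistence, and the reverse implications encode the assertion ``$\phi[a]$ holds in some ($\mathcal{P}$-)ground of $V$'' as a $\Sigma_n$ formula $\psi$ and then apply the relevant Maximality Principle to $\psi$.

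For the forward direction of (1) I would assume $\Sigma_n$-$\mathsf{RcA}(\mathcal{P}, A)$, fix a provably persistent $\Sigma_n$ formula $\phi$ and $a \in A$ with $\phi[a]$ $\mathcal{P}$-forceable, obtain from $\mathsf{RcA}$ a ground $W$ of $V$ with $W \models \phi[a]$, and note that since $V$ is a set-forcing extension of $W$ and $\phi$ is provably persistent, $\phi[a]$ holds in $V$ — this is the conclusion of $\Sigma_n$-$\mathsf{MP}^{-,*}_{\mathcal{P}}(A)$. The forward direction of (2) is verbatim the same, reading ``$\mathcal{P}$-ground'' and ``provably $\mathcal{P}$-persistent'' throughout: the forcing from the $\mathcal{P}$-ground $W$ up to $V$ lies in $\mathcal{P}^W$, so provable $\mathcal{P}$-persistence carries $\phi[a]$ from $W$ to $V$.

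For the reverse directions I would use the uniform definability of grounds. With the $\Pi_2$ predicate ``$r$ succeeds in defining a ground'' and the preceding complexity lemma, set
\[
\psi[a] \equiv \exists r\,\big(r\text{ succeeds in defining a ground}\ \wedge\ W_r\models\phi[a]\big)
\]
for (1), and for (2) conjoin the extra clause that the forcing witnessing $V = W_r[G]$ lies in $\mathcal{P}^{W_r}$, so that $\psi[a]$ says ``$\phi[a]$ holds in some $\mathcal{P}$-ground of $V$''. The first key step is the complexity count: ``$r$ succeeds in defining a ground'' is $\Pi_2$, ``$W_r \models \phi[a]$'' is $\Sigma_n$ by the preceding lemma, and ``$W_r \models \mathbb{P} \in \mathcal{P}$'' is $\Sigma_m$; since $n \geq \max\{3, m\}$ all three are $\Sigma_n$, so $\psi$ is $\Sigma_n$ — this is precisely where both hypotheses $n \geq 3$ and $n \geq m$ enter. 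Next I would check the two inputs to the Maximality Principle. For forceability: since $\phi[a]$ is $\mathcal{P}$-forceable, pass to $V[g] \models \phi[a]$ with $g$ $\mathcal{P}$-generic; as $V[g]$ is a trivial, hence $\mathcal{P}$-, ground of itself, $\psi[a]$ holds in $V[g]$, so $\psi[a]$ is $\mathcal{P}$-forceable. For provable persistence: a ground of $V$ is still a ground of any forcing extension $V[H]$, and in the $\mathcal{P}$-ground case closure of $\mathcal{P}$ under two-step iteration shows a $\mathcal{P}$-ground of $V$ remains a $\mathcal{P}$-ground of $V[H]$ whenever $H$ is $\mathcal{P}$-generic; since $\phi[a]$ is read off in the unchanged ground, $\psi$ is provably (resp.\ provably $\mathcal{P}$-) persistent. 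Applying $\Sigma_n$-$\mathsf{MP}^{-,*}_{\mathcal{P}}(A)$ (resp.\ $\SnMPast{\mathcal{P}}{A}$) to $\psi$ then gives $\psi[a]$, which unwinds to the conclusion of $\Sigma_n$-$\mathsf{RcA}(\mathcal{P}, A)$ (resp.\ $\Sigma_n$-$\mathsf{RcA}^+(\mathcal{P}, A)$).

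The hard part will be the complexity bookkeeping in the reverse directions — certifying that $\psi$ is genuinely $\Sigma_n$ through the Laver--Woodin uniform ground-definability and the complexity lemma — together with the careful use of iterability to see that $\mathcal{P}$-grounds persist under further $\mathcal{P}$-forcing; in particular one must arrange the relevant two-step iterate to land in $\mathcal{P}$, invoking closure under restriction and forcing equivalence and the presence of the trivial poset. The remaining verifications are routine.
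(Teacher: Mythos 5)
Your proposal is correct and follows essentially the same route as the paper: both encode ``$\phi[a]$ holds in some ($\mathcal{P}$-)ground'' as a $\Sigma_n$ formula via the Laver--Woodin uniform definability of grounds, verify the complexity using $n\geq\max\{3,m\}$, and use iterability of $\mathcal{P}$ to see that this formula is provably ($\mathcal{P}$-)persistent before applying the Maximality Principle. You are merely more explicit than the paper about the easy forward implications and the forceability check, which the paper compresses into ``the claim immediately follows.''
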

    \begin{proof}
        We only show the second part, as the proof of the first part is slightly easier. Let $\phi(v)$ be a $\Sigma_n$ formula. Consider the following formula $\phi^*(v)$: There are $r, \mathbb{P}, G$, and $\delta$ such that
        \begin{enumerate}
            \item $r$ succeeds in defining a ground using $\mathbb{P}$ and $G$ (so that $W_r$ is well-defined),
            \item $v\in W_r$,
            \item $W_r\models\mathbb{P}\in\mathcal{P}$,
            \item $W_r\models\phi[v]$.
        \end{enumerate}
        Then $\phi^*(v)$ is a $\Sigma_n$ formula, because (1) is a $\Pi_2$ statement in $r, \mathbb{P}, G$, (2) is $\Sigma_2$ (and also $\Pi_2$) in $r$, (3) is $\Sigma_{\max\{3, m\}}$ in $r, \mathbb{P}$, and (4) is $\Sigma_n$ in $r$. Note that $\phi^*(v)$ claims that there is a $\mathcal{P}$-ground $W$ such that $v\in W$ and $W\models\phi[v]$. Since $\mathcal{P}$ is iterable, $\phi^*(v)$ is provably $\mathcal{P}$-persistent. The claim immediately follows from this observation.
    \end{proof}

    \begin{thm}
    Let $\mathcal{P}$ and $\kappa$ be as in Assumption \ref{assump}. Let $n\geq 2$.
    \begin{enumerate}
        \item Both $\Sigma_n$-$\mathsf{RcA}(\mathcal{P}, H_{\kappa})$ and $\Sigma_n$-$\mathsf{RcA}^+(\mathcal{P}, H_{\kappa})$ are equiconsistent with the existence of a regular $\Sigma_n$-correct cardinal.
        \item Both $\Pi_n$-$\mathsf{RcA}(\mathcal{P}, H_{\kappa})$ and $\Pi_n$-$\mathsf{RcA}^+(\mathcal{P}, H_{\kappa})$ are equiconsistent with the existence of a regular $\Sigma_{n+1}$-correct cardinal.
    \end{enumerate}
    \end{thm}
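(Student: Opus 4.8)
The plan is to derive both equiconsistencies from the correspondence in Lemma \ref{lemm:RcAvsMP} together with the strength computations of Corollary \ref{cor:ExactConsistencyStrength} and Theorems \ref{TheoremAspero1}, \ref{TheoremAspero2}; recall that $\mathcal{P}$ is $\Sigma_2$-definable, so Lemma \ref{lemm:RcAvsMP} is available once the ambient formula class sits at level $\geq 3$. For part (1) with $n\geq 3$, Lemma \ref{lemm:RcAvsMP} identifies $\Sigma_n$-$\mathsf{RcA}^+(\mathcal{P},H_\kappa)$ with $\MPast{\Sigma_n}{\mathcal{P}}{H_\kappa}$ and $\Sigma_n$-$\mathsf{RcA}(\mathcal{P},H_\kappa)$ with $\Sigma_n\text{-}\mathsf{MP}^{-,*}_{\mathcal{P}}(H_\kappa)$. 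The first axiom thus inherits its exact strength, a regular $\Sigma_n$-correct cardinal, verbatim from Corollary \ref{cor:ExactConsistencyStrength}(1).

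For the weaker $\Sigma_n\text{-}\mathsf{MP}^{-,*}_{\mathcal{P}}(H_\kappa)$, the upper bound is free because $\MPast{\Sigma_n}{\mathcal{P}}{H_\kappa}$ implies it (the provably persistent formulas form a subclass of the provably $\mathcal{P}$-persistent ones). For the matching lower bound I would revisit Asper\'o's proof of Theorem \ref{TheoremAspero2}: the sentences to which the principle is applied there merely assert the existence of a collapsed witness to a reflection fact about $L$, and such sentences are provably persistent with respect to all posets, not just those in $\mathcal{P}$. Consequently the argument survives when the hypothesis is weakened to the provably-persistent fragment, yielding $\kappa$ regular and $\Sigma_n$-correct in $L$. (Regularity of $\kappa$ in $L$ is cheap, since $\kappa$ is regular in $V$---for $\mathcal{P}=\text{c.c.c.}$ via the Martin's Axiom consequence of the $\Sigma_1$-fragment---and regularity descends to $L$.) The boundary case $n=2$ lies outside Lemma \ref{lemm:RcAvsMP}, as the ground predicate is $\Pi_2$ and cannot be absorbed below $\Sigma_3$; I would instead verify part (1) for $n=2$ directly inside Asper\'o's collapse model, where the ground structure is explicit.

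For part (2) the two $\mathsf{RcA}$-variants must be handled separately, because for a $\Pi_n$ formula $\phi$ the ground-satisfaction sentence
\[
\phi^{\circ}(v)\equiv\exists W\;(W\text{ is a }\mathcal{P}\text{-ground with }v\in W\text{ and }W\models\phi[v])
\]
is $\Sigma_{n+1}$, not $\Pi_n$. For the upper bound I would show $\MPast{\Sigma_{n+1}}{\mathcal{P}}{H_\kappa}\Rightarrow\Pi_n$-$\mathsf{RcA}^+(\mathcal{P},H_\kappa)$: by iterability of $\mathcal{P}$ the sentence $\phi^{\circ}$ is provably $\mathcal{P}$-persistent, and whenever $\phi[v]$ is $\mathcal{P}$-forceable so is $\phi^{\circ}[v]$, since in any extension the universe is a $\mathcal{P}$-ground of itself via the trivial poset; applying $\MPast{\Sigma_{n+1}}{\mathcal{P}}{H_\kappa}$ to $\phi^{\circ}$ then produces a witnessing $\mathcal{P}$-ground already in $V$. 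Feeding in Theorem \ref{TheoremAspero1} at level $n+1$ yields a model of $\Pi_n$-$\mathsf{RcA}^+$, hence of $\Pi_n$-$\mathsf{RcA}$, from a regular $\Sigma_{n+1}$-correct cardinal.

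For the lower bound it suffices, using $\Pi_n$-$\mathsf{RcA}^+\Rightarrow\Pi_n$-$\mathsf{RcA}$, to show that $\Pi_n$-$\mathsf{RcA}(\mathcal{P},H_\kappa)$ forces $\kappa$ to be regular and $\Sigma_{n+1}$-correct in $L$. I would mirror the proof of Corollary \ref{cor:ExactConsistencyStrength}(2)---that is, of the implication from $\MP{\Pi_n}{\mathcal{P}}{H_\kappa}$ to the $\Sigma_{n+1}$-correctness of $\kappa$ in $L$---but arrange the auxiliary reflection sentence to have the ground-existence shape $\phi^{\circ}$ with a $\Pi_n$ kernel, so that $\Pi_n$-$\mathsf{RcA}$ (equivalently, the restriction of $\Sigma_{n+1}\text{-}\mathsf{MP}^{-,*}_{\mathcal{P}}$ to such ground sentences) can be invoked in place of the full Maximality Principle. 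The main obstacle is exactly this adaptation: one must check that the reflection witness used to extract $\Sigma_{n+1}$-correctness can be cast as a provably persistent ground sentence, so that the comparatively weak Recurrence Axiom already suffices where the earlier proof used all of $\MP{\Pi_n}{\mathcal{P}}{H_\kappa}$. The analogous verification for $\Sigma_n\text{-}\mathsf{MP}^{-,*}_{\mathcal{P}}(H_\kappa)$ in part (1) and the hands-on treatment of $n=2$ are the remaining delicate points.
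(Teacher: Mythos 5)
Your treatment of part (1) matches the paper's: for $n\geq 3$ you route both axioms through Lemma \ref{lemm:RcAvsMP}, take the upper bound from Corollary \ref{cor:ExactConsistencyStrength}(1), observe that Asper\'o's lower-bound argument only ever applies the principle to sentences that are provably persistent for \emph{all} posets (so the weak variant $\Sigma_n\text{-}\mathsf{MP}^{-,*}_{\mathcal{P}}(H_\kappa)$ suffices), and handle $n=2$ by direct inspection of Asper\'o's model; your explicit attention to the weak variant is if anything more careful than the paper's one-line citation. The upper bound of part (2) also matches: your $\phi^\circ$ argument is exactly the restriction of Lemma \ref{lemm:RcAvsMP} at level $n+1$ to $\Pi_n$ kernels, i.e.\ the implication $\Sigma_{n+1}\text{-}\mathsf{RcA}^+(\mathcal{P},H_\kappa)\Rightarrow\Pi_n\text{-}\mathsf{RcA}^+(\mathcal{P},H_\kappa)$. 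Where you diverge, and where your proposal is genuinely incomplete, is the lower bound of part (2): you propose to re-run the $\Sigma_{n+1}$-correctness extraction with the reflection sentences recast in ground-existence form, and you concede that the required persistence check is left open. This detour is unnecessary. The paper instead uses the trivial implications $\Pi_n\text{-}\mathsf{RcA}^+(\mathcal{P},A)\Rightarrow\MPast{\Pi_n}{\mathcal{P}}{A}$ and $\Pi_n\text{-}\mathsf{RcA}(\mathcal{P},A)\Rightarrow\Pi_n\text{-}\mathsf{MP}^{-,*}_{\mathcal{P}}(A)$ from the diagram displayed just before the theorem (a provably ($\mathcal{P}$-)persistent statement true in a ($\mathcal{P}$-)ground holds in $V=W[G]$), combines them with the equivalence $\MPast{\Pi_n}{\mathcal{P}}{A}\iff\MP{\Pi_n}{\mathcal{P}}{A}$, and then quotes the lower bound already established for $\MP{\Pi_n}{\mathcal{P}}{H_\kappa}$, namely that $\kappa$ is regular and $\Sigma_{n+1}$-correct in $L$; no new reflection argument is needed. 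The only residual care, for the weak $\Pi_n\text{-}\mathsf{RcA}$, is precisely the point you already isolated in part (1): the lower-bound proof must be checked to use only formulas provably persistent for all posets. Replacing your part-(2) lower bound by this chain closes the proof.
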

    \begin{proof}
        For $n\geq 3$, (1) follows from \ref{cor:ExactConsistencyStrength} and \ref{lemm:RcAvsMP}.
        (2) also follows because we have the following implications:
        \[
        \begin{tikzcd}
            \Sigma_{n+1}\text{-}\mathsf{RcA}^+(\mathcal{P}, A) \arrow[r, Rightarrow] \arrow[d, Rightarrow] & \Pi_n\text{-}\mathsf{RcA}^+(\mathcal{P}, A) \arrow[r, Rightarrow] \arrow[d, Rightarrow] & \MPast{\Pi_n}{\mathcal{P}}{A} \arrow[d, Rightarrow]\\
            \Sigma_{n+1}\text{-}\mathsf{RcA}(\mathcal{P}, A) \arrow[r, Rightarrow]                     & \Pi_n\text{-}\mathsf{RcA}(\mathcal{P}, A)\arrow[r, Rightarrow] & \Pi_n\text{-}\mathsf{MP}^{-, *}_{\mathcal{P}}(A)          
        \end{tikzcd}
        \]
        Note that $\Sigma_2$-$\mathsf{RcA}(\mathcal{P}, A)$ and $\Sigma_2$-$\mathsf{RcA}^+(\mathcal{P}, A)$ are not equivalent to fragments of Maximality Principles, but still the upper bound of their consistency strength is a regular $\Sigma_2$-correct cardinal by Asper\'{o}'s proof of Theorem \ref{TheoremAspero1} in \cite{AsperoAMaximalBoundedForcingAxiom}.
    \end{proof}

    \subsection{Maximality Principles for $\omega_1$-preserving posets}\label{subsection:omega_1-pres}

    In stark contrast to the standard forcing axioms, the Maximality Principle is consistent for some highly non-iterable classes of forcings, even with parameters. Recall that the standard forcing axiom $\mathsf{FA}_{\omega_1}(\{\PP\})$ can hold for a forcing $\PP$ only if $\PP$ preserves stationary subsets of $\omega_1$ (cf.\ \cite{ForemanMagidorShelahMM}), so the forcing axiom for all $\omega_1$-preserving forcings is inconsistent. On the other hand, we will show that $\mathsf{MP}_{\omega_1\text{-pres}}(\omega_1)$ is consistent relative to some large cardinal axiom. We do not know whether this principle is \textit{practically consistent}, i.e.\ whether it provably holds in some forcing extension assuming the existence of certain large cardinals. Hence we look for other means to establish consistency.
    
    Kechris--Woodin \cite{Kechris_Woodin_2008} show that if both $\undertilde{\Sigma}^1_1$- and $\Delta^1_2$-determinacy hold then for a Turing cone of $x$, the theory of $L[x]$ is stabilized. We aim to show that $\mathsf{MP}_{\omega_1\text{-pres}}(\omega_1)$ belongs to that theory.
    
    \begin{thm}\label{thm:ConsistencyOfMPomeg1PresWithCountableParas}
        Assume that all $\Delta^1_2$- and $\undertilde{\Sigma}^1_1$-sets are determined.
        Then there is some $x_0 \in \mathbb{R}$ such that for all $x \geq_{T} x_0$,
        \[
        L[x]\models\mathsf{MP}_{\omega_1\text{-pres}}(\omega_1).
        \]
    \end{thm}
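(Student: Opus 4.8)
The plan is to exploit the countability and correctness of the models $L[x]$ under the hypotheses, together with the Kechris--Woodin stabilization, in order to turn each instance of the Maximality Principle into a statement about the \emph{stable} theory of $L[\cdot]$ along a Turing cone.

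First I would record the consequences of the determinacy hypotheses. By the Harrington--Martin theorem, $\undertilde{\Sigma}^1_1$-determinacy yields that $x^{\#}$ exists for every real $x$; hence every $L[x]$ is correctly computed, $\omega_1^{L[x]}$ is countable in $V$, and---crucially---every $L[x]$-generic filter for a forcing of $L[x]$-size at most $\omega_1^{L[x]}$ can be found in $V$, since then only countably many dense subsets of the poset lie in $L[x]$. Adding $\Delta^1_2$-determinacy, the cited Kechris--Woodin result provides a real $x_0$ such that the first-order theory of $L[x]$ is constant for $x \geq_{T} x_0$; I would use this in the parametrized form: for every $x \geq_{T} x_0$, every real $w \leq_{T} x$, and every $y \geq_{T} x$, one has $\mathrm{Th}(L[x],w) = \mathrm{Th}(L[y],w)$, obtained by relativizing the stabilization argument above $x$.

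Fix $x \geq_{T} x_0$; I claim $L[x] \models \mathsf{MP}_{\omega_1\text{-pres}}(\omega_1)$. Let $\phi$ be a formula and $a < \omega_1^{L[x]}$, and suppose $\phi[a]$ is $\omega_1$-pres forceably persistent in $L[x]$. Since $a$ is countable in $L[x]$, fix a real $w \in L[x]$ coding $a$; as $L[x\oplus w]=L[x]$ and $x\oplus w \geq_{T} x_0$, I may replace $x$ by $x \oplus w$ and assume $w \leq_{T} x$. Writing $\phi^{*}[w]$ for the equivalent statement with the real parameter $w$, the goal becomes $L[x] \models \phi^{*}[w]$. The key mechanism is that subsets of $\omega_1^{L[x]}$ are coded by reals of $V$: if $\mathbb{P} \in L[x]$ is $\omega_1$-preserving of $L[x]$-size at most $\omega_1^{L[x]}$ and $G \in V$ is generic, then $L[x][G] = L[x][A]$ for some $A \subseteq \omega_1^{L[x]}$, and coding $A$ by a real $r \in V$ gives $L[x][G] = L[y]$ with $y = x \oplus r \geq_{T} x$ in the cone. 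Stabilization then yields $L[x][G] \models \phi^{*}[w] \iff L[x] \models \phi^{*}[w]$; i.e.\ the truth value of $\phi^{*}[w]$ is invariant under such small forcings. Consequently, if forceable persistence is witnessed by such a $\mathbb{P}$, then $\phi^{*}[w]$ holds in $L[x][G]=L[y]$ (persistence forces truth in the extension itself), whence $L[x] \models \phi^{*}[w]$, as desired.

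The main obstacle is that persistence is a \emph{global} notion---it quantifies over the proper class of all $\omega_1$-preserving posets---so a priori the witnessing forcing may have $L[x]$-size above $\omega_1^{L[x]}$ (indeed above $\omega_1^{V}$), in which case no $V$-generic exists and $L[x][G]$ is no longer of the form $L[y]$ for a real $y$ of the cone. The heart of the proof is therefore to reduce to small forcings, equivalently to show that a statement which is false throughout the cone cannot be forceably persistent via any $\omega_1$-preserving poset. Applying the previous paragraph, the truth value of $\phi^{*}[w]$ is already fixed (and, under the contrary assumption, false) across all $L[y]$ with $y \geq_{T} x$; one must then argue that large forcing cannot override this. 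I would attempt this either by reflecting the persistence of $\phi^{*}[w]$ from a large witness down to a poset of size at most $\omega_1^{L[x]}$ via a condensation/elementary-submodel argument inside $L[x]$, or by a recurrence-style argument observing that $L[x]$ is a ground of the (outer-model) extension in which $\phi^{*}[w]$ holds persistently and transferring truth back to the ground, using that the stabilized theory constrains all set-forcing extensions. Making the reflection of the \emph{persistence} quantifier rigorous---persistence being $\Pi_2$ over the universe and hence not obviously first-order over any $H_\theta^{L[x]}$---is the delicate point, and is exactly where the full strength of the Kechris--Woodin stabilization, rather than merely its consequence for small forcings, must be brought to bear.
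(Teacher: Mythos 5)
Your overall strategy matches the paper's (stabilize the theory of $L[x]$ on a cone, reduce to small forcings, realize the relevant extension as $L[y]$ for a real $y$, and transfer truth back), but the two steps you treat as routine are exactly where the work lies, and one of them is replaced by a false claim. First, the ``parametrized form'' of the Kechris--Woodin stabilization is not obtained ``by relativizing the stabilization argument above $x$'': relativizing to a fixed parameter $w$ produces a cone base depending on $w$, whereas you need a \emph{single} base $x_0$ working simultaneously for all $w\leq_T x$ (equivalently, for all $\alpha<\omega_1^{L[x]}$). Establishing this uniformity is the main technical content of the paper's proof (Lemma \ref{lemm:StableTheory}): assuming some $A_\varphi$ omits a cone, one extracts a definable least counterexample ordinal $\alpha_\ast$, proves it is monotone under passage to forcing extensions $L[y]$, drives $\sup_\beta\alpha_\ast^{y_\beta}$ up to the least $x_0$-indiscernible $\xi$ using $\Col(\omega,{<}\xi)$, and then contradicts $\alpha_\ast^z<\omega_1^{L[z]}=\xi$ via Jensen coding. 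Note also that your version quantifies over all $y\geq_T x$, which the paper explicitly says requires \emph{more} determinacy than assumed; only the restriction to $y$ with $L[y]$ a forcing extension of $L[x]$ is available (and is all that is needed).

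Second, your ``key mechanism'' is wrong: if $G$ is generic for an $\omega_1$-preserving $\mathbb{P}\in L[x]$ and $A\subseteq\omega_1^{L[x]}$ codes $G$, then a real $r\in V$ coding $A$ necessarily computes a collapse of $\omega_1^{L[x]}$, so $r\notin L[x][G]$ and $L[x][G]\neq L[x\oplus r]$. Indeed $L[x][G]$ need not be of the form $L[y]$ for \emph{any} real $y$: $\Add(\omega_1,1)^{L[x]}$ adds no reals at all. This is precisely why the paper invokes Jensen's Coding Theorem (Theorem \ref{thm:CodingTheUniverse}): one passes to a \emph{further} cardinal-preserving extension $L[y]\supseteq L[x][G]$ with $y$ a real, and it is persistence --- not mere truth in $L[x][G]$ --- that guarantees $L[y]\models\varphi(\alpha)$. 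Your argument only uses ``persistence forces truth in the extension itself,'' which does not survive this necessary extra step. Finally, the reduction to small forcings that you leave as ``the delicate point'' does not need the stabilization at all: since $x^{\#}$ exists, $\omega_1^V$ is a Silver indiscernible and $L_{\omega_1^V}[x]\prec L[x]$; the assertion ``$\varphi[\alpha]$ is $\omega_1$-pres-forceably $\omega_1$-pres-persistent'' is a single first-order statement about $L[x]$ (via the forcing relation), so a witnessing poset exists in $L_{\omega_1^V}[x]$, is hereditarily countable in $V$, and therefore has a generic in $H_{\omega_1}^V$.
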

    The determinacy assumption here follows from the existence of a Woodin cardinal and a measurable cardinal above. To prove Theorem \ref{thm:ConsistencyOfMPomeg1PresWithCountableParas}, we make use of the set-version of Jensen's Coding the Universe Theorem.
    
    \begin{thm}[Jensen \cite{Beller_Jensen_Welch_1982}]\label{thm:CodingTheUniverse}
        Suppose $V=L[A]$ for a set $A$ and $\mathsf{GCH}$ holds. Then there is a cardinal-preserving set-forcing extension $V[G]$ such that $V[G]=L[y]$ for a real $y$.
    \end{thm}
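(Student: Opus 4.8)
The plan is to code the set $A$ down to a single real by a descending, set-length iteration of almost disjoint coding forcings, using $\GCH$ to keep every stage—and hence the whole iteration—cardinal-preserving. First I would normalize the hypothesis: since $A$ is a set and $V=L[A]$, fix a cardinal $\kappa$ with $A\subseteq\kappa$, so that all of $V$ is captured by this single subset of $\kappa$. The goal then becomes to add a generic $y\subseteq\omega$ with $A\in L[y]$ (so $V=L[A]\subseteq L[y]$) and with the whole generic recoverable from $y$ (so $L[y]=V[G]$), while collapsing no cardinals.

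The basic building block is Jensen--Solovay almost disjoint coding. For an infinite cardinal $\lambda$ I would fix in $L$ a canonical almost disjoint family $\langle a_\xi : \xi<\lambda^+\rangle$ of subsets of $\lambda$, with pairwise intersections of size $<\lambda$. Given a target $B\subseteq\lambda^+$, the coding poset $\mathbb{C}_\lambda(B)$ adds $C\subseteq\lambda$ with $\xi\in B\iff |a_\xi\cap C|<\lambda$ for all $\xi$; its conditions are pairs consisting of a bounded approximation to $C$ together with a bounded promise about which $a_\xi$ are eventually avoided. The two facts that make this work under $\GCH$ are that $\mathbb{C}_\lambda(B)$ is $\lambda^+$-cc (preserving cardinals $\geq\lambda^+$) and sufficiently $<\lambda$-distributive (adding no bounded subsets of $\lambda$, so preserving cardinals $\leq\lambda$), and that $\GCH$ is preserved so that the next stage's almost disjoint family still exists. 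Thus a single step faithfully transfers the information in $B\subseteq\lambda^+$ into $C\subseteq\lambda$ with $B\in L[C]$.

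Next I would assemble these steps into a reverse-Easton-style iteration descending through the infinite cardinals $\lambda\leq\kappa$: starting from $A$, at each stage code the information accumulated above $\lambda$ into a subset of $\lambda$, taking full-support inverse limits at limit stages and direct limits where the chain condition permits, until the process terminates in a subset $y\subseteq\omega$. Since $\kappa$ is a set, the total forcing is a set (of size at most $\kappa^+$ under $\GCH$), matching the \emph{set-version} claim, and cardinals above $\kappa$ are preserved by a crude chain-condition count. Decoding then runs upward: from $y$ one reconstructs the subset produced at the previous cardinal, then the next, and so on up to $A$, giving $A\in L[y]$; and because each intermediate generic is itself coded into the next real, the entire generic filter is computable from $y$ inside $L[y]$, so $V[G]=L[A][G]=L[y]$.

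The main obstacle is the cardinal preservation of the \emph{full} descending iteration rather than of a single step. A naive iteration of $<\lambda$-distributive, $\lambda^+$-cc forcings across all cardinals can collapse cardinals at limit stages, and controlling this is the heart of Jensen's coding machinery: the canonical, coherently chosen almost disjoint families, the $\diamondsuit$-guided ``strings'' used to thin the conditions, and the resulting factorization of the iteration at each cardinal $\lambda$ as a $\lambda$-closed-type head followed by a $\lambda^+$-cc tail. I would isolate exactly this factorization as the key lemma and run an approximation-style argument to show that neither new bounded subsets nor new cofinal sequences appear at any cardinal, paying special attention to limit and singular cardinals where the coding target must itself be reorganized before it can be pushed below $\lambda$. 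Establishing this factorization uniformly through all stages, assuming only $\GCH$, is where essentially all the work lies.
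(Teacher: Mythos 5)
The paper does not prove this statement at all: it is imported as a black box from Beller--Jensen--Welch, \emph{Coding the Universe}, so there is no in-paper argument to compare against. Measured against the cited source, your outline identifies the correct strategy: almost disjoint coding at each cardinal, a descending set-length iteration from $\kappa$ down to $\omega$, $\GCH$ supplying the $\lambda^+$-c.c.\ and distributivity at each step, and upward decoding to recover $A$ and the generic from $y$. That is genuinely the shape of Jensen's proof, and you are also right that the one-step lemma is unproblematic and that the difficulty is concentrated entirely in the preservation properties of the limit of the descending construction.

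But that concentration is exactly why this cannot count as a proof: the step you defer --- the factorization of the full coding forcing at each $\lambda$ into a sufficiently distributive part below and a $\lambda^+$-c.c.\ part above, verified uniformly through limit and singular stages --- is not a routine verification but the entire content of the theorem, occupying most of the Beller--Jensen--Welch monograph. Two concrete points where the sketch as written would fail if pushed naively: (i) the construction is not literally a ``reverse-Easton-style iteration'' of the one-step posets; conditions must be coherent sequences of coding conditions across all cardinals simultaneously (Jensen's ``strings''), and without this reorganization the full-support limit you describe does collapse cardinals; (ii) at singular cardinals and at inaccessible limits the almost disjoint coding step alone does not suffice --- the target set must be restructured (``coding by delays'' and the fine-structural $\diamondsuit$/$\square$ apparatus of $L$) before it can be pushed below $\lambda$, and the distributivity argument at those stages is where the genuine work happens. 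You name these obstacles honestly, but naming the key lemma is not the same as proving it, so the proposal should be regarded as a correct road map to the literature rather than a proof.
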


    First, we prove the following variant of the result of Kechris--Woodin on the theory of $L[x]$.
    
    \begin{lemm}\label{lemm:StableTheory}
        Assume that all $\Delta^1_2$- and $\undertilde{\Sigma}^1_1$-sets are determined. Then for a Turing cone of reals $x$, we have that for all reals $y$ such that $L[y]$ is a forcing extension of $L[x]$,
        \[
        \mathrm{Th}\left(\left(L[x];\in, \alpha\mid\alpha<\omega_1^{L[x]}\right)\right)=\mathrm{Th}\left(
        \left(L[y];\in, \alpha\mid\alpha<\omega_1^{L[x]}\right)\right).
        \]
    \end{lemm}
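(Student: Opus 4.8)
The plan is to obtain the lemma from the Kechris--Woodin stabilization theorem by first converting the forcing hypothesis into a statement purely about Turing reducibility. \emph{Reduction to the cone.} Suppose $L[y]$ is a forcing extension of $L[x]$; then $L[x]\subseteq L[y]$, so $x\in L[y]$, and I set $z:=x\oplus y$. Since $x,y\in L[y]$ we get $z\in L[y]$, hence $L[z]\subseteq L[y]$, while $y\leq_T z$ gives $L[y]\subseteq L[z]$; thus $L[z]=L[y]$ and $z\geq_T x$. Consequently it suffices to produce a Turing cone with base $x_0$ such that for all $x\geq_T x_0$ and all $z\geq_T x$,
\[
\mathrm{Th}\bigl(L[x];\in,\alpha\mid\alpha<\omega_1^{L[x]}\bigr)=\mathrm{Th}\bigl(L[z];\in,\alpha\mid\alpha<\omega_1^{L[x]}\bigr);
\]
applying this to $z=x\oplus y$ and using $L[z]=L[y]$ then yields the lemma, and the forcing hypothesis plays no further role. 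Note that $\undertilde{\Sigma}^1_1$-determinacy guarantees that $x^{\#}$ exists for every real $x$, so $\omega_1^{L[x]}<\omega_1$ and the displayed ordinal parameters are genuine countable ordinals, each coded by a real inside $L[x]$.

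\emph{Parameter-free stabilization.} For a fixed sentence $\psi$ in the language of set theory, the set $A_\psi=\{x:L[x]\models\psi\}$ is $\Delta^1_2$ and invariant under Turing equivalence, since $x\equiv_T x'$ implies $L[x]=L[x']$. By $\Delta^1_2$-determinacy and Martin's cone theorem, $A_\psi$ either contains or is disjoint from a Turing cone. As there are only countably many sentences and the Martin measure is $\sigma$-complete, intersecting the corresponding cones produces a cone on which the parameter-free theory of $L[x]$ is constant, and in particular agrees along any pair $x\leq_T z$ of its members.

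\emph{Ordinal parameters: the main obstacle.} Promoting this to the full theory with all ordinal parameters $\alpha<\omega_1^{L[x]}$ is exactly the content of the Kechris--Woodin theorem, and it is the step I expect to require the most care. The difficulty is that, as $x$ ranges over the cone, the admissible parameters range over an unbounded, hence uncountable, collection of countable ordinals, so one can neither intersect a single cone per parameter, nor name an arbitrary $\alpha<\omega_1^{L[x]}$ by a parameter-free formula so as to absorb it into the previous step. The Kechris--Woodin argument circumvents this by coding each such $\alpha$ by a real of $L[x]$ and combining the $\sigma$-completeness of the Martin measure with a reflection argument, all kept within $\Delta^1_2$ so that the available determinacy still applies. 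I would invoke their theorem precisely in the displayed ``for all $z\geq_T x$'' form.

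Granting the ordinal-parameter stabilization, the three ingredients combine at once: given $x\geq_T x_0$ and $y$ with $L[y]$ a forcing extension of $L[x]$, the real $z=x\oplus y$ lies in the cone and satisfies $L[z]=L[y]$, so the stabilization applied to $x\leq_T z$ gives equality of the two theories with parameters below $\omega_1^{L[x]}$, which is the assertion of the lemma.
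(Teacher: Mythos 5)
There is a genuine gap, and it sits exactly where you flag ``the main obstacle.'' Your first step replaces $y$ by $z=x\oplus y$ (which is fine: $L[z]=L[y]$ and $z\geq_T x$) but then discards the hypothesis that $L[y]$ is a forcing extension of $L[x]$ and aims at the strictly stronger statement that the theory with ordinal parameters below $\omega_1^{L[x]}$ is stable for \emph{all} $z\geq_T x$ on a cone. The remark immediately following the lemma in the paper states that this unrestricted version is only obtained ``assuming slightly more determinacy''; it is not claimed to follow from $\Delta^1_2$- and $\undertilde{\Sigma}^1_1$-determinacy alone, and the restriction to forcing extensions is used essentially in the paper's proof. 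Worse, the step you delegate to Kechris--Woodin is not what their theorem provides: as invoked in the paper, Kechris--Woodin yields a cone on which the \emph{parameter-free} theory $\mathrm{Th}(L[x])$ is constant. Upgrading this to countable ordinal parameters (even just along forcing extensions) is precisely the content of the lemma you are asked to prove, so attributing it to the cited theorem leaves the entire mathematical content unsupplied. (As a side remark, your claim that $\{x: L[x]\models\psi\}$ is $\Delta^1_2$ is also not justified --- if it were, the parameter-free stabilization would be an immediate consequence of Martin's cone theorem and would not require the Kechris--Woodin argument.)

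For comparison, the paper's proof fixes $\varphi$, assumes the good set $A_\varphi$ contains no cone, and uses the Kechris--Woodin constant theory $\mathcal T$ to define, uniformly on the cone, a term $\alpha_\ast$ for the \emph{least} ordinal at which some forcing extension of the form $L[y]$ flips the truth value of $\varphi$. Two monotonicity claims --- proved using wellfoundedness of the ordinals and the indiscernibility of $\omega_1$ for reals, and relying on the restriction to forcing extensions --- show that $\alpha_\ast$ strictly increases along the partial L\'evy collapses $L[y_\beta]$ below the least $x_0$-indiscernible $\xi$, so that $\sup_{\beta<\xi}\alpha_\ast^{y_\beta}=\xi$. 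Jensen's Coding the Universe theorem (Theorem \ref{thm:CodingTheUniverse}) then yields a real $z$ with $L[z]$ a forcing extension of $L[x_0,g]$ and $\omega_1^{L[z]}=\xi$, giving $\alpha_\ast^z\geq\xi=\omega_1^{L[z]}$, a contradiction. None of these ingredients --- the least-counterexample term, the indiscernibles, the collapse below $\xi$, or Jensen coding --- appears in your proposal, and your sketch of ``coding each $\alpha$ by a real and using $\sigma$-completeness of the Martin measure'' does not substitute for them. To repair the argument you would need to keep the forcing-extension hypothesis and carry out (or correctly cite) the ordinal-parameter stabilization along the lines above.
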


    \begin{proof}
        For a real $x$, let $R_x$ denote the set of all reals $y$ so that $L[y]$ is a forcing extension of $L[x]$.
        Let $\varphi(x)$ be a first order formula and let 
        \[
        A_\varphi=\{x\in\mathbb R\mid \forall\alpha<\omega_1^{L[x]}\forall y\in R_x\ (L[x]\models\varphi(\alpha)\leftrightarrow L[y]\models\varphi(\alpha))\}.
        \]
        We want to show that $A_\varphi$ contains a Turing cone, so suppose this fails. By Kechris--Woodin, there is a theory $\mathcal T$  so that for some $x_0$ and all $x_0\leq_T x$, $\mathrm{Th}(L[x])=\mathcal T$. Thus the statement
        \[
        \exists\alpha<\omega_1\exists\PP\ \forces_\PP`` \exists y\in\mathbb R\ V[\dot G]=L[y]\wedge V\models\varphi(\check\alpha)\leftrightarrow L[y]\models\neg\varphi(\check\alpha))"
        \]
        belongs to $\mathcal T$. Let $\alpha_\ast$ be a term for the least $\alpha$ witnessing the above statement.
        Note that $L[x]\models \mathcal T$ for all reals $x$ with $x_0\in L[x]$, so $\alpha_\ast^x\coloneqq\alpha_\ast^{L[x]}<\omega_1^{L[x]}$ is defined for any such $x$.
        \begin{claim}
            For all $x\in\mathbb R$ with $x_0\in L[x]$ and $y\in R_x$, $\alpha_\ast^x\leq \alpha_\ast^y$.
        \end{claim}
        \begin{proof}
            Otherwise, ``there is a forcing extension of the form $L[z]$ with $\alpha_\ast^z<\alpha_\ast"$ is part of $\mathcal T$. As $\omega_1$ is indiscernible for any real, there is always a forcing witnessing this which is countable in $V$ and hence for any real $y_0$ with $x_0\in L[y_0]$ there is a real $y_1$ with $x_0\in L[y_1]$ and $\alpha_\ast^{y_0}>\alpha_\ast^{y_1}$. We can continue and build an infinite decreasing sequence of ordinals like this, contradiction.
        \end{proof}
        Let us assume wlog that $L[x_0]\models\varphi(\alpha_\ast^{x_0})$ so that $\varphi(\alpha_\ast)\in \mathcal T$ (otherwise replace $\varphi$ by $\neg\varphi$ and note that $A_\varphi=A_{\neg\varphi}$).
        \begin{claim}
            For all $x\in\mathbb R$ with $x_0\in L[x]$ and $y\in R_x$ so that $L[y]\models \neg\varphi(\alpha_\ast^x)$, we have $\alpha_\ast^x<\alpha_\ast^y$.
        \end{claim}
        \begin{proof}
            By the previous claim, $\alpha_\ast^x\leq\alpha_\ast^y$ and as $L[y]\models\neg\varphi(\alpha_\ast^x)$, $\alpha_\ast^x\neq\alpha_\ast^y$.
        \end{proof}
        Let $\xi$ be the least $x_0$-indiscernible and let $g$ be $\Col(\omega, {<}\xi)$-generic over $L[x_0]$. For $\beta<\xi$, let $y_\beta$ be a real so that $L[y_\beta]=L[x_0, g\cap \Col(\omega, {\leq}\beta)]$.
        \begin{claim}
            $\sup_{\beta<\xi}\alpha_\ast^{y_\beta}=\xi$.
        \end{claim}
        \begin{proof}
            Let $\langle\beta_i\mid i<\gamma\rangle$ be continuous and increasing so that $\beta_{i+1}$ is least such that $\alpha_\ast^{y_{\beta_{i+1}}}>\alpha_\ast^{y_{\beta_i}}$ and $\gamma$ is as large as possible. If $\gamma<\xi$ then $\gamma=\delta+1$ and $\alpha_\ast^{y_\delta}=\alpha_\ast^{y_\beta}$ for all $\delta\leq\beta<\xi$. 
            But $L[y_\delta]\models\mathcal T$, hence there is a forcing $\PP\in L[y_\delta]$ which forces the generic extension to be of the form $L[z]$ and $L[z]\models\neg\varphi(\alpha_\ast^{y_\delta})$, so $\alpha_\ast^z>\alpha_\ast^{y_\delta}$. As $L[y_\delta]$ is a forcing extension of $L[x_0]$ by a forcing of size ${<}\xi$, $\xi$ is one of the $L[y_\delta]$-indiscernibles so that we may assume $\PP\in L_\xi[y_\delta]$. Now, $\xi$ is inaccessible in $L[y_\delta]$ and hence for any sufficiently large $\varepsilon<\xi$, $L[y_\varepsilon]$ contains a generic $h$ for $\PP$ over $L[y_\delta]$. But then $L[y_\varepsilon]$ is a forcing extension of $L[y_\delta, h]$ and $\alpha_\ast^{y_\delta}<\alpha_\ast^{L[y_\delta, h]}\leq\alpha_\ast^{y_\varepsilon}$, contradiction.

            So $\gamma=\xi$ and the map $i\mapsto \alpha_\ast^{y_{\beta_i}}$ is cofinal in $\xi$.
        \end{proof}
        By Theorem \ref{thm:CodingTheUniverse}, there is a real $z$ such that $L[z]$ is a forcing extension of $L[x_0, g]$ and $\omega_1^{L[z]}=\omega_1^{L[x_0, g]}=\xi$. But $\alpha_\ast^z\geq\xi$, contradiction.

        Thus $A_\varphi$ indeed contains a Turing cone and so does the intersection $\bigcap_{\psi\in\mathrm{Fml}}A_\psi$.
        
    \end{proof}

    \begin{rem}
        Assuming slightly more determinacy, we can lift the restriction to forcing extensions in the result above. That is, there is a cone of reals $x$ such that 
        \[
        \mathrm{Th}\left(\left(L[x];\in, \alpha\mid\alpha<\omega_1^{L[x]}\right)\right)=\mathrm{Th}\left(
        \left(L[y];\in, \alpha\mid\alpha<\omega_1^{L[x]}\right)\right)
        \]
        for all $x\leq_T y$.
    \end{rem}

    \begin{proof}[Proof of Theorem \ref{thm:ConsistencyOfMPomeg1PresWithCountableParas}]
        It suffices to show that for any $x_0\in\mathbb R$, there is $x_0\leq_T x$ so that $L[x]\models\mathsf{MP}_{\omega_1\text{-pres}}(\omega_1)$.
        So let $x_0\leq_T x$ be a real such that 
        \[
        \mathrm{Th}\left(\left(L[x];\in, \alpha\mid\alpha<\omega_1^{L[x]}\right)\right)=\mathrm{Th}\left(
        \left(L[y];\in, \alpha\mid\alpha<\omega_1^{L[x]}\right)\right)
        \]
        whenever $y\in\mathbb R$ and $L[y]$ is a forcing extension of $L[x]$. 

        Suppose that $L[x, g]$ is a $\omega_1$-preserving forcing extension of $L[x]$, $\alpha<\omega_1^{L[x,g]}=\omega_1^{L[x]}$ and $\varphi(\alpha)$ holds in all further $\omega_1$-preserving forcing extensions of $L[x, g]$. As $x^\sharp$ exists, we may assume that $g\in H_{\omega_1}$.

        By Theorem \ref{thm:CodingTheUniverse}, there is a real $y$ such that $L[y]$ is an $\omega_1$-preserving forcing extension of $L[x, g]$ and hence $L[y]\models \varphi(\alpha)$. This implies $L[x]\models\varphi(\alpha)$ by our choice of $x$.
    \end{proof}
    
    So a Woodin cardinal with a measurable above is an upper bound for the consistency strength of $\mathsf{MP}_{\omega_1\text{-pres}}(\omega_1)$. We also provide a lower bound for the consistency strength of $\mathsf{MP}_{\omega_1\text{-pres}}(\omega_1)$.
    
    \begin{thm}\label{thm:RamseyFromMPomega1preserving}
        Suppose $\mathsf{MP}_{\omega_1\text{-pres}}(\omega_1)$ holds. Then there is an inner model with a Ramsey cardinal.
    \end{thm}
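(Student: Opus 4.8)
The plan is to argue by contradiction through the core model. Suppose $\mathsf{MP}_{\omega_1\text{-pres}}(\omega_1)$ holds but there is no inner model with a Ramsey cardinal. Since every measurable cardinal is Ramsey, there is then also no inner model with a measurable cardinal, so the Dodd--Jensen core model $K$ exists, is rigid, is invariant under set forcing, and satisfies the covering lemma; moreover $K$ itself has no Ramsey cardinal. The goal is to contradict this last clause by showing that $\omega_1^V$ is Ramsey in $K$. As a preliminary I would first verify that $\omega_1^V$ is even a suitable candidate, i.e.\ that it is inaccessible in $K$: relativizing the ladder-system argument behind the earlier Proposition (which produced $\omega_1^L<\omega_1$ from the $\mathsf{MP}$ for $\omega_1$-preserving posets) to $K$ and to reals in $K$ shows that $\omega_1^V$ is inaccessible to $K$-reals, and since $\omega_1^V$ is regular in $V$ it remains regular in $K\subseteq V$. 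Throughout I will use that $\omega_1$-preserving posets include all $\sigma$-closed ones, and that countable-ordinal parameters suffice to address any object of $K$ below $\omega_1$, since by condensation every such object is recoverable from a canonical subset of a countable ordinal together with the (parameter-free) definition of $K$.

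Next I would extract the combinatorial engine from $\mathsf{MP}_{\omega_1\text{-pres}}(\omega_1)$. Fix $A\subseteq\omega_1$ with $A\in K$, coded by a countable-ordinal parameter as above. I want to show that the structure $(L_{\omega_1}[A],\in,A)$ carries a closed set of indiscernibles of order type $\omega_1$, which is the standard local characterization feeding into Ramseyness. The point is that this statement, once true, is preserved by \emph{every} $\omega_1$-preserving forcing, since such a forcing neither collapses $\omega_1$ nor alters the first-order theory witnessing indiscernibility; so it is automatically $\mathcal{P}$-persistent for $\mathcal{P}=\omega_1\text{-pres}$. I would then show it is forceable: one builds the indiscernible set as an increasing union of countable approximations, which is a $\sigma$-closed (hence $\omega_1$-preserving) poset, and covering for $K$ is what guarantees that the approximations can always be stretched at every countable stage, making the forcing nontrivial and the target statement genuinely $\mathcal{P}$-forceably $\mathcal{P}$-persistent. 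Applying $\mathsf{MP}_{\omega_1\text{-pres}}(\omega_1)$ with the countable-ordinal code of $A$ as parameter then yields such an indiscernible set already in $V$.

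Finally I would descend these objects into $K$. Having produced, for each $A\in K$, a set of indiscernibles of order type $\omega_1$ in $V$, I would invoke the covering lemma and the rigidity of $K$ to transfer the relevant partition property downward: a regular cardinal carrying a sufficiently strong partition property in $V$ carries it in $K$ when covering holds, since a homogeneous (or indiscernible) set of size $\omega_1$ can be thinned to lie in $K$ by a pressing-down argument against a $K$-cover of size $\omega_1$. This yields, inside $K$, a set of indiscernibles of order type $\omega_1$ for $(L_{\omega_1}[A],\in,A)$ for every $A\subseteq\omega_1$ in $K$, which is exactly what it means for $\omega_1^V$ to be Ramsey in $K$. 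This contradicts the assumption that $K$ has no Ramsey cardinal, and hence an inner model with a Ramsey cardinal must exist.

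The hard part will be the two transfer steps and their interaction with the parameter restriction. On the forcing side, one must verify that the approximation poset is nontrivial at every countable stage -- that countable partial indiscernibles genuinely extend -- uniformly in $A$ accessed only through a countable-ordinal code; this is where covering, and the resulting correct cardinal arithmetic of $K$ below $\omega_1$, does the real work, and making it uniform is delicate. On the core-model side, the descent of the partition property into $K$ must be arranged so that a cofinal homogeneous refinement genuinely lands in $K$ rather than merely in $V$; this is the standard but technically involved absoluteness of partition properties to $K$ under covering, and it is the true engine behind the Ramsey conclusion.
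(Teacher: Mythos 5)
Your overall strategy --- use the maximality principle to produce club-like homogeneity over the core model and then pull a Ramsey-type property down into $K$ --- is in the right spirit, but both of your transfer steps, which you yourself flag as ``the hard part'', are exactly where the argument breaks, and the paper's proof is organized precisely to avoid them. The fatal gap is in the forceability step. You want to apply $\mathsf{MP}_{\omega_1\text{-pres}}(\omega_1)$ to the statement ``$(L_{\omega_1}[A],\in,A)$ has a club of indiscernibles'' and claim this is forceable by the $\sigma$-closed poset of countable approximations, with covering guaranteeing that approximations extend. But you are arguing by contradiction under the hypothesis that $K$ has no Ramsey cardinal, and under that very hypothesis there is no reason countable partial indiscernible sets for a $K$-structure should extend at all; the nontriviality of that poset is essentially equivalent to the Ramsey-type conclusion you are trying to prove, so the step is circular. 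Covering does not help here: it controls cardinalities of covers, not the existence of indiscernibles. The paper sidesteps this by extracting only a much weaker consequence of the maximality principle (Claim \ref{claim:ManyStationaryAreClub}): every stationary $S\subseteq\omega_1$ that is uniformly definable with countable ordinal parameters in all $\omega_1$-preserving extensions contains a club. There forceability is unproblematic --- one shoots a club through the stationary set --- and persistence is trivial. This gives that every suitably definable regressive function is constant on a club (the ``regressive oracle'' $\mathcal O$), and that is the \emph{only} use of the hypothesis.

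The second gap is the descent into $K$. Thinning a homogeneous set of size $\omega_1$ against a covering set does not produce a homogeneous set lying \emph{in} $K$; covering gives no control over which subsets of the cover belong to $K$, and partition properties do not pass downward this way. The paper instead works below $0^{\P}$ with the Steel core model and invokes the Sharpe--Welch theorem (Theorem \ref{thm:SharpeWelchThm} via Corollary \ref{cor:RamseyInK}): to show $\omega_1$ is Ramsey in $K$ it suffices to build a nontrivial $\Sigma_1$-elementary $j\colon M\to K$ with $\crit(j)<\omega_1$, $j(\omega_1)=\omega_1$ and $(\omega_1^+)^K\in\ran(j)$. That embedding is manufactured by iterating $\Sigma_1$-hulls of $K$ closed under the oracle $\mathcal O$, choosing at each stage the \emph{second} ordinal in the intersection of the relevant clubs so that the final hull $X_\ast$ omits a countable ordinal (giving $\crit(j)<\omega_1$) while $\vert X_\ast\cap\omega_1\vert=\omega_1$ (giving $j(\omega_1)=\omega_1$). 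None of this machinery is replaceable by a covering-plus-pressing-down argument, so as written your proof does not go through.
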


    To prove Theorem \ref{thm:RamseyFromMPomega1preserving}, we will make use of the following theorem on the core model $K$. Recall that $0^{\P}$ is the sharp for an inner model with a strong cardinal.

    \begin{thm}[Sharpe--Welch, \protect{\cite[Thm. 2.27$(i)$]{SharpeWelchGreatlyErdos}}]\label{thm:SharpeWelchThm}
        Suppose $0^{\P}$ does not exist. Suppose $\kappa$ is regular uncountable, $M$ is transitive and $j\colon M\rightarrow K\res \eta$ is an elementary embedding such that 
        \begin{enumerate}[label=$(\roman*)$]
            \item $\crit(j)<\kappa\leq\Ord\cap M$,
            \item $\eta = (\lambda^+)^K$ where $\lambda=j(\kappa)$ and
            \item $j[\kappa]$ is cofinal in $\lambda$.
        \end{enumerate}
        Suppose $\mathcal A\in\ran(j)$ is a $\lambda$-structure. Then there is a set $I\in K$ of good indiscernibles for $\mathcal A$ of order-type ${\geq}\kappa$.
    \end{thm}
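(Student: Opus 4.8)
The plan is to argue by contradiction: assume $\mathsf{MP}_{\omega_1\text{-pres}}(\omega_1)$ holds but there is no inner model with a Ramsey cardinal. Since $0^{\P}$ yields an inner model with a strong cardinal, hence a measurable, hence a Ramsey, our assumption gives $\neg 0^{\P}$. Thus the core model $K$ exists, is rigid, is generically absolute under set forcing ($K^{V[G]} = K^V$), satisfies weak covering, and — crucially — has no Ramsey cardinal, so Theorem \ref{thm:SharpeWelchThm} is available for $K$. The goal is then to produce, out of $\mathsf{MP}_{\omega_1\text{-pres}}(\omega_1)$, a set $I \in K$ of uncountably many good indiscernibles for a structure coding an initial segment of $K$; from such an $I$ one builds a nontrivial elementary self-map of an inner model (equivalently a sharp), contradicting that $K$ has no Ramsey cardinal. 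The engine converting an embedding into these indiscernibles is exactly Theorem \ref{thm:SharpeWelchThm}, so the real work is to manufacture the hypothesis of that theorem with $\kappa = \omega_1$.

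Accordingly, I would fix a candidate $\lambda$-structure $\mathcal A \in K$ (coding enough of $K\res\eta$, with the relevant countable ordinal parameters) and consider the statement
\[
\Phi \equiv \exists M\, \exists j\, \exists\lambda\ \big(j\colon M \to K\res\eta \text{ elementary},\ \crit(j) < \omega_1 \leq \Ord\cap M,\ \lambda = j(\omega_1),\ \eta = (\lambda^+)^K,\ j[\omega_1]\text{ cofinal in }\lambda,\ \mathcal A \in \ran(j)\big).
\]
The key point is that $\Phi$ is $\omega_1$-\emph{preserving persistent}: if $M, j, \lambda$ witness $\Phi$ in $V$, then in any $\omega_1$-preserving extension $V[G]$ we still have $K^{V[G]} = K^V$ (generic absoluteness of $K$) and $\omega_1^{V[G]} = \omega_1^V$, so $K\res\eta$, the ordinal $\omega_1$, and the witnesses $M, j$ are all unchanged, and $\Phi$ persists. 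This is where the choice $\kappa = \omega_1$ is essential: it is the one value of $\kappa$ in the Sharpe–Welch configuration that $\omega_1$-preserving forcing cannot move, which is what makes the persistence clause of the Maximality Principle applicable.

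The hard part — and the main obstacle — is \emph{forceability}: I must exhibit some $\omega_1$-preserving extension in which $\Phi$ holds. The natural attempt is to collapse a $K$-cardinal $\lambda$ to have size $\omega_1$ via the $\sigma$-closed poset $\Col(\omega_1,\lambda)$ (which preserves $\omega_1$ and leaves $K$ fixed), arranging $\cf(\lambda)=\omega_1$, and then to build a hull $X \prec K\res\eta$ of size $\omega_1$ with $\mathcal A \in X$, with $\sup(X\cap\lambda)=\lambda$ (giving clause (iii) after transitive collapse), and whose inverse collapse $j\colon M\to K\res\eta$ has $\crit(j)<\omega_1$. The tension here is structural: to get $\Ord\cap M \geq \omega_1$ one needs $X$ to contain $\omega_1$-many ordinals above $\omega_1$, yet to get $\crit(j)<\omega_1$ one needs $X\cap\omega_1$ to be a single countable ordinal $\delta$ — i.e. the $\omega_1$-many large ordinals of $X$ must generate no new countable ordinals under the Skolem functions of $K\res\eta$. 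Overcoming this is precisely the delicate step, and I expect it to require the combinatorial reflection at $\omega_1$ extracted from the Maximality Principle (in the spirit of the ladder-system/regressive-function argument used earlier in the paper, which already forces $\omega_1^L < \omega_1$), together with weak covering for $K$, rather than a naive hull construction.

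Once forceability is secured, I would apply $\mathsf{MP}_{\omega_1\text{-pres}}(\omega_1)$ to conclude $\Phi$ in $V$, feed the resulting embedding $j\colon M\to K\res\eta$ into Theorem \ref{thm:SharpeWelchThm} (whose hypotheses (i)–(iii) are exactly the clauses of $\Phi$, and whose side condition $\mathcal A\in\ran(j)$ is arranged), and obtain $I\in K$, a set of good indiscernibles for $\mathcal A$ of order type ${\geq}\,\omega_1$. Running this uniformly over the relevant structures $\mathcal A$ (indexed by the countable ordinal parameters available to the principle) yields, inside $K$, an uncountable system of good indiscernibles for $K\res\eta$; by the standard equivalence between good indiscernibles and the Ramsey/Erdős property, this produces a nontrivial elementary embedding of an inner model of $K$, i.e. a Ramsey cardinal in an inner model, contradicting our assumption. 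Hence $\mathsf{MP}_{\omega_1\text{-pres}}(\omega_1)$ does imply the existence of an inner model with a Ramsey cardinal.
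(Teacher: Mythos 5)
There is a genuine gap, and it is structural: your proposal does not prove the stated theorem at all. The statement in question is the Sharpe--Welch indiscernibility theorem itself --- from the hypothesis $\neg 0^{\P}$ and an elementary embedding $j\colon M\to K\res\eta$ with properties (i)--(iii), one must \emph{produce} a set $I\in K$ of good indiscernibles for $\mathcal A$ of order type ${\geq}\kappa$. Your argument instead takes this theorem as a black box (``the engine converting an embedding into these indiscernibles is exactly Theorem \ref{thm:SharpeWelchThm}'') and sketches the downstream application, namely Theorem \ref{thm:RamseyFromMPomega1preserving}, that $\mathsf{MP}_{\omega_1\text{-pres}}(\omega_1)$ yields an inner model with a Ramsey cardinal. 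Relative to the task, this is circular: nothing in your text addresses how the embedding hypothesis is converted into indiscernibles inside $K$, which is the entire content of the statement and requires core model machinery (thickness/soundness of $K$, iterability, and covering-style arguments below $0^{\P}$) --- material the paper itself does not reproduce, since it cites Sharpe--Welch \cite[Thm.~2.27$(i)$]{SharpeWelchGreatlyErdos} without proof.

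Even read charitably as a proof of Theorem \ref{thm:RamseyFromMPomega1preserving}, your sketch diverges from the paper's argument and has its own unresolved gaps. First, your statement $\Phi$ carries parameters $\mathcal A$ and $\eta=(\lambda^+)^K$ that are far from countable ordinals, so $\mathsf{MP}_{\omega_1\text{-pres}}(\omega_1)$ does not apply to it; the gesture at indexing by countable ordinal parameters does not repair this. Second, you concede that forceability of $\Phi$ --- the heart of your plan --- is open, and the tension you identify ($X$ must contain $\omega_1$-many ordinals above $\omega_1$ while $X\cap\omega_1$ stays countable) is exactly what a naive hull construction cannot resolve. The paper sidesteps forcing entirely: it extracts from $\mathsf{MP}_{\omega_1\text{-pres}}(\omega_1)$ only Claim \ref{claim:ManyStationaryAreClub} (uniformly definable stationary sets contain clubs), introduces the regressive oracle $\mathcal O$ and the $\Sigma_1$-oracle hulls of Lemma \ref{lemm:OracleHullPresentation}, and builds the embedding $j\colon M\to X_\ast\prec_{\Sigma_1}K$ directly in $V$, with $\crit(j)<\omega_1=j(\omega_1)$ guaranteed by choosing each $\alpha_i$ to be the \emph{second} point of the relevant club intersection (Claim \ref{claim:alphaiAddsNoSmallOrdinals}), then applies Corollary \ref{cor:RamseyInK}. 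You would need to either adopt that construction or supply the missing forceability argument with legitimate parameters; as written, neither the stated theorem nor the application is established.
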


    We refer to \cite{SharpeWelchGreatlyErdos} for the definition of a $\lambda$-structure. Simply note that to show that $\lambda$ is Ramsey in $K$, it suffices to show that any $\lambda$-structure in $K$ has admits a good set of indiscernibles of order-type ${\geq}\lambda$.

    \begin{cor}\label{cor:RamseyInK}
        Suppose $0^{\P}$ does not exist. Suppose $\kappa$ is regular uncountable, $M$ is transitive and $j\colon M\rightarrow K$ is a $\Sigma_1$-elementary embedding such that 
        \begin{enumerate}[label=$(\roman*)$]
            \item $\kappa\in M$,
            \item $\crit(j)<\kappa$,
            \item $j(\kappa)=\kappa$ and $(\kappa^+)^K\in\ran(j)$.
        \end{enumerate}
        Then $K\models``\kappa\text{ is Ramsey}."$
    \end{cor}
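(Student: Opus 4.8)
The plan is to deduce the corollary directly from Theorem~\ref{thm:SharpeWelchThm}. Since $j(\kappa)=\kappa$, the relevant instance has $\lambda=j(\kappa)=\kappa$ and target height $\eta=(\kappa^+)^K$, which lies in $\ran(j)$ by hypothesis~$(iii)$. I would set $\bar\eta=j^{-1}((\kappa^+)^K)$, let $M'=M\res\bar\eta$ be the initial segment of $M$ of ordinal height $\bar\eta$, and put $j'=j\res M'$, so that $j'\colon M'\to K\res\eta$. The work then splits into checking that $j'$ satisfies the hypotheses of Theorem~\ref{thm:SharpeWelchThm} with this $\eta$ and $\lambda=\kappa$, and passing from ``good indiscernibles for the structures in $\ran(j')$'' to ``$K\models\kappa$ is Ramsey''.

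For the hypotheses: $(i)$ holds because $\crit(j')=\crit(j)<\kappa$, while $\kappa\in M'$ and $\kappa<\bar\eta=\Ord\cap M'$, since $j(\kappa)=\kappa<(\kappa^+)^K=j(\bar\eta)$ forces $\kappa<\bar\eta$. Hypothesis $(ii)$ is immediate, as $j'(\kappa)=\kappa=\lambda$ and $\eta=(\kappa^+)^K=(\lambda^+)^K$. For $(iii)$, recall that any $\Sigma_0$-elementary map between transitive structures satisfies $j'(\alpha)\ge\alpha$; since $\crit(j)<\kappa$ and $j'(\alpha)<j'(\kappa)=\kappa$ for all $\alpha<\kappa$, we get $\kappa=\sup_{\alpha<\kappa}\alpha\le\sup_{\alpha<\kappa}j'(\alpha)\le\kappa$, so $j'[\kappa]$ is cofinal in $\lambda=\kappa$.

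The one genuinely delicate point—and the step I expect to be the main obstacle—is upgrading the mere $\Sigma_1$-elementarity of $j$ into $K$ to \emph{full} elementarity of the restriction $j'$ into the initial segment $K\res\eta$, which is what Theorem~\ref{thm:SharpeWelchThm} demands. Here I would exploit that $\eta=(\kappa^+)^K$ is a cardinal of $K$, so that $K\res\eta=H_\eta^K$ and, by acceptability of $K$ together with its $\Sigma_1$-definable well-order and condensation, the full first-order theory of $K\res\eta$ is controlled by $\Sigma_1$ facts over $K$ about parameters below $\eta$. The aim is to bootstrap the $\Sigma_1$-elementarity of $j$ (and the agreement $j'=j\res M'$) into full elementarity of $j'$ onto $\ran(j')\prec K\res\eta$; I would isolate this as a fine-structural lemma and treat it as the crux of the argument, since everything else is routine.

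Finally I would run a reflection argument. By the note following Theorem~\ref{thm:SharpeWelchThm}, it suffices to show that $K\models$``every $\lambda$-structure admits a good set of indiscernibles of order-type $\ge\kappa$''. Theorem~\ref{thm:SharpeWelchThm} provides, for each $\lambda$-structure $\mathcal A\in\ran(j')$, a set $I\in K$ of good indiscernibles of order-type $\ge\kappa$; as $I$ is (coded by) a subset of $\kappa$ of size $\kappa$, it lies in $H_{(\kappa^+)^K}^K=K\res\eta$, so $K\res\eta$ already witnesses goodness for every $\mathcal A\in\ran(j')$. Since $\ran(j')\prec K\res\eta$ and $\kappa\in\ran(j')$, elementarity first reflects these witnesses into $\ran(j')$, showing that $\ran(j')$ satisfies the universal statement ``every $\lambda$-structure has a good set of indiscernibles of order-type $\ge\kappa$'', and then lifts that universal statement back up to $K\res\eta$. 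As $K\res\eta$ is transitive and correct about $\lambda$-structures and about goodness of indiscernibles, this yields the displayed statement in $K$, whence $K\models\text{``}\kappa\text{ is Ramsey''}$.
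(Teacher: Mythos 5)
Your proposal is correct and follows essentially the same route as the paper: restrict $j$ to $N=M\res\bar\eta$ where $j(\bar\eta)=(\kappa^+)^K$, upgrade $\Sigma_1$-elementarity into $K$ to full elementarity into $K\res\eta$ (the paper asserts this without proof, exactly the step you flag as the crux), and apply Theorem~\ref{thm:SharpeWelchThm} with $\lambda=\kappa$. The only difference is cosmetic: where you reflect witnesses into $\ran(j')$ and lift the universal statement back up, the paper argues by contradiction via the $K$-least $\kappa$-structure without good indiscernibles, which is definable from $\kappa$ and hence lies in $\ran(j)$.
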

    \begin{proof}
        Let $\eta=(\kappa^+)^K$ and $j(\bar\eta)=\eta$. Then 
        \[
        j\res N\colon N\rightarrow K\res\eta
        \]
        is fully elementary where $N=M\res\bar\eta$. If $\kappa$ is not Ramsey in $K$ then the $K$-least $\kappa$-structure $\mathcal A$ without a good set of indiscernibles of order-type ${\geq}\kappa$ in $K$. So $\mathcal A$ is definable in $K$ from the parameter $\kappa\in \ran(j)$, hence $\mathcal A\in\ran(j)$. This contradicts Theorem \ref{thm:SharpeWelchThm}.
    \end{proof}

    For the remainder of this section, let $R$ denote the set of regressive functions $f\colon\omega_1\to\omega_1$. 
    
    \begin{defn}
        The \emph{regressive oracle} is the partial function 
        \[
        \mathcal O\colon R \rightharpoonup \omega_1
        \]
        which associates, if possible, to a regressive $f\in R$ the unique $\gamma$ such that $f^{-1}(\gamma)$ contains a club.
    \end{defn}
    
    This function serves as an oracle in the sense that it is external to the model we are going to apply it to, the core model.
    
    \begin{defn}
    Suppose $X\subseteq K$. Then the \emph{$\Sigma_n$-oracle hull} $\Sigma_n$-$\mathrm{Hull}^K_{\mathcal O}(X)$ is the smallest $\Sigma_n$-elementary substructure $Z\prec_{\Sigma_n}K$ with 
        \begin{enumerate}[label=$(\roman*)$]
            \item $X\subseteq Z$ and 
            \item $\mathcal O[Z\cap\dom(\mathcal O)]\subseteq Z$.
        \end{enumerate}
    \end{defn}
    
    As $K$ has definable Skolem terms, it is easy to see that the $\Sigma_n$-oracle hull always exists. We will show that it is of a particularly simple form.
    \begin{lemm}\label{lemm:OracleHullPresentation}
        Suppose $X\subseteq K$. Then 
        \[
        \Sigma_n\text{-}\mathrm{Hull}^K_\mathcal O(X)=\Sigma_n\text{-}\mathrm{Hull}^K(X\cup Y)
        \]
        where $Y=\mathcal O[\Sigma_n\text{-}\mathrm{Hull}^K(X)\cap\dom(\mathcal O)]$.
    \end{lemm}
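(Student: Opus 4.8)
The plan is to verify directly that $Z := \Sigma_n\text{-}\mathrm{Hull}^K(X\cup Y)$ is the smallest $\Sigma_n$-elementary substructure of $K$ containing $X$ and closed under $\mathcal O$, which is exactly $\Sigma_n\text{-}\mathrm{Hull}^K_{\mathcal O}(X)$. Write $W_0 := \Sigma_n\text{-}\mathrm{Hull}^K(X)$, so that $Y = \mathcal O[W_0\cap\dom(\mathcal O)]$ and $W_0\subseteq Z$. One inclusion is free: since $H := \Sigma_n\text{-}\mathrm{Hull}^K_{\mathcal O}(X)$ is a $\Sigma_n$-elementary substructure containing $X$, it contains $W_0$; being closed under $\mathcal O$ it then contains $Y = \mathcal O[W_0\cap\dom(\mathcal O)]\subseteq \mathcal O[H\cap\dom(\mathcal O)]$; and being $\Sigma_n$-elementary it contains $\Sigma_n\text{-}\mathrm{Hull}^K(X\cup Y)=Z$. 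For the reverse inclusion it suffices, by minimality of $H$, to show that $Z$ is itself closed under $\mathcal O$, since $Z$ is visibly $\Sigma_n$-elementary and contains $X$. Equivalently, I must show that the natural iteration computing $H$ stabilizes after a single application of the oracle.

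So fix $f\in Z\cap\dom(\mathcal O)$ and set $\gamma := \mathcal O(f)$; the goal is $\gamma\in Z$. Since $Y\subseteq\omega_1$, there are a $\Sigma_n$-Skolem term $t$, parameters $\vec a\in X^{<\omega}$, and $\delta_1,\dots,\delta_m\in Y$ with $f = t^K(\vec a,\delta_1,\dots,\delta_m)$; if $m=0$ then $f\in W_0$ and $\gamma=\mathcal O(f)\in Y\subseteq Z$ already, so assume $m\geq 1$. Choose $g_1,\dots,g_m\in W_0\cap\dom(\mathcal O)$ with $\delta_i=\mathcal O(g_i)$. The key idea is to \emph{amalgamate} the $g_i$ into a single regressive function lying in $W_0$ that agrees with $f$ on a club: define $h\colon\omega_1\to\omega_1$ by
\[
h(\alpha) = \begin{cases} t^K(\vec a,g_1(\alpha),\dots,g_m(\alpha))(\alpha), & \text{if this is defined and }{<}\alpha,\\ 0,& \text{otherwise.}\end{cases}
\]
Pick a club $C\subseteq\bigcap_{i=1}^m g_i^{-1}(\delta_i)$ (which exists since each $\mathcal O(g_i)=\delta_i$ means $g_i^{-1}(\delta_i)$ contains a club). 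For $0<\alpha\in C$ one has $t^K(\vec a,g_1(\alpha),\dots,g_m(\alpha)) = t^K(\vec a,\delta_1,\dots,\delta_m) = f$, so $h(\alpha)=f(\alpha)$, using that $f$ is regressive. Thus $h$ agrees with $f$ on a club, and since $\mathcal O$ depends only on the values of a function modulo the club filter, $h\in\dom(\mathcal O)$ and $\mathcal O(h)=\mathcal O(f)=\gamma$. As $h$ is defined over $K$ from $\vec a,g_1,\dots,g_m$ (and $\omega_1=\dom(g_1)$), all of which lie in $W_0$, we get $h\in W_0$, whence $\gamma = \mathcal O(h)\in\mathcal O[W_0\cap\dom(\mathcal O)] = Y\subseteq Z$, as desired.

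The conceptual heart is the mod-club invariance of $\mathcal O$, which turns a function built from several oracle values into one built from the $g_i$ directly. The one step that needs genuine care is verifying $h\in W_0$, i.e.\ that the diagonal function really lands in the $\Sigma_n$-Skolem hull. Here I expect to lean on the fine structure of $K$: although the graph of $h$ superficially reads as $\Sigma_{n+1}$ (because a quantifier over $\alpha<\omega_1$ is needed to collect the pointwise $\Sigma_n$-definition into a single set), $\Sigma_n$-collection together with the $\Sigma_n$-Skolem terms available in $K$ let one realize $h$ as a $\Sigma_n$-definable object from parameters in $W_0$, so that it is genuinely captured by $\Sigma_n\text{-}\mathrm{Hull}^K(X)$. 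This complexity bookkeeping — ensuring the amalgamated function is collected inside the hull at the right level — is the main technical obstacle; everything else is the routine amalgamation and the two soft inclusions above.
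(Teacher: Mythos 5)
Your proof is correct and follows essentially the same route as the paper's: both reduce the claim to showing that $\Sigma_n\text{-}\mathrm{Hull}^K(X\cup Y)$ is closed under $\mathcal O$ by amalgamating the oracle values $\mathcal O(g_i)$ into the single diagonal regressive function $h$, which agrees with $f$ on a club, so that $\mathcal O(f)=\mathcal O(h)\in Y$. The only divergence is one of emphasis: the paper treats the membership $h\in\Sigma_n\text{-}\mathrm{Hull}^K(X)$ as immediate from the definable Skolem terms of $K$, whereas you flag the complexity bookkeeping for that step as the main technical point.
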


    \begin{proof}
        The inclusion ``$\supseteq"$ is clear, so we will show ``$\subseteq$". It suffices to demonstrate that $\Sigma_n$-$\mathrm{Hull}^K(X\cup Y)$ is closed under $\mathcal O$. Suppose that $f\in \dom(\mathcal O)\cap \Sigma_n$-$\mathrm{Hull}^K(X\cup Y)$, say 
        \[
        f=\tau^K(p, \mathcal O(g_0),\dots, \mathcal O(g_k))
        \]
        where $p, g_0,\dots, g_k\in\Sigma_n$-$\mathrm{Hull}^K(X)$ and $\tau$ is a $\Sigma_n$-term.  Consider the function $h\in R$ defined by 
        \[h(\alpha)=
        \begin{cases}
            \tau(p, g_0(\alpha),\dots, g_k(\alpha))(\alpha) & \text{if this is defined and ${<}\alpha$}\\
            0 & \text{ else.}
        \end{cases}
        \]
        
        Clearly, $h\in \Sigma_n$-$\Hull^K(X)$. On a club $C_i$, we have $g_i(\alpha)=\mathcal O(g)$ and hence on the club $\bigcap_{i\leq k} C_i$, we have 
        \[
        h(\alpha)=\tau^K(p, g_0(\alpha),\dots, g_k(\alpha))(\alpha)=\tau^K(p,\mathcal O(g_0),\dots,\mathcal O(g_k))(\alpha)=f(\alpha)
        \]
        so that $h\in\dom(\mathcal O)$ and $\mathcal O(g)=\mathcal O(h)\in Y$.
    \end{proof}

    \begin{proof}[Proof of Theorem \ref{thm:RamseyFromMPomega1preserving}]
        We may assume that $0^\P$ does not exist as otherwise there is an inner model with a strong cardinal.
        The only consequence of $\mathsf{MP}_{\omega_1\text{-pres}}(\omega_1)$ we will use is that stationary costationary subsets of $\omega_1$ are not easily definable.
        \begin{claim}\label{claim:ManyStationaryAreClub}
            Suppose $S\subseteq\omega_1$ is a stationary set and $S$ is uniformly definable in all $\omega_1$-preserving forcing extensions using countable ordinals as parameters. Then $S$ contains a club.
        \end{claim}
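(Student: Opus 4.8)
The plan is to argue by contradiction. Suppose $S$ is stationary and, across all $\omega_1$-preserving extensions, uniformly defined by a formula $\psi(v,\vec p)$ with $\vec p\in\omega_1^{<\omega}$, yet $S$ contains no club, so that $\omega_1\setminus S$ is stationary. The strategy is to manufacture an $\omega_1$-preserving extension in which $S$ becomes nonstationary, to observe that \emph{nonstationarity} is persistent, and then to invoke $\mathsf{MP}_{\omega_1\text{-pres}}(\omega_1)$ to reflect nonstationarity down to $V$, contradicting the stationarity of $S$. It is essential to run the argument through nonstationarity rather than stationarity, since only the former is persistent under $\omega_1$-preserving forcing.

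First I would force with $\mathbb{C}_{\omega_1\setminus S}$, the poset of closed bounded subsets of $\omega_1\setminus S$ ordered by end-extension, whose generic union is a club $C\subseteq\omega_1\setminus S$. The key point is that, since $\omega_1\setminus S$ is stationary, this poset preserves $\omega_1$. Indeed, given a condition and a name for a function $\omega\to\omega_1$, one reflects into a countable $M\prec H_\theta$ with $\delta:=M\cap\omega_1\in\omega_1\setminus S$ (possible as $\omega_1\setminus S$ is stationary), builds inside $M$ a descending chain of conditions deciding the successive values of the function with maxima cofinal in $\delta$, and then closes off by adjoining $\delta$; the result is a legitimate condition precisely because $\delta\in\omega_1\setminus S$, and it forces the function to be bounded below $\delta$. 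Hence in $V[G]$ there is a club $C$ disjoint from $S$, so $S$ is nonstationary. Note that here stationarity of $\omega_1\setminus S$, not fatness, is all that is needed, as we only require $\omega_1$-preservation and are content for the forcing to add reals.

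Next I would consider the statement $\chi(\vec p)$ asserting that $\{\alpha<\omega_1 : \psi(\alpha,\vec p)\}$ is nonstationary, i.e.\ that some club is disjoint from the set defined by $\psi(\cdot,\vec p)$. By uniform definability, the set defined by $\psi(\cdot,\vec p)$ equals $S$ in every $\omega_1$-preserving extension of $V$, so $\chi(\vec p)$ holds in $V[G]$. Crucially, $\chi(\vec p)$ is persistent: in any further $\omega_1$-preserving extension $V[G][H]$ the witnessing club $C$ remains a club (as $\omega_1$ is preserved), the set defined by $\psi(\cdot,\vec p)$ is again $S$ (applying uniform definability to the $\omega_1$-preserving extension $V[G][H]$ of $V$), and $C\cap S=\emptyset$ still holds, so $\chi(\vec p)$ continues to hold. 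Thus $\chi(\vec p)$ is forceably persistent for the class of $\omega_1$-preserving posets, with parameters $\vec p$ drawn from $\omega_1$.

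Applying $\mathsf{MP}_{\omega_1\text{-pres}}(\omega_1)$ to $\chi(\vec p)$ then yields that $\chi(\vec p)$ holds in $V$, i.e.\ $S$ is nonstationary in $V$, contradicting that $S$ is stationary. I expect the main technical point to be the verification that $\mathbb{C}_{\omega_1\setminus S}$ preserves $\omega_1$ for merely stationary $\omega_1\setminus S$, together with the careful bookkeeping of uniform definability, which is what guarantees that both the witnessing club and the very identity of the $\psi$-defined set survive into arbitrary further $\omega_1$-preserving extensions.
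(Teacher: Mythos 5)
Your argument is correct and is essentially the dual of the paper's one-line proof: the paper applies $\mathsf{MP}_{\omega_1\text{-pres}}(\omega_1)$ directly to the statement ``$S$ contains a club,'' which is made persistently true by shooting a club through the stationary set $S$ itself, whereas you argue by contradiction and shoot a club through the complement $\omega_1\setminus S$. Both versions rest on the same two ingredients --- that club-shooting through a stationary subset of $\omega_1$ with closed bounded conditions preserves $\omega_1$, and that the uniform definability of $S$ guarantees persistence --- so the difference is only cosmetic.
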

        \begin{proof}
            Apply $\mathsf{MP}_{\omega_1\text{-pres}}(\omega_1)$ to the statement ``$S$ contains a club".
        \end{proof}
         It follows that any $f\in R$ which is uniformly definable in all $\omega_1$-preserving forcing extensions using countable ordinals as parameters is in the domain of $\mathcal O$.

         We will now construct a sequence $\langle \alpha_i\mid i<\omega_1\rangle$ of countable ordinals. If $\alpha_j$ is defined for all $j<i$ then set 
         \[
         X_i\coloneqq \Sigma_1\text{-}\Hull^K_\mathcal O(\{\alpha_j\mid j<i\}\cup\{\omega_1, \lambda\})
         \]
         where $\lambda=((\omega_1^V)^+)^K$.
         Finally, let $\alpha_i$ be the 2nd ordinal in 
         \[
         \bigcap\{f^{-1}(\mathcal O(f))\mid f\in X_i\cap\dom(\mathcal O)\}.
         \]
         
         \begin{claim}\label{claim:DomOracleContainsXi}
             For all $i<\omega_1$, $R\cap X_i\subseteq\dom(\mathcal O)$.
         \end{claim}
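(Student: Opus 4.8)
The plan is to reduce the statement to the consequence of Claim~\ref{claim:ManyStationaryAreClub} recorded just above it, using the clean presentation of the oracle hull from Lemma~\ref{lemm:OracleHullPresentation} together with the generic absoluteness of $K$. First I would rewrite
\[
X_i=\Sigma_1\text{-}\Hull^K(\{\alpha_j\mid j<i\}\cup\{\omega_1,\lambda\}\cup Y),
\]
where $Y=\mathcal O[\Sigma_1\text{-}\Hull^K(\{\alpha_j\mid j<i\}\cup\{\omega_1,\lambda\})\cap\dom(\mathcal O)]$. Since $\ran(\mathcal O)\subseteq\omega_1$, the set $Y$ consists of \emph{countable} ordinals, and the $\alpha_j$ are countable by construction. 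Hence every $f\in X_i$ has the form $f=\tau^K(\vec\delta,\omega_1,\lambda)$ for some $\Sigma_1$-Skolem term $\tau$ and a finite tuple $\vec\delta$ of countable ordinals drawn from $\{\alpha_j\mid j<i\}\cup Y$; crucially, the only uncountable parameters that can appear are $\omega_1$ and $\lambda$. Note also that $X_i\subseteq K$, so any $f\in R\cap X_i$ is a genuine element of $K$.

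Next I would fix $f\in R\cap X_i$ and argue that its presentation $f=\tau^K(\vec\delta,\omega_1,\lambda)$ makes $f$ uniformly definable in all $\omega_1$-preserving forcing extensions from the countable parameters $\vec\delta$. Since $0^\P$ does not exist, $K$ is generically absolute: for every $\omega_1$-preserving set-forcing extension $W$ we have $K^W=K$, and the $\Sigma_1$-Skolem functions of $K$ are computed identically in $V$ and in $W$. Because $\omega_1$ is $\emptyset$-definable and preserved, $\omega_1^W=\omega_1$, and $\lambda=((\omega_1)^+)^K$ is definable from $\omega_1$ over $K$, so $\lambda$ is also correctly computed and fixed in $W$. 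Consequently, in every such $W$ the set $f$ is defined by the single formula ``$v\in\tau^K(\vec\delta,\omega_1,\lambda)$'' with the fixed countable parameters $\vec\delta$, and $f$ remains a regressive function $\omega_1\to\omega_1$ there (it is the same set, with domain $\omega_1^V=\omega_1^W$). Applying the consequence of Claim~\ref{claim:ManyStationaryAreClub} -- that every regressive function uniformly definable from countable ordinals across $\omega_1$-preserving extensions lies in $\dom(\mathcal O)$ -- yields $f\in\dom(\mathcal O)$, which is precisely Claim~\ref{claim:DomOracleContainsXi}.

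I expect the main obstacle to be isolating and justifying the generic absoluteness of $K$ that underlies the uniform definability step: one must invoke that, under the hypothesis that $0^\P$ does not exist, the core model $K$ computed in $V$ agrees with the $K$ of every $\omega_1$-preserving set-forcing extension, and that its $\Sigma_1$-Skolem functions transfer between these models. Once that is granted, the remainder is bookkeeping -- separating the countable parameters (the $\alpha_j$ and the oracle values $Y$, via $\ran(\mathcal O)\subseteq\omega_1$) from the definable uncountable ones ($\omega_1$ and $\lambda$), and checking that $\omega_1$-preservation keeps both $\omega_1$ and $\lambda=((\omega_1)^+)^K$ fixed so that the defining formula does not drift across extensions.
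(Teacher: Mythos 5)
Your proof is correct, and it reaches the conclusion by a slightly different route than the paper. The paper argues by induction on $i$: using that $\mathcal O$ can only grow (and never changes its values) under $\omega_1$-preserving forcing, it shows that the recursive construction of the $\alpha_j$ for $j<i$, and hence of $X_i$ itself, is \emph{uniformly definable} across all such extensions, and only then concludes that each $f\in R\cap X_i$ is uniformly definable from countable ordinal parameters; the induction is genuinely needed there, since the uniformity of the definition of $\alpha_j$ requires knowing $R\cap X_j\subseteq\dom(\mathcal O)$ so that $X_j\cap\dom(\mathcal O)$ does not change when $\dom(\mathcal O)$ grows. You instead bypass the induction entirely: via Lemma~\ref{lemm:OracleHullPresentation} and the observation that $\ran(\mathcal O)\subseteq\omega_1$, you absorb both the $\alpha_j$'s and the oracle values into the countable-ordinal parameter tuple $\vec\delta$, so that the only non-parameter ingredients are $K$, $\omega_1$, and $\lambda=((\omega_1)^+)^K$, all of which are invariant under $\omega_1$-preserving set forcing below $0^{\P}$. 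This makes the argument for a single $f$ self-contained and avoids having to track how $\dom(\mathcal O)$ changes; the price is that you lean explicitly on the generic absoluteness of $K$ and its Skolem terms, but that ingredient is equally (if implicitly) present in the paper's version, since the elements of $X_i$ are Skolem-term values over $K$ there too. Both arguments terminate in the same consequence of Claim~\ref{claim:ManyStationaryAreClub}, so your proof is a valid and, if anything, cleaner alternative.
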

         \begin{proof}
             We prove this by induction on $i$. Observe that the regressive oracle $\mathcal O$ is almost unchanged when passing to a $\omega_1$-preserving forcing extension, at most the domain increases. Hence the definition of $\alpha_j$ for $j<i$ is uniform in all $\omega_1$-preserving forcing extensions and it follows that all elements of $X_i$ are uniformly definable in all $\omega_1$-preserving forcing extensions using countable ordinals as parameters. Thus $R\cap X_i\subseteq\dom(\mathcal O)$. 
         \end{proof}

         \begin{claim}\label{claim:alphaiAddsNoSmallOrdinals}
             For all $i<\omega_1$, $X_{i+1}\cap\alpha_i=X_i\cap\alpha_i$.
         \end{claim}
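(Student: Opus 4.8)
The inclusion $X_i\cap\alpha_i\subseteq X_{i+1}\cap\alpha_i$ is immediate from $X_i\subseteq X_{i+1}$, so the plan is to prove the reverse inclusion $X_{i+1}\cap\alpha_i\subseteq X_i$. The guiding idea is a ``plug-in function'' trick: I want to represent any $\beta\in X_{i+1}$ with $\beta<\alpha_i$ in the form $\beta=f(\alpha_i)$ for a regressive $f\in X_i$, and then read off $\beta\in X_i$ from the defining property of $\alpha_i$. Concretely, if $f\in R\cap X_i$, then $f\in\dom(\mathcal O)$ by Claim \ref{claim:DomOracleContainsXi}, and since $\alpha_i$ was chosen as an element of $\bigcap\{g^{-1}(\mathcal O(g))\mid g\in X_i\cap\dom(\mathcal O)\}$ we have $\alpha_i\in f^{-1}(\mathcal O(f))$; hence $\mathcal O(f)=f(\alpha_i)=\beta$, and as $X_i$ is closed under $\mathcal O$ this yields $\beta\in X_i$. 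I would first record, in the same spirit, that $X_i\cap\omega_1\subseteq\alpha_i$: for $\delta\in X_i\cap\omega_1$ the function $g_\delta\in R\cap X_i$ with $g_\delta(\gamma)=\delta$ for $\gamma>\delta$ and $g_\delta(\gamma)=0$ otherwise has $\mathcal O(g_\delta)=\delta$, so $g_\delta^{-1}(\mathcal O(g_\delta))$ avoids $\delta$ and therefore $\delta<\alpha_i$.

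Next I would use Lemma \ref{lemm:OracleHullPresentation} to write $X_{i+1}=\Sigma_1\text{-}\Hull^K(X_i\cup\{\alpha_i\}\cup Y)$, where $Y=\mathcal O[\Sigma_1\text{-}\Hull^K(X_i\cup\{\alpha_i\})\cap\dom(\mathcal O)]$. Thus any $\beta\in X_{i+1}$ has the form $\beta=\tau^K(p,\alpha_i,\mathcal O(h_1),\dots,\mathcal O(h_k))$ for a $\Sigma_1$-term $\tau$, a parameter $p\in X_i$, and regressive functions $h_j=\rho_j^K(p_j,\alpha_i)$ with $p_j\in X_i$. The outer occurrence of $\alpha_i$ is harmless: once the inner oracle values $\mathcal O(h_j)$ are shown to be of the form $c_j(\alpha_i)$ for suitable $c_j\in X_i$, I set $f(\gamma)=\tau^K(p,\gamma,c_1(\gamma),\dots,c_k(\gamma))$, truncated to $0$ unless this is a defined ordinal ${<}\,\gamma$, exactly as in the proof of Lemma \ref{lemm:OracleHullPresentation}. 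Then $f\in R\cap X_i$ and $f(\alpha_i)=\beta$, so the first paragraph applies.

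The main obstacle is precisely controlling the inner oracle values $\mathcal O(h_j)$ for $h_j\in X_{i+1}$ that genuinely depend on $\alpha_i$, since the choice of $\alpha_i$ only guarantees $\alpha_i\in g^{-1}(\mathcal O(g))$ for $g\in X_i$, not for functions in $X_{i+1}$. The plan is to reduce each such value via the two-variable function $H_j(\gamma,\delta)=\rho_j^K(p_j,\gamma)(\delta)$, which lies in $X_i$: since $h_j(\delta)=H_j(\alpha_i,\delta)$ equals $\mathcal O(h_j)$ on a club, I would locate a witness $\delta\in X_i$ with $H_j(\alpha_i,\delta)=\mathcal O(h_j)<\alpha_i$, so that the truncation of $\gamma\mapsto H_j(\gamma,\delta)$ belongs to $R\cap X_i\subseteq\dom(\mathcal O)$ and takes value $\mathcal O(h_j)$ at $\alpha_i$; the plug-in argument then gives $\mathcal O(h_j)\in X_i$. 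I expect the delicate points to be exactly this ``club meets $X_i$'' bookkeeping together with the verification that oracle values ${\geq}\,\alpha_i$ can never contribute to an output below $\alpha_i$. Organizing the whole argument as an induction on the stages of the oracle-hull construction (alternately closing under $\Sigma_1$-Skolem functions and under $\mathcal O$), with the inductive hypothesis that every element produced is an $X_i$-function evaluated at $\alpha_i$, should keep the nested oracle values under control and is where the real work lies.
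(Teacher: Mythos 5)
Your first two paragraphs are essentially the paper's own argument: by Lemma \ref{lemm:OracleHullPresentation} one writes $\gamma\in X_{i+1}\cap\alpha_i$ as $\tau^K(p,\mathcal O(g_0),\dots,\mathcal O(g_k),\alpha_i)$, forms the truncated regressive function $h(\beta)=\tau^K(p,g_0(\beta),\dots,g_k(\beta),\beta)$, notes $h\in R\cap X_i\subseteq\dom(\mathcal O)$ by Claim \ref{claim:DomOracleContainsXi}, and uses that $\alpha_i\in f^{-1}(\mathcal O(f))$ for every $f\in X_i\cap\dom(\mathcal O)$ to conclude $\gamma=h(\alpha_i)=\mathcal O(h)\in X_i$. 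The difference is that the paper takes the functions $g_j$ whose oracle values occur to lie in $X_i$ --- that is how it applies Lemma \ref{lemm:OracleHullPresentation}, using that $X_i$ is itself already closed under $\mathcal O$ and under $\Sigma_1$-Skolem terms --- and with that reading the argument closes immediately. Your third paragraph, which tries in addition to control oracle values $\mathcal O(h_j)$ of functions $h_j$ genuinely depending on $\alpha_i$, has no counterpart in the paper's proof.

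That third paragraph is also where your proposal stops being a proof. By your own account the ``real work'' is left undone, and the one concrete step you do propose does not go through as stated: you want a witness $\delta\in X_i$ with $H_j(\alpha_i,\delta)=\mathcal O(h_j)$, i.e.\ a point of $X_i\cap\omega_1$ lying in the club on which $h_j$ is constant. But $X_i\cap\omega_1$ is a countable, hence bounded, subset of $\omega_1$, and a club need not meet it at all, so there is no reason such a $\delta$ exists. The companion issue you flag --- that $\mathcal O(h_j)$ might be ${\geq}\alpha_i$, in which case the truncation destroys the plug-in mechanism --- is likewise left unresolved. To be fair, the case you isolate is a reasonable one to worry about when applying Lemma \ref{lemm:OracleHullPresentation} with $X=X_i\cup\{\alpha_i\}$, since the set $Y$ there is formed from the plain hull of $X_i\cup\{\alpha_i\}$ rather than of $X_i$; but the paper's proof does not engage with this case, and your proposal neither resolves it nor reduces it to the case that is handled. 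As written, the argument is therefore incomplete exactly at the point you yourself identify as delicate.
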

         \begin{proof}
             Suppose $\gamma\in X_{i+1}\cap\alpha_1$. We can write $\gamma$ as 
             \[
             \gamma=\tau^K(p, \mathcal O(g_0),\dots, \mathcal O(g_k),\alpha_i)
             \]
             for some $p,g_0,\dots, g_k\in X_i$ and some $\Sigma_1$-term $\tau$ by Lemma \ref{lemm:OracleHullPresentation}. Consider the function $h\in R$ defined via 
            \[h(\beta)=
            \begin{cases}
                \tau^K(p, g_0(\beta),\dots, g_k(\beta),\beta) & \text{if this is defined and ${<}\beta$}\\
                0 & \text{ else.}
            \end{cases}
            \]
            and note that $h\in X_i$. The definition of $\alpha_i$ implies that $g_j(\alpha_i)=\mathcal O(g_j)$ for $j\leq k$. Further, $h\in\dom(\mathcal H)$ by Claim \ref{claim:DomOracleContainsXi} so that $h(\alpha_i)=\mathcal O(h)$ and hence
            \[\gamma=\tau^K(p,g_0(\alpha_i),\dots,g_k(\alpha_i),\alpha_i)=h(\alpha_i)=\mathcal O(h)\in X_i.
            \]
         \end{proof}
         Now let $X_\ast=\bigcup_{i<\omega_1} X_i\prec_{\Sigma_1} K$ and let $j\colon M\rightarrow X_\ast$ be the anticollapse. Clearly, $\omega_1\in M$ and as $\omega_1\in X_\ast$ and $\vert X_\ast\cap\omega_1\vert=\omega_1$, we have $j(\omega_1)=\omega_1$. Further, as we chose the $\alpha_i$ to be the 2nd ordinal in the relevant intersection and by Claim \ref{claim:alphaiAddsNoSmallOrdinals}, we made sure that $\omega_1\nsubseteq X_\ast$ so that $\crit(j)<\omega_1$. Applying Corollary \ref{cor:RamseyInK}, we find that $\omega_1$ is Ramsey in $K$.
    \end{proof}

    The proof shows that an inner model with a Ramsey cardinal exists assuming the conclusion of Claim \ref{claim:ManyStationaryAreClub}. A stronger principle is ``every $\Sigma_2$-definable (without parameters) stationary subset of $\omega_1$ contains a club". This implies that the club filter restricted to $\mathrm{HOD}$ is an ultrafilter, so $\omega_1$ is measurable in $\mathrm{HOD}$ (otherwise consider the $\mathrm{HOD}$-least stationary co-stationary subset). So in fact this is equivalent to the statement
    \begin{center}
        ``all ordinal definable stationary subsets of $\omega_1$ contain a club.''
    \end{center}
    Hoffelner \cite{hoffelneruniverseordinaldefinablestationarycostationary} showed that the consistency strength of this principle is exactly a measurable cardinal.

    We end this subsection with several remaining questions.

    \begin{que}\label{que:omega_1-pres}\leavevmode
        \begin{enumerate}
            \item What is the exact consistency strength of $\mathsf{MP}_{\omega_1\text{-pres}}(\omega_1)$?
            \item Is $\mathsf{MP}_{\omega_1\text{-pres}}(\mathbb{R})$ (or even $\mathsf{MP}_{\omega_1\text{-pres}}(\mathbb{R}\cup\omega_2)$) consistent?
        \end{enumerate}
    \end{que}
    
    Note that $\mathsf{MP}_{\omega_1\text{-pres}}(H_{\omega_2})$ is inconsistent as it implies that every stationary set contains a club. The argument in \cite{SchindlerBMMandStrongCardinals} shows that $\Sigma_1$-$\mathsf{MP}_{\omega_1\text{-pres}}(\mathbb{R})$ implies the existence of an inner model with a strong cardinal.

    \begin{que}
        Is $\mathsf{MP}_{\text{c-pres}}(A)$, where ``c-pres'' denotes the class of cardinal-preserving posets, consistent for some parameter set $A\supseteq\omega$?
    \end{que}

    Our proof of Theorem \ref{thm:ConsistencyOfMPomeg1PresWithCountableParas} does not give a positive answer to this question, because $\mathsf{GCH}$ cannot always be forced by cardinal-preserving posets, and thus \ref{thm:CodingTheUniverse} may not be applicable.

    \section{Separating $\Sigma_n$ and $\Pi_n$-Maximality Principles}

    The consistency strength of $\SnMPallR$ is strictly higher than the consistency strength of $\PnMPallR$. Moreover, the natural way to force $\MPallempty{\Sigma_n}$ seems to be to force $\MPastall{\Sigma_{n+2}}{\mathbb{R}}$ which implies $\PnMPallR$, in fact even $\MPastall{\Pi_{n+1}}{\mathbb{R}}$. So it might be tempting to conjecture that $\PnMPallR$ is a consequence of $\SnMPallR$. However, we will show that this is not the case.
    
    \begin{thm}\label{thm:SeparateMaximalityPrinciples}
        Suppose the existence of a regular reflecting cardinal is consistent with $\mathsf{ZFC}$. Then for any $n\geq 2$, there is a model of $\mathsf{ZFC}$ in which $\SnMPallR\wedge\neg\PnMPallempty$ holds.
    \end{thm}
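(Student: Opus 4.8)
The plan is to read $\neg\PnMPallempty$ as the failure to push a single lightface button, and to build a model of $\SnMPallR$ in which one distinguished $\Pi_n$ button is deliberately left unpushed. Recall that a sentence $\chi$ is \emph{$\mathcal P$-forceably $\mathcal P$-persistent} (for $\mathcal P$ the class of all posets) exactly when $\chi$ can be forced to hold in all further extensions; in Hamkins' terminology $\chi$ is then a \emph{button}. With this language $\PnMPallempty$ says precisely that every lightface $\Pi_n$ button is already true, so it suffices to produce a model $M$ in which $\SnMPallR$ holds while some lightface $\Pi_n$ sentence $\psi$ is a button but is false. This is not obstructed by $\SnMPallR$: by directedness of the forcing extensions (strong downward directedness of grounds), if $\psi$ is a button then $\neg\psi$ is not, so the $\Sigma_n$ sentence $\neg\psi$ --- though true in $M$ --- is never something $\SnMPallR$ is compelled to decide, and $\SnMPallR$ only ever forces $\Sigma_n$ buttons true. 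As a bonus, $\hat\psi\equiv\forall\PP\,\forces_{\PP}\psi$ is then a \emph{provably} persistent lightface $\Pi_n$ sentence that is forceable but false, so even lightface $\Pi_n$-$\mathsf{MP}^{\ast}$ fails.

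By the hypothesis I may work in a model with a regular reflecting cardinal $\lambda$, reflecting all first-order formulas; in particular $V_\lambda\prec_{\Sigma_{n+2}}V$, so $\lambda$ is $\Sigma_{n+2}$-correct, which by Corollary \ref{cor:ExactConsistencyStrength} is the exact strength of $\SnMPallR$. I would then force $\SnMPallR$ (with target $\omega_1$, collapsing $\lambda$ to $\omega_1$) by the iteration of length $\lambda$ underlying Theorem \ref{TheoremAspero1} and Corollary \ref{cor:ExactConsistencyStrength}: bookkeeping enumerates the boldface $\Sigma_n$ sentences $\varphi[\vec a]$ with $\vec a\in\mathbb R$ and, using the $\Sigma_{n+2}$-correctness of $\lambda$ to recognize which are buttons (a genuinely $\Sigma_{n+2}$ property of $\varphi[\vec a]$), pushes each such button. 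The essential deviation from the \emph{natural} route --- which forces the stronger $\MPastall{\Sigma_{n+2}}{\mathbb R}$ and hence, as remarked before the theorem, all of $\PnMPallR$ --- is that this iteration pushes \emph{only} $\Sigma_n$ buttons and never the provably persistent $\Sigma_{n+2}$ instances such as $\hat\psi$; it is tuned to deliver exactly $\SnMPallR$ and no more.

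The remaining and most delicate task is to exhibit the distinguished $\Pi_n$ button $\psi$ and keep it unpushed. I would take $\psi$ to be a lightface $\Pi_n$ sentence whose pushed form is realized by a dedicated coding forcing $\mathbb C$ whose effect is \emph{robustly} persistent under all further forcing --- in the style of the almost disjoint / Jensen codings (cf.\ Theorem \ref{thm:CodingTheUniverse}) that turn a $\Sigma_1$ statement such as ``$V\neq L$'' into a permanent button --- so that once $\psi$ is coded it becomes $\mathcal P$-persistently true. The sentence $\psi$ and the coding $\mathbb C$ must be chosen so that the pushed state of $\psi$ is \emph{independent of the $\Sigma_n$-theory}: no button realized by the main iteration codes $\psi$, and $\mathbb C$ is never activated by the bookkeeping. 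Granting this, $\psi$ stays false throughout, $\mathbb C$ still witnesses that $\psi$ is a button in the final model $M$, and since the iteration realizes only $\Sigma_n$ buttons the $\Pi_n$ button $\psi$ is left unpushed; by the first paragraph $M\models\SnMPallR\wedge\neg\PnMPallempty$.

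The main obstacle I expect is exactly this compatibility: one must show that pushing every boldface $\Sigma_n$ button --- which, because button-hood of a $\Sigma_n$ sentence is a genuinely $\Sigma_{n+2}$ condition, repeatedly invokes the \emph{full} $\Sigma_{n+2}$-correctness of $\lambda$ --- does not as a side effect code $\psi$ or destroy its forceability, i.e.\ that the strong $\Sigma_n$-maximality being forced does not leak down to the $\Pi_n$ level. This demands a robustly persistent lightface $\Pi_n$ button that provably survives arbitrary collapses yet can be certified independent of the $\Sigma_n$-theory, together with a preservation argument establishing simultaneously that $\neg\psi$ reaches $M$ and that $\SnMPallR$ genuinely holds in $M$ even though the coding $\mathbb C$ is systematically \emph{avoided} rather than performed.
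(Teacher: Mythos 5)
You have correctly identified the shape of the target --- a model of $\SnMPallR$ in which some lightface $\Pi_n$ sentence is forceably persistent yet false --- and your observation that $\SnMPallR$ places no direct obstruction on this (since $\psi$ and $\neg\psi$ cannot both be buttons) is sound. But the proposal leaves the entire mathematical content of the theorem unresolved: you never produce the sentence $\psi$, and the mechanism you propose for keeping it unpushed --- running the $\SnMPallR$ iteration while ``avoiding'' the coding forcing $\mathbb C$ --- is not the one the paper uses and faces an obstruction you do not address. The difficulty is that whether the final model satisfies $\SnMPallR$ with $\psi$ still unpushed depends on where the witnesses to true $\Sigma_n$ statements about reals end up living, and this is only known \emph{after} the construction; the paper's choice of which button to leave unpushed (the index $i$) is made only after the model $L[G][h]$ of $\SnMPallR$ exists, above a $\Sigma_{2n+2}$-correct $\lambda\geq\lambda_0$ where $V_{\lambda_0}^{L[G][h]}$ already contains witnesses for every true $\Sigma_n$ fact about reals. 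A $\psi$ fixed in advance, as your plan requires, cannot be calibrated this way.

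The paper's actual route is in a sense the opposite of yours: rather than avoiding the $\Pi_n$ button, it \emph{pushes} a proper class of independent buttons $B_\alpha$ (``$\aleph_{\alpha+1}^L$ has a fresh subset'') for $\alpha$ in a delicately constructed $\Sigma_n$-definable class $A$ (obtained as a definable $\Sigma_{n-1}$-generic for $\Addinf$ over $L$, which is the content of Theorem \ref{thm:NiceAexists} and the technical analysis of the strong forcing relation $\forces^+$), then forces $\SnMPallR$ over $L[G]$, and finally passes to a \emph{ground} $W=L[G_-][h]$ in which exactly one button $B_i$, $i\in A$, $i>\lambda$, is unpushed. This step-down is essential for two reasons your sketch cannot reproduce. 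First, the witnessing $\Pi_n$ sentence is $\forall\alpha\in A\ B_\alpha$, which is not a button over $L$ at all --- it requires the class forcing $\PP_A$ to realize --- but becomes a genuine set-forceable button over $W$, since only the single $B_i$ needs re-pushing there. Second, the verification that $\SnMPallR$ survives the passage to $W$ uses that $W$ is a ground of $L[G][h]$ (so forceable persistence over $W$ implies truth in $L[G][h]$) together with $V_\lambda^W=V_\lambda^{L[G][h]}$ and the preservation of $\Sigma_{n-1}$-correctness of $\lambda$ in $W$, which is exactly what the reflection properties in Theorem \ref{thm:NiceAexists}(2) are engineered to provide. None of this machinery is replaceable by the ``robustly persistent dedicated coding'' you gesture at, and the compatibility problem you flag in your last paragraph as ``the main obstacle'' is in fact the theorem.
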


    We start with a rough sketch of the argument. The basic idea is to start with a proper class of independent buttons $\langle B_\alpha\mid\alpha\in\Ord\rangle$ all of which are initially unpushed. To be more precise, each $B_\alpha$ is a statement (in parameter $\alpha$) which fails in $V$, is provably persistent for all forcings and for any class $A\subseteq\Ord$, there is a forcing extension $V[G]$ in which $B_i$ holds (``is pushed") iff $\alpha\in A$.

    We will carefully choose a $\Sigma_n$-definable class $A$ and go to $V[G]$ in which only $B_i$ for $i\in A$ is pushed. The regular $\Sigma_{n+2}$-correct cardinal $\kappa$ survives this step, so we may pass to a further generic extension $V[G][h]$ in which $\SnMPallR$ holds. Of course, $\PnMPallR$ may hold as well in $V[G][h]$. The trick is to ``unpush" one of the buttons $B_i$ with $i\in A$ by passing to a ground $W$ of $V[G][h]$ so that the $\Pi_n$-sentence
    \[
    \forall\alpha\in A\ B_\alpha
    \]
    fails in $W$, but holds and does so persistently, in $V[G][h]$. Here we use that $A$ is still $\Sigma_n$-definable in $W$ and this is one of the reasons we start with $V=L$. This will be enough to ensure that $\PnMPallempty$ fails in $W$. The difficult part here is to arrange that $\SnMPallR$ remains true. This will come down to our choice of $A$ and $i$. First, let $\lambda_0$ be sufficiently large so that all $\Sigma_{n}$ statements about reals have a witness in $V_{\lambda_0}^{V[G][h]}$, i.e. if $\varphi(v_0, v_1)$ is $\Pi_{n-1}$, $x$ is a real and $\exists y\varphi(y, x)$ holds then $\exists y\in V_{\lambda_0}^{V[G][h]}\ \varphi(y, x)$. We will then be able to find a $\Sigma_{n-1}$-correct $\lambda\geq\lambda_0$ and some $i\geq\lambda$ as well as a ground $W$ of $V[G][h]$ so that $V_\lambda^W=V_\lambda^{V[G][h]}$, $B_i$ is unpushed in $W$ and, crucially, $\lambda$ remains $\Sigma_{n-1}$-correct in $W$.  As $\lambda\geq\lambda_0$, $W$ and $V$ agree about $\Sigma_n$-statements with real parameters as these are upwards absolute from $V_{\lambda}^W$ to $W$. 

    The only thing to worry about now is that there may be a $\Sigma_{n}$-statement about a real $x$ which can be forced to be persistent over $W$, but not over $V[G][h]$. But it is easy to see that this is not possible as $W$ is a ground for set forcing of $V[G][h]$. It follows that $\SnMPallR$ holds in $W$.

    Arranging for such a $\lambda$ to exist will give use the most headaches. It is much easier in the case $n=2$ as $H_\lambda\prec_{\Sigma_1}V$ for any uncountable cardinal $\lambda$ and in this case we can get away with $A=\Ord$. For $n>2$, the argument above will only work with a very careful selection of $A$.

    The exact choice of buttons is of minor relevance, in our case we will start with $V=L$ and $B_\alpha$ will be the statement 
    \[
    \aleph_{\alpha+1}^L\text{ contains a fresh subset},
    \]
    that is, there is some $a\subseteq\aleph_{\alpha+1}^L$ with $a\notin L$, yet $a\cap\alpha\in L$ for $\alpha<\aleph_{\alpha+1}^L$. For any definable class $A$, the Easton support product 
    \[
    \PP_A\coloneqq \prod_{\alpha\in A}\Add(\omega_{\alpha+1}, 1)
    \]
    is a definable class forcing which pushes the buttons $B_\alpha$ with $\alpha\in A$.

    \begin{prop}
        $\PP_A$ pushes only the buttons $B_\alpha$ with $\alpha\in A$.
    \end{prop}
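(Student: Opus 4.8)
The plan is to prove the (harder) converse direction: for $\alpha \notin A$, the button $B_\alpha$ is \emph{not} pushed, i.e.\ $L[G] \models \neg B_\alpha$ where $G \subseteq \PP_A$ is generic. Since $B_\alpha$ is provably persistent and $\PP_A$ preserves cardinals and the $\GCH$ (it is an Easton product of the $\Add(\omega_{\beta+1},1)$ over the $\GCH$ model $L$), it suffices to show that $L[G]$ contains no fresh subset of $\kappa := \omega_{\alpha+1} = \aleph_{\alpha+1}^L$. The overall strategy is a factoring argument that isolates $\kappa$ and then excludes a fresh subset by a no-new-branch (chain condition) argument.

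First I would factor $\PP_A = \PP_{<\alpha} \times \PP_{>\alpha}$ into the coordinates below and above $\alpha$; there is no coordinate at $\alpha$ precisely because $\alpha \notin A$. The tail $\PP_{>\alpha}$ is a product of $\Add(\omega_{\beta+1},1)$ for $\beta > \alpha$, whose least factor sits at $\omega_{\alpha+2} = \kappa^+$, so $\PP_{>\alpha}$ is $\kappa^+$-closed and adds no new subsets of $\kappa$. As $\PP_{<\alpha}$ has size at most $\kappa$ and hence is $\kappa^+$-c.c., Easton's lemma shows that over $L[G_{<\alpha}]$ the tail stays $\kappa^+$-distributive, so every subset of $\kappa$ in $L[G]$ already lies in $L[G_{<\alpha}]$. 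Thus it is enough to show that the head $\PP_{<\alpha}$ adds no fresh subset of $\kappa$.

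If $A \cap \alpha$ is bounded in $\alpha$ (in particular whenever $\alpha$ is a successor), then $\PP_{<\alpha}$ has size at most $\lambda := \omega_\alpha < \kappa$, hence is $\kappa$-c.c.; a $\kappa$-c.c.\ forcing adds no cofinal branch to the height-$\kappa$ tree of possible initial segments of a putative fresh subset, so by the standard branch lemma there is none. The delicate case is $\alpha$ a limit with $A \cap \alpha$ cofinal in $\alpha$: then $\kappa = \lambda^+$ is the successor of the \emph{singular} cardinal $\lambda$, the head has size exactly $\kappa$ and is only $\kappa^+$-c.c.\ (it even carries antichains of size $\kappa$), so the branch lemma does not apply directly. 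This is the main obstacle.

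To treat it I would group the head along a sequence $\langle \gamma_i \mid i < \theta \rangle$ cofinal in $\alpha$, with $\theta = \cf(\lambda)$, writing $\PP_{<\alpha} = \prod_{i<\theta} Q_i$ where $Q_i$ is the product over the coordinates in $[\gamma_i,\gamma_{i+1})$; then $Q_i$ is $\omega_{\gamma_i+1}$-closed, while $\prod_{j<i} Q_j$ consists of fewer than $\cf(\lambda)$ factors each of size $<\lambda$, so has size at most $\lambda < \kappa$ and is $\kappa$-c.c. Given a fresh subset $a$ of $\kappa$ in $L[G]$, for each $i<\theta$ the set $a$ is fresh over $L[G_{[i,\theta)}]$ (its initial segments lie in $L$) and is added over that model by the $\kappa$-c.c.\ forcing $\prod_{j<i} Q_j$, so the branch lemma gives $a \in L[G_{[i,\theta)}]$; hence $a \in \bigcap_{i<\theta} L[G_{[i,\theta)}]$. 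The concluding step is the mutual-genericity fact that the intersection of the tail extensions of a full-support product over a limit length is the ground model, $\bigcap_{i<\theta} L[G_{[i,\theta)}] = L$, whence $a \in L$, a contradiction. I expect verifying this tail-intersection lemma (equivalently, the $\kappa$-approximation property of $\PP_{<\alpha}$ at the successor of a singular) to be the technical heart of the proof; it should follow from a homogeneity-and-genericity argument showing that a set lying in every tail model admits a name using no coordinate and therefore belongs to $L$.
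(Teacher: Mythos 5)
Your factorization $\PP_A=\PP_{<\alpha}\times\PP_{>\alpha}$, the use of the $\kappa^+$-closure of the tail together with Easton's lemma to reduce to the head, and the treatment of the case where $A\cap\alpha$ is bounded are all fine, and they agree in substance with the paper's (much terser) argument: the paper simply asserts that $\PP_{<\alpha}$ has size $<\omega_{\alpha+1}$ and quotes the fact that a forcing of size less than a regular $\kappa$ adds no fresh subset of $\kappa$. You are right to worry about the limit case: if $\alpha$ is a limit, $A\cap\alpha$ is cofinal in $\alpha$ and $\omega_\alpha$ is singular, then under $\GCH$ the head has size $\omega_\alpha^{\mathrm{cf}(\omega_\alpha)}=\omega_{\alpha+1}$, so the size bound the paper invokes is not literally available there, and your block decomposition, yielding $a\in L[G_{[i,\theta)}]$ for every $i<\theta$, is a sensible way to attack that case. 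That part of your argument is correct (modulo the point below about which lemma you are really using).

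There are, however, two problems. The serious one is the step you yourself flag: $\bigcap_{i<\theta}L[G_{[i,\theta)}]=L$ is asserted, not proved, and it carries the entire weight of the hard case. This is not a routine consequence of mutual genericity: mutual genericity gives $L[G_{<i}]\cap L[G_{[i,\theta)}]=L$, but a set lying in \emph{every} tail model need not lie in any $L[G_{<i}]$, and splitting $\theta$ into two cofinal pieces does not help because membership in all tails does not yield membership in either half. Nor does weak homogeneity of the factors do the job as stated, since $a$ is not assumed ordinal definable from ground-model parameters; and the Hamkins-style two-step approximation lemma does not apply, because the tails $\PP_{[i,\theta)}$ are only $\omega_{\gamma_i+1}$-closed, not ${<}\kappa$-closed. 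In general the intersection of a strictly decreasing sequence of grounds can exceed their ``limit,'' so ``a name using no coordinate'' is exactly what has to be produced and is the hardest point of the whole proof; as written, the proposal does not contain a proof of the Proposition in this case. The second, minor, problem is the lemma you cite twice: ``a $\kappa$-c.c.\ forcing adds no cofinal branch to a height-$\kappa$ tree'' is false (forcing with a Suslin tree is c.c.c.\ and adds a fresh subset of $\omega_1$). What is true, and what both you and the paper actually need, is that a forcing with a dense subset of size $<\kappa$ adds no fresh subset of a regular cardinal $\kappa$; since the head forcings you apply this to do have size $<\kappa$ over the relevant models, your intermediate conclusions survive, but the justification should be corrected.
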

    \begin{proof}
        For $\alpha\notin A$, note that we can split $\PP_A$ into $\PP_{<\alpha}\times\PP_{>\alpha}$ with $\PP_{>\alpha}$ being ${<}\omega_{\alpha+2}$-closed and $\PP_{<\alpha}$ of size $<\omega_{\alpha+1}$. Finally, recall that a forcing of size $<\kappa$ cannot add a fresh subset to $\kappa$.
    \end{proof}

    \subsection{Finding the class $A$}
    
    Let us fix some $n\geq 1$. Our aim is to show that assuming $V=L$ there is a $\Sigma_{n+1}$-definable $A$ with a number of nice properties.
    \begin{thm}\label{thm:NiceAexists}
        Assume $V=L$. Then there is a $\Sigma_{n+1}$-definable proper class $A\subseteq\Ord$ so that 
        \begin{enumerate}
            \item Whenever $\lambda$ is $\Sigma_n$-correct, $A\cap\lambda=A^{L_\lambda}$ and 
            \[
            (L_\lambda;\in, A\cap\lambda)\prec_{\Sigma_n}(L;\in, A).
            \]
            \item Whenever $\lambda$ is $\Sigma_n$-correct, $B\subseteq \Ord, B\cap \lambda= A\cap \lambda$ and $B\triangle A$ is a set, we have 
            \[
            (L_\lambda;\in, A\cap\lambda)\prec_{\Sigma_n}(L;\in, B).
            \]
        \end{enumerate}
    \end{thm}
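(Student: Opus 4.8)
The plan is to treat the case $n=1$ separately and trivially, and to build a coding predicate by recursion for $n\geq 2$. For $n=1$ every uncountable cardinal is $\Sigma_1$-correct (as $L_\lambda\prec_{\Sigma_1}L$), and one checks directly that $A=\Ord$ works: the predicate is $\Delta_0$ and so contributes nothing beyond the pure $\Sigma_1$-correctness of $\lambda$, while changing $A$ only on a set above $\lambda$ cannot affect $\Sigma_1$-truths with parameters below $\lambda$. So assume $n\geq 2$. I would define $A$ by a recursion organized along a block structure: using the $\Delta_1^L$ Gödel pairing, reserve for each ordinal $\beta$ a block of length $\omega$ (say $[\beta\cdot\omega,\beta\cdot\omega+\omega)$), so that the portion of $A$ inside the first $\beta$ blocks is exactly the set $a_\beta:=A\cap(\beta\cdot\omega)$. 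On block $\beta$ I would record, relative to $a_\beta$ only, two kinds of data: a certificate that some level $L_{\eta_\beta}$ is $\Sigma_{n-1}$-elementary in the $a_\beta$-expanded language and closed under the canonical $<_L$-least witness operator for $\Pi_{n-2}$-with-predicate formulas with parameters coded below $\beta$, together with those least witnesses themselves. The essential discipline is \emph{locality}: every decision ``$\gamma\in A$'' for $\gamma$ in block $\beta$ depends on $\beta$ and on the already-built set $a_\beta$ alone, never on the eventual tail.

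For definability, unfolding the recursion gives ``$\gamma\in A$'' in the form ``there is a set $a$ that is the correct $A$-restriction below $\beta\cdot\omega$ and satisfies a local condition $\theta(\beta,a)$''. The predicate ``$a$ is the correct restriction'' is a bounded universal quantifier over biconditionals whose nontrivial side is $\Sigma_{n-1}$-truth over $L$ about the set $a$, hence $\Pi_n$, while $\theta(\beta,a)$ is itself $\Sigma_{n-1}$; so ``$\gamma\in A$'' has the shape $\exists a(\Pi_n\wedge\Sigma_{n-1})$ and is $\Sigma_{n+1}$. This is exactly why I would only ever code $\Sigma_{n-1}$-data about set initial segments: coding $\Sigma_n$-truths of $(L;A)$ itself would be barred both by Tarski and by the $\Sigma_{n+1}$ budget. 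The equality $A\cap\lambda=A^{L_\lambda}$ for $\Sigma_n$-correct $\lambda$ then falls out, since the only non-$\Delta_1$ ingredient of the recursion is $\Sigma_{n-1}$- (a fortiori $\Sigma_n$-) truth over $L$ about set parameters, which $L_\lambda$ computes correctly by $\Sigma_n$-correctness.

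For reflection I would prove, by a simultaneous induction on $m\leq n$ over the reflection points, that $A\cap\lambda=A^{L_\lambda}$ and $(L_\lambda;\in,A\cap\lambda)\prec_{\Sigma_m}(L;\in,A)$. The coded lower-level witnesses drive the induction up to $\Sigma_{n-1}$-elementarity in the expanded language, and the certificates are arranged so that at a $\Sigma_n$-correct $\lambda$ the predicate $A\cap\lambda$ admits a $\Sigma_{n-1}$-definition over $L_\lambda$ from the last certificate below $\lambda$. The final step to $\Sigma_n$ then combines this local simplification of $\dot A$ with the pure $\Sigma_n$-correctness of $\lambda$: a $\Sigma_n$-with-$\dot A$ formula becomes, locally, genuinely $\Sigma_n$, so its witness reflects into $L_\lambda$, and the $\Pi_{n-1}$-verification of the witness is absolute by the $\Sigma_{n-1}$-elementarity already secured. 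The subtle point, requiring exactly the careful choice of what to certify, is that a witness computed against $a_\beta$ must remain a genuine witness against the full $A$; this is where the bootstrap is used, since $\Sigma_{n-1}$-elementarity at $\lambda$ freezes the relevant $\Pi_{n-1}$-with-predicate facts when passing from $a_\beta$ up to $A$.

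Property (2) I expect to come almost for free from locality: if $B\cap\lambda=A\cap\lambda$ and $B\triangle A$ is a set, then every $\Sigma_n$-truth with parameters in $L_\lambda$ is witnessed and verified strictly below $\lambda$ using only $A\cap\lambda=B\cap\lambda$, so neither direction of $(L_\lambda;\in,A\cap\lambda)\prec_{\Sigma_n}(L;\in,B)$ can be disturbed by the bounded tail difference; one simply reruns the reflection argument with $B$ in place of $A$, all invoked certificates and witnesses being unaffected. The main obstacle throughout is the self-reference forced by the tight budget: the predicate $A$ must witness the $\Sigma_n$-truths of the very structure $(L;A)$ it helps define, yet only $\Sigma_{n-1}$-data can be coded into a $\Sigma_{n+1}$-definable, tail-robust $A$. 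Reconciling this gap — delivering full $\Sigma_n$-reflection from coded $\Sigma_{n-1}$-data by means of the local simplification of $\dot A$ at correct levels together with $\Sigma_n$-correctness — is the delicate ``very careful selection of $A$'' that the argument demands, and I expect it to be by far the hardest part.
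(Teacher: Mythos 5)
Your base case is already false: for $n=1$ the class $A=\Ord$ does \emph{not} satisfy property (2). The sentence $\exists y\,(y\in\Ord\wedge y\notin\dot A)$ is $\Sigma_1$ in the expanded language (the first conjunct is $\Delta_0$, the second atomic), it holds in $(L;\in,B)$ whenever $B=\Ord\setminus\{i\}$ for any $i\geq\lambda$, and it fails in $(L_\lambda;\in,\lambda)$. So your guiding slogan that ``changing $A$ only on a set above $\lambda$ cannot affect $\Sigma_1$-truths with parameters below $\lambda$'' is simply wrong when $A$ is itself $\Delta_0$-definable: the entire content of property (2) is that $A$ must be sufficiently \emph{generic} that set-sized changes are undetectable at the $\Sigma_n$ level, and $\Ord$ is the least generic choice possible. (The case $n=1$ is not a throwaway: the paper applies the theorem with parameter $n-1$, so $n=1$ is exactly what is needed for the basic $\Sigma_2$/$\Pi_2$ separation.) The same tension haunts your $n\geq 2$ construction: you want $A\cap\lambda$ to admit a $\Sigma_{n-1}$-definition over $L_\lambda$ from a certificate so that $\Sigma_n$-with-$\dot A$ formulas ``become genuinely $\Sigma_n$,'' but any such simplification that is too uniform hands an adversary a $\Sigma_n$-definition of membership in $A$ against which a deleted element $i\geq\lambda$ becomes detectable, killing (2). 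You never explain how the certificates avoid this.

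The deeper gap is the transfer mechanism you yourself flag as ``by far the hardest part'' and then do not supply. Your blocks code witnesses for $\Pi_{n-2}$-with-predicate formulas \emph{relative to the initial segment $a_\beta$}, but the theorem demands $\Sigma_n$-elementarity into $(L;\in,A)$ and even into $(L;\in,B)$ for modified tails $B$: a $\Pi_{n-1}$ matrix verified against $(L_{\eta_\beta};\in,a_\beta)$ has no a priori reason to survive when its universal quantifiers range over all of $L$ and the predicate is the full (or perturbed) class, and conversely a witness for a $\Sigma_n$-truth of $(L;\in,B)$ may be large and may genuinely depend on the tail. This is precisely the point where the paper's proof is doing real work: it realizes $A$ as a $\Sigma_{n+1}$-definable, $\Sigma_n$-generic filter for the class forcing $\Addinf$, introduces the strong forcing relation $\forces^+$, proves that for reduced $\Sigma_m$ formulas the relation $p\forces^+\varphi$ is $\Sigma_m$-definable, and establishes a localized forcing theorem so that ``$(L;\in,A)\models\varphi$'' is equivalent to ``some $p\in G$ with $p\forces^+\varphi$,'' a $\Sigma_n$ condition met by $G\cap L_\lambda$ by genericity; tail-robustness then follows because any set-sized modification of a $\Sigma_n$-generic filter is again $\Sigma_n$-generic. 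Your sketch replaces this machinery with the assertion that ``the bootstrap'' handles it, which is exactly the step that needs a proof; as written, neither the downward direction of (1) nor either direction of (2) is established.
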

    Point $(2)$ above roughly states that there is no $\Sigma_n$-way to detect set-sized changes in $A$ from below. If $i\geq\lambda$ and $i\in A$ then necessarily 
    \[
    (L_\lambda;\in, A\cap\lambda)\not\prec_{\Sigma_{n+1}}(L;\in, A\setminus\{i\})
    \]
    as the statement ``$\exists j\in A\ j\notin A\setminus\{i\}"$ holds true and is $\Sigma_{n+1}$ over $(L;\in, A\setminus\{i\})$, so $(2)$ is the most we can ask for.

    While we will never pass to a forcing extension in this section, our technique to find $A$ will make use of the class forcing method.

    \begin{defn}
        Suppose $\PP$ is a class forcing. A filter $G\subseteq\PP$ is \emph{$\Sigma_n$-generic} if $\PP\cap D\neq\emptyset$ for all dense $D\subseteq\PP$ which are $\Sigma_n$-definable (with parameters) over $(V;\in, \PP, \leq, \perp)$.
    \end{defn}

    The only class forcing of interest to us in this section is ``Cohen forcing at $\Ord$.''
    
    \begin{defn}
        The class forcing $\Addinf$ consist of functions $p\colon\alpha\rightarrow 2$ with $\alpha\in\Ord$, ordered by reverse inclusion.
    \end{defn}
    
    The forcing $\Addinf$ is ${<}\Ord$-closed so it is tame and does not add any sets. Also, we remark that $\Addinf$, $\leq$ and $\perp$ are all $\Sigma_0$-definable.

    \begin{lemm}\label{lemm:DefinableGenericExists}
        Assume $V=L$. There is a $\Sigma_{n+1}$-definable class $G\subseteq\Addinf$ so that for all $\Sigma_n$-correct $\lambda$ the following holds.
        \begin{enumerate}
            \item $G^{L_\lambda}=G\cap L_\lambda$.
            \item If $D\subseteq\Addinf$ is $\Sigma_n$-definable from parameters in $L_\lambda$ and any $p\in G\cap L_\lambda$ can be extended to a condition in $D$ then $D\cap G\cap L_\lambda\neq\emptyset$.
        \end{enumerate}
    \end{lemm}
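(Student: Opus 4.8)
The plan is to realize $G$ as a definable ``generic filter'' for the forcing $\Addinf$, built by a recursion guided by the canonical well-order $<_L$ of $L$ and diagonalizing against all $\Sigma_n$-definable dense sets. Thinking of the generic as a function $F\colon\Ord\to 2$ with $G=\{F\res\alpha\mid\alpha\in\Ord\}$, I would first $<_L$-enumerate all pairs $(\varphi,a)$, where $\varphi$ is the G\"odel number of a $\Sigma_n$-formula $\varphi(v_0,v_1)$ and $a$ is a parameter; such a pair names the (possibly non-dense) set $D_{\varphi,a}=\{p\in\Addinf\mid\varphi[p,a]\}$. Throughout, I use that $\Sigma_n$ admits a $\Sigma_n$-satisfaction relation, and $\Pi_n$ a $\Pi_n$-one, as recorded in Subsection \ref{Subsection:FirstOrderExpressibility}, in order to evaluate $\varphi_\xi[p,a_\xi]$ definably.

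Concretely I would set $p_0=\emptyset$, take unions at limits, and at a successor stage $\xi+1$ let $p_{\xi+1}$ be the $<_L$-least $q\leq p_\xi$ with $q\in D_{\varphi_\xi,a_\xi}$ if such $q$ exists, and otherwise set $p_{\xi+1}=p_\xi\cup\{(\dom p_\xi,0)\}$. The default one-bit extension, together with the fact that each $\Sigma_0$-definable dense set $\{p\mid\beta\in\dom p\}$ is met when its index is reached, guarantees that domains are cofinal in $\Ord$, so $F=\bigcup_\xi p_\xi$ is total. For the complexity, the key point is that the successor clause ``$p_{\xi+1}=q$'' is a Boolean combination of ``$\exists r\leq p_\xi\ \varphi_\xi[r,a_\xi]$'', ``$q\leq p_\xi\wedge\varphi_\xi[q,a_\xi]$'', and ``$\forall q'<_L q\,(q'\leq p_\xi\to\neg\varphi_\xi[q',a_\xi])$''; via the $\Sigma_n$- and $\Pi_n$-satisfaction relations these are $\Sigma_n$ and $\Pi_n$, so the clause is $\Delta_{n+1}$. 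Since $<_L$ and the enumeration $\xi\mapsto(\varphi_\xi,a_\xi)$ are $\Delta_1$ and $\Sigma_n/\Pi_n$ are closed under bounded quantification, the predicate $\mathrm{Seq}(t,\xi)$ expressing ``$t=\langle p_\eta\mid\eta\leq\xi\rangle$'' is $\Delta_{n+1}$, whence $p\in G\iff\exists\xi\,\exists t\,(\mathrm{Seq}(t,\xi)\wedge p\subseteq t(\xi))$ is $\Sigma_{n+1}$.

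To verify $(1)$ and $(2)$ for $\Sigma_n$-correct $\lambda$, the crucial observation is that coherence cannot be read off from the syntactic complexity of $G$ (which is only $\Sigma_{n+1}$), but must be proved by a stepwise induction that uses only the $\Sigma_n$- and $\Pi_n$-elementarity supplied by $L_\lambda\prec_{\Sigma_n}L$. I would show by induction on $\xi<\lambda$ that the stage-$\xi$ conditions computed in $L_\lambda$ and in $L$ coincide: at a successor the existence of an extension of $p_\xi$ into $D_{\varphi_\xi,a_\xi}$ is $\Sigma_n$ with parameters $p_\xi,a_\xi\in L_\lambda$ and hence reflects, and the $<_L$-least witness agrees and lies in $L_\lambda$ (were the $L$-least witness strictly $<_L$-below the $L_\lambda$-least one, it would itself lie in $L_\lambda$ by $<_L$-initiality of $L_\lambda$, contradicting minimality). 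As all $p_\xi$ with $\xi<\lambda$ then have domain $<\lambda$ and, using regularity of $\lambda$, their domains are cofinal in $\lambda$, we obtain $G^{L_\lambda}=G\cap L_\lambda$, giving $(1)$. For $(2)$, if $D=D_{\psi,a}$ is $\Sigma_n$-definable with $a\in L_\lambda$, its index $\xi^*$ in the enumeration is ${<}\lambda$, since the $<_L$-predecessors of $(\psi,a)$ lie in $L_\lambda$ and have order type $<\lambda$; the condition $p_{\xi^*}$ lies in $G\cap L_\lambda$, so by hypothesis it extends into $D$, this extension reflects into $L_\lambda$, and therefore the construction places $p_{\xi^*+1}\in D\cap G\cap L_\lambda$.

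I expect the main obstacle to be exactly this tension in the complexity bookkeeping. The global definition of $G$ is genuinely $\Sigma_{n+1}$, yet both coherence clauses may draw only on $\Sigma_n$-correctness of $\lambda$; so one cannot appeal to absoluteness of the defining formula and must instead run the inductive argument above, checking at each successor that both the reflection of ``an extension exists'' and the preservation of the $<_L$-least witness go through with all parameters drawn from $L_\lambda$. Getting these two reflections to align with the exact complexity budget $\Delta_{n+1}$, while keeping the witnesses inside $L_\lambda$, is the delicate part of the proof.
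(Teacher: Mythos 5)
Your proposal is correct and follows essentially the same route as the paper: a $<_L$-guided recursion that at stage $\xi$ asks only the $\Sigma_n$-question whether $p_\xi$ can be extended into the $\xi$-th $\Sigma_n$-class (never whether that class is dense), yielding a $\Delta_{n+1}$ recursion, a $\Sigma_{n+1}$-definable $G$, and local agreement at $\Sigma_n$-correct $\lambda$ via reflection of the existential clause and $<_L$-initiality of $L_\lambda$. One small repair: $\Sigma_n$-correct cardinals form a club and need not be regular, but your appeal to regularity is unnecessary, since cofinality of the domains $\dom(p_\xi)$ in $\lambda$ already follows from your own observation that each dense set $\{p\mid\beta\in\dom(p)\}$ with $\beta<\lambda$ is indexed, and hence met, below $\lambda$.
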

    Note that $(2)$ implies that $G$ is $\Sigma_n$-generic over $L$ as well as over $L_\lambda$ for any $\Sigma_n$-correct $L_\lambda$.
    \begin{proof}
        Already the existence of any $\Sigma_{n+1}$-definable $\Sigma_n$-generic is not entirely trivial: for a $\Sigma_n$-class $D\subseteq\Addinf$, the statement ``$D$ is dense" is $\Pi_{n+1}$, but we want $A$ to be $\Sigma_{n+1}$. Furthermore, it may happen that $\lambda$ is $\Sigma_n$ correct and $L_\lambda$ and $L$ disagree about whether $D$ is dense. The trick to get around this issue is to never ask the question whether a given class is dense, instead only ask whether a given $p$ can be extended to a given $\Sigma_n$ class $D$ which is only $\Sigma_n$.

        Using the canonical wellorder $<_L$ of $L$, let $\langle \tau_\alpha\mid\alpha\in\Ord\rangle$ be a $\Sigma_1$-definable enumeration of all terms for $\Sigma_n$-classes.  

        Now define a decreasing sequence $\langle p_\alpha\mid\alpha\in\Ord\rangle$ through $\Addinf$ as follows: Let $p_0=\emptyset$ and for $\alpha\in\mathrm{Lim}$, $p_\alpha=\bigcup_{\beta<\alpha} p_\beta$. If $p_\alpha$ is defined then, if possible, let $p_{\alpha+1}$ be the $<_L$-least $q\leq p_\alpha$ with $q$ in the class defined by the term $\tau_\alpha$. Otherwise, $p_{\alpha+1}=p_\alpha$. 

        Note that the definition in the successor step is of complexity $\Sigma_n\vee\Pi_n$ as there is a $\Sigma_n$-formula defining $\Sigma_n$-truth, so the whole recursion is $\Delta_{n+1}$. Moreover, carrying out this recursion in some $\Sigma_n$-correct $L_\lambda$ yields $\langle p_\alpha\mid\alpha<\lambda\rangle$. Finally, it is easy to see that 
        \[
        G=\{q\in\Addinf\mid \exists\alpha\ p_\alpha\leq q\}
        \]
        has the property $(1)$. For $(2)$, let $D$ be any $\Sigma_n$-definable class (using parameters in $L_\lambda$) so that any $p\in G\cap L_\lambda$ can be extended to a condition in $D$. Then $D$ is the evaluation of a term $\tau_\alpha$ with $\alpha<\lambda$ and hence $p_{\alpha+1}\in D\cap G\cap L_{\lambda}$.
    \end{proof}
    We remark that the $A$ defined above is even $\Delta_{n+1}$. This is best possible in the sense that a $\Sigma_n$-generic $A$ for $\Addinf$ can be neither $\Sigma_n$ nor $\Pi_n$-definable. 

    We will go on and prove that $A$ above witnesses Theorem \ref{thm:NiceAexists} to hold. To do so, we will prove a localized version of the forcing theorem for $\Addinf$ along the following lines.

    \begin{defn}
        We enrich the forcing language for $\Addinf$ with an additional atomic formula
     $v\in\dot A$ and define $p\forces\check x\in\dot A$ via $x\in\dom(p)\wedge p(x)=1$. 
   \end{defn}

    \begin{lemm}\label{lemm:LocalForcingTheoremForAddInf}
        Suppose $G$ is $\Sigma_n$-generic for $\Addinf$ with associated generic class $A\subseteq\Ord$. For any $\Sigma_n$-formula $\varphi(v_0,\dots, v_k)$ and $x_0,\dots, x_k\in V$, the following are equivalent
        \begin{enumerate}
            \item $(V;\in, A)\models \varphi(x_0,\dots, x_k)$.
            \item There is $p\in G$ with $p\forces\varphi(\check x_0,\dots, \check x_k)$.
        \end{enumerate}
    \end{lemm}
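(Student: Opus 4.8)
The plan is to prove both implications simultaneously by induction on the logical complexity of $\varphi$, running the usual truth lemma while keeping careful track of the definitional complexity of every dense set we appeal to, so that $\Sigma_n$-genericity of $G$ always suffices to meet it. First I would record the setup. Since $\Addinf$ is ${<}\Ord$-closed it adds no sets, so in $(V;\in, A)$ every quantifier ranges over $V$ and it suffices to work in the forcing language built from $\dot A$ and check-names $\check x$, $x\in V$. The recursive clauses of the (tame) forcing relation then read: $p\forces\check x\in\dot A$ iff $x\in\dom(p)$ and $p(x)=1$; $p\forces\check x\notin\dot A$ iff $x\in\dom(p)$ and $p(x)=0$; $p\forces\exists y\,\psi$ iff $\{q\le p\mid\exists a\ q\forces\psi(\check a)\}$ is dense below $p$; and $p\forces\forall y\,\sigma$ iff $p\forces\sigma(\check a)$ for every $a$. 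A first observation I would make is that for bounded $\theta$ the relation $p\forces\theta[\check{\vec x}]$ is $\Delta_0$ in $(p,\vec x)$: a condition can always be extended to decide $\dot A$ on all ordinals appearing in $\mathrm{trcl}(\{\vec x\})$, and bounded formulas only query $\dot A$ there, so the forcing value is computed locally from $p$. I would then normalize $\varphi$ to prenex form and induct on the number of quantifier blocks, peeling one entire block at a time; this keeps the L\'evy level from drifting upward.

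The inductive steps run as follows. In the matrix (bounded) base case, $(V;\in, A)\models\theta[\vec x]$ iff some $p\in G$ decides $\theta$ affirmatively, where I use $\Sigma_n$-genericity only against the $\Delta_0$ dense sets $\{p\mid\gamma\in\dom(p)\}$ (for instance, to realize $\gamma\notin A$ by a condition with $p(\gamma)=0$). For a leading existential block $\varphi=\exists\vec y\,\psi$ with $\psi$ a $\Pi_{m-1}$ formula, the direction $(1)\Rightarrow(2)$ needs no genericity, since a true witness $\vec a$ yields, by the induction hypothesis, some $p\in G$ with $p\forces\psi(\check{\vec a})$ and hence $p\forces\varphi$; for $(2)\Rightarrow(1)$, from $p\in G$ with $p\forces\varphi$ the set $D_p=\{q\le p\mid\exists\vec a\ q\forces\psi(\check{\vec a})\}$ is dense below $p$, and I would pad it to the dense set $D_p\cup\{q\mid q\perp p\}$, meet it with $G$ (which, being a filter containing $p$, must land in $D_p$), and apply the induction hypothesis. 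For a leading universal block $\varphi=\forall\vec z\,\sigma$ with $\sigma$ a $\Sigma_{m-1}$ formula, $(2)\Rightarrow(1)$ is pointwise and free, while for $(1)\Rightarrow(2)$ I would meet $G$ with $D=\{p\mid p\forces\varphi\}\cup\{p\mid\exists\vec a\ p\forces\neg\sigma(\check{\vec a})\}$; the second alternative cannot occur in $G$, since by the induction hypothesis applied to the $\Pi_{m-1}$ formula $\neg\sigma$ it would force a true universal statement to fail, so $G$ must meet the first alternative. Conjunctions and disjunctions are absorbed by the prenexing, or else handled directly using that $G$ is directed.

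The heart of the argument, and the only real obstacle, is the complexity bookkeeping guaranteeing that each $D_p$ and $D$ above is $\Sigma_n$-definable. By a parallel induction I would check that forcing a $\Pi_m$ formula is itself $\Pi_m$ — this is immediate from the pointwise clause $p\forces\forall y\,\sigma\leftrightarrow\forall a\ p\forces\sigma(\check a)$ — so that $D_p$, being $\exists\vec a$ over a $\Pi_{m-1}$ forcing relation, is $\Sigma_m$, and $D$, being a disjunction of a $\Pi_m$ and a $\Sigma_m$ condition, is $\Sigma_{m+1}$. The decisive point is then that in a $\Sigma_n$ formula the outermost (existential) block produces only the $\Sigma_n$-definable set $D_p$, whereas every universal block sits at a level $m\le n-1$, so its set $D$ is $\Sigma_{m+1}\subseteq\Sigma_n$; in both cases $\Sigma_n$-genericity of $G$ applies. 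This is exactly why the lemma is provable for $\Sigma_n$ (together with all $\Pi_m$ for $m<n$, which the induction requires) but not for $\Pi_n$: a leading universal block at level $n$ would push $D$ up to $\Sigma_{n+1}$, beyond what $\Sigma_n$-genericity can meet. I do not expect any trouble in the forcing-theoretic clauses themselves — since no sets are added they reduce to elementary manipulations of conditions — so the entire weight of the proof rests on this L\'evy-level accounting.
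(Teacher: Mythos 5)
Your overall strategy (truth lemma by induction on complexity, with the sole burden being that every dense class you ask $G$ to meet is $\Sigma_n$-definable) is the right one, and it is essentially how the paper proceeds. But there is a genuine gap exactly at the point you flag as the heart of the argument, namely the L\'evy-level accounting for the \emph{standard} forcing relation. First, the base case is not $\Delta_0$: with your own clauses, $p\forces\theta[\check{\vec x}]$ for bounded $\theta$ unwinds to ``for every $s\colon\beta\to 2$ extending $p\res\beta$, $\theta^{[s]}$ holds,'' and the quantifier over ${}^{\beta}2$ cannot be bounded without a power-set computation; this makes the relation $\Pi_1$ but not $\Sigma_1$. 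Second, and more seriously, the claim that ``forcing a $\Pi_m$ formula is $\Pi_m$, immediate from the pointwise clause'' is not immediate: it needs the forcing relation for $\Sigma_{m-1}$ formulas to be at worst $\Pi_m$, and your existential clause --- ``$\{q\le p\mid\exists a\, q\forces\psi(\check a)\}$ is dense below $p$'' --- prefixes a $\forall q\,\exists r$ to the inner relation at every existential block. Running the recursion honestly from the $\Pi_1$ base case, the complexity drifts upward by one level at every other stage (e.g.\ forcing $\Sigma_1$ comes out $\Pi_3$, hence forcing $\Pi_2$ comes out $\Pi_3$, not $\Pi_2$), which is exactly why the paper records that the naive relation for $\Sigma_n$ formulas is only $\Pi_{n+2}$ and states explicitly that it is not known whether the standard relation for $\Sigma_n$ formulas is $\Sigma_n$-definable. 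With that drift, your dense class $D_p$ for the outermost existential block lands in $\Sigma_{n+1}$ for odd $n$, beyond the reach of $\Sigma_n$-genericity.

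The paper's resolution of precisely this problem is to replace $\forces$ by a strong forcing relation $\forces^+$ whose existential and bounded-quantifier clauses are witness-based (no ``dense below $p$'' prefix), so that the complexity genuinely closes up at level $\Sigma_m$/$\Pi_m$ (Proposition \ref{prop:ComplexityOfStrongRelation}; even the $\Pi_1$ case there needs a separate trick with dense classes of conditions comparable or incompatible with everything in a transitive closure, relying on ${<}\Ord$-distributivity). One then proves the truth lemma for $\forces^+$ directly (Lemma \ref{lemm:LocalStrongForcingTheoremForAddInf}) and recovers Lemma \ref{lemm:LocalForcingTheoremForAddInf} from the density relation between $\forces$ and $\forces^+$ (Lemma \ref{lemm:BasicPropertiesOfStrongRelation}). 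So the content you dismiss with ``I do not expect any trouble in the forcing-theoretic clauses themselves'' is where the actual work lives: either you must introduce something like $\forces^+$, or you must supply a proof that the standard relation for $\Pi_m$ formulas is $\Pi_m$ --- which does not follow from the clauses you wrote down.
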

    
    While this is true, it is itself not very useful. The issue is that the forcing relation for $\Addinf$ seems to be too complicated: For a $\Sigma_n$ formula $\varphi$, the naive definition of the relation $p\forces\varphi(\check x_0,\dots,\check x_k)$ is not better than $\Pi_{n+2}$, while we would really like it to be $\Sigma_n$. Through other means, we will see that it is $\Pi_{n+1}$ for $n\geq 1$, but we do not know whether it is $\Sigma_n$. One problem here, which is specific to class forcing, is that $\Addinf$ has antichains which are proper classes and so $p\forces\exists x\varphi(x)$ is not in general equivalent to $\exists \dot x\ p\forces\varphi(\dot x)$, i.e. mixing fails.  Even for $\varphi\in\Sigma_0$, it is a priori not clear whether the forcing relation for $\varphi$ is any better than $\Pi_2$ as a similar issue arises for bounded existential quantification.

    To deal with this, we replace the forcing relation $\forces$ with a stronger yet less complicated relation $\forces^+$ for which we can nonetheless prove a localized forcing theorem. This in turn will rely on a careful analysis of the complexity of the relation $\forces^+$.  This analysis holds more generally for all ${<}\Ord$-distributive forcings, so we carry it out in more generality.


    \begin{defn}
        A first order formula $\varphi$ is \emph{reduced} if 
        \begin{itemize}
            \item no implication appears in $\varphi$ and
            \item negations appear only immediately in front of atomic subformulas.
        \end{itemize}
    \end{defn}

    Note that any formula is tautologically equivalent to a reduced formula: First replace all implications $\varphi\rightarrow\psi$ by $\neg\varphi\vee\psi$, then push all negations through as far as possible and finally eliminate double negations.

    \begin{defn}
        Let $\PP$ be a class forcing.
        For reduced formulas $\varphi(v_0,\dots, v_k)$ in the language $\mathcal L_{\dot G}=\{\in,\dot G\}$ and $x_0,\dots, x_k\in V$, $p\in\PP$, we define $p\forces^+\varphi(\check x_0,\dots, \check x_k)$ by induction of $\varphi$ as follows:
        \begin{itemize}
            \item $p\forces^+ x= y$ iff $x=y$.
            \item $p\forces^+ x\neq y$ iff $x\neq y$.
            \item $p\forces^+ x\in y$ iff $x\in y$.
            \item $p\forces^+ x\notin y$ iff $x\notin y$.
            \item $p\forces^+ x\in \dot G$ iff $x\in \PP\wedge p\leq x$.
            \item $p\forces^+ x\notin \dot G$ iff $x\notin \PP\vee (x\in \PP\wedge p\perp x)$.
            \item $p\forces^+ \exists z\in y\ \varphi(z,  x_0,\dots, x_k)$ iff $\exists z\in y\ p\forces^+\varphi( z,  x_0,\dots,  x_k)$.
            \item $p\forces^+\forall z\in y\ \varphi(z,  x_0,\dots,  x_k)$ iff $\forall z\in y\ p\forces^+\varphi( z, x_0,\dots,  x_k)$.
            \item $p\forces^+\varphi_0( x_0,\dots,  x_k) \wedge \varphi_1( x_0,\dots,  x_k)$ iff $p\forces^+\varphi_0( x_0,\dots, x_k)\wedge p\forces^+\varphi_1( x_0,\dots, x_k)$.
            \item $p\forces^+\varphi_0( x_0,\dots,  x_k) \vee \varphi_1( x_0,\dots,  x_k)$ iff $p\forces^+\varphi_0( x_0,\dots, x_k)\vee p\forces^+\varphi_1( x_0,\dots, x_k)$.
            \item $p\forces^+ \exists y\varphi(y,  x_0,\dots,  x_k)$ iff $\exists y\ p\forces^+\varphi( y, x_0,\dots,  x_k)$.
            \item $p\forces^+\forall y\varphi(y, x_0,\dots,  x_k)$ iff $\forall q\leq p\forall y\exists r\leq q\ r\forces^+\varphi( y, x_0,\dots, x_k)$.
        \end{itemize}
    \end{defn}
    We will now prove some basic properties of $\forces^+$.
    \begin{lemm}\label{lemm:BasicPropertiesOfStrongRelation}
        Let $\PP$ be some ${<}\Ord$-distributive class forcing. The strong forcing relation $\forces^+$ has the following properties for any reduced formula $\varphi$ in the language $\mathcal L_{\dot G}$.
        \begin{enumerate}
            \item If $p\forces^+\varphi( x_0,\dots,  x_k)$ and $q\leq p$ then $q\forces^+\varphi( x_0,\dots,  x_k)$.
            \item $p\forces^+\varphi( x_0,\dots, x_k)$ implies $p\forces\varphi(\check x_0,\dots,\check x_k)$.
            \item $p\forces\varphi(\check x_0,\dots,\check x_k)$ iff
            \[
            \{q\leq p\mid q\forces^+\varphi( x_0,\dots, x_k)\}
            \]
            is dense below $p$.
            \item The class 
            \[
            \{p\in\PP\mid p\forces^+\varphi( x_0,\dots,  x_k)\vee p\forces^+\varphi^\neg( x_0,\dots, x_k)\}
            \]
            is dense. Here, $\varphi^\neg$ is the reduced formula induced by $\neg\varphi$.
        \end{enumerate}
    \end{lemm}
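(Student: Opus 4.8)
The plan is to establish (1)--(4) by a simultaneous induction on the complexity of the reduced formula $\varphi$, proving them in the order (1), (2), (4), (3), since each item is used in the ones listed after it. I read $\forces$ as the semantic forcing relation, so that $p\forces\varphi(\check x_0,\dots,\check x_k)$ asserts $(V;\in,G)\models\varphi(x_0,\dots,x_k)$ for every sufficiently generic $G\ni p$; as ${<}\Ord$-distributive forcings are tame, enough such generics exist and meet every definable dense class lying below a fixed condition, which is all the arguments require. Part (1) is then a routine induction. The only atomic clauses referring to the condition are $x\in\dot G$ and $x\notin\dot G$: if $p\leq x$ and $q\leq p$ then $q\leq x$, and if $p\perp x$ and $q\leq p$ then $q\perp x$. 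The propositional and bounded-quantifier clauses inherit monotonicity from the induction hypothesis, and the $\forall y$ clause is monotone because passing from $p$ to $q\leq p$ only restricts the range of the leading quantifier ``$\forall q'\leq p$''.

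For (2) I again induct on $\varphi$. The $\dot G$-clauses use the filter axioms: $p\leq x$ forces $x\in G$ by upward closure, while $x\notin\PP$ or $p\perp x$ forces $x\notin G$. The conjunction, disjunction, and existential clauses (bounded or not) are immediate from the induction hypothesis; here one uses that a ${<}\Ord$-distributive forcing adds no sets, so that a bound $z\in y$ is absolute between $V$ and $(V;\in,G)$. The only clause with real content is $\forall y\,\varphi$: by definition $p\forces^+\forall y\,\varphi$ says exactly that for each $y$ the class $E_y=\{r\leq p\mid r\forces^+\varphi(y)\}$ is dense below $p$, and by (1) it is also open below $p$; hence any generic $G\ni p$ meets every $E_y$, and the induction hypothesis gives $(V;\in,G)\models\varphi(y)$ for all $y$, i.e.\ $p\forces\forall y\,\varphi$.

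Part (4) carries the combinatorial weight. The atomic and propositional cases are direct, using (1) to combine decisions: for $\varphi_0\wedge\varphi_1$ one first decides $\varphi_0$ densely, and below any condition forcing $\varphi_0$ one decides $\varphi_1$ densely, monotonicity preserving the first decision. The decisive cases are the quantifiers. For a bounded quantifier over a set $y$ I invoke ${<}\Ord$-distributivity directly: by the induction hypothesis and (1) each decision class $D_z=\{q\mid q\forces^+\varphi(z)\text{ or }q\forces^+\varphi(z)^\neg\}$ for $z\in y$ is dense and open, so the set-indexed intersection $\bigcap_{z\in y}D_z$ is dense; a condition in it decides all the $\varphi(z)$ simultaneously, and whether some $z$ yields $\varphi(z)$ or every $z$ yields $\varphi(z)^\neg$ then decides the bounded formula and its reduced negation. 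For an unbounded quantifier $\exists y\,\varphi$ I split cases: either some $q\leq p$ and some $y$ satisfy $q\forces^+\varphi(y)$, deciding positively, or no such pair exists, in which case the induction hypothesis (density of deciding each $\varphi(y)$) gives $p\forces^+\forall y\,\varphi(y)^\neg=(\exists y\,\varphi)^\neg$; the case $\forall y\,\varphi$ is dual.

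Finally (3) follows formally. For ``$\Leftarrow$'', if $\{q\leq p\mid q\forces^+\varphi\}$ is dense below $p$ then, being open by (1), it is met by any generic $G\ni p$, and (2) yields $(V;\in,G)\models\varphi$, so $p\forces\varphi$. For ``$\Rightarrow$'' I argue contrapositively: if that class is not dense below $p$, fix $q\leq p$ with no $r\leq q$ satisfying $r\forces^+\varphi$; by (4) there is $r\leq q$ with $r\forces^+\varphi^\neg$, whence $r\forces\varphi^\neg$ by (2). Since $\varphi^\neg$ is tautologically equivalent to $\neg\varphi$ and generics through $r$ exist, there is a generic $G$ with $r\in G$ and $(V;\in,G)\not\models\varphi$; as $r\leq p$ we have $p\in G$, so $p\not\forces\varphi$. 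I expect the only genuine obstacle to be expository rather than mathematical: making precise the meta-level notion of ``sufficiently generic filter'' so that the semantic steps in (2) and (3) are legitimate for a proper-class forcing. Tameness of ${<}\Ord$-distributive forcing supplies this, but must be cited with care; the one new combinatorial ingredient is the distributivity argument in the bounded-quantifier case of (4).
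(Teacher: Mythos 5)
Your proof is correct, but it reverses the logical order of the paper's argument. The paper proves the forward direction of $(3)$ directly by induction on $\varphi$, transferring from $\forces$ to $\forces^+$ case by case (e.g.\ from $p\forces\psi_0\vee\psi_1$ it extracts, densely below $p$, a condition $\forces^+$-deciding one disjunct), and then obtains $(4)$ as a short corollary of $(3)$: if no $q\leq p$ has $q\forces^+\varphi$ then $p\forces\varphi^\neg$, so $(3)$ yields some $q\leq p$ with $q\forces^+\varphi^\neg$. You instead prove $(4)$ first by a self-contained induction showing that the class of conditions $\forces^+$-deciding $\varphi$ versus $\varphi^\neg$ is dense, and then derive $(3)$ from $(2)$ and $(4)$; your bounded-quantifier case, intersecting the set-indexed family of dense open decision classes, is exactly where ${<}\Ord$-distributivity enters in the paper's $\forall z\in y$ case as well, and your treatment of the unbounded quantifiers (either some $q\leq p$ and some $y$ give $q\forces^+\varphi(y)$, or else $p\forces^+\forall y\,\varphi(y)^\neg$ outright) is sound. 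Your route has the mild advantage that the main induction never mentions the standard relation $\forces$, whose behaviour for class forcing is precisely the delicate point this section is designed to sidestep; the paper's route is shorter because it leans on standard properties of $\forces$ at each step. One small stylistic difference: you phrase $(2)$ and the reverse direction of $(3)$ semantically via sufficiently generic filters, whereas the paper argues syntactically (density of $\{q\leq p\mid q\forces^+\varphi\}$ below $p$ gives density of $\{q\leq p\mid q\forces\varphi\}$, hence $p\forces\varphi$), which avoids having to justify the existence of generics for a proper-class forcing; your tameness caveat addresses this, but the syntactic phrasing would make it unnecessary.
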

    \begin{proof}
        $(1)$ is easy and a straightforward induction on the complexity of $\varphi$ proves $(2)$, so we skip it. 
        
        Note that the reverse implication in $(3)$ is already a consequence of $(2)$: If 
        \[
        \{q\leq p\mid  q\forces^+\varphi( x_0,\dots, x_k)\}
        \] is dense below $p$ then so is $\{q\leq p\mid q\forces\varphi(\check x_0,\dots,\check x_k)\}$ and hence $p\forces\varphi(\check x_0,\dots,\check x_k)$.
        
        We will now prove the forwards implication of $(3)$. We proceed by induction on the complexity of $\varphi$. The atomic case is easy.
        
        \begin{description}
            \item[\underline{$\varphi=\psi_0\wedge\psi_1$}] Let $q\leq p\forces\psi_0\wedge\psi_1$. By induction there is $r\leq q$ with $r\forces^+\psi_0$ and further $s\leq r$ with $s\forces^+\psi_1$. By $(1)$, $s\forces^+\psi_0$ as well and hence $s\forces^+\psi_0\wedge\psi_1$.
            \item[\underline{$\varphi=\psi_0\vee\psi_1$}] If $p\forces\psi_0\vee\psi_1$ then for any $q\leq p$ there is $r\leq q$ and $i<2$ so that $r\forces\psi_i$. By induction, there is $s\leq r$ so that $s\forces^+\psi_i$ and hence $s\forces^+\psi_0\vee\psi_1$.
            \item[\underline{$\varphi=\exists z\in y\ \psi(z)$}] If $p\forces\exists z\in\check y\ \varphi(z)$ then for any $q\leq p$, there is $r\leq q$ and $z\in y$ with $r\forces\psi(\check z)$.  By induction, there is $s\leq r$ with $s\forces^+\psi(z)$ and hence $s\forces^+\exists z\in y\ \psi(z)$.
            \item[\underline{$\varphi=\forall z\in y\ \psi(z)$}]  Suppose that $p\forces\forall z\in\check y\ \psi(z)$. Then clearly for all $z\in y$, $p\forces\psi(\check z)$ and hence 
            \[
            D_z=\{q\leq p\mid q\forces^+\psi( z)\}
            \]
            is dense open below $p$. As $\PP$ is ${<}\Ord$-distributive, $\bigcap_{z\in y} D_z$ is dense below $p$ and for any $q\in\bigcap_{z\in y} D_z$, we have $q\forces^+\forall z\in y\ \psi(z)$.
            \item[\underline{$\varphi=\exists z \psi(z)$}] Similar to the bounded existential case.
            \item[\underline{$\varphi=\forall z \psi(z)$}] Assuming $p\forces\forall z\psi(z)$, we will show that in fact $p\forces^+\forall z\varphi(z)$. Given $q\leq p$ and $y$, we have $q\forces\varphi(\check y)$ so that by induction, there is $r\leq q$ with $r\forces^+\varphi(y)$. Hence $p\forces^+\forall y\psi(y)$ by definition of $\forces^+$.
        \end{description}
        For $(4),$ suppose there is no $q\leq p$ with $q\forces^+\varphi$. Then by $(3)$, there is no $q\leq p$ with $q\forces\varphi$ and hence $p\forces\neg\varphi$. As $\neg\varphi$ and $\varphi^\neg$ are tautologically equivalent, $p\forces\varphi^\neg$ so that by $(3)$ again, there is some $q\leq p$ with $q\forces^+\varphi^\neg$.
    \end{proof}
    
    The strong forcing relation $\forces^+$ has some quirks which are not shared by the usual forcing relation. For example, it can happen that $p\forces^+\forall x\varphi(x)$, yet there is no single $x$ with $p\forces^+\varphi(x)$. Further, there may be reduced sentences $\varphi,\psi$ so that $\varphi$ tautologically implies $\psi$, $p\forces^+\varphi$ and yet $p\not\forces^+\psi$.

    \begin{prop}\label{prop:ComplexityOfStrongRelation}
        Suppose $\PP$ is a class forcing and $\varphi(v_0,\dots,v_k)$ is a reduced $\Sigma_m$ (resp. $\Pi_m$) formula in the language $\mathcal L_{\dot G}$. Then the relation 
        \[
        \{(p, x_0,\dots, x_k)\mid p\forces^+\varphi( x_0,\dots, x_k)\}
        \]
        is $\Sigma_m$ (resp. $\Pi_m$) definable over $(V;\in, \PP,\leq,\perp)$. 
    \end{prop}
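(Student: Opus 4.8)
The plan is to prove the claim by induction on the structure of the reduced formula $\varphi$, maintaining as the inductive invariant that whenever $\varphi$ is $\Sigma_m$ (resp.\ $\Pi_m$) the class $\{(p,\vec x)\mid p\forces^+\varphi(\vec x)\}$ is $\Sigma_m$ (resp.\ $\Pi_m$) over $(V;\in,\PP,\leq,\perp)$. Since the two clauses are dual and $\varphi^\neg$ has the dual L\'evy complexity of $\varphi$, it is convenient to carry both clauses simultaneously, so that the relation for $\varphi^\neg$ is available at the same stage of the induction as the relation for $\varphi$. The induction is organized by the number of connectives and quantifiers of $\varphi$, which is a legitimate well-founded measure because the reduction $\psi\mapsto\psi^\neg$ does not increase this count.

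For the base cases, each atomic and negated-atomic clause in the definition of $\forces^+$ is manifestly $\Sigma_0$ over $(V;\in,\PP,\leq,\perp)$; in particular $p\forces^+ x\in\dot G$ and $p\forces^+ x\notin\dot G$ are $\Sigma_0$, as they are spelled out using only $\PP$, $\leq$ and $\perp$. The clauses for $\wedge$, $\vee$, and the bounded quantifiers transparently preserve L\'evy complexity, since a $\Sigma_m$ (resp.\ $\Pi_m$) formula is closed under conjunction, disjunction and bounded quantification and the definition of $\forces^+$ simply mirrors the corresponding connective. For an unbounded existential $\varphi=\exists y\,\psi(y)$ with $\psi$ of complexity $\Pi_{m-1}$, the defining clause $p\forces^+\exists y\,\psi(y)\iff\exists y\,(p\forces^+\psi(y))$ together with the inductive hypothesis for $\psi$ gives a $\Sigma_m$ definition directly.

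The only genuinely delicate case, and the main obstacle, is the unbounded universal $\varphi=\forall y\,\psi(y)$ with $\psi$ of complexity $\Pi_{m-1}$ (so $\varphi$ is $\Pi_m$). Reading off the complexity from the definition $p\forces^+\forall y\,\psi(y)\iff\forall q\le p\,\forall y\,\exists r\le q\;(r\forces^+\psi(y))$ is useless: applying the inductive hypothesis to the $\Pi_{m-1}$ matrix and then prefixing $\forall\forall\exists$ only yields a $\Pi_{m+1}$ bound, and iterating this over nested universal quantifiers would blow the complexity up without limit. To repair this I would establish the dual characterization
\[
p\forces^+\forall y\,\psi(y)\iff \forall y\,\neg\,\exists q\le p\;\bigl(q\forces^+\psi(y)^\neg\bigr),
\]
where $\psi^\neg$ denotes the reduced formula induced by $\neg\psi$. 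The forward direction uses downward closure, Lemma \ref{lemm:BasicPropertiesOfStrongRelation}(1), together with the consistency of the forcing relation: if some $q\le p$ forced $\psi(y)^\neg$, the definition of $\forces^+$ would supply $r\le q$ with $r\forces^+\psi(y)$, whence by (1) $r$ would strongly force both $\psi(y)$ and $\psi(y)^\neg$, contradicting Lemma \ref{lemm:BasicPropertiesOfStrongRelation}(2) and the consistency of $\forces$. The reverse direction uses the density statement Lemma \ref{lemm:BasicPropertiesOfStrongRelation}(4): below any $q\le p$ some $r$ decides $\psi(y)$ in the strong sense, and since $r\forces^+\psi(y)^\neg$ is excluded by hypothesis we must have $r\forces^+\psi(y)$. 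Granting this equivalence, the right-hand side has the desired complexity: by the simultaneous inductive hypothesis the relation $q\forces^+\psi(y)^\neg$ is $\Sigma_{m-1}$, so its negation is $\Pi_{m-1}$, and the prefix $\forall y\,\forall q\le p$ --- a universal quantifier followed by a quantifier relativized to the $\Sigma_0$-definable class $\PP$ --- keeps the whole expression $\Pi_{m-1}\subseteq\Pi_m$. This closes the universal case, and hence the induction. I expect the bookkeeping of the simultaneous $\Sigma_m/\Pi_m$ induction, and the verification that the dual characterization genuinely holds --- which leans on the ambient ${<}\Ord$-distributivity of $\PP$ through Lemma \ref{lemm:BasicPropertiesOfStrongRelation} --- to be where essentially all the work lies.
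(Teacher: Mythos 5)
Your proof is correct, but it handles the crucial universal-quantifier case by a genuinely different device than the paper. The paper splits that case in two: for $\Pi_1$ it introduces, for each $x$, the dense class $D_x$ of conditions deciding ($\leq$ or $\perp$) every element of $\mathrm{tc}(\{x,a\})$, observes that such a condition already strongly forces any $\Sigma_0$ statement that some extension of it strongly forces, and thereby rewrites the $\forall\forall\exists$ pattern as a genuinely $\Pi_1$ one; for $\Pi_{m+2}$ it merges consecutive universal quantifiers and reads the complexity directly off the definition, the point being that for $m+1>0$ the inner $\exists r\leq q$ is absorbed into the $\Sigma_{m+1}$ matrix. Your dual characterization $p\forces^+\forall y\,\psi(y)\iff\forall y\,\neg\exists q\leq p\,\bigl(q\forces^+\psi(y)^\neg\bigr)$ treats all universal cases uniformly, and your verification of it from parts (1), (2) and (4) of Lemma \ref{lemm:BasicPropertiesOfStrongRelation} is sound; it also delivers the correct bound in the $\Pi_1$ case, where the naive reading-off fails. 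What you pay for the uniformity is that every universal step now routes through the decision-density Lemma \ref{lemm:BasicPropertiesOfStrongRelation}(4), hence through the ordinary forcing relation $\forces$ and the ${<}\Ord$-distributivity of $\PP$, whereas in the paper only the $\Pi_1$ base uses distributivity and the higher $\Pi_{m+2}$ cases are purely syntactic; both arguments therefore implicitly need the distributivity hypothesis that the proposition's bare statement omits but its ambient context supplies. One cosmetic slip: the matrix $\psi$ of a $\Pi_m$ formula $\forall y\,\psi(y)$ should be taken to be $\Sigma_{m-1}$, not $\Pi_{m-1}$, so that $q\forces^+\psi(y)^\neg$ is $\Pi_{m-1}$ by the induction hypothesis, its negation is $\Sigma_{m-1}$, and the final count is $\forall y\,\forall q\,(\Sigma_{m-1})$, i.e.\ $\Pi_m$ rather than $\Pi_{m-1}$; this does not affect the validity of the argument.
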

    \begin{proof}
        We prove this by induction on the complexity of $\varphi$.
        \begin{description}
            \item[\underline{$\varphi\in\Sigma_0$}] This is straightforward.

            \item[\underline{$\varphi\in\Pi_1$}] Say $\varphi(a)=\forall x\psi(x, a)$ where $\psi$ is $\Sigma_0$. For a given $x$, let 
            $$D_x=\{q\in\PP \mid\forall r\in\mathrm{tc}(\{x, a\})\ (q\leq r\vee q\perp r)\}.$$
            An inspection of the definition of $\forces^+$ shows that if $q\in D_x$ and $r\leq q$ with $r\forces^+\psi(x, a)$ then already $q\forces^+\psi(x, a)$. Further, as $\PP$ is ${<}\Ord$-distributive, $D_x$ is dense. It follows that $p\forces^+\varphi(a)$ is equivalent to 
            $$\forall q\leq p\forall x (q\in D_x\rightarrow q\forces^+\psi(x, a))$$
            which is $\Pi_1$.
            \item[\underline{$\varphi\in\Sigma_{m+1}$}] Suppose $\varphi=\exists x\psi(x, a)$ with $\psi\in\Pi_n$. We have $p\forces^+\varphi(a)$ iff $\exists x\ p\forces^+\psi(x, a)$ which is $\Sigma_{m+1}$ by induction.
            \item[\underline{$\varphi\in\Pi_{m+2}$}] $\varphi$ is of the form $\forall x_0\dots\forall x_k \psi(x_0,\dots,x_k)$ with $\psi$ a reduced $\Sigma_n$ formula. With a moment of reflection, one can see that
        \[
        p\forces^+\forall x_0\dots\forall x_k \psi(x_0,\dots,x_k)
        \]
        is equivalent to 
        \[
        \forall q\leq p\forall x_0\dots\forall x_k\exists r\leq q\ r\forces^+\psi(x_0,\dots,x_k)
        \]
        which is $\Pi_{m+1}$ by induction as $m+1>0$. 
        \end{description}

    \end{proof}
    In the proof of Lemma \ref{lemm:BasicPropertiesOfStrongRelation}, we showed that if $\varphi$ is of the form $\forall x\psi(x)$ then $p\forces\varphi(\check x)$ iff $p\forces^+\varphi(x)$. Thus we also get the optimal complexity of the standard forcing relation for the $\Pi_m$ complexity classes. 
    \begin{cor}
        Suppose $\PP$ is a ${<}\Ord$-distributive class forcing, $m\geq 1$ and $\varphi$ is a $\Pi_m$-formula in the language $\mathcal L_{\dot G}$. Then the relation 
        \[
        \{(p, x_0,\dots, x_k)\mid p\forces\varphi(\check x_0,\dots, \check x_k)\}
        \]
        is $\Pi_m$-definable over $(V;\in, \PP,\leq,\perp)$.
    \end{cor}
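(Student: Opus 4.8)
The plan is to deduce the statement about the ordinary forcing relation $\forces$ from the complexity bound for the strong relation $\forces^+$ established in Proposition \ref{prop:ComplexityOfStrongRelation}, exploiting the fact---isolated in the proof of Lemma \ref{lemm:BasicPropertiesOfStrongRelation}---that $\forces$ and $\forces^+$ coincide on purely universal formulas.

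First I would put $\varphi$ into a convenient syntactic shape. The relation $p\forces\varphi(\check x_0,\dots,\check x_k)$ depends only on the truth value of $\varphi$ in the generic extensions of $V$ by $\PP$ below $p$, hence it is unchanged if $\varphi$ is replaced by any logically equivalent formula. Thus I may first pass to a reduced formula tautologically equivalent to $\varphi$ (eliminating implications and pushing negations to the atomic level, which alters neither the quantifier structure nor the $\Pi_m$-classification), and then, if necessary, prefix a vacuous universal quantifier. In this way $\varphi$ becomes a reduced $\Pi_m$-formula of the form $\forall x\,\psi(x, v_0,\dots, v_k)$; here the hypothesis $m\geq 1$ is precisely what guarantees that a leading unbounded universal quantifier is available.

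The key step is then the equivalence
\[
p\forces\forall x\,\psi(x,\check x_0,\dots,\check x_k)\ \Longleftrightarrow\ p\forces^+\forall x\,\psi(x, x_0,\dots, x_k),
\]
which is exactly the content of the universal case in the proof of Lemma \ref{lemm:BasicPropertiesOfStrongRelation}: the implication from $\forces^+$ to $\forces$ is clause $(2)$ of that lemma, while the converse is the computation carried out there, showing --- via the forward direction of clause $(3)$ applied to $\psi$ together with the definition of $\forces^+$ at a universal quantifier --- that $p\forces\forall x\,\psi(x)$ already yields $p\forces^+\forall x\,\psi(x)$. Granting this equivalence, the relation $p\forces\varphi(\check x_0,\dots,\check x_k)$ is identical to $p\forces^+\varphi(x_0,\dots,x_k)$, and Proposition \ref{prop:ComplexityOfStrongRelation} gives that the latter is $\Pi_m$-definable over $(V;\in,\PP,\leq,\perp)$, since $\varphi$ is a reduced $\Pi_m$-formula. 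This is the desired conclusion.

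The only point I expect to require care is the initial reduction together with the role of the hypothesis $m\geq 1$. One must check that rewriting $\varphi$ in reduced form with a leading universal quantifier is harmless for the forcing relation and does not raise its $\Pi_m$-complexity, and that the restriction $m\geq 1$ is genuinely necessary: for $m=0$ the leading quantifier of $\varphi$ need not be universal, so the coincidence of $\forces$ and $\forces^+$ is unavailable, and --- as already remarked in the excerpt --- the forcing relation of even a $\Sigma_0$-formula over a class forcing can a priori be only $\Pi_2$, so the method does not apply.
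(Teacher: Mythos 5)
Your proposal is correct and follows essentially the same route as the paper, which derives the corollary from the observation (made explicit in the universal-quantifier case of the proof of Lemma \ref{lemm:BasicPropertiesOfStrongRelation}) that $p\forces\forall x\,\psi(\check{\vec{x}})$ iff $p\forces^+\forall x\,\psi(\vec{x})$, combined with Proposition \ref{prop:ComplexityOfStrongRelation}. Your additional care about passing to a reduced form and prefixing a vacuous universal quantifier when $\varphi$ does not literally begin with one is a detail the paper leaves implicit, and it is handled correctly.
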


    We are now in good shape to prove the localized forcing theorem for the strong forcing relation.

    \begin{lemm}\label{lemm:LocalStrongForcingTheoremForAddInf}
        Let $\PP$ be a ${<}\Ord$-distributive class forcing and suppose $G$ is $\Sigma_n$-generic for $\PP$. For any reduced $\Sigma_n$-formula $\varphi(v_0,\dots, v_k)$ in the language $\mathcal L_{\dot G}$ and $x_0,\dots, x_k\in V$, the following are equivalent
        \begin{enumerate}
            \item $(V;\in, G)\models \varphi(x_0,\dots, x_k)$.
            \item There is $p\in G$ with $p\forces^+\varphi(x_0,\dots, x_k)$.
        \end{enumerate}
    \end{lemm}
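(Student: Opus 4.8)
The plan is to prove the biconditional by induction on reduced formulas, taking as induction measure the number of logical connectives and quantifiers of $\varphi$. Since the reduction operation $\psi\mapsto\psi^\neg$ preserves this measure, the inductive hypothesis will be available simultaneously for every immediate subformula \emph{and} for its reduced negation; this is exactly what the quantifier cases below require. I would prove both implications $(1)\Rightarrow(2)$ and $(2)\Rightarrow(1)$ at once.

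The implication $(2)\Rightarrow(1)$ is the routine half: one unwinds the definition of $\forces^+$ and applies the inductive hypothesis. The Boolean and existential clauses, as well as the bounded universal clause, pass through immediately because $\forces^+$ commutes with them ($p\forces^+\forall z\in y\,\psi$ means $\forall z\in y\,(p\forces^+\psi)$, so each instance is forced by $p\in G$). The only clause consuming genericity is the unbounded universal: from $p\forces^+\forall y\,\psi(y)$ and $p\in G$ one fixes $y$, notes that $\{r\le p\mid r\forces^+\psi(y)\}$ is dense below $p$, extends it by the conditions incompatible with $p$ to a dense class, meets it with $G$, and applies the inductive hypothesis. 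By Proposition~\ref{prop:ComplexityOfStrongRelation} this class has the same complexity as $\psi$, hence is $\Sigma_n$, so $\Sigma_n$-genericity applies.

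For the substantive implication $(1)\Rightarrow(2)$, the connectives are handled by the directedness of the filter $G$ (for $\wedge$, amalgamate the two witnessing conditions inside $G$) and the existential quantifiers by choosing a witness and invoking the inductive hypothesis --- neither needs genericity. The universal quantifiers are the heart of the argument. Here I would invoke Lemma~\ref{lemm:BasicPropertiesOfStrongRelation}(4), that the class of conditions deciding a given reduced formula is dense, together with ${<}\Ord$-distributivity: in the bounded case $\forall z\in y\,\chi(z)$ the distributivity lets one intersect, over the set-many $z\in y$, the dense classes deciding each $\chi(z)$ and still obtain a dense class, while in the unbounded case it underlies the $\forall q\le p\,\exists r\le q$ shape of the definition of $\forces^+$. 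Meeting the resulting dense decision class with $G$ yields $p\in G$ deciding the formula; the alternative that $p$ decides it negatively is then refuted by applying the inductive hypothesis (in the $(2)\Rightarrow(1)$ direction) to the reduced negation, which would contradict the truth assumption $(1)$.

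The main obstacle is a complexity issue: for $\Sigma_n$-genericity to apply, every decision class met along the way must be $\Sigma_n$-definable over $(V;\in,\PP,\le,\perp)$. By Proposition~\ref{prop:ComplexityOfStrongRelation} the relation $p\forces^+\theta$ has the same L\'evy complexity as $\theta$, so the decision class $\{p\mid p\forces^+\theta\vee p\forces^+\theta^\neg\}$ is $\Sigma_n$ precisely when $\theta$ has complexity strictly below $n$. Inside a $\Sigma_n$ formula every unbounded universal scopes over a formula of complexity at most $\Pi_{n-1}$, so those decision classes are unproblematic. The delicate case is a bounded universal $\forall z\in y\,\chi(z)$ whose matrix $\chi$ is genuinely $\Sigma_n$: its decision class is a priori only $\Delta_{n+1}$, since the negative alternative $\exists z\in y\,(p\forces^+\chi^\neg(z))$ is $\Pi_n$. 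This is exactly where ${<}\Ord$-distributivity must do the real work, and for the forcing of interest $\PP=\Addinf$ it can be sidestepped entirely: given witnesses $p_z\in G$ forcing each $\chi(z)$ (from the inductive hypothesis), their union $\bigcup_{z\in y}p_z$ is an initial segment of the generic class $\bigcup G$, hence lies in $G$ and is a common lower bound forcing $\forall z\in y\,\chi(z)$. Verifying that the general ${<}\Ord$-distributive case can be pushed through within $\Sigma_n$-genericity is the point I would expect to require the most care.
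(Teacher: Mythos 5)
Your overall architecture (induction on reduced formulas, the routine $(2)\Rightarrow(1)$ direction, existentials via a witness, and the unbounded universal via the dense decision class of Lemma \ref{lemm:BasicPropertiesOfStrongRelation}(4), whose complexity is $\Sigma_{n-1}\vee\Pi_{n-1}$ because the matrix of an unbounded universal inside a $\Sigma_n$ formula is at most $\Sigma_{n-2}$) matches the paper. The genuine gap is exactly the one you flag and do not close: the bounded universal case of $(1)\Rightarrow(2)$. Your plan of intersecting, over all $z\in y$, the decision classes for $\chi(z)$ fails on complexity grounds when $\chi$ is genuinely $\Sigma_n$ --- each decision class is only $\Sigma_n\vee\Pi_n$, and the intersection, being a universal quantification over $y$ of such conditions, is a priori $\Pi_{n+1}$, so $\Sigma_n$-genericity does not apply. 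Your proposed rescue for $\Addinf$ (that $\bigcup_{z\in y}p_z$ lies in $G$) is not a consequence of $G$ being a $\Sigma_n$-generic filter; it happens to hold for the particular $G$ built in Lemma \ref{lemm:DefinableGenericExists}, but the lemma is stated for arbitrary $\Sigma_n$-generic filters on arbitrary ${<}\Ord$-distributive class forcings, and the theorem on the complexity of $\forces^+$ is needed in that generality elsewhere.

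The paper's proof avoids decision classes entirely in this case. From the inductive hypothesis one gets, for each $x\in y$, a witness $p_x\in G$ with $p_x\forces^+\psi(x)$; the assignment $x\mapsto p_x$ is a \emph{set}, and the class
\[
D=\{q\in\PP\mid \forall x\in y\ q\leq p_x\ \vee\ \exists x\in y\ q\perp p_x\}
\]
is $\Sigma_0$-definable from that set as a parameter and dense precisely by ${<}\Ord$-distributivity. Any $q\in D\cap G$ is compatible with every $p_x$ (all lie in the filter $G$), hence lies below all of them, hence $q\forces^+\psi(x)$ for every $x\in y$ by Lemma \ref{lemm:BasicPropertiesOfStrongRelation}(1), i.e.\ $q\forces^+\forall x\in y\,\psi(x)$. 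This is uniform in the complexity of $\psi$ and uses only $\Sigma_0$-genericity, so the delicate case you isolate simply does not arise. If you replace your bounded-universal step with this argument, the rest of your proof goes through as written.
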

    \begin{proof}
        We do so by induction on the complexity of $\varphi$. The case where $\varphi$ is a Boolean combination of atomic formulas as well as the bounded existential quantification case is easy. We will further skip the cases of conjunction and disjunction.
        \begin{description}
            \item[\underline{$\varphi=\forall x\in y\ \psi(x)$}] If there is $p\in G$ with $p\forces^+\forall x\in  y\psi(x)$ then $\forall x\in y\ p\forces^+\psi(x)$ so that $\forall x\in y (V;\in, G)\models \psi(x)$ and hence $(V;\in, G)\models\forall x\in y\ \psi(x)$. On the other hand, if $(V;\in, G)\models \forall x\in y\ \psi(x)$ then for all $x\in y$ there is some $p_x\in G$ with $p\forces^+\psi( x)$. The set 
            \[
            D=\{q\in\PP\mid \forall x\in y\ q\leq p_x\vee \exists x\in y\ q\perp p_x\}
            \]
            is dense in $\PP$ as $\PP$ is ${<}\Ord$-distributive. Also, $D$ is $\Sigma_0$-definable and hence $G$ meets $D$. It follows that there is $p\in G$ with $\forall x\in y\ p\forces^+\psi(x)$ so that $p\forces^+\forall x\in y\ \psi(x)$.
            \item[\underline{$\varphi=\exists x\ \varphi(x)$}] If $(V;\in, G)\models\exists x\psi(x)$ then let $x$ witness this. Hence there is $p\in G$ with $p\forces^+\psi(x)$ so that $p\forces^+\exists x\psi(x)$. On the other hand, if $p\forces^+\exists x\ \psi(x)$ and $p\in G$ then there is $x$ so that $p\forces^+\psi( x)$ and hence $(V;\in, G)\models\psi(x)$.
            \item[\underline{$\varphi=\forall x\psi(x)$}] We may assume that $\psi$ is at most $\Sigma_{n-2}$. Suppose first that 
            \[
            (V;\in, G)\models\forall x\ \psi(x).
            \]
            Let $\psi^\neg$ denote the reduced $\Pi_{n-2}$ formula induced by $\neg\psi$. The set 
            \[
            D\coloneqq \{p\in\PP\mid p\forces^+\forall x\ \psi(x)\vee p\forces^+\exists x\psi^\neg(x)\}
            \]
            is dense by Lemma \ref{lemm:BasicPropertiesOfStrongRelation} and $\Sigma_{n-1}\vee\Pi_{n-1}$-definable. Thus  
            there is some $p\in D\cap G$ as $G$ is $\Sigma_n$-generic. We cannot have that $p\forces^+\psi(x)^\neg$ for some $x$ as then $(V;\in, G)\models\neg\psi(x)$, so $p\forces^+\forall x\psi(x)$ is the only other possibility.

            Now assume that $p\forces^+\forall x\psi(x)$ for some $p\in G$. This implies that for any $x$, the set 
            \[
            D_x=\{q\in \PP\mid q\perp p\vee (q\leq p\wedge q\forces^+\psi(x))\}
            \]
            is dense and, as $D_x$ is $\Sigma_{n-2}$, $D_x\cap G\neq\emptyset$. Hence $(V;\in, G)\models \psi(x)$ by induction, for any $x$.
        \end{description}
    \end{proof}

    \begin{rem}
        The proof shows that the assumption ``$G$ is $\Sigma_n$-generic" can be weakened slightly to ``$G$ is $\Sigma_{n-1}\vee\Pi_{n-1}$-generic" in the obvious sense.
    \end{rem}

    While we will make no use of Lemma \ref{lemm:LocalForcingTheoremForAddInf}, we remark that it is a consequence of Lemma \ref{lemm:LocalStrongForcingTheoremForAddInf} and Lemma \ref{lemm:BasicPropertiesOfStrongRelation}. 

    \begin{proof}[Proof of Theorem \ref{thm:NiceAexists}]
        Let $G$ be the $\Sigma_n$-generic filter for $\Addinf$ given by Lemma \ref{lemm:DefinableGenericExists} and let $A\subseteq\Ord$ denote the associated $\Sigma_n$-generic class of ordinals. Suppose that $\lambda$ is $\Sigma_n$-correct. Then $A\cap\lambda=A^{L_\lambda}$ as $G\cap L_\lambda=G^{L_\lambda}$.
        \begin{claim}
            Suppose $B\subseteq\Ord$ is $\Sigma_n$-generic for $\Addinf$ with $B\cap\lambda=A\cap\lambda$. Then 
            \[
            (L_\lambda;\in, A\cap\lambda)\prec_{\Sigma_n}(L;\in, B).
            \]
        \end{claim}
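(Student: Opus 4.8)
The plan is to prove $\Sigma_n$-elementarity directly: for every reduced $\Sigma_n$-formula $\varphi(\vec v)$ in the forcing language (every formula is tautologically equivalent to a reduced one, and such a replacement does not affect $\prec_{\Sigma_n}$) and all $\vec x\in L_\lambda$, I would show
\[
(L_\lambda;\in,A\cap\lambda)\models\varphi(\vec x)\iff (L;\in,B)\models\varphi(\vec x).
\]
Since agreement on $\Sigma_n$-formulas automatically yields agreement on $\Pi_n$-formulas, this gives $\prec_{\Sigma_n}$. The bridge between the two structures is the assertion
\[
(\ast)\qquad \exists p\in G\cap L_\lambda\ \bigl(p\forces^+\varphi(\vec x)\bigr),
\]
evaluated in $L$. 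By Proposition \ref{prop:ComplexityOfStrongRelation} the relation $p\forces^+\varphi(\vec x)$ is $\Sigma_n$-definable over $(L;\in)$, so by $\Sigma_n$-correctness of $\lambda$ it is absolute between $L_\lambda$ and $L$ whenever $p,\vec x\in L_\lambda$. The strategy is to show that each side of the displayed equivalence is equivalent to $(\ast)$.

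For the left-hand side: by Lemma \ref{lemm:DefinableGenericExists}(1), $G\cap L_\lambda=G^{L_\lambda}$, and (as noted after that lemma) this is $\Sigma_n$-generic over $L_\lambda$; applying the local strong forcing theorem (Lemma \ref{lemm:LocalStrongForcingTheoremForAddInf}) inside $L_\lambda$ gives $(L_\lambda;\in,A\cap\lambda)\models\varphi(\vec x)\iff(\ast)$. For the implication $(\ast)\Rightarrow(L;\in,B)\models\varphi(\vec x)$: a witness $p\in G\cap L_\lambda$ has $\dom(p)<\lambda$ and is an initial segment of the characteristic function of $A$; since $B\cap\lambda=A\cap\lambda$ it is equally an initial segment of the characteristic function of $B$, so $p$ lies in the generic filter $G_B$ determined by $B$. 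As $B$ is $\Sigma_n$-generic over $L$, Lemma \ref{lemm:LocalStrongForcingTheoremForAddInf} applied in $L$ with $G_B$ yields $(L;\in,B)\models\varphi(\vec x)$.

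The converse $(L;\in,B)\models\varphi(\vec x)\Rightarrow(\ast)$ is the only delicate point, and it is where Lemma \ref{lemm:DefinableGenericExists}(2) is used. By the forcing theorem over $L$ there is some $p'\in G_B$ with $p'\forces^+\varphi(\vec x)$. Consider the class $D=\{q:q\forces^+\varphi(\vec x)\}$, which is $\Sigma_n$-definable with parameters $\vec x\in L_\lambda$. I claim every $r\in G\cap L_\lambda$ extends to a condition in $D$: otherwise fix $r_0\in G\cap L_\lambda$ with no extension in $D$; but $r_0$ and $p'$ both lie in the filter $G_B$ and, being initial segments of the same characteristic function, are comparable, so either $p'\le r_0$, directly contradicting that $r_0$ has no extension in $D$, or $r_0\le p'$, in which case $r_0\forces^+\varphi(\vec x)$ by downward persistence (Lemma \ref{lemm:BasicPropertiesOfStrongRelation}(1)), again a contradiction. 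Hence the hypothesis of Lemma \ref{lemm:DefinableGenericExists}(2) is satisfied and $D\cap G\cap L_\lambda\neq\emptyset$, which is exactly $(\ast)$. I expect this last step to be the main obstacle: one cannot run a naive ``deciding density'' argument through $\varphi^\neg$, since $\forces^+\varphi^\neg$ is only $\Pi_n$ while $G$ is merely $\Sigma_n$-generic; it is precisely the linear comparability of conditions in the generic filter for $\Addinf$ that keeps the argument inside the $\Sigma_n$-definable class $D$ and allows the invocation of the local genericity in Lemma \ref{lemm:DefinableGenericExists}(2).
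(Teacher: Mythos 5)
Your proposal is correct and follows essentially the same route as the paper's proof: both reduce everything to the statement $\exists p\in G\cap L_\lambda\ p\forces^+\varphi(\vec x)$ via Lemma \ref{lemm:LocalStrongForcingTheoremForAddInf}, the $\Sigma_n$-definability of $\forces^+$ from Proposition \ref{prop:ComplexityOfStrongRelation}, and property $(2)$ of Lemma \ref{lemm:DefinableGenericExists}. The only difference is that you explicitly verify (via the linear comparability of conditions in a filter on $\Addinf$ and downward persistence of $\forces^+$) the hypothesis that every $r\in G\cap L_\lambda$ extends into $D$, a step the paper leaves implicit.
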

        \begin{proof}
            Let $H\subseteq\Addinf$ be the filter associated to $B$ and let $\varphi$ be a reduced $\Sigma_n$-formula and $x_0,\dots, x_k\in L_\lambda$. It suffices prove $(L;\in, H)\models\varphi(x_0,\dots, x_k)$ implies $(L_\lambda;\in, G\cap L_\lambda)\models\varphi(x_0,\dots, x_k)$, so let us assume the former. By Lemma \ref{lemm:LocalStrongForcingTheoremForAddInf},
            \[
                \exists p\in H\ p\forces^+\varphi( x_0,\dots,  x_k)
            \]
            so $H$ meets the $\Sigma_n$-definable class $D=\{q\in\Addinf\mid q\forces^+\varphi( x_0,\dots, x_k)\}$. By the property $(2)$ of $G$ guaranteed by Lemma \ref{lemm:DefinableGenericExists}, $G\cap L_\lambda\cap D\neq\emptyset$, so 
            \[
            (L_\lambda;\in, G\cap L_\lambda)\models\varphi(x_0,\dots,x_k)
            \]
            by Lemma \ref{lemm:LocalStrongForcingTheoremForAddInf}.
        \end{proof}
        The theorem follows as any set-sized change to $A$ results in another $\Sigma_n$-generic filter for $\Addinf$.
    \end{proof}

    \subsection{Separating Maximality Principles}

    We now aim to proof the main theorem. For the remainder of this section, \textbf{we assume $V=L$ and that $\kappa$ is a reflecting cardinal}. 
    
    \begin{prop}\label{prop:PPAhasNoProperClassAntichains}
        Let $A\subseteq\Ord$ be any definable class. There is no definable proper class antichain in $\PP_A$. 
    \end{prop}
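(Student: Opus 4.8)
The plan is to prove the (apparently stronger, but equivalent) statement that every definable \emph{maximal} antichain of $\PP_A$ is a set: since a definable proper class antichain can be extended, using the canonical well-order $<_{L}$, to a definable maximal antichain that is still a proper class, this suffices. Two structural features of the Easton product will do the work. First, for a cardinal $\lambda$ every condition $p$ splits as $p=(p\res\lambda)\cup p^{\geq\lambda}$, where $p\res\lambda$ is the restriction of $p$ to coordinates $\alpha\in A$ with $\omega_{\alpha+1}<\lambda$ and $p^{\geq\lambda}$ is the rest; the low part $p\res\lambda$ lies in the \emph{set} forcing $\PP_A\res\lambda$ of all conditions supported on such coordinates. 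Second, if a condition $a$ is compatible with $p\res\lambda$ and $\mathrm{supp}(a)$ is disjoint from $\mathrm{supp}(p^{\geq\lambda})$, then $a$ is compatible with $p$: a common extension $r$ of $a$ and $p\res\lambda$ below $\lambda$ may be amalgamated with $p^{\geq\lambda}$, since the two act on disjoint coordinate blocks and the Easton condition at each regular cardinal is preserved by the union.

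Fix a definable maximal antichain $\mathcal M$, defined by a formula of complexity $\Sigma_j$ with parameters $\bar a$. Working in $L$, I would invoke L\'evy reflection: the class of $\lambda$ with $L_\lambda\prec_{\Sigma_k}L$ is club, so I can choose a limit cardinal $\lambda$ with $\bar a\in L_\lambda$ and $k$ large enough (exceeding $j$ and the complexity of the definitions of $A$ and of $\PP_A$) that all relevant notions are computed correctly in $L_\lambda$. In particular $A^{L_\lambda}=A\cap\lambda$, the class forcing as computed in $L_\lambda$ is exactly the set forcing $\PP_A\res\lambda$, $\mathcal M^{L_\lambda}=\mathcal M\cap L_\lambda$, and the sentence ``$\mathcal M$ is a maximal antichain'' reflects. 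Thus $L_\lambda$, and hence $V$, sees that $\mathcal M\cap\PP_A\res\lambda$ is a maximal antichain of the set forcing $\PP_A\res\lambda$.

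It then remains to promote this to maximality in the full $\PP_A$. Given any $p\in\PP_A$, its low part $p\res\lambda$ belongs to $\PP_A\res\lambda$, so by maximality there is $a\in\mathcal M\cap\PP_A\res\lambda$ compatible with $p\res\lambda$. Since $\mathrm{supp}(a)$ consists only of coordinates with $\omega_{\alpha+1}<\lambda$, it is disjoint from $\mathrm{supp}(p^{\geq\lambda})$, and the second structural fact yields that $a$ is compatible with $p$. Hence $\mathcal M\cap\PP_A\res\lambda$ is a maximal antichain of $\PP_A$; being contained in the antichain $\mathcal M$, it must equal $\mathcal M$, so $\mathcal M\subseteq\PP_A\res\lambda$ is a set, contradicting that $\mathcal M$ is a proper class. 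The step I expect to be most delicate is securing a single $\lambda$ that is simultaneously correct enough for all these reflections while being a cardinal at which $\PP_A^{L_\lambda}=\PP_A\res\lambda$; I stress that the more naive route via a $\Delta$-system or chain-condition argument is \emph{not} available, because in $L$ there need not be any inaccessible (regular, $\Sigma_k$-correct) cardinal above the parameters — the correct reflecting points are singular limit cardinals — so the reflection-through-maximality argument is what sidesteps the missing large cardinals.
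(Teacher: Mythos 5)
There is a genuine gap, and it sits exactly where you flagged the argument as delicate: the choice of a \emph{singular} reflecting point $\lambda$. Your proof never uses the reflecting cardinal $\kappa$ (or any large cardinal) --- only $V=L$ and L\'evy reflection --- so if it worked it would establish the proposition in $\mathsf{ZFC}+V=L$ outright. But the proposition fails there: as remarked immediately after its statement, if there are no inaccessible cardinals and there is a definable global well-order, then $\PP_A$ \emph{does} have a definable proper class antichain. The paper's actual proof is a Fodor-style pressing-down argument: enumerate the antichain as $\langle p_\alpha\mid\alpha\in\Ord\rangle$, observe that $\lambda\mapsto p_\lambda\res\lambda$ is essentially regressive on the class of inaccessible cardinals, and use the elementarity $L_\kappa\prec L$ to find a proper class of inaccessibles on which this map is constant, whence two distinct members of the antichain are compatible. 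Inaccessibility does real work there, and it has to.

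Concretely, the false step is the identification $\PP_A^{L_\lambda}=\PP_A\res\lambda$ for singular $\lambda$, and with it the claim that $\mathcal M\cap\PP_A\res\lambda$ is maximal in $\PP_A\res\lambda$. When $\lambda$ is singular and $A\cap\lambda$ is unbounded in $\lambda$, the Easton condition permits conditions whose support is cofinal in $\lambda$ of order type $\cof(\lambda)$: it only demands $\lvert\mathrm{supp}(p)\cap\gamma\rvert<\gamma$ at regular $\gamma$, and imposes no constraint at $\lambda$ itself. Such conditions have rank $\lambda$ and are not elements of $L_\lambda$. Hence reflection only yields that $\mathcal M\cap L_\lambda$ is maximal among the \emph{bounded-support} conditions, which form a non-dense subforcing of $\PP_A\res\lambda$, and maximality does not transfer upward: the low part $p\res\lambda$ of a general $p\in\PP_A$ may itself have support cofinal in $\lambda$, and nothing guarantees it is compatible with any element of $\mathcal M\cap L_\lambda$. (This is the same phenomenon as a maximal antichain of the finite-support part of $\prod_{n<\omega}\Add(\omega_{n+1},1)$ failing to be predense in the full-support product.) If instead you take $\lambda$ to be an inaccessible, sufficiently correct cardinal above the parameters, then every condition in $\PP_A\res\lambda$ has bounded support, $\PP_A^{L_\lambda}=\PP_A\res\lambda$, and your factor-and-amalgamate argument does go through --- but that is precisely the large cardinal hypothesis you were trying to avoid, and it is what the standing assumption on $\kappa$ supplies.
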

    The large cardinal assumption is necessary to prove this. If there are no inaccessible cardinals and there is a definable global wellorder then $\PP_A$ has a proper class antichain for all proper classes $A$.
    \begin{proof}
        Without loss of generality we may assume that $A$ is definable with parameters in $L_\kappa$ and suppose towards a contradiction that there is a proper class definable antichain $\mathcal A$. Similarly, we may assume that $\mathcal A$ is definable with parameters in $L_\kappa$. Using the canonical wellorder of $L$, let $\langle p_\alpha\mid\alpha\in\Ord\rangle$ be a definable one-to-one enumeration of $\mathcal A$. Let
        \[
        f\colon I\rightarrow V,\ f(\lambda)=p_\lambda\res\lambda,
        \]
        where $I$ is the class of inaccessible cardinals and note that $f$ is essentially regressive in the sense that $f(\lambda)\in V_\lambda$.
        \begin{claim}
             There is a proper class $S$ so that $f$ is constant on $S$.
        \end{claim}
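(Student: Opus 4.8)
The plan is to locate a single value of $f$ whose preimage is unbounded; since any unbounded subclass of $\Ord$ is a proper class, this immediately yields the desired $S$. The reflecting cardinal $\kappa$ is what pins down the right value.

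First I would note that $\kappa$ is inaccessible: a regular $\Sigma_2$-correct cardinal in $L$ is a strong limit, since ``there is no largest cardinal'' is $\Pi_2$ and reflects down to $V_\kappa$. Hence $\kappa\in I$, $V_\kappa=L_\kappa$, and by the essential regressiveness of $f$ already recorded, $q:=f(\kappa)=p_\kappa\res\kappa$ lies in $V_\kappa$. I take $S:=f^{-1}(q)=\{\lambda\in I\mid p_\lambda\res\lambda=q\}$; by construction $\kappa\in S$ and $f$ is constant on $S$, so everything reduces to showing $S$ is unbounded.

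Suppose not, so $S\subseteq\beta_0$ for some ordinal $\beta_0$. Then $V$ satisfies
\[
\exists\beta\ \forall\lambda>\beta\ \big(\lambda\notin I\ \vee\ p_\lambda\res\lambda\neq q\big),
\]
an assertion whose parameters, $q$ together with the parameters from $L_\kappa$ defining $\mathcal A$, all lie in $V_\kappa$. Assuming this is a $\Sigma_2$-statement (see below), $V_\kappa\prec_{\Sigma_2}V$ reflects it into $V_\kappa$, producing a bound $\beta<\kappa$; moreover the matrix then holds in $V$ as well, by $\Sigma_2$-correctness. But $\kappa>\beta$, $\kappa\in I$ and $f(\kappa)=q$, a contradiction. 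Hence $S$ is unbounded and therefore a proper class, proving the claim.

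The step I expect to cause the most trouble is exactly the complexity bookkeeping flagged above: for arbitrary definable $\mathcal A$ the relation ``$p_\lambda$ is the $\lambda$-th element of $\mathcal A$'', and hence ``$f(\lambda)=q$'', need not be low in the L\'evy hierarchy, so the boundedness statement need not be $\Sigma_2$ and the reflection off $\kappa$ would fail. This is precisely where $V=L$ must be exploited: the canonical wellorder $<_L$ is $\Sigma_1$ and, via condensation, the predicates ``$x\in\mathcal A$'' and ``$p_\lambda\res\lambda=q$'' can be computed sufficiently absolutely between $L_\kappa$ and $L$ that $\Sigma_2$-correctness suffices; concretely I would first reduce to a presentation of $f$ that is $\Sigma_1$ over $L$ in parameters from $L_\kappa$, so that the matrix above is $\Pi_1$ and the whole statement is genuinely $\Sigma_2$. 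A more robust packaging that makes the same point while quantifying only over ranks is to assume every fiber is bounded, set $b(x)=\sup f^{-1}(x)$ and $G(\beta)=\sup\{b(x)+1\mid x\in V_\beta\}$, and show $\kappa$ is a closure point of the continuous increasing function $G$ (using reflection to get $b(x)<\kappa$ for $x\in V_\kappa$ and regularity of $\kappa$); then $f(\kappa)\in V_\kappa$ forces $b(f(\kappa))<\kappa\le b(f(\kappa))$, the same contradiction.
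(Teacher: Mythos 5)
Your argument is, at its core, the same as the paper's: assume all fibers are bounded, observe that the bounding function $q\mapsto\sup f^{-1}(q)$ is definable from parameters in $L_\kappa$, reflect to conclude that the bound is ${<}\kappa$ for every $q\in V_\kappa$, and then note that $q=f(\kappa)\in V_\kappa$ has $\kappa$ in its fiber --- a contradiction. Your ``more robust packaging'' at the end is essentially verbatim the paper's proof. The one place you go astray is the complexity bookkeeping you (rightly) flag as the danger point. You are reading ``reflecting cardinal'' as $\Sigma_2$-correct, and under that reading the worry is fatal: the antichain $\mathcal A$ is an \emph{arbitrary} definable class (and $A$ itself is only $\Delta_{n+1}$ for the $n$ at hand), so the predicates ``$x\in\mathcal A$'' and ``$p_\lambda\res\lambda=q$'' can sit arbitrarily high in the L\'evy hierarchy, and no amount of condensation or $<_L$-trickery will compress $\Sigma_{17}$-truth over $L$ into a $\Sigma_1$ presentation of $f$. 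That proposed repair does not work. The actual resolution is that in this paper ``reflecting'' means \emph{fully} correct, i.e.\ $V_\kappa\prec V$ as a scheme (this is what is used throughout the proof of the separation theorem, e.g.\ to get $\kappa$ to be $\Sigma_{n+2}$-correct for the given $n$), so here $L_\kappa\prec L$ and the reflection step goes through regardless of the complexity of $\mathcal A$ and $A$. With that correction your argument is exactly the paper's; without it, it has a genuine gap that your suggested fix does not close.
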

        \begin{proof}
            If not then for all $q$, $f^{-1}(\{q\})$ is bounded, say by $\alpha_q$. Note that
            \[
            \forall q\in L_\kappa\ \alpha_q<\kappa
            \]
            as $L_\kappa\prec L$ and $q\mapsto\alpha_q$ is definable using parameters in $L_\kappa$. On the other hand, we must have $\alpha_q\geq\kappa$ for $q=f(\kappa)$, contradiction.
        \end{proof}
        Let $\lambda_0\in S$ and take $\lambda_1\in S$ sufficiently large so that $\dom(p_{\lambda_0})\subseteq\lambda_1$. As $\lambda_0,\lambda_1\in S$, we have $p_{\lambda_0}\res\lambda_0=p_{\lambda_1}\res\lambda_1$. Clearly, $p_{\lambda_0},p_{\lambda_1}$ are compatible then, contradiction.
    \end{proof}

    The important consequence of this for us is that we can make use of mixing, i.e.\ if $p\forces_{\PP_A}\exists x\varphi(x)$ then there is a name $\dot x$ so that $p\forces_{\PP_A}\varphi(\dot x)$. We could not make use of this in the prior section as $\Addinf$ has proper class antichains.

    \begin{lemm}
        Suppose $A$ is a definable class. For $n\geq 2$ and $\Gamma$ either $\Sigma_n$ or $\Pi_n$, for any $\varphi\in\Gamma$ the relation 
        \[
        \{(p, \dot x_0,\dots, \dot x_k)\mid p\forces_{\PP_A}\varphi(\dot x_0,\dots, \dot x_k)\}
        \]
        is $\Gamma$-definable over $(V;\in, A)$.        
    \end{lemm}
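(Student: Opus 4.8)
The plan is to prove both the $\Sigma_n$- and the $\Pi_n$-case simultaneously by induction on $n\geq 2$, exploiting the fact — unavailable in the previous subsection for $\Addinf$ — that $\PP_A$ admits \emph{mixing}. Concretely, by Proposition~\ref{prop:PPAhasNoProperClassAntichains} every antichain of $\PP_A$ is a set, so $\PP_A$ is tame: the forcing theorem holds, every element of a generic extension is named by a set-sized name, and if $p\forces_{\PP_A}\exists x\,\psi(x)$ then $p\forces_{\PP_A}\psi(\dot x)$ for a single name $\dot x$. Thus I may work with the ordinary relation $\forces$ rather than $\forces^+$, and after passing to a reduced prenex form (so that all propositional connectives and bounded quantifiers sit in a $\Delta_0$ matrix) the induction is driven by exactly two clauses: for $\psi\in\Pi_{n-1}$,
\[
p\forces_{\PP_A}\exists x\,\psi(x)\iff\exists\dot x\,\bigl(\dot x\text{ is a }\PP_A\text{-name}\wedge p\forces_{\PP_A}\psi(\dot x)\bigr),
\]
and for $\psi\in\Sigma_{n-1}$,
\[
p\forces_{\PP_A}\forall x\,\psi(x)\iff\forall\dot x\,\bigl(\dot x\text{ is a }\PP_A\text{-name}\rightarrow p\forces_{\PP_A}\psi(\dot x)\bigr).
\]
The predicate ``$\dot x$ is a $\PP_A$-name'' is $\Delta_2$ over $(V;\in,A)$, since it only requires that the conditions occurring in $\dot x$ lie in $\PP_A$. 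Granting the inductive hypothesis that the inner relation is $\Pi_{n-1}$ (resp. $\Sigma_{n-1}$), the inductive step for $n\geq 3$ is then immediate: the $\Delta_2$ name-predicate is absorbed because $\Delta_2\subseteq\Pi_{n-1}\cap\Sigma_{n-1}$ once $n-1\geq 2$, so the first clause displays the $\Sigma_n$-relation as $\exists\dot x\,(\Pi_{n-1})$ and the second displays the $\Pi_n$-relation as $\forall\dot x\,(\Sigma_{n-1})$.

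The whole weight of the argument therefore falls on the base case $n=2$, which is also exactly where the hypothesis $n\geq 2$ is genuinely needed. The difficulty is that $\PP_A$ is only $\Delta_2$-definable over $(V;\in,A)$: deciding membership in $\Add(\omega_{\alpha+1},1)$ requires computing the cardinal $\omega_{\alpha+1}$, which is $\Delta_2$ and not $\Delta_1$. If one naively unfolds the $\Delta_0$-forcing relation to the atomic level, both that relation and the name-predicate are only $\Delta_2$, and each quantifier clause then costs an extra alternation, yielding $\Sigma_3$ rather than $\Sigma_2$. To recover the lost quantifier I would instead establish the base case directly by \emph{localization}. For a cardinal $\lambda$ one has $\PP_A\cong\PP_A^{<\lambda}\times\PP_A^{\geq\lambda}$, where $\PP_A^{<\lambda}=\PP_A\cap V_\lambda$ is a set forcing and the tail $\PP_A^{\geq\lambda}$ is ${<}\lambda$-closed; moreover $\PP_A$ preserves inaccessibles and $\Sigma_1$-correct cardinals. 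Since $H_\lambda$ correctly computes the cardinals below $\lambda$, the set forcing $\PP_A^{<\lambda}$ is correctly identified inside $H_\lambda$ from $A\cap\lambda$, and there both ``$\dot x$ is a name'' and ``$p\forces\theta$'' are computed by the set-forcing relation of $\PP_A^{<\lambda}$, hence $\Delta_1$ in the \emph{set} parameter $\PP_A^{<\lambda}$. Reflecting a forced $\Sigma_2$-truth to a correct $H_\lambda^{V[G]}$ and pulling it back through the factorization — using $H_\lambda\prec_{\Sigma_1}V$ and the preservation of such $\lambda$ — lets me express the base relation with a single outer quantifier over a witnessing inaccessible $\lambda$, which is absorbed because $\Sigma_2$ is closed under $\exists$ and $\Pi_2$ under $\forall$.

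The main obstacle is precisely this base case: arranging that the $\Delta_2$ cost of defining $\PP_A$ (and of the name-predicate) is absorbed into, rather than added to, the two quantifier blocks of a $\Sigma_2$- or $\Pi_2$-formula. The two tools that make this possible are the product factorization with a highly closed tail, which allows a fixed set forcing $\PP_A^{<\lambda}$ to decide the relevant data correctly, and the $\Sigma_1$-correctness $H_\lambda\prec_{\Sigma_1}V$, which lets that set forcing be identified and the truth reflected without a further alternation. Once the base is secured at complexity $\Sigma_2$ (resp. $\Pi_2$), the clean inductive step via mixing and tameness propagates the bound to all $n\geq 2$; the same factorization shows the clauses are uniform in $p$ and the $\dot x_i$, so the relation is definable by a single $\Sigma_n$ (resp. $\Pi_n$) formula over $(V;\in,A)$, as required.
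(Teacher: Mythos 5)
Your proposal is correct and follows essentially the same route as the paper: the inductive step is handled exactly as in the paper via mixing, which is legitimate because Proposition~\ref{prop:PPAhasNoProperClassAntichains} rules out definable proper class antichains, and the base case $n=2$ is secured by the same localization the paper uses, namely that for inaccessible $\lambda$ the factorization into the set forcing $\PP_{A\cap\lambda}=\PP_A\cap V_\lambda$ and a highly closed tail gives $V_\lambda^{V[G]}=V_\lambda[G_\lambda]\prec_{\Sigma_1}V[G]$, so that $p\forces_{\PP_A}\varphi(\dot x)$ reduces to a single quantifier over a witnessing inaccessible $\lambda$ together with the set-forcing statement $p\forces_{\PP_{A\cap\lambda}}``V_{\check\lambda}^{V[\dot G]}\models\varphi(\dot x)"$. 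Your observation that the naive atomic-level unfolding only yields $\Sigma_3$ because $\PP_A$ is merely $\Delta_2$-definable is precisely the obstruction the paper's localization is designed to avoid, so no substantive difference remains.
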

    \begin{proof}
        Let us start with the base case $n=2$. The main point is that if $\lambda$ is inaccessible, $G$ is generic for $\PP_A$ then $G_\lambda=G\cap V_\lambda$ is generic over $V$ for the set forcing $\PP_{A\cap\lambda}$, $\lambda$ remains inaccessible and $V^{V[G]}_\lambda=V_\lambda[G_\lambda]$. In particular, $V_\lambda[G_\lambda]\prec_{\Sigma_1} V[G]$. 
        
        \begin{claim}
            For a $\Sigma_2$-formula $\varphi$, we have
             \[
             p\forces_{\PP_A}\varphi(\dot x)\Leftrightarrow\exists\lambda\ \lambda\text{ is inaccessible}\wedge p, \dot x\in V_\lambda\wedge p\forces_{\PP_{A\cap\lambda}}``V_{\check\lambda}^{V[\dot G]}\models\varphi(\dot x)".
             \]
        \end{claim}
        \begin{proof}
            We only prove the forwards direction. If $p\forces_{\PP_A}\varphi(\dot x)$ then we also have 
            \[
            p\forces_{\PP_A}\exists\mu\ \mu\text{ is inaccessible and } V_\mu^{V[\dot G]}\models \varphi(\dot x).
            \]
            As there are no proper class antichains, there is some $\lambda_0$ so that 
            \[
            p\forces_{\PP_A}\exists\mu\leq\check\lambda_0\ \mu\text{ is inaccessible and } V_\mu^{V[\dot G]}\models \varphi(\dot x).
            \]
            Finally, for any inaccessible $\lambda\geq\lambda_0$, we then have 
            \[
            p\forces_{\PP_{A\cap\lambda}}V_{\check\lambda}^{V[\dot G]}\models\varphi(\dot x).
            \]
        \end{proof}
        
        As $\PP_{A\cap\lambda}$ is a set-forcing for any $\lambda$, the above claim provides a $\Sigma_2$-definition of $p\forces_{\PP_A}\varphi(\dot x)$ for any $\Sigma_2$-formula $\varphi$.
        
        Similarly, if $\varphi$ is $\Pi_2$ then 
        \[
        p\forces_{\PP_A}\varphi(\dot x)\Leftrightarrow\forall\lambda\ (\lambda\text{ is inaccessible}\wedge p, \dot x\in V_\lambda)\rightarrow p\forces_{\PP_{A\cap\lambda}}``V_{\check\lambda}^{V[\dot G]}\models\varphi(\dot x)"
        \]
        is $\Pi_2$.

        We can now proceed by induction. If the claim holds for a $\Pi_n$ formula $\varphi$ then 
        \[
        p\forces\exists x\varphi(x,\dot y)\Leftrightarrow\exists \dot x\in V^{\PP_A}\ p\forces\varphi(\dot x,\dot y)
        \]
        is $\Sigma_{n+1}$. Here we used Proposition \ref{prop:PPAhasNoProperClassAntichains} to use mixing. The remaining case is easier.
    \end{proof}

    \begin{proof}[Proof of Theorem \ref{thm:SeparateMaximalityPrinciples}]
        We follow the sketch laid out in the introduction, filling out the details. Let $A$ be the $\Sigma_n$-definable class given by Theorem \ref{thm:NiceAexists} (by plugging in $n-1$) and  let $G$ be generic for $\PP_A$ and let $G_\kappa=G\cap L_\kappa$.
        
        \begin{claim}
            $L_\kappa[G_\kappa]\prec L[G]$.
        \end{claim}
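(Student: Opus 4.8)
The plan is to reduce the Claim to the assertion that $\kappa$ remains \emph{correct} in $L[G]$, i.e. $V_\kappa^{L[G]}\prec L[G]$, and then to prove correctness by transferring the forcing relation across a ground-model elementarity. Since $\kappa$ is reflecting it is inaccessible, a limit of inaccessibles, and $L_\kappa\prec L$; because $A$ is definable this upgrades to $A\cap\kappa=A^{L_\kappa}$ and
\[
(L_\kappa;\in,A\cap\kappa)\prec(L;\in,A).
\]
At the inaccessible $\kappa$ the Easton product factors as $\PP_A\cong\PP_{A\cap\kappa}\times\PP_{A\setminus\kappa}$, where $\PP_{A\cap\kappa}=\PP_A\cap L_\kappa$ is a set forcing of size $\kappa$ and $\PP_{A\setminus\kappa}$ is ${<}\kappa^+$-closed, hence adds no sets of rank below $\kappa$. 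Thus $G_\kappa=G\cap L_\kappa$ is $L$-generic (so also $L_\kappa$-generic) for $\PP_{A\cap\kappa}$, and $V_\kappa^{L[G]}=V_\kappa^{L[G_\kappa]}=L_\kappa[G_\kappa]$. So the Claim is precisely $V_\kappa^{L[G]}\prec L[G]$. Note we use only that $A$ is definable, not the finer properties from Theorem \ref{thm:NiceAexists}.

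To prove correctness I would transfer the forcing relation through the displayed elementarity. By the lemma computing the complexity of $\Vdash_{\PP_A}$, the relation $p\Vdash_{\PP_A}\varphi(\vec{\dot a})$ is definable over $(L;\in,A)$ by a formula depending only on $\varphi$, uniformly in the model; the same formula, read inside $L_\kappa$, defines $\Vdash_{\PP_{A\cap\kappa}}$ over $(L_\kappa;\in,A\cap\kappa)$. Hence for $p\in\PP_{A\cap\kappa}$ and names $\vec{\dot a}\in L_\kappa$,
\[
(L;\in,A)\models[\,p\Vdash_{\PP_A}\varphi(\vec{\dot a})\,]\iff(L_\kappa;\in,A\cap\kappa)\models[\,p\Vdash_{\PP_{A\cap\kappa}}\varphi(\vec{\dot a})\,].
\]

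With this I would prove $V_\kappa^{L[G]}\prec L[G]$ by induction on formulas, the only nontrivial case being $\exists x\,\psi$. Given $L[G]\models\exists x\,\psi[\vec a]$ with $\vec a=\vec{\dot a}^{\,G_\kappa}$, the class
\[
D=\{q\in\PP_{A\cap\kappa}:\exists\dot x\in L_\kappa\;q\Vdash_{\PP_A}\psi(\dot x,\vec{\dot a})\}\cup\{q:q\Vdash_{\PP_A}\forall x\,\neg\psi(x,\vec{\dot a})\}
\]
is definable over $(L;\in,A)$, and by the displayed equivalence it mirrors the corresponding class over $(L_\kappa;\in,A\cap\kappa)$, which is dense by the forcing theorem and mixing for $\PP_{A\cap\kappa}$ inside $L_\kappa$; so $D$ is dense and met by $G_\kappa$. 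A condition in the second part would force $\neg\exists x\,\psi$ while lying in $G$, contradicting $L[G]\models\exists x\,\psi[\vec a]$; hence some $q\in G_\kappa$ and $\dot x\in L_\kappa$ satisfy $q\Vdash_{\PP_A}\psi(\dot x,\vec{\dot a})$, producing a witness $\dot x^{\,G_\kappa}\in V_\kappa^{L[G]}$ to which the induction hypothesis applies. The upward direction is immediate from the induction hypothesis.

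The main obstacle is the last input: the forcing theorem and, crucially, mixing for the Easton class product $\PP_{A\cap\kappa}$ \emph{inside} $L_\kappa$. A naive closure or homogeneity argument fails here, since the closed top factor $\PP_{A\setminus\kappa}$ can create witnesses to $\exists x\,\psi$ even for parameters below $\kappa$; what rescues the argument is genuine reflection of the ground correctness of $\kappa$. I would obtain the needed mixing by reflecting Proposition \ref{prop:PPAhasNoProperClassAntichains}: although ``$\kappa$ is reflecting'' is not first order, for each fixed definable antichain the proof needs only a sufficiently $\Sigma_j$-correct inaccessible above its parameters, and such a cardinal exists below $\kappa$ because $L_\kappa\prec L$ reflects the first-order (for fixed $j$) assertion that there is an inaccessible $\Sigma_j$-correct cardinal, witnessed in $L$ by $\kappa$ itself. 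This yields the no-proper-class-antichain scheme, hence mixing and the forcing theorem, inside $L_\kappa$.
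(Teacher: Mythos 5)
Your proposal is correct in outline, but it takes a genuinely different and substantially longer route than the paper. The paper's proof never runs a Tarski--Vaught induction and never needs mixing: it first observes that any dense $D\subseteq\PP_A\cap L_\kappa$ lifts to the dense class $D'=\{p\in\PP_A\mid p\res\kappa\in D\}$ (Easton support plus inaccessibility of $\kappa$ give $p\res\kappa\in L_\kappa$), so $G_\kappa$ is generic over $L_\kappa$; then, given $L_\kappa[G_\kappa]\models\varphi(\dot x^{G_\kappa})$, the truth lemma over $L_\kappa$ produces $q\in G_\kappa$ with $q\forces\varphi(\dot x)$ in the sense of $L_\kappa$, the elementarity $(L_\kappa;\in,\PP_A\cap L_\kappa)\prec(L;\in,\PP_A)$ together with the definability of the forcing relation transfers this to $q\forces_{\PP_A}\varphi(\dot x)$ over $L$, and hence $L[G]\models\varphi(\dot x^G)$. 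Since this single upward implication holds for \emph{every} formula, applying it to $\neg\varphi$ gives the converse, and full elementarity follows with no witness-finding at all. Your argument instead proves the downward (existential-witness) direction directly, which is exactly what forces you to establish mixing for $\PP_{A\cap\kappa}$ as a class forcing inside $L_\kappa$, and hence to reflect Proposition \ref{prop:PPAhasNoProperClassAntichains} into $L_\kappa$; your reflection argument for that (for each fixed $j$, an inaccessible $\Sigma_j$-correct cardinal below $\kappa$ exists because $L_\kappa\prec L$ and $\kappa$ itself witnesses the statement in $L$) is sound, and indeed some such relativization is also silently needed by the paper for the transfer of the definable forcing relation to $L_\kappa$ at higher complexity levels. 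Your identification $V_\kappa^{L[G]}=L_\kappa[G_\kappa]$ via the ${<}\kappa^{+}$-closure of the tail factor is also correct but not needed for the paper's formulation. So: nothing is wrong, but the key simplification you miss is that one direction of the elementarity, obtained from the truth lemma over $L_\kappa$ plus the transfer, already suffices.
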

        \begin{proof}
            As $A$ is definable, $(L_\kappa;\in, A\cap \kappa)\prec (L;\in, A)$ and hence $(L_\kappa;\in, \PP_A\cap L_\kappa)\prec(L;\in, \PP_A)$. If $D\subseteq\PP_A\cap L_\kappa$ is any dense subset then 
            \[
            D'\coloneqq\{p\in\PP_A\mid p\res\kappa\in D\}
            \]
            is dense in $\PP_A$ as $\PP_A$ has Easton support. This easily implies that $G\cap L_\kappa$ is generic over $L_\kappa$. So if $L_\kappa[G_\kappa]\models\varphi(\dot x^G)$ for some $\dot x\in L_\kappa$ then there is $p\in G$ so that $p\res\kappa\forces\varphi(\dot x)$. Hence $L[G]\models\varphi(\dot x^G)$ as well.
        \end{proof}
        
        In particular, $\kappa$ is regular and $\Sigma_{n+2}$-correct in $L[G]$, so we may pass to a forcing extensions $L[G][h]$ for a forcing of size $\kappa$ so that $L[G][h]\models\SnMPallR$. Let $\lambda_0$ be sufficiently large so that $V_{\lambda_0}^{L[G][h]}$ contains a witness for any true $\Sigma_n$-statement in a real parameter. Now let $\lambda\geq\lambda_0$ be regular $\Sigma_{2n+2}$-correct. Let $i>\lambda$, $i\in A$, let $G_-$ be $G$ with the generic for $\Add(\omega_{i+1}, 1)$ deleted and $G_\lambda=G\cap L_\lambda$.
    
        \begin{claim}
            $L_\lambda[G_\lambda]\prec_{\Sigma_{n-1}}L[G_-]$.
        \end{claim}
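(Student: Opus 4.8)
The plan is to establish the two directions of $\Sigma_{n-1}$-elementarity directly through the forcing relation for $\PP_{A\setminus\{i\}}$, transported by the elementarity supplied by Theorem~\ref{thm:NiceAexists}$(2)$. Fix a $\Sigma_{n-1}$-formula $\varphi$ and names $\dot x_0,\dots,\dot x_k\in L_\lambda$; since $\lambda$ is inaccessible (being regular and $\Sigma_{2n+2}$-correct), these are $\PP_{A\cap\lambda}$-names and $\dot x_j^{G_-}=\dot x_j^{G_\lambda}$. First I would record the combinatorial setup: as $i>\lambda$, the Easton product factors as $\PP_{A\setminus\{i\}}\cong\PP_{A\cap\lambda}\times\QQ$ with $\QQ=\prod_{\alpha\in(A\setminus\{i\})\setminus\lambda}\Add(\omega_{\alpha+1},1)$ being ${\leq}\lambda$-closed, so that $G_\lambda:=G\cap L_\lambda=G_-\cap L_\lambda$ is $\PP_{A\cap\lambda}$-generic over both $L$ and $L_\lambda$, and $V_\lambda^{L[G_-]}=L_\lambda[G_\lambda]$.

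Next I would invoke the two main tools. Since $\lambda$ is $\Sigma_{n-1}$-correct, $(A\setminus\{i\})\cap\lambda=A\cap\lambda$, and $(A\setminus\{i\})\triangle A=\{i\}$ is a set, Theorem~\ref{thm:NiceAexists}$(2)$ (applied with $n-1$ in place of $n$ and $B=A\setminus\{i\}$) gives
\[
(L_\lambda;\in,A\cap\lambda)\prec_{\Sigma_{n-1}}(L;\in,A\setminus\{i\}).
\]
By the forcing-relation lemma (for $n\geq 3$, so that $n-1\geq 2$), there is a single $\Sigma_{n-1}$-formula $\Theta_\varphi$ defining $p\forces_{\PP_{A\setminus\{i\}}}\varphi(\dot x_0,\dots,\dot x_k)$ over $(L;\in,A\setminus\{i\})$; because this definition is uniform and refers only to set-forcing relations at inaccessibles (which are absolute), the same formula interpreted in $(L_\lambda;\in,A\cap\lambda)$ defines $p\forces^{L_\lambda}_{\PP_{A\cap\lambda}}\varphi(\dot x_0,\dots,\dot x_k)$. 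The case $n=2$ is immediate from $V_\lambda^{L[G_-]}=L_\lambda[G_\lambda]$ together with $H_\lambda\prec_{\Sigma_1}L[G_-]$.

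For the upward direction, if $L_\lambda[G_\lambda]\models\varphi$ then the forcing theorem in $L_\lambda$ yields $p\in G_\lambda$ with $(L_\lambda;\in,A\cap\lambda)\models\Theta_\varphi(p,\dot x_0,\dots,\dot x_k)$; applying the elementarity above to this $\Sigma_{n-1}$-fact about the parameters $p,\dot x_j\in L_\lambda$ gives $p\forces_{\PP_{A\setminus\{i\}}}\varphi$ over $L$, and since $p\in G_\lambda\subseteq G_-$ we conclude $L[G_-]\models\varphi$.

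The downward direction is the main obstacle, since a witnessing condition in $G_-$ may use coordinates above $\lambda$. Here I would exploit the weak homogeneity of the tail $\QQ$: if $p=(p_0,p_1)\in G_-$ forces $\varphi$ over $L$, then because $\dot x_0,\dots,\dot x_k$ are $\PP_{A\cap\lambda}$-names and $\QQ$ is a weakly homogeneous product of Cohen forcings, the lower part $p_0\in G_\lambda$ already satisfies $p_0\forces_{\PP_{A\setminus\{i\}}}\varphi$. Thus $(L;\in,A\setminus\{i\})\models\Theta_\varphi(p_0,\dot x_0,\dots,\dot x_k)$, and reflecting this $\Sigma_{n-1}$-fact downward through the elementarity yields $p_0\forces^{L_\lambda}_{\PP_{A\cap\lambda}}\varphi$; as $p_0\in G_\lambda$ is generic over $L_\lambda$, we obtain $L_\lambda[G_\lambda]\models\varphi$, completing the proof. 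The delicate points to verify are the correct localization of $\Theta_\varphi$ from $L$ to $L_\lambda$ and the homogeneity reduction of the witnessing condition to its part below $\lambda$.
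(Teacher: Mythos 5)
Your overall architecture matches the paper's: factor the Easton product at $\lambda$, invoke Theorem~\ref{thm:NiceAexists}$(2)$ to get $(L_\lambda;\in,A\cap\lambda)\prec_{\Sigma_{n-1}}(L;\in,A\setminus\{i\})$, use the $\Sigma_{n-1}$-definability of the class forcing relation, and transfer a forcing statement across that elementarity. Where you genuinely diverge is in how the witnessing condition is pushed below $\lambda$: you appeal to weak homogeneity of the tail $\QQ$ to conclude $p\res\lambda\forces\varphi$ outright in $L$, whereas the paper never uses homogeneity. Instead it observes that every $q\leq p\res\lambda$ is compatible with $p$ in the product, so the $\Sigma_{n-1}$-statement ``$\exists r\leq q\ r\forces_{\PP_{A\setminus\{i\}}}\varphi(\dot x)$'' holds in $(L;\in,A\setminus\{i\})$ and reflects to $(L_\lambda;\in,A\cap\lambda)$ for each such $q$, yielding that conditions forcing $\varphi$ are dense below $p\res\lambda$ in $L_\lambda$'s sense. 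Your homogeneity reduction is a legitimate and arguably cleaner alternative for that particular step (modulo checking that the class forcing relation is invariant under the definable tail automorphisms).

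The genuine gap is the step you yourself flag as ``delicate'': passing from $(L_\lambda;\in,A\cap\lambda)\models\Theta_\varphi(p_0,\dot x)$ and $p_0\in G_\lambda$ to $L_\lambda[G_\lambda]\models\varphi(\dot x^{G_\lambda})$. Your justification --- that $\Theta_\varphi$ ``refers only to set-forcing relations at inaccessibles (which are absolute)'' --- handles the $\Sigma_2$ base case of the definability lemma, but not the inductive steps for $n-1\geq 3$: there the definition passes through mixing and unbounded quantification over names, and to run the truth lemma for the relation $\Theta_\varphi$ over $(L_\lambda;\in,A\cap\lambda)$ one needs, at minimum, that the class $\{q\in\PP_{A\cap\lambda}\mid q\forces\varphi(\dot x)\vee q\forces\neg\varphi(\dot x)\}$ is dense \emph{as computed in $L_\lambda$}. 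That density statement has complexity well above $\Sigma_{n-1}$, so it does not come from the $\Sigma_{n-1}$-elementarity you are using; this is precisely why the paper chooses $\lambda$ to be $\Sigma_{2n+2}$-correct rather than merely $\Sigma_{n-1}$-correct, and devotes the corresponding step of its proof to reflecting this density from $L$ to $L_\lambda$ before concluding that $p\res\lambda$ forces $\varphi$ over $L_\lambda$. As written, your proposal defers exactly the point where the real work lies, and the hint you give for discharging it would not suffice beyond the $n=3$ case.
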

        \begin{proof}
            In any case, we have that $G_\lambda$ is generic over $L_\lambda$ for $\PP_A^{L_\lambda}=\PP_A\cap L_\lambda=\PP_{A\cap\lambda}$. Also recall that 
            \[
            (L_\lambda;\in, A\cap\lambda)\prec_{\Sigma_{n-1}}(L;\in, A\setminus\{i\}).
            \]
            First start with the easy case $n-1=1$. We have that $\lambda$ is inaccessible and hence $L_\lambda[G_\lambda]=H_{\lambda}^{L[G_-]}\prec_{\Sigma_1}L[G_-]$, so we are done.
        
            Now assume $n-1\geq 2$. If $\varphi$ is $\Sigma_{n-1}$ and 
            \[
            L[G_-]\models\varphi(\dot x^{G_-})
            \]
            for some name $\dot x\in L_\lambda$ then there is $p\in G_-$   with 
            \[
            (L;\in, A\setminus\{i\})\models p\forces_{\PP_{A\setminus\{i\}}}\varphi(\dot x).
            \]
            We will show that already 
            \[
            (L_\lambda;\in, A\cap\lambda)\models p\res\lambda\forces_{\PP_{A\cap\lambda}}\varphi(\dot x).
            \]
            As $\lambda$ is $\Sigma_{2n+2}$-correct in $L$, 
            \[(L_\lambda;\in, A\cap\lambda)\models \{q\in\PP_{A\cap\lambda}\mid q\forces\varphi(\dot x)\vee q\forces_{\PP_{A\cap\lambda}}\neg\varphi(\dot x)\}\text{ is dense}.
            \]
            Further, for any $q\leq p\res\lambda$, we have that the $\Sigma_{n-1}$-statement
            \[
            (L;\in, A\setminus\{i\})\models\exists r\leq q\ r\forces{\PP_{A\setminus\{i\}}}\varphi(\dot x)
            \]
            reflects down to $(L_\lambda;\in, A\cap\lambda)$. 
        
            Hence $L_\lambda[G_\lambda]\models\varphi(\dot x^{G_\lambda})$ as $p\res\lambda\in G_\lambda$.
        \end{proof}
        
        It is easier to see that $\lambda$ is $\Sigma_{n-1}$-correct in $L[G]$.
        
        \begin{claim}
            $L[G_-][h]\models\SnMPallR$.
        \end{claim}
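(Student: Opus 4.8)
The plan is to exploit that $W:=L[G_-][h]$ is a ground of $U:=L[G][h]$ for \emph{set} forcing and that the two models agree on $\Sigma_n$-truth with real parameters. First I would set $\mathbb S:=\Add(\omega_{i+1},1)$. Since $\mathbb S$ is ${<}\omega_{i+1}$-closed and $\kappa<\omega_{i+1}$, the poset $\mathbb H$ yielding $\SnMPallR$ (of size $\kappa$) already lies in $L[G_-]$, so I can apply the product lemma to the two independent forcings $\mathbb S$ and $\mathbb H$: from ``$g_i$ is $\mathbb S$-generic over $L[G_-]$'' and ``$h$ is $\mathbb H$-generic over $L[G_-][g_i]$'' it follows that $h$ is $\mathbb H$-generic over $L[G_-]$ and $g_i$ is $\mathbb S$-generic over $W=L[G_-][h]$. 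Hence $U=W[g_i]$ is a set-forcing extension of $W$. Because $\mathbb S$ adds no subset of any ordinal below $\omega_{i+1}>\lambda$, this yields $\mathbb R^W=\mathbb R^U$ and $V_\lambda^W=V_\lambda^U$.

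The second step transfers forceable persistence upward. Fix a $\Sigma_n$-formula $\varphi$, a real $x\in\mathbb R^W=\mathbb R^U$, and suppose $\varphi[x]$ is forceably persistent over $W$, say $\mathbb P\in W$ forces ``$\varphi[x]$ is persistent'' over $W$. I claim the same $\mathbb P$ witnesses forceable persistence over $U$. Indeed, an arbitrary forcing extension of $U^{\mathbb P}$ has the form $W[g_i][\hat G][K]$, where $\hat G$ is $\mathbb P$-generic over $W[g_i]=U$, hence also $\mathbb P$-generic over $W$; by the product lemma $g_i$ is then $\mathbb S$-generic over $W[\hat G]=W^{\mathbb P}$, so $W[g_i][\hat G][K]=W^{\mathbb P}[g_i][K]$ is a forcing extension of $W^{\mathbb P}$ (via $\mathbb S\ast\dot K\in W^{\mathbb P}$). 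Since $W^{\mathbb P}\models$``$\varphi[x]$ is persistent'', every such extension satisfies $\varphi[x]$, so $U^{\mathbb P}\models$``$\varphi[x]$ is persistent''. As $U\models\SnMPallR$ by the construction of $h$, we conclude $U\models\varphi[x]$.

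Finally I would push this truth back down to $W$. Write $\varphi=\exists y\,\theta(y,v)$ with $\theta\in\Pi_{n-1}$. By the choice of $\lambda_0\leq\lambda$ there is a witness $y_0\in V_{\lambda_0}^U\subseteq V_\lambda^U$ with $U\models\theta(y_0,x)$; since $\lambda$ is $\Sigma_{n-1}$-correct in $U$ and $\theta$ is $\Pi_{n-1}$, already $V_\lambda^U\models\theta(y_0,x)$. Using $V_\lambda^U=V_\lambda^W$ together with the $\Sigma_{n-1}$-correctness of $\lambda$ in $W$ (which follows from the preceding claim $L_\lambda[G_\lambda]\prec_{\Sigma_{n-1}}L[G_-]$ and the smallness of $h$), this lifts to $W\models\theta(y_0,x)$, whence $W\models\varphi[x]$. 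Combining the three steps, forceable persistence of $\varphi[x]$ over $W$ yields $W\models\varphi[x]$, that is, $W\models\SnMPallR$. The delicate point is this last step: the downward absoluteness of $\Sigma_n$-statements with real parameters is exactly what the earlier careful choices (a witness-bounding $\lambda_0$, a $\Sigma_{2n+2}$-correct $\lambda$, and a deleted coordinate $i>\lambda$) were engineered to secure, and it hinges on $\lambda$ remaining $\Sigma_{n-1}$-correct in both $U$ and $W$ while $V_\lambda^W=V_\lambda^U$. The upward persistence transfer, by contrast, is where being a set-forcing ground of $U$ is used, in an essential but routine way.
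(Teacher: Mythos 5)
Your proof is correct and follows essentially the same route as the paper's: transfer the forceable persistence of $\varphi[x]$ from $W=L[G_-][h]$ up to $U=L[G][h]$ via the product lemma, apply $\SnMPallR$ in $U$, and then pull the truth of $\varphi[x]$ back down to $W$ through $V_\lambda^{U}=V_\lambda^{W}=L_\lambda[G_\lambda][h]$ using the witness bound $\lambda_0$ and the $\Sigma_{n-1}$-correctness of $\lambda$ in both models. The paper's version is terser but structurally identical; you have merely made explicit the mutual-genericity rearrangements and the intermediate step $U\models\varphi[x]$ that the paper leaves implicit.
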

        \begin{proof}
            Suppose $\varphi(x)$ is $\Sigma_n$, $x$ a real and $\QQ\in L[G_-][h]$ forces that $\varphi(x)$ holds in all further forcing extensions. By taking $H$ to be $\QQ$-generic over $L[G][h]$, we see that $L[G][h][H]\models\varphi(x)$ as this is a forcing extension of $L[G_-][h][H]$.
        
            By choice of $\lambda$, we have $L_\lambda[G_\lambda][h]\models\varphi(x)$. As $\Sigma_{n-1}$-correct cardinals are preserved by small forcing, $L_\lambda[G_\lambda][h]\prec_{\Sigma_{n-1}} L[G_-][h]$. As $\Sigma_n$-formulas are upwards absolute from $L_\lambda[G_\lambda][h]$ to $L[G_-][h]$, we have $L[G_-][h]\models\varphi(x)$.
        \end{proof}
    
        \begin{claim}
            In $L[G_-][h]$, there is no fresh subset of $\omega_{i+1}^L$.
        \end{claim}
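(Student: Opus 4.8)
The plan is to show that the whole extension $L\rightarrow L[G_-][h]$ factors, around the cardinal $\mu\coloneqq\aleph_{i+1}^L$, as a forcing of size ${<}\mu$ followed by a ${<}\omega_{i+2}$-closed forcing, and then to invoke the two facts already used to prove that $\PP_A$ pushes only the buttons $B_\alpha$ with $\alpha\in A$: a forcing of size ${<}\mu$ adds no fresh subset of $\mu$, and a ${<}\omega_{i+2}$-closed forcing adds no new subset of $\mu$ at all. First I would split off the coordinates around $i$. Since $i\in A$ and its factor has been deleted, exactly the computation from that proposition yields
\[
\PP_{A\setminus\{i\}}=\PP_{<i}\times\PP_{>i},
\]
where $\PP_{<i}=\prod_{\alpha\in A,\,\alpha<i}\Add(\omega_{\alpha+1},1)$ has size ${<}\mu$ and $\PP_{>i}=\prod_{\alpha\in A,\,\alpha>i}\Add(\omega_{\alpha+1},1)$ is ${<}\omega_{i+2}$-closed. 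I would write $G_-=G_{<i}\times G_{>i}$ accordingly.

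Next I would relocate $h$ so that the closed forcing $\PP_{>i}$ is applied last. Let $\mathbb{S}$ be the poset of size $\kappa$ with generic $h$, so $L[G][h]\models\SnMPallR$. Since $\kappa<\mu$, $\mathbb{S}$ is coded by a subset of $\kappa$; and $\Add(\omega_{i+1},1)\times\PP_{>i}$ is ${<}\omega_{i+1}$-distributive over $L[G_{<i}]$ by Easton's lemma (as $\PP_{<i}$ is $\omega_{i+1}$-c.c.), so it adds no subset of $\kappa$ and therefore $\mathbb{S}\in L[G_{<i}]$. Because $h$ is $\mathbb{S}$-generic over $L[G]\supseteq L[G_{<i}][G_{>i}]$ while $G_{>i}$ is $\PP_{>i}$-generic over $L[G_{<i}]$, the product lemma shows $(h,G_{>i})$ is $\mathbb{S}\times\PP_{>i}$-generic over $L[G_{<i}]$, whence
\[
L[G_-][h]=L[G_{<i}][h][G_{>i}],
\]
with $G_{>i}$ now $\PP_{>i}$-generic over the intermediate model $L[G_{<i}][h]$.

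Finally I would combine the two facts. The bottom forcing $\PP_{<i}\ast\dot{\mathbb{S}}$ has size ${<}\mu$, so by the cited fact it adds no fresh subset of $\mu$ over $L$; in particular no subset of $\mu$ in $L[G_{<i}][h]$ is fresh over $L$. The top forcing $\PP_{>i}$ remains ${<}\omega_{i+2}$-distributive over $L[G_{<i}][h]$ by Easton's lemma, since $\PP_{<i}\ast\dot{\mathbb{S}}$ is $\omega_{i+2}$-c.c.\ (indeed of size ${<}\mu<\omega_{i+2}$), and hence adds no new subset of $\mu$. Therefore $\mathcal{P}(\mu)^{L[G_-][h]}=\mathcal{P}(\mu)^{L[G_{<i}][h]}$, so no subset of $\mu$ in $L[G_-][h]$ is fresh over $L$, which is the claim.

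The step I expect to give the most trouble is the relocation of $h$: officially $h$ is introduced as a generic over $L[G]$, that is, after the deleted Cohen subset $g_i$ of $\omega_{i+1}$ has been added, so I must verify that its poset $\mathbb{S}$ already lives in $L[G_{<i}]$ and that $h$ is mutually generic with $G_{>i}$ over $L[G_{<i}]$, so that $\PP_{>i}$ may legitimately be pushed to the top. This is exactly where the smallness $\kappa<\mu$ and the closure of $\Add(\omega_{i+1},1)\times\PP_{>i}$ are used. I would also record that $\PP_{<i}$, $\mathbb{S}$, and $\PP_{>i}$ all preserve $\mu$ as a cardinal, so that ``fresh subset of $\aleph_{i+1}^L$'' is computed with the intended value of $\mu$.
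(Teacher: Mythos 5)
Your proof is correct and uses the same decomposition and the same two facts as the paper; the only difference is that the paper factors the extension in the opposite order (the ${<}\omega_{i+2}^L$-closed part first, then a forcing of size ${<}\omega_{i+1}$ with $h$ on top), which makes your relocation of $h$ past $\PP_{>i}$ unnecessary. Either ordering works, and your mutual-genericity argument for moving $h$ below $G_{>i}$ is sound.
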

        \begin{proof}
            $L[G_-][h]$ arises as an extension of $L$ by first a $<\omega_{i+2}^L$-closed class forcing and then a further extension by a forcing of size ${<}\omega_{i+1}$. Thus $L[G_-][h]$ cannot contain a fresh subset of $\omega_{i+1}^L$.
        \end{proof}
    
        Finally, consider the statement 
        \[
        \forall\alpha\in A^L\ (\omega_{\alpha+1}^L\text{ contains a fresh subset}).
        \]
        As $\Sigma_n$-truth over $L$ is $\Sigma_n$-definable, $A^L$ is still $\Sigma_n$-definable and as $n\geq 2$, this is a $\Pi_n$-statement. It holds in any forcing extension of $L[G][h]$, but not in $L[G_-][h]$ and hence $\PnMPallempty$ fails in $L[G_-][h]$. This completes the proof of Theorem \ref{thm:SeparateMaximalityPrinciples}.
    \end{proof}

    We raise two questions regarding possible generalizations of Theorem \ref{thm:SeparateMaximalityPrinciples}.

    \begin{que}
        For which $n\geq 3$ is $\SnMPallR \land \neg\PnMPallempty$ consistent with a proper class of supercompact/extendible cardinals?
    \end{que}

    Our proof of Theorem \ref{thm:SeparateMaximalityPrinciples} shows the consistency of $\MP{\Sigma_n}{\mathcal{P}}{\mathbb{R}}\land\neg\MPempty{\Pi_n}{\mathcal{P}}$, where $\mathcal{P}$ is the class of $\kappa$-closed forcings for any uncountable cardinal $\kappa$.

    \begin{que}
        Is $\MP{\Sigma_n}{\mathcal{P}}{\mathbb{R}}\land \neg\MPempty{\Pi_n}{\mathcal{P}}$ consistent for other iterable classes $\mathcal{P}$ of posets, such as c.c.c.\ or proper?
    \end{que}

    \subsection{Yet another version of Maximality Principles}

    The following variant of Maximality Principles appears in \cite{HamkinsASimpleMaximalityPrinciple}.

    \begin{defn}
        Let $\Gamma$ be a set of formulas, $\mathcal{P}$ be a class of posets, and $A$ be a set. Then $\MPplus{\Gamma}{\mathcal{P}}{A}$ is the scheme of formulas asserting that for any formula $\phi(\vec{x})$ in $\Gamma$ and any $\vec{a}\in A^{<\omega}$, if $\phi[\vec{a}]$ is $\mathcal{P}$-forceably $\mathcal{P}$-persistent, then $\phi[\vec{a}]$ holds and is $\mathcal{P}$-persistent.
    \end{defn}

    By definition, $\MPplus{\Gamma}{\mathcal{P}}{A}$ implies $\MP{\Gamma}{\mathcal{P}}{A}$. Other easy facts on $\MPplus{\Gamma}{\mathcal{P}}{A}$ are summarized as follows.

    \begin{prop}
        Assume that $\mathcal{P}$ is an iterable $\Sigma_m$-definable class of posets.
        \begin{enumerate}
            \item $\MPplus{\Pi_1}{\mathcal{P}}{A}$ holds.
            \item $\MPplus{\Sigma_1}{\mathcal{P}}{A} \iff \MP{\Sigma_1}{\mathcal{P}}{A}$.
            \item For $m\leq n<\omega$,
            \begin{align*}
                \MPplus{\Pi_n}{\mathcal{P}}{A} & \iff \MP{\Pi_n}{\mathcal{P}}{A},\\
                \MP{\Pi_n}{\mathcal{P}}{A} & \Longrightarrow \MPplus{\Sigma_{n-1}}{\mathcal{P}}{A}.
            \end{align*}
            Therefore, $\allMPplus{\mathcal{P}}{A} \iff \allMP{\mathcal{P}}{A}$.
        \end{enumerate}
    \end{prop}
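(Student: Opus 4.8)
The plan is to reduce every instance of $\MPplus{\Gamma}{\mathcal{P}}{A}$ to an instance of an ordinary Maximality Principle applied to the \emph{persistence assertion} of the formula in play. For a formula $\phi(\vec x)$ write $\nu\phi(\vec x)$ for the statement ``$\phi[\vec x]$ is $\mathcal{P}$-persistent'', i.e.
\[
\nu\phi(\vec x)\ \equiv\ \forall\mathbb{P}\,\bigl(\mathbb{P}\in\mathcal{P}\ \to\ \forces_{\mathbb{P}}\phi[\check{\vec x}]\bigr).
\]
Two observations drive everything. First, the plus-conclusion for $\phi$ --- ``$\phi[\vec a]$ holds and is $\mathcal{P}$-persistent'' --- is literally $\nu\phi[\vec a]$, since the trivial-poset instance of persistence already yields that $\phi[\vec a]$ holds; and the hypothesis ``$\phi[\vec a]$ is $\mathcal{P}$-forceably $\mathcal{P}$-persistent'' is exactly ``$\nu\phi[\vec a]$ is $\mathcal{P}$-forceable''. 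Second, by iterability $\nu\phi$ is \emph{provably $\mathcal{P}$-persistent}: if $\nu\phi[\vec a]$ holds, forcing with $\mathbb{R}\in\mathcal{P}$ and then with some $\dot{\mathbb{S}}$ forced into $\mathcal{P}$ amounts to forcing with $\mathbb{R}*\dot{\mathbb{S}}\in\mathcal{P}$, so $\phi[\vec a]$ is preserved. Hence ``$\nu\phi[\vec a]$ forceable'' already implies ``$\nu\phi[\vec a]$ forceably persistent'', and $\MPplus{\Gamma}{\mathcal{P}}{A}$ for $\phi$ follows by applying the ordinary principle to $\nu\phi$ and reading off $\nu\phi[\vec a]$.

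The one nontrivial point is the complexity of $\nu\phi$. Using the standard fact that for $n\geq 1$ the relation $\forces_{\mathbb{P}}\phi$ is $\Pi_n$ (resp.\ $\Sigma_n$) whenever $\phi$ is $\Pi_n$ (resp.\ $\Sigma_n$), and that $\mathbb{P}\notin\mathcal{P}$ is $\Pi_m$, the matrix $\mathbb{P}\notin\mathcal{P}\vee\forces_{\mathbb{P}}\phi$ is $\Pi_n$ in both cases $\phi\in\Pi_n$ and $\phi\in\Sigma_{n-1}$ provided $m\leq n$ (here $\Sigma_{n-1}\subseteq\Pi_n$ is what pins down the index), and the leading $\forall\mathbb{P}$ keeps it $\Pi_n$. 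This yields (3): for $m\leq n$, applying $\MP{\Pi_n}{\mathcal{P}}{A}$ to $\nu\phi$ gives $\MPplus{\Pi_n}{\mathcal{P}}{A}$ when $\phi\in\Pi_n$ and $\MPplus{\Sigma_{n-1}}{\mathcal{P}}{A}$ when $\phi\in\Sigma_{n-1}$, while $\MPplus{\Pi_n}{\mathcal{P}}{A}\Rightarrow\MP{\Pi_n}{\mathcal{P}}{A}$ is immediate because the plus-conclusion is stronger. Summing over all $n\geq m$ --- every formula being $\Pi_n$ for some such $n$ --- then gives $\allMPplus{\mathcal{P}}{A}\iff\allMP{\mathcal{P}}{A}$.

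For (2), I would note that every $\Sigma_1$ formula is upward absolute to forcing extensions, hence provably $\mathcal{P}$-persistent; so for $\phi\in\Sigma_1$ the hypotheses ``forceable'' and ``forceably persistent'' coincide, and ``$\phi[\vec a]$ holds'' already entails ``and is $\mathcal{P}$-persistent''. Thus the plus-conclusion is free, giving $\MPplus{\Sigma_1}{\mathcal{P}}{A}\iff\MP{\Sigma_1}{\mathcal{P}}{A}$.

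Part (1) is where I expect the real work, and it is the main obstacle. Since $\Pi_1$ formulas are downward absolute, $\MP{\Pi_1}{\mathcal{P}}{A}$ is provable in $\ZFC$ and so holds in \emph{every} forcing extension; it therefore suffices to show that ``$\phi[\vec a]$ is forceably persistent'' is itself $\mathcal{P}$-persistent, after which $\MP{\Pi_1}{\mathcal{P}}{A}$ can be invoked inside each $\mathcal{P}$-extension. In modal terms this is the implication $\Diamond_{\mathcal{P}}\Box_{\mathcal{P}}\phi\to\Box_{\mathcal{P}}\Diamond_{\mathcal{P}}\Box_{\mathcal{P}}\phi$, which is valid exactly when the frame of $\mathcal{P}$-extensions is directed. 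The hard part is establishing this directedness: given an arbitrary $\mathcal{P}$-extension $V[H]$ and the witnessing extension $V[G]$ in which $\phi$ is persistent, one must find a common $\mathcal{P}$-extension, which comes down to forcing the witness over $V[H]$ and hence to a suitable upward absoluteness of membership in $\mathcal{P}$. For $\mathcal{P}$ the class of all posets this is automatic (a product of posets is a poset), and in general I would try to extract the required common-extension property from iterability, verifying it directly for the classes at hand if no uniform argument is available.
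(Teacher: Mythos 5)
Your handling of (2), (3), and the concluding equivalence is correct and coincides with the paper's method (it is the same observation the authors record when proving the parallel proposition for $\MPast{\Gamma}{\mathcal{P}}{A}$): by closure under two-step iteration the persistence assertion $\nu\phi$ is provably $\mathcal{P}$-persistent, and it is $\Pi_n$ whenever $\phi$ is $\Pi_n$ or $\Sigma_{n-1}$ and $m\leq n$, so everything reduces to applying $\MP{\Pi_n}{\mathcal{P}}{A}$ to $\nu\phi$; the $\Sigma_1$ case is just upward absoluteness.

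The genuine gap is in (1), and you have located it precisely but not closed it --- and it cannot be closed from iterability alone, so your fallback of verifying directedness ``for the classes at hand'' is in fact mandatory. Read for arbitrary $A$, statement (1) fails for the iterable, $\Sigma_2$-definable class of $\omega_1$-preserving posets: let $S\subseteq\omega_1$ be stationary and costationary and let $\phi$ be the $\Pi_1$-statement (in the parameters $S$ and $\omega_1$) ``every club subset of $\omega_1$ meets $S$''. Shooting a club through $S$ preserves $\omega_1$ and makes $\phi$ persistent under all further $\omega_1$-preserving forcing (the club survives), so $\phi$ is $\mathcal{P}$-forceably $\mathcal{P}$-persistent; but shooting a club through $\omega_1\setminus S$ also preserves $\omega_1$ and makes $S$ nonstationary, so $\phi$ is not $\mathcal{P}$-persistent in $V$, and $\MPplus{\Pi_1}{\omega_1\text{-pres}}{A}$ fails whenever $S,\omega_1\in A$. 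The missing hypothesis is exactly the directedness you identify --- for instance, that posets in $\mathcal{P}$ remain in $\mathcal{P}$ after forcing with posets in $\mathcal{P}$ --- under which your argument (force the witnessing extension over an arbitrary $\mathcal{P}$-extension, apply persistence there, and pull $\phi$ down by $\Pi_1$-downward-absoluteness) does go through, as it does for the class of all posets or the $\sigma$-closed posets. So either add such a hypothesis or record that (1) is not provable in the stated generality; as submitted, your proof of (1) is incomplete, though your diagnosis of why is exactly right.
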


    Our proof of \ref{thm:SeparateMaximalityPrinciples} implies the following.

    \begin{thm}
        Suppose the existence of a regular reflecting cardinal is consistent with $\mathsf{ZFC}$. Then for any $n\geq 2$, there is a model of $\mathsf{ZFC}$ in which $\MPall{\Sigma_n}{\mathbb{R}}\land\neg\MPplusallempty{\Sigma_n}$ holds.
    \end{thm}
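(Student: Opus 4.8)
The plan is to reduce the separation to the construction already carried out for Theorem~\ref{thm:SeparateMaximalityPrinciples}. First I would note that in any model of $\MPall{\Sigma_n}{\mathbb{R}}$, and hence of $\MPallempty{\Sigma_n}$ (for the class of all posets), a parameter-free $\Sigma_n$-sentence $\phi$ witnesses $\neg\MPplusallempty{\Sigma_n}$ as soon as $\phi$ is forceably persistent but not persistent: indeed $\MPallempty{\Sigma_n}$ then forces $\phi$ to hold, so $\phi$ is a \emph{true}, forceably persistent, non-persistent $\Sigma_n$-sentence, which is exactly a failure of the ``$+$''-clause. Thus it suffices to exhibit one such sentence in a model of $\MPall{\Sigma_n}{\mathbb{R}}$.

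I would work inside the model $L[G_-][h]$ produced in the proof of Theorem~\ref{thm:SeparateMaximalityPrinciples}, which already satisfies $\SnMPallR$. Recall that there the $\Pi_n$-sentence
\[
\Theta\equiv\forall\alpha\in A^L\ \big(\omega_{\alpha+1}^L\text{ contains a fresh subset}\big)
\]
is forceably persistent yet false, witnessing $\neg\PnMPallempty$. Its negation $\neg\Theta$ is $\Sigma_n$ and true in $L[G_-][h]$, but it \emph{cannot} serve as the witness: since a fresh subset is never removed by further forcing, $\neg\Theta$ is a decreasing property, and no extension makes it persistent, because the unique missing coordinate $i$ can always be filled by a still later forcing. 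This monotonicity of the basic buttons is precisely what distinguishes the two separations and forces a genuine modification of the construction.

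The modification I propose is to replace, at the distinguished coordinate $i>\lambda$, the plain on/off button ``$\omega_{i+1}^L$ contains a fresh subset'' by a coordinate carrying a single, permanently fixed, $\Sigma_n$-readable bit $b_i\in\{0,1\}$, coded by the generic object added at $i$ and located parameter-freely through the $\Sigma_n$-definition of $i$ as the unique $A^L$-coordinate left undetermined in $L[G_-][h]$. Taking $\phi\equiv(b_i=1)$, the aim is that $\phi$ can be locked to the value $1$ along one extension (making $\phi$ persistent) while an incompatible extension sets $b_i=0$ (making $\phi$ fail), so that $\phi$ is forceably persistent but not persistent; by the first paragraph this $\phi$ is the desired $\Sigma_n$-witness for $\neg\MPplusallempty{\Sigma_n}$. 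Dually, ``$b_i\neq1$'' is a false, forceably persistent $\Pi_n$-sentence, so the modified construction continues to deliver $\neg\PnMPallempty$ exactly as in Theorem~\ref{thm:SeparateMaximalityPrinciples}.

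Finally, I would preserve $\MPall{\Sigma_n}{\mathbb{R}}$ by rerunning the correctness argument of Theorem~\ref{thm:SeparateMaximalityPrinciples} verbatim: with the coding coordinate chosen above a regular $\Sigma_{n-1}$-correct $\lambda$, the analysis giving $L_\lambda[G_\lambda][h]\prec_{\Sigma_{n-1}}L[G_-][h]$, together with the fact that $L[G_-][h]$ is a ground for set forcing of $L[G][h]$, shows that the bit at $i$ does not affect $\Sigma_n$-truth of statements with real parameters. The hard part will be the coding itself: one must arrange a $\Sigma_n$-definable bit whose two values are genuinely lockable through non-amalgamable extensions—monotone fresh-subset buttons alone can never be simultaneously lockable ``on'' and forceable ``off''—while keeping the entire configuration $\Sigma_n$-definable from $A$ and leaving the delicate preservation of $\SnMPallR$ intact.
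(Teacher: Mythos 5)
Your reduction in the first paragraph is correct and matches the paper: under $\MPallempty{\Sigma_n}$ it suffices to produce a parameter-free $\Sigma_n$-sentence that is forceably persistent but not persistent. The gap is in what comes next. You conclude that because the buttons ``$\omega_{\alpha+1}^L$ has a fresh subset'' are monotone, no sentence built from them can do the job, and that a ``genuine modification of the construction'' with a new lockable-bit gadget is required. That inference is wrong, and the gadget you posit is never constructed --- you yourself flag the coding as ``the hard part,'' but that coding is precisely the content of the theorem, so the proposal is not a proof.

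The idea you are missing is that the witness need not unpush a button; it can instead exploit the non-absoluteness of $\omega_1$. The paper takes the \emph{unmodified} model $L[G_-][h]$ from Theorem \ref{thm:SeparateMaximalityPrinciples} and the $\Sigma_n$-sentence $\phi$ asserting ``if $\omega_1\in A$ then $\omega_1$ has a fresh subset'' (read: if the current $\omega_1$ is the $L$-cardinal attached to a coordinate of $A$, then the corresponding button is pushed). Each individual button only ever turns on, but $\phi$ is not monotone because its subject, $\omega_1$, moves under forcing. It is forceably persistent: passing to $L[G][h]=L[G_-][h][g_i]$ pushes every button indexed by $A$, and pushed buttons stay pushed, so $\phi$ holds in all further extensions. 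It is not persistent over $L[G_-][h]$: collapsing $\omega_i^L$ to $\omega$ makes $\omega_1=\omega_{i+1}^L$ with $i\in A$, and since $L[G_-][h]$ has no fresh subset of $\omega_{i+1}^L$ and the collapse is too small to add one, $\phi$ fails in that extension. So the same model and the same buttons already witness $\neg\MPplusallempty{\Sigma_n}$, with no need to re-verify $\SnMPallR$ or to design non-amalgamable extensions.
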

    \begin{proof}
        The model produced in the proof of Theorem \ref{thm:SeparateMaximalityPrinciples} is the desired model. Consider the $\Sigma_n$-sentence ``if $\omega_1 \in A$ then $\omega_1$ has a fresh subset.''
    \end{proof}

    \section{The impact of large cardinals and standard forcing axioms}

    We used $V=L$ extensively in the prior sections, so it is natural to ask whether we can accommodate large cardinals. This is definitely impossible for $n=2$.

    \begin{thm}[Woodin]\label{thm:WoodinsStatTowerForcing}
        Suppose there is a proper class of Woodin cardinals. Then $\MPall{\Pi_2}{\mathbb{R}}$ holds.
    \end{thm}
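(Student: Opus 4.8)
The plan is to leverage the special feature of the level $n=2$ already exploited in the proof of Theorem~\ref{thm:NiceAexists}: for every uncountable cardinal $\lambda$ one has $H_\lambda\prec_{\Sigma_1}V$. Write the given $\Pi_2$ formula in prenex form as $\phi(a)\equiv\forall x\,\psi(x,a)$, where $\psi$ is $\Sigma_1$ and $a\in\mathbb{R}$. Since $\psi$ is $\Sigma_1$ it reflects between $H_\lambda$ and $V$ for parameters in $H_\lambda$, so $V\models\phi(a)$ if and only if $H_\lambda\models\phi(a)$ for every uncountable cardinal $\lambda$. Thus the truth of $\phi(a)$ reduces to finding, for each set $x$, a witness for $\psi(x,a)$ \emph{inside} $V$. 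The entire difficulty is that a $\Sigma_1$ assertion about a genuine set parameter $x$ is not downward generic absolute, so the witnesses supplied by the forcing realizing persistence must somehow be pulled back into $V$.

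I would first dispose of the hereditarily countable case. Suppose $\phi(a)$ is forceably persistent, witnessed by a poset $\mathbb{P}$ over which $\phi(a)$ becomes persistent in $V[H]$. Collapsing any prescribed set to be countable is a further forcing through which $\phi(a)$ persists, so $\psi(x,a)$ holds whenever $x$ is hereditarily countable in the relevant extension. Passing to the countable-transitive-model normal form, the predicate ``$\psi(x,a)$, where $x$ is coded by the real $r$'' is $\Sigma^1_2$ in $(r,a)$, so $V[H]$ satisfies the projective sentence $\forall r\,\hat{\psi}(r,a)$. The proper class of Woodin cardinals yields full projective generic absoluteness, whence $\forall r\,\hat{\psi}(r,a)$ already holds in $V$; equivalently, writing $\phi^{hc}(a)\equiv\forall x\in H_{\omega_1}\,\psi(x,a)$, we obtain $V\models\phi^{hc}(a)$.

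It remains to promote $\phi^{hc}(a)$ to the full statement $\forall x\,\psi(x,a)$, and I expect this to be the principal obstacle. A direct reduction to reals is blocked: the real coding an uncountable $x$ exists only after a collapse, and projective absoluteness then concerns the \emph{enlarged} family of hereditarily countable sets of the extension rather than $x$ as computed in $V$. I would therefore invoke Woodin's stationary tower to transfer information about uncountable parameters through a generic elementary embedding. Assuming toward a contradiction that $V\models\neg\psi(x_0,a)$ for some uncountable $x_0$, fix a Woodin cardinal $\delta>\max(|\mathbb{P}|,|\mathrm{tc}(x_0)|)$ and force with $\mathbb{Q}_{<\delta}$ to obtain an elementary $j\colon V\to M\subseteq V[G]$ with $\crit(j)=\omega_1$, $j(\omega_1)=\delta$, and $M$ closed under ${<}\delta$-sequences in $V[G]$. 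As $a$ is a real, $j(a)=a$, and by elementarity $M\models\neg\psi(j(x_0),a)$.

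The heart of the argument is then to derive $M\models\psi(j(x_0),a)$ and reach a contradiction. The natural route is to force $\mathbb{P}$ over $V[G]$: the result is an extension of a $\mathbb{P}$-generic extension of $V$, over which $\phi(a)$ is persistent, so $\psi(j(x_0),a)$ holds there for the element $j(x_0)\in M\subseteq V[G]$; one must then reflect the witness back into the ${<}\delta$-closed model $M$. Securing this reflection through the generic ultrapower — arranging that the witness added over $V[G]$ can be captured by the closure of $M$, using the agreement $H_\delta^{M}=H_\delta^{V[G]}$ and the absoluteness furnished by the Woodin cardinals — is where the large cardinal strength is genuinely spent, and I expect it to be the crux of the whole proof. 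It is precisely the breakdown of this reflection at higher complexity (where $H_\lambda\prec_{\Sigma_1}V$ no longer controls $\Sigma_2$-truth) that prevents the analogous argument for $\Pi_3$, matching the open question raised later, and explains why the threshold for incompatibility with large cardinals sits exactly at $n=2$.
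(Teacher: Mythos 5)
Your overall strategy --- a stationary tower embedding $j\colon V\to M$ together with the fact that real parameters are fixed by $j$ --- is the right one, and the preliminary reduction for hereditarily countable parameters via projective absoluteness is correct (though the final argument makes it unnecessary). But the step you yourself flag as the crux is a genuine gap, and the route you sketch for closing it does not work. You end up with the $\Pi_1$ fact $\neg\psi(j(x_0),a)$ holding in $M$ and the $\Sigma_1$ fact $\psi(j(x_0),a)$ holding in $V[G][H]$, and the absoluteness runs the wrong way in both directions: the witness for $\psi(j(x_0),a)$ produced in $V[G][H]$ can have arbitrarily large rank and in any case lives in $V[G][H]$ rather than $V[G]$, so neither the agreement $V_\delta^M=V_\delta^{V[G]}$ nor the ${<}\delta$-closure of $M$ inside $V[G]$ lets you pull it into $M$; and $\Pi_1$ statements are not upward absolute from $M$ to $V[G]$ or to $V[G][H]$, so you cannot move $\neg\psi(j(x_0),a)$ the other way either. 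The two facts never meet in a single model, so no contradiction is reached.

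The paper's proof avoids this by never tracking an individual counterexample through $j$. Since $\neg\phi(a)$ is $\Sigma_2$ (an existential quantifier followed by a $\Pi_1$ matrix), its failure in $V$ reflects to $V_\kappa\models\neg\phi(a)$ for some inaccessible $\kappa$. One then takes a Woodin cardinal $\delta>\max\{\kappa,\lvert\mathbb{P}\rvert\}$ and chooses the tower generic $G$ so that $V[G]$ \emph{already contains} a $\mathbb{P}$-generic extension of $V$ below the condition forcing persistence; hence $V[G]\models\phi(a)$ with no further forcing needed. Applying $j$ gives $V_{j(\kappa)}^M\models\neg\phi(a)$ with $j(\kappa)<\delta$, and since $V_\delta^M=V_\delta^{V[G]}$ this is a statement about a structure that both models compute identically. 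As $j(\kappa)$ is inaccessible in $V[G]$, Levy absoluteness $V_{j(\kappa)}^{V[G]}=H_{j(\kappa)}^{V[G]}\prec_{\Sigma_1}V[G]$ lifts the $\Sigma_2$ statement $\neg\phi(a)$ from $V_{j(\kappa)}^{V[G]}$ to $V[G]$, and the contradiction with $V[G]\models\phi(a)$ occurs entirely inside $V[G]$. To repair your write-up, replace ``fix a counterexample $x_0$ and consider $j(x_0)$'' by ``fix an inaccessible $V_\kappa$ reflecting $\neg\phi(a)$ and consider $V_{j(\kappa)}^M$'', and absorb $\mathbb{P}$ into the tower generic rather than forcing with it afterwards.
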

    \begin{proof}
        Suppose $\varphi$ is a $\Pi_2$-formula, $x\in\mathbb R$ and $\PP$ forces that $\varphi(x)$ holds in all further extensions, yet $\varphi(x)$ fails in $V$. Then there is an inaccessible $\kappa$ so that $V_\kappa\models\neg\varphi(x)$. Let $\delta>\max\{\kappa,\vert\PP\vert\}$ be a Woodin cardinal and let $\mathbb{P}_{<\delta}$ be the stationary tower forcing at $\delta$. We can find $V$-generic $G \subset \mathbb{P}_{<\delta}$ so that $V[G]$ contains an extension of $V$ by $\PP$, in particular $V[G]\models\varphi(x)$. Let $j\colon V\rightarrow M$ be the induced generic elementary embedding. Then $j(\delta)=\delta$, so $j(\kappa)<\delta$. By elementarity of $j$, $j(\kappa)$ is inaccessible in $M$ and $V_{j(\kappa)}^M\models\neg\varphi(x)$. But $V_\delta^M =V_\delta^{V[G]}$ so $j(\kappa)$ is inaccessible in $V$ and $V_{j(\kappa)}^{V[G]}\models\neg\varphi(x)$. This implies $V[G]\models\neg\varphi(x)$, contradiction.
    \end{proof}

    What happens for $n\geq 3$ is really unclear. As we noted before Question \ref{que:MPvsGA}, the Ground Axiom ($\mathsf{GA}$) implies $\neg\MPastempty{\Sigma_3}{\mathcal{P}}$ for any non-trivial class of posets $\mathcal{P}$. Since Reitz \cite{RietzTheGroundAxiom} showed that natural large cardinal assumptions are compatible with $\mathsf{GA}$, no natural large cardinal assumption implies $\MPastallempty{\Sigma_3}$ outright. We will extend this idea to show that natural large cardinals do not prove $\MPastallempty{\Sigma_2}$.

    \begin{defn}[Reitz \cite{RietzTheGroundAxiom}]
        We say that a set of ordinals $A$ is coded into the continuum function at $\beta$ iff 
        \[
        \forall\alpha<\sup A\ (2^{\aleph_{\beta+\alpha+1}}=\aleph_{\beta+\alpha+2}\iff\beta\in A).
        \]
        The \emph{Continuum Coding Axiom} ($\CCA$) states that every set of ordinals is coded into the continuum function at some $\beta$.
    \end{defn}

    $\CCA$ implies $\mathsf{GA}$. Note that $\neg\CCA$ is a provably persistent $\Sigma_3$ statement, but unfortunately it is not $\Sigma_2$. However, we can turn $\CCA$ into a stronger $\Pi_2$-sentence by requiring that the coding happens sufficiently local.

    \begin{defn}
        The \emph{Local Continuum Coding Axiom} ($\LCCA$) asserts that for all cardinals $\lambda$ with $\lambda=\aleph_\lambda$, we have $V_\lambda\models\CCA$.
    \end{defn}

    If $V_\lambda\models\CCA$ for an ordinal $\lambda$ then $\lambda=\aleph_\lambda$, so $\LCCA$ is the ``fastest" coding one could ask for. A slight variant of $\LCCA$ was also used in \cite{GoldbergUsubasTheoremIsOptimal} under the name of the Local Ground Axiom. Since $\neg\LCCA$ is a provably persistent $\Sigma_2$ statement, $\MPastallempty{\Sigma_2}$ implies $\neg\LCCA$.

    \begin{lemm}\label{lemm:LCCAConsistent}
        For any model of $\mathsf{ZFC}$, there is a class forcing extension in which $\LCCA$ holds.
    \end{lemm}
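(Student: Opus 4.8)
The plan is first to reduce to the case $V\models\GCH$: since $\GCH$ can be forced over any model of $\ZFC$ by a tame, cardinal- and cofinality-preserving class forcing, and since $\LCCA$ is a statement purely about the continuum function, it suffices to produce the desired extension over a model of $\GCH$. So assume $V\models\GCH$. I would then run a Reitz-style coding forcing: a reverse Easton class iteration $\mathbb{P}=\langle\mathbb{P}_\gamma,\dot{\mathbb{Q}}_\gamma\mid\gamma\in\Ord\rangle$ with Easton support whose nontrivial stages occur at cardinals, where $\dot{\mathbb{Q}}_\gamma$ is a product of forcings of the form $\Add(\aleph_{\delta+1},\aleph_{\delta+3})$ (or the trivial forcing) used to violate or preserve $\GCH$ at prescribed successor cardinals, so that $2^{\aleph_{\beta+\alpha+1}}=\aleph_{\beta+\alpha+2}$ iff $\alpha\in A$ codes the desired sets $A$. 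By Easton's lemma each $\mathbb{P}_\gamma$ factors as a sufficiently closed forcing times a small-chain-condition forcing, so $\mathbb{P}$ is tame, preserves $\ZFC$, cardinals and cofinalities, and realizes exactly the prescribed continuum pattern. Since we only ever push $2^{\aleph_\delta}$ up to $\aleph_{\delta+2}$, a short induction on $\gamma$ gives $\beth_\gamma<\lambda$ for all $\gamma<\lambda$, hence $\beth_\lambda=\lambda$ and $\lvert V_\lambda^{V[G]}\rvert=\lambda$, for every $\lambda=\aleph_\lambda$.

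The content lies in the bookkeeping, where the new ingredient beyond plain $\CCA$ is \emph{locality}: the block of cardinals used to code a set $A$ must lie below $\lambda$ whenever $A\in V_\lambda^{V[G]}$ and $\lambda=\aleph_\lambda$. To arrange this I would assign coding blocks via the G\"odel pairing, which maps $\lambda\times\lambda$ into $\lambda$ for every infinite cardinal $\lambda$: a set $A$ with $\sup A=\theta$ is coded in the block determined by the pair $(\theta,s)$, where $s$ is the position of $A$ in the relevant well-order of subsets of $\theta$. If $A\in V_\lambda^{V[G]}$ then $\theta<\lambda$ and, by the cardinality computation above, $s<2^\theta<\lambda$, so the block lies below $\lambda$; moreover, since $\lambda$ is a cardinal it is additively closed, so the coding cardinals $\aleph_{\beta+\alpha+1}$ for $\alpha<\theta$ all have index below $\lambda$.

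Granting such a $\mathbb{P}$, I would verify $\LCCA$ in $V[G]$ by a factoring argument. Fix $\lambda=\aleph_\lambda$. Because the nontrivial stages occur at cardinals and the stage-$\gamma$ forcing for $\gamma\geq\lambda$ manipulates the continuum function only at cardinals $\geq\gamma$, the tail $\mathbb{P}/\mathbb{P}_\lambda$ is $\lambda$-closed while $\mathbb{P}_\lambda=\mathbb{P}\cap V_\lambda$. Hence the tail adds no new elements of $V_\lambda$, so $V_\lambda^{V[G]}=V_\lambda^{V[G_\lambda]}$ and every set of ordinals in $V_\lambda^{V[G]}$ already appears by stage $\lambda$. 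By locality, each such $A$ is coded in a block of cardinals below $\lambda$, and since $2^{\aleph_\gamma}<\lambda$ for $\gamma<\lambda$ the values $2^{\aleph_{\beta+\alpha+1}}$ witnessing the code are computed correctly inside $V_\lambda^{V[G]}$. Therefore $V_\lambda^{V[G]}\models\CCA$, and as $\lambda$ ranges over all $\aleph$-fixed points, $V[G]\models\LCCA$.

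The main obstacle is the self-referential nature of the bookkeeping: the sets to be coded are themselves added by $\mathbb{P}$, so the block assignment and the well-orders involved must be defined along the iteration so that (i) the blocks are pairwise disjoint, (ii) every set of ordinals in the final model is coded, and (iii) locality holds. Reconciling (ii) with locality is the delicate point, since a set with large supremum appearing at a late stage must still be coded in nearby cardinals; this is exactly where Reitz's progressive coding—adapted to place each code immediately above the stage at which the coded set first appears—is required. Checking that the $\lambda$-closed tail leaves the already-written codes and the local continuum pattern untouched is then the remaining routine verification.
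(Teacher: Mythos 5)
Your proposal follows the same overall strategy as the paper: pass to a model of $\GCH$, run a Reitz-style reverse Easton class iteration coding each set of ordinals into the continuum function, and arrange the coding to happen ``fast enough'' that for every $\lambda=\aleph_\lambda$ all sets in $V_\lambda$ are coded below $\lambda$, with the tail of the iteration closed enough to leave $V_\lambda$ and its continuum pattern untouched. The difference is in the bookkeeping, which, as you correctly say, is where the content lies. The paper indexes coding blocks by stage rather than by G\"odel pairs: at each limit ordinal $\alpha$ it codes \emph{all} subsets of $\alpha$ present in $V^{\PP_\alpha}$ at once, in the contiguous interval of cardinals indexed by $[\xi_\alpha,\xi_{\alpha+1})$ where $\xi_{\alpha+1}=\xi_\alpha+\alpha^{++}$ (length $\alpha^{++}$ suffices since inductively $(2^{\alpha})^{V^{\PP_\alpha}}\leq(\alpha^{++})^V$). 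This one device settles all three of the issues you flag at the end: disjointness is automatic (consecutive intervals), completeness is automatic (later stages are too closed to add new subsets of $\alpha$, so everything to be coded already exists at stage $\alpha$), and locality reduces to the arithmetic fact that $\xi_\lambda=\lambda$ whenever $\lambda=\aleph_\lambda$. Your G\"odel-pairing scheme can be made to work, but as written you invoke two distinct placement rules --- ``the block determined by the pair $(\theta,s)$'' and ``immediately above the stage at which the coded set first appears'' --- and these must be reconciled: the block's location has to be determined at the stage when the code is written, late enough that $A$ exists but early enough that the iteration has not yet passed the block's cardinals; the paper's choice of coding all of $\mathcal P(\alpha)^{V^{\PP_\alpha}}$ at stage $\alpha$ is exactly the reconciliation. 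Two minor slips: forcing $\GCH$ globally is \emph{not} cardinal-preserving (it collapses the cardinals in the interval $(\kappa^+,2^{\kappa}]$), though this is harmless since the lemma only asks for some class extension; and the well-order giving $s$ should enumerate the subsets of $\theta$ as of the stage by which they have all appeared (equivalently, of the final model), so that the bound $s<2^{\theta}<\lambda$ refers to $(2^{\theta})^{V[G]}$.
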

    \begin{proof}
        We may assume that we start with a model of $\mathsf{GBC}+\mathsf{GCH}$, in particular we have access to a class global wellorder $\prec$. Define $\xi_\alpha$ for $\alpha\geq\omega$ by $\xi_\omega=0$, $\xi_{\alpha+1}=\xi_\alpha + \alpha^{++}$ and $\xi_\alpha=\sup_{\beta<\alpha}\xi_\beta$ for $\beta\in\mathrm{Lim}$. 
       
        Define an Easton support iteration $\PP=\langle \PP_\alpha,\Q_\alpha\mid\alpha\in\Ord\rangle$ as follows: 
        For $\alpha\in\Ord$, $\Q_\alpha$ is the name for the trivial forcing unless $\alpha$ is a limit ordinal. In this case, let $\Q_\alpha$ be a $\PP_\alpha$-name for the forcing $\QQ_\alpha$ which codes every subset of $\alpha$ in $V^{\PP_\alpha}$ at an ordinal in the interval $[\xi_\alpha,\xi_{\alpha+1})$. The definition of $\xi_{\alpha+1}$ is chosen so that this interval is sufficiently long for the coding take place: we will inductively have that $(2^\alpha)^{V^{\PP_\alpha}}\leq (\alpha^{++})^V$. More precisely, $\QQ_\alpha$ is the Easton support product
        \[
        \prod_{i<2^{\vert\alpha\vert}}\prod_{j\in x_i}\mathbb{A}(\aleph_{\xi_{\alpha}+\alpha\cdot i + j + 1})
        \]
        where $\mathbb A(\lambda)=\Add(\lambda,\lambda^{++})$ and $\langle x_i\mid i<2^{\vert\alpha\vert}\rangle$ is the evaluation of the $\prec$-least $\PP_\alpha$-name for an enumeration of $\mathcal P(\alpha)^{V^{\PP_\alpha}}$. Note that $\QQ_\alpha$ does not add any new subsets of $\alpha$.

        In $V^\PP$, every subset of some limit ordinal $\alpha$ is coded into the continuum function at an ordinal ${<}\xi_{\alpha+1}$. It is easy to see that if $\lambda=\aleph_\lambda$ then $\lambda=\xi_\lambda=\aleph_{\xi_\lambda}$ and hence $V_{\lambda}^{V^\PP}\models\CCA$.
    \end{proof}

    This construction preserves natural large cardinals, such as measurable, Woodin, supercompact, extendible cardinals, etc. So $\LCCA$ is ``consistent with all natural large cardinals."\footnote{A precise statement which follows from Lemma \ref{lemm:LCCAConsistent} is that assuming the existence of a proper class of Woodin cardinals, $\LCCA$ is $\Omega$-consistent in terms of Woodin's $\Omega$-logic (cf.\ \cite{WoodinPmax}).} Thus Woodin's Theorem \ref{thm:WoodinsStatTowerForcing} is best possible in the sense that no (consistent) large cardinal assumption proves $\MPastempty{\Sigma_2}{\mathcal{P}}$ for any provably non-trivial class of forcings $\mathcal P$.

    \begin{cor}
        Suppose that the existence of a proper class of Woodin cardinals is consistent. Then there is a model of $\mathsf{ZFC}+\MPall{\Pi_2}{\mathbb{R}}\land\neg \MPastallempty{\Sigma_2}$.
    \end{cor}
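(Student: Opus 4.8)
The plan is to combine Woodin's Theorem \ref{thm:WoodinsStatTowerForcing} with the $\LCCA$-forcing of Lemma \ref{lemm:LCCAConsistent}, relying on the fact that the latter preserves a proper class of Woodin cardinals. First I would fix a model $V$ of $\mathsf{ZFC}$ together with a proper class of Woodin cardinals, which exists by the consistency hypothesis (``there is a proper class of Woodin cardinals'' being the first-order schema asserting unboundedly many Woodins). Let $\PP$ be the Easton support coding iteration constructed in the proof of Lemma \ref{lemm:LCCAConsistent} and let $G$ be $\PP$-generic over $V$. Since $\PP$ is a tame definable class iteration with Easton support, $V[G]$ is again a model of $\mathsf{ZFC}$, and by Lemma \ref{lemm:LCCAConsistent} we have $V[G]\models\LCCA$.

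The key point is that $V[G]$ still carries a proper class of Woodin cardinals, which is exactly the preservation property of the coding construction noted immediately after Lemma \ref{lemm:LCCAConsistent}: each Woodin cardinal $\delta$ of $V$ remains Woodin in $V[G]$, via the usual lifting of the witnessing extender embeddings along the factorization $\PP\cong\PP_\delta\ast\PP^{\geq\delta}$. As there are unboundedly many Woodins in $V$, there remain unboundedly many in $V[G]$. With the large cardinals preserved, Theorem \ref{thm:WoodinsStatTowerForcing} applies verbatim inside $V[G]$ and yields $V[G]\models\MPall{\Pi_2}{\mathbb{R}}$. On the other side, $\neg\LCCA$ is a provably persistent $\Sigma_2$-statement, so $\MPastallempty{\Sigma_2}$ implies $\neg\LCCA$; taking the contrapositive and using $V[G]\models\LCCA$, we conclude $V[G]\models\neg\MPastallempty{\Sigma_2}$. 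Thus $V[G]$ witnesses $\mathsf{ZFC}+\MPall{\Pi_2}{\mathbb{R}}\land\neg\MPastallempty{\Sigma_2}$, as required.

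The main obstacle is precisely the large-cardinal preservation in the second step: one must check that the reverse-Easton coding iteration $\PP$, which acts nontrivially cofinally in the ordinals and blows up the continuum function arbitrarily high, nonetheless does not destroy Woodinness. This is handled by the standard surgery-and-lifting technique for Reitz-type continuum-coding class forcings, and is exactly the content of the preservation remark following Lemma \ref{lemm:LCCAConsistent}; once it is granted, the remainder of the argument is a direct citation of Theorem \ref{thm:WoodinsStatTowerForcing} and of the implication $\LCCA\Rightarrow\neg\MPastallempty{\Sigma_2}$ recorded just before Lemma \ref{lemm:LCCAConsistent}.
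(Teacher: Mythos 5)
Your proposal is correct and follows exactly the paper's own argument: force $\LCCA$ over a model with a proper class of Woodin cardinals via Lemma \ref{lemm:LCCAConsistent}, use the preservation of Woodin cardinals to apply Theorem \ref{thm:WoodinsStatTowerForcing} for $\MPall{\Pi_2}{\mathbb{R}}$, and use that $\neg\LCCA$ is a provably persistent, forceable $\Sigma_2$-statement to refute $\MPastallempty{\Sigma_2}$. The extra detail you supply on lifting the extender embeddings is a reasonable elaboration of the preservation remark the paper simply asserts.
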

    \begin{proof}
        Any model of $\mathsf{ZFC}$ with a proper class of Woodin cardinals extends to a model $M$ of $\LCCA$ with a proper class of Woodin cardinals. As there is a proper class of Woodin cardinals, $M\models\MPall{\Pi_2}{\mathbb{R}}$ and as $\LCCA$ holds in $M$, $M\models \neg\MPastallempty{\Sigma_2}$.
    \end{proof}

    We can use this idea to solve a question of Ikegami--Trang, who proved the following results.

    \begin{thm}[Ikegami--Trang \cite{IkegamiTrangOnAClassOfMaximilityPrinciples}]\leavevmode
        \begin{enumerate}
            \item $\mathsf{MM}^{++}$ does not imply $\MP{\Pi_2}{\mathrm{SSP}}{H_{\omega_2}}$.
            \item Suppose $\mathsf{MM}^{++}$ holds and there is a proper class of Woodin cardinals. Then $\MP{\Pi_2}{\mathrm{SSP}}{H_{\omega_2}}$ holds.
        \end{enumerate}
    \end{thm}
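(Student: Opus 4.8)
The plan is to prove the two parts by quite different means: (2) by a generic-embedding argument modeled on Woodin's Theorem~\ref{thm:WoodinsStatTowerForcing}, and (1) by forcing a model of $\MM^{++}$ in which an engineered $\Pi_2$-statement refutes $\MP{\Pi_2}{\mathrm{SSP}}{H_{\omega_2}}$.

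For (2), I would first use that under $\MM^{++}$ every stationary-set-preserving poset is semiproper (Shelah), so the semiproper stationary tower $\mathbb{Q}_{<\delta}$ at a Woodin cardinal $\delta$ is itself SSP and absorbs any SSP poset as a complete subforcing. Suppose toward a contradiction that $\phi$ is $\Pi_2$, $a\in H_{\omega_2}$, that some SSP poset $\PP$ forces ``$\phi(a)$ is SSP-persistent'', yet $V\models\neg\phi(a)$. As $\neg\phi(a)$ is $\Sigma_2$, it reflects to $V_\kappa$ for some inaccessible $\kappa$ with $a\in V_\kappa$. Pick a Woodin $\delta>\max\{\kappa,|\PP|\}$ and force with $\mathbb{Q}_{<\delta}$ below a condition absorbing $\PP$, obtaining $j\colon V\to M$ with $j(\delta)=\delta$ and $V_\delta^M=V_\delta^{V[G]}$; since $V[G]$ is a semiproper extension of $V^\PP$, persistence gives $V[G]\models\phi(a)$. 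Applying $j$ to the reflecting instance yields $V_{j(\kappa)}^M\models\neg\phi(j(a))$ with $j(\kappa)<\delta$ inaccessible, so $V_{j(\kappa)}^{V[G]}\models\neg\phi(j(a))$, and as $V_{j(\kappa)}^{V[G]}\prec_{\Sigma_1}V[G]$ the $\Sigma_2$-statement lifts to $V[G]\models\neg\phi(j(a))$.

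This is where the real difficulty sits, and is exactly the point at which $\MM^{++}$ must do work beyond what Woodin's theorem needs. Persistence only delivers $\phi(a)$, whereas reflection delivers $\neg\phi(j(a))$; because the tower embedding has critical point $\omega_1$ it moves the parameter $a\in H_{\omega_2}$, so the two facts fail to contradict one another. (For real parameters $j$ fixes the parameter, which is precisely why Theorem~\ref{thm:WoodinsStatTowerForcing} is confined to $\mathbb{R}$.) The natural attempt is to fix $a$ by using a tower concentrating on models of size $\aleph_1$, so that $\crit(j)=\omega_2$ and $j\res H_{\omega_2}=\mathrm{id}$; then $V[G]\models\phi(a)\wedge\neg\phi(a)$ and we are done. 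The obstruction is that such a higher tower need not absorb an SSP poset that adds reals or subsets of $\omega_1$, whereas the countable tower that does absorb $\PP$ has $\crit(j)=\omega_1$. Reconciling ``fix the parameter'' with ``absorb $\PP$'' is the crux, and I expect the resolution to run either through $\MM^{++}$ and a careful analysis of the relevant quotient forcings, or alternatively through $(\ast)$ and the $\Pi_2$-maximality of $(H_{\omega_2},\in,\mathrm{NS}_{\omega_1})$ that $\MM^{++}$ provides, together with the proper class of Woodins to bridge $\Pi_2$-over-$V$ formulas and their $H_{\omega_2}$-reducts.

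For (1), the plan is to start from a Foreman--Magidor--Shelah model of $\MM^{++}$ over a supercompact, arranged so that no Woodin cardinal survives above $\omega_2$, and then to apply a further coding forcing that is SSP and can be set up to preserve $\MM^{++}$. Using this coding I would install a button configuration in the spirit of Theorem~\ref{thm:SeparateMaximalityPrinciples}: a $\Pi_2$-statement $\phi(a)$ in a parameter $a\in H_{\omega_2}$, of the form ``all designated coding locations are filled'', which is SSP-forceably SSP-persistent but false in the final model, its falsity being tenable precisely because the absence of Woodins blocks the reflection used in part~(2). The main obstacle here is to choose the $\Pi_2$-witness so that it is genuinely set-forceably persistent---bounded coding is only $\Sigma_2$, so the witness must exploit both a parameter and the lack of large-cardinal reflection---while simultaneously ensuring that the coding forcing leaves $\MM^{++}$ intact.
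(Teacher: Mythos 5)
First, a point of order: the paper does not prove this theorem at all --- it is quoted from Ikegami--Trang and used as a black box --- so there is no in-paper argument to compare against, and your proposal must stand on its own. For part (2), your setup is correct and you have located the crux accurately, but you stop exactly there, and the unresolved step is the entire mathematical content. The resolution is a theorem of Woodin and is precisely where $\MM^{++}$ (and specifically the $++$) enters: applying $\MM^{++}$ to the given SSP poset $\PP$ shows that the set $S_{\PP}$ of $X\prec H_\theta$ with $\lvert X\rvert=\aleph_1$, $X\cap\omega_2\in\omega_2$, and an $X$-generic filter $g\subseteq\PP\cap X$ all of whose names in $X$ for stationary subsets of $\omega_1$ evaluate to stationary sets, is stationary; forcing with the \emph{full} tower $\mathbb{P}_{<\delta}$ below the condition $S_{\PP}$ then yields $j\colon V\to M$ with $\crit(j)=\omega_2^V$, $j(\omega_2^V)=\delta$, a $V$-generic $g\subseteq\PP$ lying in $V[G]$, and the further property that every stationary subset of $\omega_1$ of $V[g]$ remains stationary in $V[G]$ --- which is also what licenses your appeal to persistence over $V[g]$ to get $V[G]\models\phi(a)$, a step you assert but do not justify. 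Gesturing at $(\ast)$ or ``quotient analysis'' does not substitute for this input; without it the argument does not close.

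For part (1), your coding plan founders exactly where you say it does: ``coding location $\beta$ is filled'' is $\Sigma_2$ in the ordinal $\beta$, and the relevant $\beta$'s lie far above $H_{\omega_2}$, so no admissible $\Pi_2$ counterexample comes out of it, and no amount of tuning the parameter fixes the quantifier shape. But (1) needs none of this machinery. Take a ground model with a supercompact $\kappa$ and exactly one inaccessible $\mu>\kappa$ (truncate the universe at the next inaccessible above $\mu$), and force $\MM^{++}$ by the usual length-$\kappa$ iteration. In the resulting model the inaccessible cardinals form a nonempty bounded class, so the parameter-free $\Pi_2$ sentence ``there is no inaccessible cardinal'' is false; yet it is SSP-forceable, via the $\sigma$-closed (hence SSP) collapse of the boundedly many inaccessibles to $\omega_1$, and it is provably persistent, since set forcing cannot create inaccessible cardinals (downward absoluteness of regularity plus the fact that making $2^{\mu}<\lambda$ true for a surviving regular $\lambda$ would require collapsing $\lambda$ itself). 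Hence already the parameter-free fragment of $\MP{\Pi_2}{\mathrm{SSP}}{H_{\omega_2}}$ fails there, which is more than (1) asks for.
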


    They also showed that for some $n\geq 3$, the conclusion of (2) cannot be strengthened to $\MPempty{\Sigma_n}{\mathrm{SSP}}$. Consequently, \cite[Question 5.5]{IkegamiTrangOnAClassOfMaximilityPrinciples} asks whether the theory $\mathsf{MM}^{++}\,+$``there is a proper class of Woodin cardinals" proves $\MPempty{\Sigma_2}{\mathrm{SSP}}$. We will show that the answer is no in a strong sense.

    \begin{thm}\label{thm:MM_plus_Failure_of_Sigma_2-MP}
        Suppose the theory $\mathsf{MM}^{++}+``$there is a proper class of Woodin cardinals" is consistent. Then so is the theory $\mathsf{MM}^{++}+``$there is a proper class of Woodin cardinals"$+\neg\MPastempty{\Sigma_2}{\mathrm{SSP}}$.
    \end{thm}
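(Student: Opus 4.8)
The plan is to reduce Theorem~\ref{thm:MM_plus_Failure_of_Sigma_2-MP} to forcing $\LCCA$ over a model of $\MM^{++}$ together with a proper class of Woodin cardinals, while preserving both. Indeed, suppose we have produced a model $M$ of $\MM^{++}$ with a proper class of Woodin cardinals in which $\LCCA$ holds. Then the sentence $\neg\LCCA$ witnesses the failure of $\MPastempty{\Sigma_2}{\mathrm{SSP}}$ in $M$: it is $\Sigma_2$, it is provably persistent for all posets (hence in particular provably $\mathrm{SSP}$-persistent, exactly as recorded just before Lemma~\ref{lemm:LCCAConsistent}), and it is false in $M$ since $\LCCA$ holds there. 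It remains only to check that $\neg\LCCA$ is $\mathrm{SSP}$-forceable over $M$, and for this adding a single Cohen real suffices: $\Add(\omega,1)$ is c.c.c., hence $\mathrm{SSP}$, it leaves the continuum function unchanged, and the new real, viewed as a subset of $\omega$, is distinct from every ground-model set of ordinals, so it is not coded into the continuum function and $V_\lambda\models\neg\CCA$ for every fixed point $\lambda=\aleph_\lambda$. Thus $\MPastempty{\Sigma_2}{\mathrm{SSP}}$ fails in $M$, as required.

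So the entire content is to show that if $\MM^{++}$ together with a proper class of Woodin cardinals is consistent, then so is that theory augmented by $\LCCA$. The obstruction to quoting Lemma~\ref{lemm:LCCAConsistent} directly is that its coding forcing is run over a model of $\GCH$, whereas $\MM^{++}$ forces $2^{\aleph_0}=2^{\aleph_1}=\aleph_2$. The plan is therefore to re-run the coding iteration of Lemma~\ref{lemm:LCCAConsistent} but to perform all coding strictly above $\omega_2$, using only factors $\Add(\aleph_\mu,\aleph_\mu^{++})$ with $\aleph_\mu\geq\omega_2$, so that the resulting Easton-support class forcing $\PP$ is $<\omega_2$-directed-closed. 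This can be done without sacrificing $\LCCA$, because the least fixed point of $\alpha\mapsto\aleph_\alpha$ already exceeds $\aleph_\omega$: for every relevant $\lambda=\aleph_\lambda$ there is ample room to code each bounded subset of $\lambda$ at ordinals lying in the interval $(\omega_2,\lambda)$. Concretely I would first interleave a $<\omega_2$-directed-closed reverse Easton forcing restoring $\GCH$ on $[\omega_2,\infty)$, which leaves $H_{\omega_2}$ and the continuum function below $\omega_2$ untouched, so that the coding can read clean bits off the continuum function in the coding region, and then code as in Lemma~\ref{lemm:LCCAConsistent}.

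Granting this construction, preservation splits into three checks. First, since $\PP$ is $<\omega_2$-directed-closed it adds no new subsets of $\omega_1$, so $H_{\omega_2}$ is unchanged and all of the combinatorics of $\MM^{++}$ at $\omega_1$ survives verbatim. Second, and crucially, $\PP$ preserves $\MM^{++}$ by a preservation theorem for forcing axioms under $<\omega_2$-directed-closed forcing, in the style of König--Yoshinobu and Larson. Third, $\PP$ preserves the proper class of Woodin cardinals by the same reflection and extender-lifting argument already invoked in Lemma~\ref{lemm:LCCAConsistent}: for a Woodin cardinal $\delta$ one arranges the coding of subsets of ordinals below $\delta$ to take place below $\delta$, so that $\PP$ factors as a forcing captured inside $V_\delta$ followed by a $\delta$-directed-closed tail, whence the relevant embeddings lift and $\delta$ remains Woodin. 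Combining the three, $M:=V^{\PP}$ is the desired model.

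The hard part will be the second check: preservation of $\MM^{++}$ under the closed \emph{class} forcing $\PP$, carried out simultaneously with preserving the Woodins and realizing full $\LCCA$. The tension is that the closure needed for $\MM^{++}$ pushes all coding high above $\omega_2$, while Woodin preservation wants the coding of sets below a Woodin $\delta$ to remain below $\delta$, and $\LCCA$ still demands that every bounded set be coded below each fixed point; reconciling these three constraints, and verifying that a class-length $<\omega_2$-directed-closed iteration (rather than a single closed poset) still preserves the forcing axiom, is where the genuine work lies.
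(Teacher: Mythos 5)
Your proposal is correct and follows essentially the same route as the paper: force $\LCCA$ by a ${<}\omega_2$-directed closed class forcing that starts the coding at $\omega_2$, invoke the Larson-style preservation theorem for $\mathsf{MM}^{++}$ under such forcings, note that the Woodin cardinals survive, and then use $\neg\LCCA$ as the provably persistent $\Sigma_2$ counterexample to $\MPastempty{\Sigma_2}{\mathrm{SSP}}$. The extra details you supply (verifying SSP-forceability of $\neg\LCCA$ via a Cohen real, and handling the failure of $\GCH$ below $\omega_2$ by coding only above $\omega_2$) are points the paper leaves implicit, but they do not change the argument.
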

    \begin{proof}
        It was shown in \cite{LarsonSeparatingStationaryReflectionPrinciples} that ${<}\omega_2$-directed closed forcing preserves $\mathsf{MM}$. The proof can be adapted to show that ${<}\omega_2$-directed closed forcing preserves $\mathsf{MM}^{++}$ as well. We can now force $\LCCA$ by a ${<}\omega_2$-directed closed class forcing by starting the coding at $\omega_2$ so in the resulting generic extension $V[G]$, $\mathsf{MM}^{++}$ still holds. Further, all Woodin cardinals are preserved by this extension and by $\LCCA$, $\MPastempty{\Sigma_2}{\mathrm{SSP}}$ fails.
    \end{proof}

    This also solves Question 4 of Goodman's PhD Thesis \cite{GoodmanSigmaNCorrectForcingAxioms} in the negative, namely, $\mathsf{MM}^{++}$ is not equivalent to the forcing axiom $\Sigma_2$-$\mathsf{CMM}$ as defined in his thesis, because the latter implies $\MPast{\Sigma_2}{\mathrm{SSP}}{H_{\omega_2}}$.\footnote{In \cite{GoodmanSigmaNCorrectForcingAxioms}, the Maximality Principle for $\Sigma_n$ formulas was defined as $\MPast{\Sigma_n}{\mathcal{P}}{A}$.}

    Note that the proof of Theorem \ref{thm:MM_plus_Failure_of_Sigma_2-MP} works for virtually all standard forcing axioms, because they are also preserved by ${<}\omega_2$-directed closed forcings (cf.\ \cite{CoxForcingAxiomsAprrochability}). In particular, $\mathsf{PFA}+\neg\MPastempty{\Sigma_2}{\mathrm{proper}}$ is consistent with a proper class of large cardinals. On the other hand, the following question is still open.

    \begin{que}
        Is $\mathsf{PFA}\land\neg\MP{\Pi_2}{\mathrm{proper}}{H_{\omega_2}}$ consistent with a proper class of supercompact/extendible cardinals?
    \end{que}

    \bibliographystyle{alpha}
    \bibliography{MPbib}
    
\end{document}